\numberwithin{equation}{section}
\newcommand*{\LargerCdot}{\raisebox{-0.25ex}{\scalebox{2}{$\cdot$}}}
\theoremstyle{plain}
\newtheorem{mydef}{Definition}[section]
\newtheorem*{mydef*}{Definition}
\newtheorem{mylemma}[mydef]{Lemma}
\newtheorem{mytheorem}[mydef]{Theorem}
\newtheorem{mycor}[mydef]{Corollary}
\newtheorem{myprop}[mydef]{Proposition}
\newtheorem{myassum}[mydef]{Assumption}
\newtheorem{myproblem}[mydef]{Problem}
\theoremstyle{definition}
\newtheorem{myexample}[mydef]{Example}
\newtheorem*{myexample*}{Example}
\newtheorem{myremark}[mydef]{Remark}
\newtheorem*{myremark*}{Remark}
\theoremstyle{remark}
\newcommand{\Hom}{\mathrm{Hom}}
\newcommand{\Ker}{\mathrm{Ker}\;}
\newcommand{\Id}{\mathrm{id}}
\newcommand{\Dom}{\mathrm{Dom}}
\newcommand{\Image}{\mathrm{Im}}
\newcommand{\acts}{\curvearrowright}
\newcommand{\CAlg}{\mathcal{C}}
\newcommand{\Tab}{\mathlarger{\mathlarger{\sigma}}}
\newcommand{\LDiff}{\mathrm{Diff}_{\mathrm{loc}}}
\newcommand{\LBis}{\mathrm{Bis}_{\mathrm{loc}}}
\titleformat{\section}{\normalfont\fontsize{13}{15}\bfseries}{\thesection}{1em}{}
\titleformat{\subsection}{\normalfont\fontsize{11}{15}\bfseries}{\thesubsection}{1em}{}
\newcommand{\explain}[2]{\underset{\mathclap{\overset{\uparrow}{#2}}}{#1}}
\title{Lie Pseudogroups \`a la Cartan 
}
\author{Marius Crainic\footnote{Utrecht University; \texttt{m.crainic{@}uu.nl}} \hspace{1cm} Ori Yudilevich\footnote{KU Leuven; \texttt{ori.yudilevich{@}kuleuven.be} }}
\date{February 1, 2019}
\begin{document}

\maketitle

\begin{abstract} 
	We present a modern formulation of \'Elie Cartan's structure theory for Lie pseudogroups and prove a reduction theorem that clarifies the role of Cartan's systatic system. The paper is divided into three parts. In part one, using notions coming from the theory of Lie groupoids and algebroids, we introduce the framework of Cartan algebroids and realizations, structures that encode Cartan's structure equations and notion of a pseudogroup in normal form. In part two, we present a novel proof of Cartan's Second Fundamental Theorem, which states that any Lie pseudogroup is equivalent to a pseudogroup in normal form. In part three, we prove a new reduction theorem that states that, under suitable regularity conditions, a pseudogroup in normal form canonically reduces to a generalized pseudogroup of local solutions of a Lie-Pfaffian groupoid. \footnote{Keywords: Lie pseudogroups, structure equations, Lie algebroids, Lie groupoids, Jets, Spencer cohomology} \footnote{Mathematics Subject Classification: 58H05, 22E60, 58A15, 58A20, 53C10}
\end{abstract}

\newpage

\setcounter{tocdepth}{2}
\tableofcontents

\newpage 

\section*{Introduction}
\addcontentsline{toc}{section}{Introduction}

In two classical papers \cite{Cartan1904,Cartan1905}, dating back to 1904-05, \'Elie Cartan introduced a structure theory for Lie pseudogroups, building on the then recent work of Sophus Lie. As part of this work, Cartan introduced several tools that have, with the years, become fundamental in differential geometry, including the theory of exterior differential systems, G-structures, the equivalence problem, and, to some extent, the very notion of differential forms. These tools, as it turned out, became much more influential than the original problem itself -- the study of Lie pseudogroups, the main reason being the gap between the notions and ideas that Cartan introduced and the mathematical language that he had at his disposal. Chern and Chevalley, in an obituary to Cartan (\cite{Chern1952}, 1952), wrote: ``We touch here a branch of mathematics which is very rich in results but which very badly needs clarification of its foundations.'' Singer and Sternberg, in their work titled \textit{The Infinite Groups of Lie and Cartan Part I (The Transitive Groups)} (\cite{Singer1965}, 1965), wrote: ``We must confess that we find most of these papers extremely rough going and we certainly cannot follow all arguments in detail. The best procedure is to guess at the theorems, then prove them, then go back to Cartan.'' 

In this paper, using modern differential geometric language, and in particular notions coming from the theory of Lie groupoids and algebroids and from the theory of geometric PDEs, we revisit Cartan's work and present a global, coordinate-free formulation of his theory of Lie pseudogroups, with the aim of providing further insight on a work that has proven to be so rich in content. The main contributions of this paper are:
\renewcommand\labelitemi{$\vcenter{\hbox{\tiny$\bullet$}}$}
\begin{itemize}
	\item The new framework of Cartan algebroids and realizations, the global objects behind Cartan's (local) structure equations of a Lie pseudogroup. 
	\item The equivalent but more intuitive point of view of Cartan pairs and realizations. 
	\item A coordinate-free and conceptual proof of Cartan's Second Fundamental Theorem.
	\item A canonical Lie algebroid, we call the \textit{systatic algebroid}, that is intrinsic to any Cartan algebroid, as well as its canonical action on realizations of the Cartan algebroid. This, in turn, leads us to a new reduction theorem for Lie pseudogroups. This part of the paper clarifies the role of Cartan's \textit{systatic system}, one of the least understood aspects of Cartan's theory. 
\end{itemize}

We would like to emphasize that our goal in this paper is not to promote Cartan's theory nor make claims concerning its strengths and weaknesses as compared to other theories, but rather to translate his ideas as faithfully as possible into modern language so that these ideas could be critically studied by means of modern mathematical tools. This approach has already proven to be very fruitful with the most notable example being the last item in the above list.

\subsubsection*{Lie Pseudogroups}

Let us begin with the basic object of interest -- a Lie pseudogroup. Let $M$ be a manifold and let 
\begin{equation*}
\LDiff(M):= \{\; \phi: U \to V\;|\; U,V\subset M\;\text{open subsets},\; \phi\;\text{a diffeomorphism} \;\}
\end{equation*}
denote the set of all locally defined diffeomoprhisms of $M$. 

\begin{mydef*}
	\index{pseudogroup}
	A \textbf{pseudogroup} on $M$ is a subset $\Gamma\subset \LDiff(M)$ that satisfies:
	\begin{enumerate}
		\item Group-like axioms:
		\begin{enumerate}[label=\arabic*)]
			\item if $\phi,\phi'\in\Gamma$ and $\Image(\phi')\subset\Dom(\phi)$, then $\phi\circ\phi'\in\Gamma$,
			\item if $\phi\in\Gamma$, then $\phi^{-1}\in\Gamma$,
			\item $\Id_M \in \Gamma$.
		\end{enumerate}
		\item Sheaf-like axioms:
		\begin{enumerate}[label=\arabic*)]
			\item if $\phi\in\Gamma$ and $U\subset\Dom(\phi)$ is open, then $\phi|_U\in\Gamma$,
			\item if $\phi\in\LDiff(M)$ and $\{U_i\}_{i\in I}$ is an open cover of $\Dom(\phi)$ s.t. $\phi|_{U_i}\in\Gamma\;\; \forall\;i\in I$, then $\phi\in\Gamma$.
		\end{enumerate}
	\end{enumerate}
	An \textbf{orbit} of $\Gamma$ is an equivalence class of points of $M$ given by the equivalence relation
	\begin{equation*}
	x \sim y \;\;\;\; \text{ if and only if }\;\;\;\;\; \exists\; \phi\in\Gamma \text{ such that } \phi(x)=y.
	\end{equation*}
	A pseudogroup is \textbf{transitive} if it has a single orbit, otherwise it is \textbf{intransitive}.
\end{mydef*}

\begin{myremark*}
	Many of Cartan's examples of pseudogroups arise in the following way: given a subset $\Gamma_0 \subset \text{Diff}_{\text{loc}}(M)$ satisfying the group-like axioms, there exists a smallest pseudogroup $\langle \Gamma_0 \rangle$ on $M$ containing $\Gamma_0$ which we call the \textbf{pseudogroup generated by} $\Gamma_0$. It is obtained by ``imposing'' the sheaf-like axioms, similar to sheafification in the theory of sheaves. \qedhere
\end{myremark*}

A \textbf{Lie pseudogroup}, loosely speaking, is a pseudogroup that is defined as the set of solutions of a system of partial differential equations (PDEs). For the precise definition, see Definition \ref{def:liepseudogroup}. For the moment, it suffices to keep in mind that Lie pseudogroups arise in differential geometry as the local symmetries of geometric structures. For example, the set of local symplectomorphisms of a symplectic manifold $(M,\omega)$, i.e. all locally defined diffeomorphisms $\phi\in\LDiff(M)$ that satisfy the partial differential equation $\phi^*\omega=\omega$, is a Lie pseudogroup. 

The work of Lie and Cartan, and a large part of the literature that followed, was restricted to the local study of Lie pseudogroups, i.e. Lie pseudogroups on open subsets of Euclidean spaces. In this case, given a Lie pseudogroup $\Gamma$ on $\mathbb{R}^n$ or an open subset thereof, one typically introduces coordinates $(x,y,...)$ on the copy of $\mathbb{R}^n$ on which the elements of $\Gamma$ are applied, coordinates $(X,Y,...)$ on the copy of $\mathbb{R}^n$ in which the elements of $\Gamma$ take value, and, with respect to these coordinates, every element of $\Gamma$ is represented by its component functions 
\begin{equation*}
X=X(x,y,...),\;Y=Y(x,y,...),...
\end{equation*}
Here are two simple but already very interesting examples of Lie pseudogroups cited from \cite{Cartan1937-1}:

\begin{myexample*}
The diffeomorphisms of $\mathbb{R}^2 \backslash \{y=0\}$ of the form
\begin{equation*}
X=x+ay,\;\;\;\;\; Y=y,
\end{equation*}
parametrized by a real number $a\in\mathbb{R}$, generate (see above Remark) a non-transitive Lie pseudogroup. It is characterized as the set of local solutions of the system of partial differential equations
\begin{equation*}
\frac{\partial X}{\partial x} = 1,\;\;\;\; \frac{\partial X}{\partial y} = \frac{X-x}{y},\;\;\;\; Y=y.
\end{equation*}
\end{myexample*}

\begin{myexample*}
The locally defined diffeomorphisms of $\mathbb{R}^2 \backslash \{y=0\}$ of the form
\begin{equation*}
X=f(x),\;\;\;\;\; Y=\frac{y}{f'(x)},
\end{equation*}
parametrized by a function $f\in\text{Diff}_{\text{loc}}(\mathbb{R})$ (locally defined diffeomorphisms of $\mathbb{R}$), generate a transitive Lie pseudogroup. This pseudogroup is characterized as the set of local solutions of the system of partial differential equations
\begin{equation*}
\frac{\partial X}{\partial x} = \frac{y}{Y},\;\;\;\; \frac{\partial X}{\partial y}=0,\;\;\;\; \frac{\partial Y}{\partial y}=\frac{Y}{y}.
\end{equation*}
\end{myexample*}

\subsubsection*{Cartan's Approach to Lie Pseudogroups}

The study of Lie pseudogroups was initiated by Sophus Lie in a three volume monograph written in collaboration with Friedrich Engel (\cite{Lie1888}, 1888-1893). In this work, Lie concentrated on the special class of Lie pseudogroups of \textit{finite type}. These are, loosely speaking, Lie pseudogroups whose elements are parametrized by a finite number of real variables (e.g. the first example above). They are substantially simpler to handle because they are (locally) encoded by their finite dimensional space of parameters which inherits the structure of a (local) Lie group. In fact, Lie's work on this ``special case'' marked the birth of Lie group theory. 

Lie's key idea was to study these objects by means of their induced set of infinitesimal transformations, or, in modern terms, to study a Lie group by means of its associated Lie algebra of invariant vector fields. Cartan sought to extend Lie's ideas to the general case and to develop a theory that also encompasses Lie pseudogroups of \textit{infinite type}. These are, loosely speaking, Lie pseudogroups whose elements may be also parameterized by arbitrary functions (e.g. the second example above). While a direct generalization of Lie's construction of a Lie algebra of vector fields proved difficult, Cartan showed that one can associate an infinitesimal structure with a Lie pseudogroup by passing to the dual picture of differential forms. Cartan's approach is depicted in the following diagram: 

\begin{center}
	\begin{tikzpicture}
	\node at (0,0) {\small \begin{minipage}{2.5cm} \begin{center} Lie pseudogroup  \end{center} \end{minipage}};	
	\draw [->] (1.5,0) -- (2.7,0);
	\node at (2.1,0.7) {\scriptsize \begin{minipage}{2.5cm} \begin{center} 2nd Fundamental Theorem \end{center} \end{minipage}};
	\node at (4.3,0) {\small \begin{minipage}{3cm} \begin{center} Lie pseudogroup \\ in normal form  \end{center} \end{minipage}};
	\draw [->] (5.9,0) -- (7.1,0);
	\node at (8.6,0) {\small \begin{minipage}{3cm} \begin{center} Infinitesimal structure \end{center} \end{minipage}};
	\path [->, dashed, bend right] (7.1,0.5) edge (5.9,0.5);
	\node at (6.5,1.2) {\scriptsize \begin{minipage}{2.5cm} \begin{center} 3rd Fundamental Theorem \end{center} \end{minipage}};
	\end{tikzpicture}
\end{center}

In his \textit{Second Fundamental Theorem}, Cartan showed that any Lie pseudogroup can be replaced by an \textit{equivalent} pseudogroup that is in \textit{normal form}. This theorem introduces two new concepts. The first is that of equivalence, or what we call \textit{Cartan equivalence}, of pseudogroups. Cartan realized that what is important about a pseudogroup is its algebraic structure and not the space on which it acts, and that two pseudogroups should be allowed to be called ``the same'' even if they act on spaces of different dimensions. To achieve this, he introduced a relation that relates two pseudogroups $\widetilde{\Gamma}$ on $P$ and $\Gamma$ on $M$ if there exists a map $\pi:P\to M$ such that $\widetilde{\Gamma}$ is an action of $\Gamma$ on $P$ along $\pi$ (see Definition \ref{def:isomorphicprolongation}), in which case he said that $\widetilde{\Gamma}$ is an \textit{isomorphic prolongation} of $\Gamma$. Then, as equivalence, he took the equivalence relation generated by this. 
\begin{center}
	\begin{tikzpicture}[every node/.style={align=center,text depth=0ex,text height=2ex,text width=4em}]
	\matrix (m) [matrix of math nodes,column sep=-1.5em,row sep=1.5em, row 1/.style={nodes={align=left}}]
	{ 
		& \hspace{-0.3cm} \widetilde{\Gamma} \acts \; P & \\
		\hspace{-0.7cm} \Gamma \acts \; M & & \hspace{0.35cm} M' \;\, \text{\reflectbox{$\acts$}} \; \Gamma'\\
	};
	\path[->,font=\scriptsize]
	(m-1-2) edge node[auto] {} (m-2-1)
	(m-1-2) edge node[auto] {} (m-2-3);
	\end{tikzpicture}
\end{center}

The second new concept, and the essence of Cartan's theory, is that of a \textit{pseudogroup in normal form}. A pseudogroup is in normal form if it is characterized as the set of local symmetries of the following type of object: a collection of functions $I_1,..,I_n$ and 1-forms $\omega_1,...,\omega_r$ that satisfy the set of \textit{structure equations}\footnote{Here, and throughout the paper, we use the Einstein summation convention.}
\begin{equation*}
	d\omega_i + \frac{1}{2} c_i^{jk} \omega_j\wedge \omega_k = a_i^{\lambda j} \pi_\lambda \wedge \omega_j,
\end{equation*} 
where the coefficients $c_i^{jk}$ and $a_i^{\lambda j}$ are functions of the invariants $I_1,..,I_n$ and $\pi_1,...,\pi_p$ are auxiliary 1-forms that complete the $\omega_i$'s to a coframe (for Cartan's precise formulation, see Section \ref{section:cartansrealizationproblem}). Of course, these structure equations generalize the familiar Maurer-Cartan structure equations of a Lie group, the case in which the right-hand side is zero and the coefficients $c_i^{jk}$ are constant, namely the structure constants of the associated Lie algebra (see Example \ref{example:liegroupsliealgebras}).

As in the case of Lie groups, Cartan interpreted the structure functions $c_i^{jk}$ and $a_i^{\lambda j}$ as the infinitesimal structure associated with the Lie pseudogroup and posed the following integrability problem known as the \textit{realization problem} (see Section \ref{section:cartansrealizationproblem} for the precise formulation): starting from a set of functions $c_i^{jk}$ (antisymmetric in the top indices) and $a_i^{\lambda j}$, do these arise as the infinitesimal structure of a Lie pseudogroup? In the special case in which the $a_i^{\lambda j}$'s are zero and the $c_i^{jk}$'s are constant, the answer to the problem is given by Lie's well-known Third Fundamental Theorem: if the constants $c_i^{jk}$ satisfy the Jacobi identity 
\begin{equation*}
c_i^{mj} c_m^{kl} + c_i^{ml} c_m^{jk} + c_i^{mk} c_m^{lj} =0,
\end{equation*}
i.e. if they are the structure constants of a Lie algebra, then they are the structure constants of the Lie algebra of some Lie group. In the general case, Cartan identified a more intricate set of equations that play the role of the Jacobi identity (Equations (C1)-(C3) in Theorem \ref{theorem:necessaryconditions}), and gave a partial solution to the realization problem in what he called the \textit{Third Fundamental Theorem} for Lie pseudogroups: if the initial data is \textit{involutive}, then \textit{local} solutions exists in the \textit{real-analytic} category (see Theorem \ref{theorem:thirdfundamentaltheoremlocal}). The main ingredient of his proof is an analytic tool that he developed for this very purpose, a tool that has evolved into the modern day theory of Exterior Differential Systems. It is interesting to note that there have been no improvements on Cartan's results to date, namely there is no Third Fundamental Theorem in the smooth category nor one of a global nature as in the case of Lie groups. 

\subsubsection*{The Framework of Cartan Algebroids and Realizations}

The first step in modernizing Cartan's structure theory is to upgrade the local coordinate objects and equations to Lie-theoretic structures, in analogy to how Lie's structure constants gave rise to the notion of a Lie algebra. The theory of Lie groupoids and algebroids provides us with the appropriate language and tools to do this. 

In Section \ref{section:cartanalgebroidsrealizations}, we introduce the two main objects of our framework: \textit{Cartan algebroids} and their \textit{realizations}. A Cartan algebroid (Definition \ref{def:cartanalgebroid}) is a pair $(\CAlg,\Tab)$ consisting of a vector bundle $\CAlg$ over a manifold $N$ that is equipped with a Lie algebroid-like structure, but one in which the Jacobi identity fails. This failure is controlled by the second object $\Tab\subset\Hom(\CAlg,\CAlg)$, a vector bundle of fiberwise endomorphisms of $\CAlg$. A realization (Definition \ref{def:realization}) of a Cartan algebroid is a pair $(P,\Omega)$ consisting of a surjective submersion $I:P\to N$ together with a $\CAlg$-valued $1$-form $\Omega\in\Omega^1(P;\CAlg)$ that satisfies the Maurer-Cartan type equation
\begin{equation*}
d\Omega+\frac{1}{2}[\Omega,\Omega]=\Pi\wedge\Omega,
\end{equation*}
where $\Pi\in\Omega^1(P;\Tab)$ is an auxiliary $\Tab$-valued $1$-form that controls the failure of $\Omega$ to be a true Maurer-Cartan form. The structure is depicted in the following diagram:
\begin{equation*}
\begin{tikzcd}[column sep=1em,row sep=0.6em]
& \node (1-2) [] {TP}; \\
\node (1-1) [] {P}; & \\ 
& \node (2-2) [] {(\CAlg,\Tab)}; \\
\node (2-1) [] {N};
\arrow[from=1-2, to=1-1] 
\arrow[from=2-2, to=2-1]
\arrow[from=1-1, to=2-1, "I" left]
\arrow[from=1-2, to=2-2, "(\Omega{,}\Pi)" right]
\end{tikzcd}
\end{equation*} 
Any realization has an associated pseudogroup of local symmetries
\begin{equation*}
\Gamma(P,\Omega) = \{ \; \phi \in \LDiff(P) \; | \; \phi^*I=I, \; \phi^*\Omega=\Omega \; \}.
\end{equation*}
Pseudogroups that arise in this way are said to be in \textit{normal form}. Together, a Cartan algebroid and a realization encode Cartan's notion of normal form, and the realization problem becomes the problem of whether a Cartan algebroid admits a realization. 

Section \ref{section:cartanalgebroidsrealizations} will be concluded by introducing an alternative point of view on Cartan algebroids and realizations. We show that, up to \textit{gauge equivalence}, Cartan algebroids are in 1-1 correspondence with \textit{Cartan pairs}, a notion which deviates slightly from Cartan, but which is simpler to handle and closer in nature to the well-understood notion of a Lie algebroid. We believe that this alternative description will lend itself more easily to modern Lie-theoretic methods.

\subsubsection*{Proof of the Second Fundamental Theorem and Lie-Pfaffian Groupoids}

In the framework of Cartan algebroids and realizations, Cartan's Second Fundamental Theorem (Theorem \ref{theorem:secondfundamentaltheorem}) states that: any Lie pseudogroup $\Gamma$ on $M$ admits an isomorphic prolongation in normal form $\Gamma(P,\Omega)$. The essential idea of the proof is due to Cartan: starting with a Lie pseudogroup $\Gamma$ of order $k$ (the order of the defining equations), one passes to the $k$'th jet space $J^k\Gamma$ of $k$-jets of elements of $\Gamma$, which is a geometric realization of the the defining system of PDEs of $\Gamma$ (jets were formalized by Ehresmann, but the idea is already present in Cartan's work). Like any jet space, $J^k\Gamma$ carries a tautological form $\omega$, known as the \textit{Cartan form}, from which one can construct the desired realization and its induced pseudogroup in normal form (jet spaces and the Cartan form are reviewed in Appendix \ref{section:jetgroupoids}). 

While the idea of the proof is simple, the proof itself becomes rather difficult to manage when one sets to work directly with jet spaces. This problem is overcome by the following observation: the pair
\begin{equation*}
(J^k\Gamma,\omega)
\end{equation*}
has the structure of a \textit{Lie-Pfaffian groupoid}, and this structure isolates the precise ingredients that are needed in proving the Second Fundamental Theorem. A Lie-Pfaffian groupoid is a Lie groupoid that is equipped with a $1$-form with coefficients that satisfies a few basic properties (Definition \ref{def:pfaffiangroupoid}). It was introduced in \cite{Salazar2013} with precisely this purpose in mind -- to isolate the essential ingredients that are needed when working with Lie pseudogroups and related objects. The necessary background material on Lie-Pfaffian groupoids is reviewed in Appendix \ref{section:Liepfaffaingroupoidandalgebroid}. 

In Section \ref{section:thesecondfundamentaltheorem}, we follow this path and present a novel proof of the Second Fundamental Theorem. We first show that the pair $(J^k\Gamma,\omega)$ is indeed a Lie-Pfaffian groupoid (Section \ref{section:liepseudogroups}) and then construct an isomorphic prolongation of $\Gamma$ that is in normal form purely out of the data of the Lie-Pfaffian groupoid (Section \ref{section:proofofthesecondfundamentaltheorem}).

\subsubsection*{The Systatic Space and Reduction}

The reduction of ``inessential'' invariants is probably the least understood part of Cartan's work on Lie pseudogroups (see \cite{Cartan1937-1}, pp. 18-24). By studying the stabilizers of a pseudogroup in normal form, Cartan uses the structure equations to derive the set of equations 
\begin{equation*}
a^i_{\lambda j}\omega^j=0,
\end{equation*}
which he calls the \textit{systatic system}. These equations determine an integrable distribution (and hence a foliation) on the space the pseudogroup acts on, which then allows him to determine which of the invariant coordinates are essential and which are inessential, and to argue that the inessential ones can be removed, resulting in a Cartan equivalent pseudogroup that acts on a space of lower dimension.

One of the difficulties in understanding Cartan's reduction procedure is its local nature. The very notion of an invariant coordinate only makes sense locally, and the three simple examples of reduction that Cartan gives in \cite{Cartan1937-1} (pp. 23-24) show that the process of separating the essential invariants from the inessential ones requires a smart choice of a coordinate transformation. Our framework of Cartan algebroids and realizations allows us to gain a deeper understanding of the systatic system and to prove a reduction theorem which is canonical, global and coordinate-free. 

Let us give an outline of the reduction procedure, which is the subject of Section \ref{section:systaticspacesmodern}. We show that, any Cartan algebroid $(\CAlg,\Tab)$ over $N$ contains a canonical Lie algebroid $\mathcal{S}\to N$, which we call the \textit{systatic space}. This Lie algebroid, in turn, acts canonically on any realization $(P,\Omega)$ of the Cartan algebroid in the sense that it acts on the surjective submersion $I:P\to N$ and the $1$-form $\Omega$ is invariant under this action (Propositions \ref{prop:systaticspace1action} and \ref{prop:reductionbasic}):
\begin{center}
	\begin{tikzpicture}[every node/.style={align=center,text depth=0ex,text height=2ex,text width=4em}]
	\matrix (m) [matrix of math nodes,column sep=-0.6em,row sep=2em, row 1/.style={nodes={align=left}}]
	{ 
		\hspace{0.6cm} \mathcal{S} \acts & (P,\Omega) \\
		N & \\
	};
	\path[->,font=\scriptsize]
	(m-1-1) edge node[auto] {} (m-2-1)
	(m-1-2) edge node[midway, right=-17] {$I$} (m-2-1);
	\end{tikzpicture}
\end{center}
The image of this action is an integrable distribution on $P$, which, in local coordinates, coincides precisely with Cartan's systatic system (see equation above). 

Reduction is then obtained by taking the quotient of $(P,\Omega)$ by the action of $\mathcal{S}$. This, however, must be done with care. The quotient, as we will show, does not give rise to a usual pseudogroup on a manifold, but rather to a \textit{generalized pseudogroup} -- a natural extension of the notion of a pseudogroup. Noting that a locally defined diffeomorphism on $M$ can be regarded as a local bisection of the pair groupoid $M\times M\rightrightarrows M$ leads us to the definition of a generalized pseudogroup, namely a set of local bisections of a Lie groupoid $\mathcal{G}\rightrightarrows M$, not necessarily the pair groupoid, that satisfies axioms analogous to those of a pseudogroup. In particular, generalized pseudogroups arise naturally in our setting as the set of local ``solutions'' of a Lie-Pfaffian groupoid (see Definition \ref{def:pfaffiangroupoid} and Example \ref{example:generalizedpseudogroupliepfaffiangroupoid}). Generalized pseudogroups will be discussed in Appendix \ref{section:generalizedpseudogroups}.

In Theorem \ref{theorem:reduction}, we prove that, under suitable regularity conditions, the quotient of a realization $(P,\Omega)$ by the action of the systatic space $\mathcal{S}$ is a Lie-Pfaffian groupoid $(\mathcal{G}_\mathrm{red},\Omega_\mathrm{red})$, and $\Gamma(P,\Omega)$ is an isomorphic prolongation of its generalized pseudogroup of local solutions $\Gamma_\mathrm{red}$. The reduction theorem together with the Second Fundamental Theorem give us the following picture:
\begin{center}
	\begin{tikzpicture}[every node/.style={align=center,text depth=0ex,text height=2ex,text width=4em}]
	\matrix (m) [matrix of math nodes,column sep=-0.5em,row sep=0em, row 1/.style={nodes={align=left}}]
	{ 
		& \hspace{-1.2cm} \Gamma(P,\Omega) \acts \, P & \\
	    & & \hspace{-0cm} (\mathcal{G}_\mathrm{red},\Omega_\mathrm{red}) \\
	    & & & \hspace{-0.6cm} \text{\reflectbox{$\acts$}} \; \Gamma_\mathrm{red}\\
		\hspace{-0.7cm} \Gamma \acts \, M & & \hspace{0.35cm} P_\mathrm{red} \; \\
	};
	\path[->,font=\scriptsize]
	(m-1-2) edge node[auto] {} (m-4-1)
	(m-2-3) edge [transform canvas={xshift=0ex}] node[auto] {} (m-4-3)
	(m-2-3) edge [transform canvas={xshift=1ex}] node[auto] {} (m-4-3)
	(m-1-2) edge node[auto] {} (m-4-3);
	\end{tikzpicture}
\end{center}
Indeed, by the Second Fundamental Theorem, a pseudogroup $\Gamma$ admits an isomorphic prolongation in normal form $\Gamma(P,\Omega)$, which can then be reduced to a generalized pseudogroup $\Gamma_\mathrm{red}$. Hence, $\Gamma$ and its reduction $\Gamma_\mathrm{red}$ are Cartan equivalent. Intuitively, we can think of $\Gamma_\mathrm{red}$ as a ``smaller'' representative of the equivalence class, one that is closer to the true abstract object underlying the pseudogroup.

\subsubsection*{A Brief Survey of the Existing Literature}

This paper is the result of our effort to read Cartan's classical work on Lie pseudogroups and understand his original ideas as faithfully as possible. Our main sources are Cartan's papers  \cite{Cartan1904,Cartan1905} from 1904-1905, and two later papers \cite{Cartan1937-1,Cartan1937-2} of Cartan from 1937 that present the same material in a more concise (and, in our opinion, more accessible) form. Our work also builds upon the work of many mathematicians that continued in Lie and Cartan's footsteps, many of which were motivated, like us, by the challenge of ``understanding Cartan''. The first big step in modernizing Cartan's work is attributed to Charles Ehresmann. Some of Ehresmann's important contributions to this field are: the modern definition of a pseudogroup, the theory of jet spaces, and the introduction of Lie groupoids into the theory (see \cite{Libermann2007} for a historical account). Ehresmann's work marked the beginning of the ``modern era'' of Lie pseudogroups, a renewed interest in the subject that had its peak in the 1950-1970's, but which continues until this very day. A full account of the literature that appeared on this subject over the years deserves a paper of its own. We mention here a small selection of this literature, with an emphasis on papers that were particularly influential to our work (some of which will also be mentioned throughout this paper). 

The structure equations of a Lie pseudogroup have been studied from various perspectives in \cite{Libermann1954,Matsushima1955,Kuranishi1961,Rodrigues1963,Kumpera1964,Singer1965,Guillemin1966,Malgrange1972-1,Malgrange1972-2,Kamran1989,Lisle1995,Stormark2000,Olver2009}. In particular, proofs of Cartan's Second Fundamental Theorem can be found in \cite{Libermann1954,Kuranishi1961,Guillemin1966,Kamran1989,Stormark2000}, a proof of Cartan's Third Fundamental Theorem can be found in \cite{Kumpera1964}, and \cite{Stormark2000} also contains an exposition of Cartan's systatic system and reduction of inessential invariants. Throughout the years, people have also taken several new approaches to the study of Lie pseudogroups. These include: the formal theory of Lie ($F$)-groups and Lie ($F$)-algebras \cite{Kuranishi1959,Kuranishi1961}; the infinitesimal point of view of sheaves of Lie algebras of vector fields \cite{Rodrigues1962,Singer1965}; the study of Lie pseudogroups via the defining equations of their infinitesimal transformations, known as Lie equations \cite{Malgrange1972-1,Malgrange1972-2}; an approach using Milnor's infinite dimensional Lie groups \cite{Kamran2004}; and the point of view of infinite jet bundles \cite{Olver2005} (\cite{Valiquette2008} compares the infinite jet bundle approach with Cartan's theory and examines Cartan's systatic system and the reduction of inessential invariants from this point of view). 

\subsubsection*{Structure of the Paper}

The recommended order for reading this paper is as follows:
\begin{equation*}
\text{Section \ref{section:cartanalgebroidsrealizations}} \hspace{0.1cm} \Rightarrow \hspace{0.1cm} 
\text{Appendix \ref{section:jetgroupoids}  - \ref{section:Liepfaffaingroupoidandalgebroid}} \hspace{0.1cm} \Rightarrow \hspace{0.1cm} \text{Section \ref{section:thesecondfundamentaltheorem}} \hspace{0.1cm} \Rightarrow \hspace{0.1cm} 
\text{Appendix \ref{section:generalizedpseudogroups} - \ref{appendix:basicforms}} \hspace{0.1cm}  \hspace{0.1cm} \Rightarrow \hspace{0.1cm} 
\text{Section \ref{section:systaticspacesmodern}}.
\end{equation*}
In Section \ref{section:cartanalgebroidsrealizations}, we introduce the framework of Cartan algebroids and realizations, as well as the alternative point of view of Cartan pairs and realizations. Appendix \ref{section:jetgroupoids} is a review of jet groupoids and algebroids, and Apendix \ref{section:Liepfaffaingroupoidandalgebroid} is a review of the abstract point of view of Lie-Pfaffian groupoids and algebroids. In Section \ref{section:thesecondfundamentaltheorem}, we present our proof of the Second Fundamental Theorem. Appendix \ref{section:generalizedpseudogroups} introduces the notion of a generalized pseudogroup, and in Appendix \ref{appendix:basicforms} we recall the notion of a basic form in the context of Lie groupoids, which is used in the reduction theorem. In Section \ref{section:systaticspacesmodern}, we study the systatic space of a Cartan algebroid, its action on realizations, and we present the reduction theorem for pseudogroup in normal form.

\subsubsection*{Prerequisites}

We assume familiarity with the theory of Lie groupoids and algebroids. Recommended references on the subject are \cite{Crainic2011-1,Moerdijk2003,Mackenzie2005}. We also assume a basic familiarity with the notion of jets. There are innumerable reference on this subject, e.g. \cite{Saunders1989,Olver1995,Bocharov1999}. See also \cite{Yudilevich2016-2} for a concise review of all the necessary background material.

\subsubsection*{Acknowledgments} The authors would like to thank Pedro Frejlich and Ioan Marcut for fruitful discussions. The second author would like to thank Henrique Bursztyn and IMPA for the opportunity to present a mini-course on this subject. 

The first author was financially supported by the Nederlandse Organisatie voor Wetenschappelijk Onderzoek - Vici grant no. 639.033.312. The second author by the ERC Starting Grant no. 279729 as a PhD student at Utrecht University and the long term structural funding - Methusalem grant of the Flemish Government as a postdoctoral researcher at KU Leuven.

\section{Cartan Algebroids, Realizations and Pseudogroups in Normal Form}
\label{section:cartanalgebroidsrealizations}

The central objects in our modern formulation of Cartan's theory of Lie pseudogroups are \textit{Cartan algebroids}, on the infinitesimal side, and their \textit{realizations}, on the global side. In this modern language, Cartan's realization problem asks whether a given Cartan algebroid admits a realization. As a prelude, let us recall the realization problem in Cartan's language. 

\subsection{Prelude: Cartan's Realization Problem}
\label{section:cartansrealizationproblem}

Fix two integers $N\geq n\geq 0$, and denote the coordinates on $\mathbb{R}^n$ by $(x_1,...,x_n)$, the coordinates on $\mathbb{R}^N$ by $(x_1,...x_n,x_{n+1},...,x_N)$, and the projection by
\begin{equation*}
I=(I_1,...,I_n):\mathbb{R}^N \to \mathbb{R}^n,\hspace{0.5cm} I_a(x)=x_a.
\end{equation*} 
Fix a third integer $r$ such that $N\geq r\geq n$, and set $p:=N-r$. 

\begin{myproblem} [Cartan's Realization Problem]
	\label{problem:cartansrealizationproblem}
	Given functions
	\begin{equation*}
	c_i^{jk},\; a_i^{\lambda j}\in C^\infty(U), \hspace{1.5cm} (1\leq i,j,k \leq r,\; 1\leq \lambda \leq p),
	\end{equation*}
	on an open subset $U\subset \mathbb{R}^n$ such that
	\begin{enumerate}
		\item $c_i^{jk} = -c_i^{kj}$,
		\item the matrices $A^\lambda = (a_i^{\lambda j})$ are linearly independent,
	\end{enumerate}
	find a set of linearly independent $1$-forms $\omega_1,...,\omega_r\in\Omega^1(V)$ on an open subset $V\subset\mathbb{R}^N$ satisfying $I(V)=U$, with
	\begin{equation}
	\omega_1=dI_1|_V,\; ... \; ,\; \omega_n=dI_n|_V,
	\label{eqn:anchoredlocal}
	\end{equation}
	that satisfy the following property: there exists another set of $1$-forms $\pi_1,...,\pi_p\in\Omega^1(V)$	such that $\{\omega_1,..,\omega_r,\pi_1,...\pi_p\}$ is a coframe of $V$ and
	\begin{equation}
	d\omega_i + \frac{1}{2} c_i^{jk} \omega_j\wedge \omega_k =  a_i^{\lambda j} \pi_\lambda \wedge \omega_j,
	\label{eqn:structureequationslocal}
	\end{equation} 
	where $c_i^{jk}, a_i^{\lambda j}$ are viewed as functions on $V$ that are constant along the fibers of $I$. 
\end{myproblem}

An immediate consequence of \eqref{eqn:anchoredlocal} and \eqref{eqn:structureequationslocal} is that 
\begin{equation}
c_i^{jk} = 0 \; \text{ and } \; a_i^{\lambda j} = 0, \hspace{1cm} \forall \; 1\leq i \leq n. \tag{C0}
\end{equation}
We call the initial data of the realization problem, i.e. the functions $(c_i^{jk},a_i^{\lambda j})$ on the open subset $U\subset\mathbb{R}^n$ that satisfy properties 1 and 2 as well as condition $(C0)$, an \textbf{almost Cartan data}. This plays the role of the infinitesimal structure. We call a solution of the realization problem $(I_a,\omega_i)$ a \textbf{realization} of the almost Cartan data. This plays the role of the global structure. Equations \eqref{eqn:structureequationslocal} are called the \textbf{structure equations}. Cartan's realization problem asks: \textit{does an almost Cartan data admit a realization?} 

In what Cartan calls \textit{the third fundamental theorem}, he gives a partial solution to the realization problem. The first step he takes in solving the realization problem is to identify a set of necessary conditions for the existence of a realization:

\begin{mytheorem}[necessary integrable conditions]
	\label{theorem:necessaryconditions}
	If an almost Cartan data $(c_i^{jk},a_i^{\lambda j})$ on $\mathbb{R}^n$ admits a realization, then there exist functions
	\begin{equation*}
	\nu_\lambda^{jk},\; \xi_{\lambda}^{\mu j},\; \epsilon_\lambda^{\eta\mu}\in C^\infty (\mathbb{R}^n) \hspace{1.5cm} (1\leq j,k \leq r,\; 1\leq \lambda,\eta,\mu \leq p),
	\end{equation*}
	with $\nu_\lambda^{jk} = -\nu_\lambda^{kj},\; \epsilon_\lambda^{\eta\mu} = -\epsilon_\lambda^{\mu\eta}$, such that
	\begin{gather}
	a_i^{\eta m} a_m^{\mu j} - a_i^{\mu m} a_m^{\eta j} = a_i^{\lambda j} \epsilon_\lambda^{\eta \mu} \tag{C1}, \\
	c_i^{mj} c_m^{kl} + c_i^{mk} c_m^{lj} + c_i^{ml} c_m^{jk} + \Big(\frac{\partial c_i^{kl}}{\partial x_j} + \frac{\partial c_i^{lj}}{\partial x_k} + \frac{\partial c_i^{jk}}{\partial x_l}\Big) = a_i^{\lambda l} \nu_\lambda^{jk} + a_i^{\lambda k} \nu_\lambda^{lj} + a_i^{\lambda j} \nu_\lambda^{kl} \tag{C2} \\
	a_m^{\lambda j} c_i^{mk}  - a_m^{\lambda k} c_i^{mj} + a_i^{\lambda m} c_m^{jk} + \Big( \frac{\partial a^{\lambda k}_i}{\partial x_j} - \frac{\partial a^{\lambda j}_i}{\partial x_k} \Big) = a_i^{\mu k} \xi_\mu^{\lambda j} - a_i^{\mu j} \xi_\mu^{\lambda k} \tag{C3},
	\end{gather}
	where terms that involve $\partial/\partial x_j$, with $j>n$, are understood to be zero. 
\end{mytheorem}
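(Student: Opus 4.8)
The plan is to follow Cartan's classical strategy of \emph{prolonging the structure equations}: differentiate the structure equations \eqref{eqn:structureequationslocal} exteriorly and extract the constraints forced by $d^2=0$. Since $\{\omega_1,\dots,\omega_r,\pi_1,\dots,\pi_p\}$ is a coframe of $V$, the $2$-forms $d\pi_\lambda$ admit a unique expansion in this coframe. This produces functions $\tilde\nu_\lambda^{jk},\tilde\xi_\lambda^{\mu j},\tilde\epsilon_\lambda^{\eta\mu}\in C^\infty(V)$, antisymmetric in $(j,k)$ respectively $(\eta,\mu)$, with
\[
d\pi_\lambda = \tfrac12\,\tilde\nu_\lambda^{jk}\,\omega_j\wedge\omega_k + \tilde\xi_\lambda^{\mu j}\,\pi_\mu\wedge\omega_j + \tfrac12\,\tilde\epsilon_\lambda^{\eta\mu}\,\pi_\eta\wedge\pi_\mu .
\]
At this stage these are merely functions on $V$; the substance of the theorem is that one may take them to be pullbacks of functions on the base.

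Next I would apply $d$ to \eqref{eqn:structureequationslocal}. Because $c_i^{jk}$ and $a_i^{\lambda j}$ are constant along the fibres of $I$ and $\omega_a=dI_a$ for $a\le n$ by \eqref{eqn:anchoredlocal}, their differentials involve only base directions: $dc_i^{jk}=\tfrac{\partial c_i^{jk}}{\partial x_l}\,\omega_l$ and $da_i^{\lambda j}=\tfrac{\partial a_i^{\lambda j}}{\partial x_l}\,\omega_l$, the sums running over $l\le n$ (equivalently over all $l$ under the convention that $\partial/\partial x_l=0$ for $l>n$). Substituting the structure equations for each $d\omega_j$ and the expansion above for each $d\pi_\lambda$, the identity $d(d\omega_i)=0$ becomes a $3$-form on $V$ that vanishes identically. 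I would then decompose it along the coframe into its $\omega\wedge\omega\wedge\omega$, $\pi\wedge\omega\wedge\omega$ and $\pi\wedge\pi\wedge\omega$ parts, noting that no $\pi\wedge\pi\wedge\pi$ part occurs since $d\omega_i$ is at most linear in the $\pi$'s. Setting each part to zero and antisymmetrising the coefficients yields precisely: the $\pi\wedge\pi\wedge\omega$ part gives (C1), the $\omega\wedge\omega\wedge\omega$ part gives (C2), and the $\pi\wedge\omega\wedge\omega$ part gives (C3), now with $\tilde\nu,\tilde\xi,\tilde\epsilon$ in place of $\nu,\xi,\epsilon$.

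It remains to replace $\tilde\nu,\tilde\xi,\tilde\epsilon$ by genuine functions on the base. The key observation is that the \emph{left-hand sides} of (C1)--(C3) involve only $c,a$ and their base-direction derivatives, hence are constant along the fibres of $I$, i.e. pullbacks from $U$. For (C1) this already forces $\tilde\epsilon$ to descend: since the matrices $A^\lambda=(a_i^{\lambda j})$ are pointwise linearly independent, the $\tilde\epsilon_\lambda^{\eta\mu}$ are uniquely determined through (C1) by the fibrewise-constant commutators $a_i^{\eta m}a_m^{\mu j}-a_i^{\mu m}a_m^{\eta j}$, and are therefore themselves constant along the fibres. For (C2) and (C3) the coefficients need not be uniquely determined, so instead I would choose (locally) a section $s\colon U\to V$ of the submersion $I$ and set $\nu:=s^*\tilde\nu$, $\xi:=s^*\tilde\xi$, $\epsilon:=s^*\tilde\epsilon$. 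Since each of (C1)--(C3) holds pointwise on $V$ and its left-hand side depends only on the image point $I(v)\in U$, evaluation at $v=s(x)$ shows that $\nu,\xi,\epsilon$ satisfy (C1)--(C3) as identities on $U$, which completes the argument.

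I expect the main obstacle to be organisational rather than conceptual: the sign- and index-bookkeeping in expanding $d(d\omega_i)=0$ and sorting the many terms into the three coframe components is lengthy and error-prone, and one must be careful that the cross-terms between the $\omega\wedge\omega$ and $\pi\wedge\omega$ parts of each $d\omega_j$ land in the correct component. By contrast, the descent of the structure functions to the base is light, resting only on fibre-constancy of the data together with the existence of local sections of $I$.
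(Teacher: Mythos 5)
Your proposal is correct and follows essentially the same route as the paper's proof: expand $d\pi_\lambda$ in the coframe, differentiate the structure equations \eqref{eqn:structureequationslocal}, sort the resulting $3$-form identity into its $\pi\wedge\pi\wedge\omega$, $\omega\wedge\omega\wedge\omega$ and $\pi\wedge\omega\wedge\omega$ components to obtain (C1)--(C3) on $V$, and then restrict along a section of $I$ to descend to $U$. The only (harmless) embellishments are your observation that $\tilde\epsilon$ is forced to be fibre-constant by the linear independence of the $A^\lambda$, and the explicit remark that the left-hand sides are pullbacks from the base.
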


\begin{proof}
	Let $(I_a,\omega_i)$ be a realization, and choose $\pi_1,...,\pi_p$ such that $\{\omega_1,..,\omega_r,\pi_1,...,\pi_p\}$ is a coframe of $\mathbb{R}^N$ and such that \eqref{eqn:structureequationslocal} is satisfied. Decompose $d\pi_\lambda$ in terms of the coframe:
	\begin{equation*}
	d\pi_\lambda = \frac{1}{2} \nu_\lambda^{jk} \omega_j\wedge \omega_k + \xi_\lambda^{\mu j} \pi_\mu\wedge \omega_j + \frac{1}{2} \epsilon_\lambda^{\eta \mu} \pi_\eta\wedge \pi_\mu,
	\end{equation*}
	where $\nu_\lambda^{jk},\;\xi_\lambda^{\mu j},\;\epsilon_\lambda^{\eta \mu}\in C^\infty(\mathbb{R}^N)$ are the coefficients. Then differentiate \eqref{eqn:structureequationslocal} and replace all appearances of $d\pi_\lambda$ by this decomposition. This gives three sets of equations. Finally, $(C1)-(C3)$, which are equations on $\mathbb{R}^n$, are obtained by restricting these equations to the slice $\{x_{n+1}=0,...,x_N=0\}$ (or to any other section of the projection $I$). 
\end{proof}

We call an almost-Cartan data that satisfies $(C1)-(C3)$, for some set of functions $\nu_\lambda^{jk},\; \xi_{\lambda}^{\mu j},\; \epsilon_\lambda^{\eta\mu}$, a \textbf{Cartan data}. The latter theorem thus says that if an almost Cartan data admits a realization, then it is a Cartan data. 

Any realization induces a pseudogroup, namely the pseudogroup of its local symmetries:
\begin{equation*}
\Gamma(I_a,\omega_i) := \{\; \phi\in\LDiff(V)\;|\; \phi^*I_a=I_a,\; \phi^* \omega_i = \omega_i \;\}.
\end{equation*}
In general, there is no guarantee that this pseudogroup consists of more than just the identity diffeomorphism of $V$ and its restrictions to open subsets. A pseudogroup $\Gamma$ on an open subset $V$ of a Euclidean space is said to be in \textbf{normal form} if it is the pseudogroup of local symmetries of a realization $(I_a,\omega_i)$ on $V$ and if its orbits are the fibers of $I$. Theorem \ref{theorem:secondfundamentaltheoremconverse} will give sufficient conditions for the orbits of $\Gamma(I_a,\omega_i)$ to coincide with the fibers of $I$, and hence to be in normal form.

As a preparation for our coordinate-free definitions of these structures that underly Cartan's realization problem,  let us look at the following simple and familiar example of the realization problem: 

\begin{myexample}[Lie groups and Lie algebras]
	\label{example:liegroupsliealgebras}
	Let us consider the realization problem in the case $n=0$ and $p=0$. An almost Cartan data is simply a set of constants $c_i^{jk}$ ($1\leq i,j,k \leq r$) that are anti-symmetric in the upper indices. Fixing an $r$-dimensional vector space $\mathfrak{g}$ and a basis $X^1,...,X^r$, this data can be encoded in an anti-symmetric bilinear operation on $\mathfrak{g}$:
	\begin{equation*}
	[\cdot,\cdot]: \mathfrak{g} \times \mathfrak{g} \to \mathfrak{g},\hspace{1cm} [X^j,X^k] = c_i^{jk} X^i.
	\end{equation*} 
	Conditions (C1) and (C3) are vacuous while condition (C2) reduces to the well-known Jacobi identity 
	\begin{equation*}
	[[X,Y],Z] + [[Y,Z],X] + [[Z,X],Y] = 0,\hspace{1cm} \forall\;X,Y,Z\in\mathfrak{g}.
	\end{equation*}
	Hence, a Cartan data in this case is the same thing as a Lie algebra. 
	
	A Lie group integrating $\mathfrak{g}$ induces a solution to the realization problem as follows: any Lie group $G$ with Lie algebra $\mathfrak{g}$ comes with a canonical $\mathfrak{g}$-valued 1-form, the \textit{Maurer-Cartan form}:
	\begin{equation*}
	\Omega=\Omega_{\text{MC}}\in \Omega^1(G;\mathfrak{g}),\hspace{1cm} \Omega_g = (dL_{g^{-1}})_g : T_gG\to T_eG = \mathfrak{g}.
	\end{equation*}
	It satisfies two main properties: 
	\begin{enumerate}
		\item[1)] the Maurer-Cartan equation
	\begin{equation*}
	d\Omega + \frac{1}{2}[\Omega,\Omega] = 0,
	\end{equation*}
		\item[2)] it is pointwise an isomorphism, i.e. $\Omega_g:T_gG\xrightarrow{\simeq} \mathfrak{g}$ is a linear isomorphism for all $g\in G$. 
	\end{enumerate}	
	Writing $\Omega = \omega_i X^i$, for some uniquely defined $\omega_i\in\Omega^1(G)$, the Maurer-Cartan equation becomes
	\begin{equation*}
	d\omega_i + \frac{1}{2} c_i^{jk} \omega_j\wedge\omega_k=0,
	\end{equation*}
	and the second property is equivalent to requiring that $\{\omega_1,...,\omega_r\}$ be a coframe of $G$. Thus, we obtain a realization (where the projection $I$ is simply the map from $G$ to a point). We remark that any realization on an open subset $V\subset \mathbb{R}^r$ induces a local Lie group structure on $V$ (see e.g. \cite{Greub1973}, pp. 368-369). Hence, in this simple case, the realization problem is closely related to the problem of integrating a Lie algebra to a Lie group.
	
	Finally, the pseudogroup induced by the realization,
	\begin{equation*}
	\Gamma(G,\Omega) = \{ \; \phi\in\LDiff(G)\;|\; \phi^*\Omega=\Omega \; \},
	\end{equation*}
	is precisely the pseudogroup generated by left translations. It is in normal form, since its single orbit is $G$ itself.  
\end{myexample}

\subsection{Structure Equations (Realizations)}
\label{section:realizationsstructureequations}

We begin with Cartan's very basic idea: pseudogroups realized as the set of local symmetries of a system of functions and 1-forms. Globally, we start with a surjective submersion $I:P\to N$, a vector bundle $\CAlg\to N$ and a $\CAlg$-valued 1-form $\Omega\in\Omega^1(P;I^*\CAlg)$. Such data induces a pseudogroup on $P$, 
\begin{equation}
\Gamma(P,\Omega):= \{ \; \phi\in\LDiff(P)\;|\; \phi^*I=I,\; \phi^*\Omega=\Omega  \; \}.
\label{eqn:realizationinducedpseudogroup}
\end{equation}
Note that the first condition ensures that the second makes sense. One would like to understand the first order consequences of the defining equations of this pseudogroup (e.g. ``$\phi^*(d\Omega) = d\Omega$''). This becomes easier when $\CAlg$ is endowed with extra structure. 

\subsubsection{The Maurer-Cartan Expression}

\begin{mydef}
	\label{def:preliealgebroid}
	\index{Almost Lie algebroid}
	An \textbf{almost Lie algebroid} over a manifold $N$ is a vector bundle $\CAlg\to N$ equipped with a vector bundle map $\rho:\CAlg\to TN$ (``the anchor'') and a bilinear antisymmetric map $[\cdot,\cdot]:\Gamma(\CAlg)\times\Gamma(\CAlg)\to\Gamma(\CAlg)$ (``the bracket'') satisfying the Leibniz identity
	\begin{equation*}
	[\alpha,f\beta] = f[\alpha,\beta] + L_{\rho(\alpha)}(f)\beta,\hspace{1cm} \forall\;\alpha,\beta\in \Gamma(\CAlg),\; f\in C^\infty(N),
	\end{equation*}
	and 
	\begin{equation*}
	\rho([\alpha,\beta]) = [\rho(\alpha),\rho(\beta)],\hspace{1cm} \forall\;\alpha,\beta\in \Gamma(\CAlg).
	\end{equation*}
	An almost Lie algebroid $\CAlg$ is \textbf{transitive} if $\rho:\CAlg\to TN$ is surjective. 
\end{mydef}

\begin{myexample}
	The best known example of an almost Lie algebroid is a Lie algebroid: an almost Lie algebroid whose bracket satisfies the Jacobi identity. 
\end{myexample}

\begin{mydef}
	Given an almost Lie algebroid $\CAlg$ over $N$ and a surjective submersion $I:P\to N$, a 1-form $\Omega\in\Omega^1(P;I^*\CAlg)$ is called \textbf{anchored} if
	\begin{equation*}
	\rho\circ \Omega = dI.
	\end{equation*}
\end{mydef}

The anchored condition on $\Omega$ ensures that, although the expression $d\Omega$ does not make sense globally, the Maurer-Cartan type expression ``$d\Omega + \frac{1}{2}[\Omega,\Omega]$'' does. The construction is the same as for Lie algebroids: let $I:P\to N$ be a surjective submersion, $\CAlg\to N$ an almost Lie algebroid and $\nabla:\mathfrak{X}(N)\times \Gamma(\CAlg)\to \Gamma(\CAlg)$ a connection on $\CAlg$. The connection induces a de Rham-type operator 
\begin{equation*}
d_\nabla:\Omega^*(P;I^*\CAlg) \to \Omega^{*+1}(P;I^*\CAlg)
\end{equation*}
on the space of $\CAlg$-valued forms defined by the usual formula
\begin{equation*}
\begin{split}
(d_\nabla \Omega) (X_0,...,X_p) &= \sum_{i=0}^p (-1)^i (I^*\nabla)_{X_i} (\Omega(X_0,...,\hat{X}_i,...,X_p)) \\ 
& \;\;\;\;\;+ \sum_{0\leq i<j \leq p} (-1)^{i+j} \Omega([X_i,X_j],X_0,...,\hat{X}_i,...,\hat{X}_j,...,X_p),
\end{split}
\end{equation*}
where $\Omega\in\Omega^p(P;I^*\CAlg)$ and $X_0,...,X_p\in\mathfrak{X}(P)$. Note that $(d_\nabla)^2=0$ if and only if $\nabla$ is flat. Next, we also have the $\CAlg$-torsion of $\nabla$, which is the tensor $[\cdot,\cdot]_\nabla \in \Gamma(\Hom(\Lambda^2\CAlg,\CAlg))$ that is defined at the level of sections by $[\alpha,\beta]_\nabla = [\alpha,\beta] -  \nabla_{\rho(\alpha)}\beta + \nabla_{\rho(\beta)}\alpha$ for all $\alpha,\beta\in\Gamma(\CAlg)$. The $\mathcal{C}$-torsion, in turn, induces a graded bracket,
\begin{equation}
\label{eqn:torsionpairing}
[\cdot,\cdot]_\nabla:\Omega^p(P;I^*\CAlg)\times \Omega^q(P;I^*\CAlg) \to \Omega^{p+q}(P;I^*\CAlg),
\end{equation} 
which is defined by the following wedge-like formula:
\begin{equation*}
\begin{split}
&[\Omega,\Omega']_\nabla(X_1,...,X_{p+q}) = \\ & \hspace{2cm} \sum_{\sigma\in S_{p,q}} \text{sgn}(\sigma) \; [ \Omega(X_{\sigma(1)},...,X_{\sigma(p)}) ,\Omega'(X_{\sigma(p+1)},...,X_{\sigma(p+q)})]_\nabla,
\end{split}
\end{equation*}
where $S_{p,q}$ is the group of $(p,q)$-shuffles. 

\begin{myprop}
	\label{prop:maurercartan2form}
	Let $\mathcal{C}$ be an almost Lie algebroid over $N$, let $I:P\to N$ be a surjective submersion and let $\Omega\in\Omega^1(P;I^*\CAlg)$. If $\Omega$ is anchored, then the Maurer-Cartan 2-form
	\begin{equation*}
	\text{MC}_\Omega := d_\nabla\Omega + \frac{1}{2}[\Omega,\Omega]_\nabla \in \Omega^2(P;I^*\CAlg)
	\end{equation*}
	is independent of the choice of connection.
\end{myprop}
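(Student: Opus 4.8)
The plan is to compare $\mathrm{MC}_\Omega$ for two different connections and to show that the difference vanishes, the whole point being that the anchored condition $\rho\circ\Omega = dI$ forces a cancellation between the two connection-dependent pieces. Fix two connections $\nabla$ and $\nabla'$ on $\CAlg$. Their difference is $C^\infty(N)$-linear in both arguments, so it is a tensor $A\in\Omega^1(N;\Hom(\CAlg,\CAlg))$ with $\nabla'_X\beta = \nabla_X\beta + A_X\beta$ for all $X\in\mathfrak{X}(N)$ and $\beta\in\Gamma(\CAlg)$. Since $\mathrm{MC}_\Omega = d_\nabla\Omega + \tfrac12[\Omega,\Omega]_\nabla$ splits as a sum of two pieces, each depending on $\nabla$, it suffices to compute how each piece changes under $\nabla\rightsquigarrow\nabla'$ and to verify that the two variations are opposite.

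First I would treat $d_\nabla\Omega$. The operator $d_\nabla$ is built from the pullback connection $I^*\nabla$, which changes by the pulled-back tensor, $(I^*\nabla')_X s = (I^*\nabla)_X s + A_{dI(X)}s$ for $s\in\Gamma(I^*\CAlg)$ and $X\in\mathfrak{X}(P)$ (this holds on pullback sections and extends to all sections by tensoriality). Only the first sum in the defining formula for $d_\nabla$ involves $\nabla$ — the bracket term $\Omega([X_0,X_1])$ does not — so for a $1$-form the variation is
\begin{equation*}
(d_{\nabla'}\Omega - d_\nabla\Omega)(X_0,X_1) = A_{dI(X_0)}\big(\Omega(X_1)\big) - A_{dI(X_1)}\big(\Omega(X_0)\big).
\end{equation*}

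Next I would treat the torsion term. At the level of sections the $\CAlg$-torsion changes tensorially, $[u,v]_{\nabla'} = [u,v]_\nabla - A_{\rho(u)}v + A_{\rho(v)}u$, so this is a genuine variation of the tensor $[\cdot,\cdot]_\nabla\in\Gamma(\Hom(\Lambda^2\CAlg,\CAlg))$, which then propagates pointwise through the wedge-like formula. For a $1$-form the $(1,1)$-shuffle sum collapses, using antisymmetry of the bracket, to $\tfrac12[\Omega,\Omega]_\nabla(X_0,X_1) = [\Omega(X_0),\Omega(X_1)]_\nabla$, and hence
\begin{equation*}
\big(\tfrac12[\Omega,\Omega]_{\nabla'} - \tfrac12[\Omega,\Omega]_\nabla\big)(X_0,X_1) = -A_{\rho(\Omega(X_0))}\Omega(X_1) + A_{\rho(\Omega(X_1))}\Omega(X_0).
\end{equation*}

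The decisive step, and the only place the hypothesis is used, is now to invoke the anchored condition $\rho\circ\Omega = dI$: substituting $\rho(\Omega(X_i)) = dI(X_i)$ turns the torsion variation into precisely the negative of the $d_\nabla\Omega$ variation, so the two cancel and $\mathrm{MC}_\Omega$ is independent of $\nabla$. I do not expect a genuine obstacle once both variations are written down; the points to get right are the bookkeeping of signs and the distinction between where each connection enters — the pullback connection $I^*\nabla$ in $d_\nabla$ versus the anchor-twisted connection in the torsion. It is worth remarking that without anchoredness the two variations would be controlled by the unrelated contractions $A_{dI(X)}$ and $A_{\rho(\Omega(X))}$ and would in general fail to cancel, which explains why the anchored hypothesis is exactly what makes the Maurer--Cartan $2$-form well defined.
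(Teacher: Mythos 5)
Your proposal is correct and follows essentially the same route as the paper: compare the two connection-dependent pieces under a change of connection, express both variations via the difference tensor, and use the anchored condition $\rho\circ\Omega=dI$ to make them cancel. You are in fact slightly more careful than the paper's displayed computation about tracking the factor $\tfrac12$ through the shuffle sum.
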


\begin{proof}
	Let $\nabla$ and $\nabla'$ be two connections on $\CAlg$ and set $\eta:=\nabla-\nabla'\in \Omega^1(N;\Hom(\CAlg,\CAlg))$. Let $p\in P$ and $X,Y\in T_pP$. The sum of the following two equations vanishes if $\Omega$ is anchored:
	\begin{equation*}
	\begin{split}
	(d_\nabla\Omega - d_{\nabla'}\Omega)(X,Y) &= \eta(dI(X))(\Omega(Y)) - \eta(dI(Y))(\Omega(X)) \\
	([\Omega,\Omega]_\nabla - [\Omega,\Omega]_{\nabla'})(X,Y) &= - \eta(\rho\circ\Omega(X))(\Omega(Y)) + \eta(\rho\circ\Omega(Y))(\Omega(X)) \qedhere
	\end{split}
	\end{equation*}
\end{proof}

\begin{myremark}
	From now on we suppress $\nabla$ from the notation and write $d\Omega+\frac{1}{2}[\Omega,\Omega]$ when $\Omega$ is anchored.
\end{myremark}

Intuitively, $\text{MC}_\Omega$ measures the failure of $\Omega:TP\to \CAlg$ to be a morphism of almost Lie algebroids. For example, when $P=N$ and $I$ is the identity, 
\begin{equation*}
\text{MC}_\Omega(X,Y) = - \Omega([X,Y]) + [\Omega(X),\Omega(Y)],\;\;\;\; \forall X,Y\in\mathfrak{X}(N).
\end{equation*}
When $\Omega$ is pointwise surjective, we have the following useful formula:

\begin{mylemma}
	\label{lemma:maurercartanexpression}
	Let $\mathcal{C}$ be an almost Lie algebroid over $N$, let $I:P\to N$ be a surjective submersion and let $\Omega\in\Omega^1(P;I^*\CAlg)$ be anchored and pointwise surjective. Given any $\alpha \in \Gamma(\CAlg)$, there exists $X_\alpha\in \mathfrak{X}(P)$ such that
	\begin{equation*}
	\Omega(X_\alpha) = I^*\alpha.
	\end{equation*}
	Given a pair $\alpha,\beta \in \Gamma(\CAlg)$, and $X_\alpha,X_\beta\in \mathfrak{X}(P)$ as above, 
	\begin{equation*}
	\text{MC}_\Omega (X_\alpha,X_\beta) = -\Omega([X_\alpha,X_\beta]) + I^*[\alpha,\beta].
	\end{equation*}
\end{mylemma}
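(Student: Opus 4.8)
The plan is to fix once and for all an auxiliary connection $\nabla$ on $\CAlg$ — which is harmless since $\text{MC}_\Omega$ does not depend on this choice by Proposition \ref{prop:maurercartan2form} — then to establish the existence of the lift $X_\alpha$, and finally to expand $\text{MC}_\Omega(X_\alpha,X_\beta)$ term by term and watch the covariant-derivative terms cancel.

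For the existence of $X_\alpha$: since $\Omega$ is pointwise surjective, the bundle map $\Omega:TP\to I^*\CAlg$ has constant rank, so $\ker\Omega\subset TP$ is a subbundle. I would choose a complement $H$ with $TP=\ker\Omega\oplus H$, so that $\Omega|_H:H\to I^*\CAlg$ is a vector bundle isomorphism, and set $X_\alpha:=(\Omega|_H)^{-1}(I^*\alpha)$. This is a smooth vector field with $\Omega(X_\alpha)=I^*\alpha$, as required.

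For the identity, I would use the defining formula for $d_\nabla$ applied to two vector fields, together with the fact that on $1$-forms $\tfrac{1}{2}[\Omega,\Omega]_\nabla(X,Y)=[\Omega(X),\Omega(Y)]_\nabla$ (the two $(1,1)$-shuffles collapse into one because the $\CAlg$-torsion bracket is antisymmetric). Substituting $\Omega(X_\alpha)=I^*\alpha$ and $\Omega(X_\beta)=I^*\beta$ gives
\begin{equation*}
\text{MC}_\Omega(X_\alpha,X_\beta) = (I^*\nabla)_{X_\alpha}(I^*\beta) - (I^*\nabla)_{X_\beta}(I^*\alpha) - \Omega([X_\alpha,X_\beta]) + [I^*\alpha,I^*\beta]_\nabla.
\end{equation*}
The two inputs I would then feed in are: (i) the anchored condition $\rho\circ\Omega=dI$, which yields $dI(X_\alpha)=\rho(I^*\alpha)=I^*\rho(\alpha)$ and hence, by the defining property of the pullback connection on pullback sections, $(I^*\nabla)_{X_\alpha}(I^*\beta)=I^*(\nabla_{\rho(\alpha)}\beta)$ (and symmetrically for the $\beta$-term); and (ii) the definition of the $\CAlg$-torsion as a tensor, $[\alpha,\beta]_\nabla=[\alpha,\beta]-\nabla_{\rho(\alpha)}\beta+\nabla_{\rho(\beta)}\alpha$, which, being $C^\infty(N)$-bilinear, pulls back pointwise to $[I^*\alpha,I^*\beta]_\nabla=I^*([\alpha,\beta]_\nabla)$.

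Plugging these in, the four covariant-derivative terms cancel in pairs, leaving exactly $\text{MC}_\Omega(X_\alpha,X_\beta)=-\Omega([X_\alpha,X_\beta])+I^*[\alpha,\beta]$. The computation is essentially bookkeeping; the step I expect to be the main obstacle is the careful conversion of $dI(X_\alpha)$ into $I^*\rho(\alpha)$ and its passage through the pullback connection, since a priori $dI(X_\alpha)$ is only a section of $I^*TN$ rather than the pullback of a genuine vector field on $N$. The anchored hypothesis is precisely what guarantees it equals the pullback of $\rho(\alpha)\in\mathfrak{X}(N)$, and this is exactly what makes the cancellation go through.
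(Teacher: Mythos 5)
Your proposal is correct and follows essentially the same route as the paper: existence of $X_\alpha$ via a splitting of $0\to\ker\Omega\to TP\xrightarrow{\Omega} I^*\CAlg\to 0$, then a direct expansion of $d_\nabla\Omega+\tfrac{1}{2}[\Omega,\Omega]_\nabla$ on $(X_\alpha,X_\beta)$ in which the anchored condition converts $(I^*\nabla)_{X_\alpha}(I^*\beta)$ into $I^*(\nabla_{\rho(\alpha)}\beta)$ so that the covariant-derivative terms cancel against the torsion correction. The paper's proof is just a terser version of the same computation, and your identification of the conversion $dI(X_\alpha)=I^*\rho(\alpha)$ as the crux is exactly where the anchored hypothesis enters there as well.
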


\begin{proof}
	An $X_\alpha$ as in the statement can be obtained by choosing a splitting of the short exact sequence of vector bundles $0\to \text{ker}(\Omega)\to TP \xrightarrow{\Omega} I^*\CAlg\to 0$. By the anchored condition:
	\begin{equation*}
	\begin{split}
	(d\Omega + \frac{1}{2}[\Omega,\Omega])(X_\alpha,X_\beta) &= \cancel{(I^*\nabla)_{X_\alpha} (\Omega(X_\beta))} - \cancel{(I^*\nabla)_{X_\beta} (\Omega(X_\alpha))} - \Omega([X_\alpha,X_\beta]) \\
	& \;\;\; + I^*[\alpha,\beta] - \cancel{I^*(\nabla_{\rho(\alpha)}\beta)} + \cancel{I^*(\nabla_{\rho(\beta)}\alpha)}.\qedhere
	\end{split}
	\end{equation*}
\end{proof}

\subsubsection{Almost Cartan Algebroids}

In order to make sense of the structure equations \eqref{eqn:structureequationslocal} globally, one needs a little more than an almost Lie algebroid -- one needs to encode the space where the $\pi$'s live, and this is the role of the symbol space $\Tab$: 

\begin{mydef}
	\label{def:precartanalgebroid}
	\index{almost Cartan algebroid}
	An \textbf{almost Cartan algebroid} over a manifold $N$ is a pair $(\CAlg,\Tab)$, which consists of: 
	\begin{enumerate}
		\item a transitive almost Lie algebroid $\CAlg\to N$,
		\item a vector subbundle $\Tab\subset \Hom(\CAlg,\CAlg)$ (called the \textit{symbol space}),
	\end{enumerate}
	 such that $T(\CAlg)\subset \Ker\rho$ for all $T\in\Tab$ (i.e. $\Tab\subset \Hom(\CAlg,\Ker\rho)$), where $\rho$ is the anchor of $\CAlg$. 
\end{mydef} 

\begin{myremark}
	The vector bundle $\Tab$ is a tableau bundle in the sense of Definition \ref{def:tableaubundle}, and we can talk about its prolongations and associated Spencer cohomology. These play an important role in the theory, and in particular in questions of formal and real-analytic integrability (see e.g. Theorem \ref{theorem:secondfundamentaltheoremconverse}).
\end{myremark}

\begin{myexample}
	\label{example:preliealgebroidcartan}
	Locally, we are back to Cartan: almost Cartan algebroids locally correspond to the notion of an almost Cartan data (Section \ref{section:cartansrealizationproblem}) and they are encoded by functions $c_i^{jk}$ and $a_i^{\lambda j}$. Adapting to the notation of Section \ref{section:cartansrealizationproblem}: 
	\begin{itemize}
		\item $N = U\subset \mathbb{R}^n$, an open subset.
		\item $\CAlg\to N$ is the trivial vector bundle of rank $r$ (where $r\geq n$) with trivializing frame $\{e^1,...,e^r\}$ and endowed with the almost Lie algebroid structure determined by
		\begin{equation*}
		\rho(e^i) = \frac{\partial}{\partial x_i} \;\text{ for }\; 1\leq i \leq n,\;\;\;\; \rho(e^i)=0 \;\text{ for }\; i> n,
		\end{equation*}
		and
		\begin{equation*}
		[e^j,e^k] = c^{jk}_i e^i.
		\end{equation*}
		The fact that $\rho$ is a Lie algebra homomorphism is equivalent to the condition $c_i^{jk}=0$ for $i\leq n$ (first part of condition $(C0)$).
		\item $\Tab\to N$ is the trivial vector bundle of rank $p$ with trivializing frame denoted by $\{t^1,...,t^p\}$. Each element of the frame acts on $\CAlg$ by
		\begin{equation*}
		t^\lambda(e^j) =  a_i^{\lambda j} e^i,
		\end{equation*}
		and, extending by linearity, we obtain a map $\Tab \to \Hom(\CAlg,\CAlg)$. The injectivity of this map is equivalent to Cartan's condition that, at each point of $\mathbb{R}^n$, the matrices $A^\lambda = (a_i^{\lambda j})$ are linearly independent. The condition $\Tab\subset \Hom(\CAlg,\Ker\rho)$ is equivalent to the condition $a^i_{j\lambda}=0$ for $i\leq n$  (second part of condition $(C0)$).  \qedhere
	\end{itemize}
\end{myexample}

\subsubsection{Isomorphism and Gauge Equivalence}
\label{section:gaugeequivalence}

There is an obvious notion of isomoprhism of almost Cartan algebroids. First note that, given two vector bundles $\mathcal{C}$ and $\mathcal{C}'$ over $N$ and a vector subbundle $\Tab\subset \Hom(\mathcal{C},\mathcal{C})$, a vector bundle isomorphism $\psi:\mathcal{C}\to\mathcal{C}'$ maps $\Tab$ into $\Hom(\mathcal{C}',\mathcal{C}')$ by conjugation, i.e.
\begin{equation*}
\psi(\Tab) := \{ \psi\circ T\circ \psi^{-1}\;|\; S\in\Tab \}\subset \Hom(\mathcal{C}',\mathcal{C}').
\end{equation*}

\begin{mydef}
	Two almost Cartan algebroids $(\CAlg,\Tab)$ and $(\CAlg',\Tab')$ over $N$	are \textbf{isomorphic} if there exists a vector bundle isomorphism $\psi:\mathcal{C}\to\mathcal{C}'$ such that $\psi([\alpha,\beta]) = [\psi(\alpha),\psi(\beta)]$ for all $\alpha,\beta\in\Gamma(\mathcal{C})$, $\rho\circ\psi = \rho'$ and $\psi(\Tab)=\Tab'$.
\end{mydef}

\noindent However, this notion of an isomorphism turns out to be too strong, and the slightly weaker notion of gauge equivalence turns out to be the relevant one in the theory. The main evidence for this will come in Section \ref{section:thesecondfundamentaltheorem}, where we will see that the construction of a Cartan algebroid and a realization out of a Lie pseudogroup, both in the general algorithm as well as in examples, depends on a choice, and different choices lead to gauge equivalent structures. We begin now by defining the notion of a gauge equivalence of almost Cartan algebroids, and later show that realizations and Cartan algebroids behave well under such transformations. 

Given an almost Cartan algebroid $(\CAlg,\Tab)$, a choice of a vector bundle map $\eta:\CAlg\to\Tab$ induces a new bracket $[\cdot,\cdot]^\eta$ on $\CAlg$, 
\begin{equation*}
[\alpha,\beta]^\eta := [\alpha,\beta] + \eta(\alpha)(\beta)-\eta(\beta)(\alpha),\hspace{1cm} \forall\; \alpha,\beta\in\Gamma(\CAlg).
\end{equation*}
We denote by $\CAlg^\eta$ the vector bundle $\CAlg$ equipped with the new bracket $[\cdot,\cdot]^\eta$ but with the same anchor $\rho$.

\begin{mylemma}
	\label{lemma:gaugetransformation}
	Let $(\CAlg,\Tab)$ be an almost Cartan algebroid over $N$ and let $\eta\in\Gamma(\Hom(\CAlg,\Tab))$. Then $(\CAlg^\eta,\Tab)$ is an almost Cartan algebroid over $N$.
\end{mylemma}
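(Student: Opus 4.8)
The plan is to verify directly that $(\CAlg^\eta,\Tab)$ satisfies each clause of Definition \ref{def:precartanalgebroid}. The key preliminary observation is that $\CAlg^\eta$ has the same underlying vector bundle and, crucially, the same anchor $\rho$ as $\CAlg$; only the bracket is modified. Consequently $\Hom(\CAlg^\eta,\CAlg^\eta)=\Hom(\CAlg,\CAlg)$ as vector bundles and $\Ker\rho$ is unchanged, so $\Tab\subset\Hom(\CAlg,\Ker\rho)$ remains a valid symbol space with no further work. Likewise, transitivity is immediate since $\rho$ is unchanged and was already surjective. All that genuinely requires checking is that the new bracket $[\cdot,\cdot]^\eta$ equips $\CAlg^\eta$ with the structure of an almost Lie algebroid.

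I would then verify the algebraic properties of $[\cdot,\cdot]^\eta$ in turn. The bracket is $\mathbb{R}$-bilinear since every term in its definition is, and it is antisymmetric because the correction $\eta(\alpha)(\beta)-\eta(\beta)(\alpha)$ is visibly antisymmetric in $(\alpha,\beta)$ while $[\cdot,\cdot]$ is antisymmetric by hypothesis. For the Leibniz identity I would use two facts about the correction terms: first, since $\eta(\alpha)\in\Gamma(\Tab)$ is a bundle endomorphism, it is $C^\infty(N)$-linear, so $\eta(\alpha)(f\beta)=f\,\eta(\alpha)(\beta)$; second, since $\eta$ is a vector bundle map, $\eta(f\beta)=f\,\eta(\beta)$, whence $\eta(f\beta)(\alpha)=f\,\eta(\beta)(\alpha)$. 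Feeding these into the original Leibniz rule, the scalar $f$ factors out of every correction term while the derivative term $L_{\rho(\alpha)}(f)\beta$ is untouched (the anchor being unchanged), yielding exactly $[\alpha,f\beta]^\eta=f[\alpha,\beta]^\eta+L_{\rho(\alpha)}(f)\beta$.

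The one genuinely nontrivial point, and hence the main obstacle, is the anchor compatibility $\rho([\alpha,\beta]^\eta)=[\rho(\alpha),\rho(\beta)]$. Applying $\rho$ to the definition of $[\cdot,\cdot]^\eta$ gives $\rho([\alpha,\beta])+\rho(\eta(\alpha)(\beta))-\rho(\eta(\beta)(\alpha))$. Here is precisely where the symbol condition $\Tab\subset\Hom(\CAlg,\Ker\rho)$ from Definition \ref{def:precartanalgebroid} becomes essential: because $\eta$ takes values in $\Tab$, the endomorphisms $\eta(\alpha)$ and $\eta(\beta)$ map $\CAlg$ into $\Ker\rho$, so both correction terms are annihilated by $\rho$. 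The identity then reduces to the anchor compatibility of the original bracket. Thus the defining constraint on the symbol space of an almost Cartan algebroid is exactly what guarantees that gauge transformations $\eta\colon\CAlg\to\Tab$ preserve the structure; I would emphasize this as the conceptual content of the lemma.
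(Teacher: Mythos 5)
Your proof is correct and follows essentially the same route as the paper: the only substantive checks are that $[\cdot,\cdot]^\eta$ still satisfies the Leibniz identity (clear, since the correction terms are tensorial) and that the anchor compatibility survives, which holds precisely because $\Tab\subset\Hom(\CAlg,\Ker\rho)$ kills the correction terms under $\rho$. You have merely spelled out in more detail the steps the paper dismisses as immediate; the identification of the symbol condition as the crux is exactly the paper's argument.
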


\begin{proof}
	We only need to verify that $\CAlg^\eta$ is an almost Lie algebroid. The Leibniz identity is clear, and
	\begin{equation*}
	\begin{split}
	\rho[\alpha,\beta]^\eta = [\rho(\alpha),\rho(\beta)] + \cancel{\rho(\eta(\alpha)(\beta))} - \cancel{\rho(\eta(\beta)(\alpha))},\hspace{1cm} \forall\;\alpha,\beta\in\Gamma(\CAlg),
	\end{split}
	\end{equation*}
	because $\Tab\subset \Hom(\CAlg,\Ker\rho)$. 
\end{proof}

\begin{mydef}
	\label{def:gaugeequivalence}
	Two almost Cartan algebroids $(\CAlg,\Tab)$ and $(\CAlg',\Tab')$ over $N$ are \textbf{gauge equivalent} if there exists a vector bundle map $\eta: \CAlg\to \Tab$ s.t. $(\CAlg',\Tab')$ is isomorphic to $(\CAlg^\eta,\Tab)$.
\end{mydef}

Clearly, gauge equivalence defines an equivalence relation on the set of almost Cartan algebroids. 

\begin{myremark}
	Locally, the notion of gauge equivalence already appears in \cite{Kumpera1964}.
\end{myremark}

\subsubsection{Realizations and Structure Equations}

Given an almost Cartan algebroid $(\CAlg,\Tab)$ and a surjective submersion $I:P\to N$, the vector bundle $\Tab\subset \Hom(\CAlg,\CAlg)$ allow us to define a second wedge-like operation 
\begin{equation*}
\wedge : \Omega^p(P;I^*\Tab) \times \Omega^q(P;I^*\CAlg) \to \Omega^{p+q}(P;I^*\CAlg)
\end{equation*}
defined by:
\begin{equation*}
\begin{split}
& (\eta\wedge\phi)(X_1,...,X_{p+q}) = \sum_{\sigma\in S_{p,q}} \text{sgn}(\sigma) \; \eta(X_{\sigma(1)},...,X_{\sigma(p)}) (\phi(X_{\sigma(p+1)},...,X_{\sigma(p+q)})).
\end{split}
\end{equation*}

\begin{mydef}
	\label{def:realization} \index{realization!of an almost Cartan algebroid}
	A \textbf{realization} of an almost Cartan algebroid $(\CAlg,\Tab)$ is a pair $(P,\Omega)$ consisting of a surjective submersion $I:P\to N$ and a pointwise-surjective anchored 1-form 
	\begin{equation*}
	\Omega\in\Omega^1(P;I^*\CAlg)
	\end{equation*}
	such that, for some 1-form $\Pi\in\Omega^1(P;I^*\Tab)$,
	\begin{equation}
	d\Omega+\frac{1}{2}[\Omega,\Omega] = \Pi\wedge\Omega \label{eqn:structureequations}
	\end{equation}
	and
	\begin{equation}
	\label{eqn:vbisomorphism}
	(\Omega,\Pi):TP\xrightarrow{\simeq} I^*(\CAlg\oplus\Tab) 
	\end{equation}
	is vector bundle isomorphism. Equation \eqref{eqn:structureequations} is called the \textbf{structure equation}.
\end{mydef}

As we saw, a realization $(P,\Omega)$ induces a pseudogroup $\Gamma(P,\Omega)$ of its local symmetries (see \eqref{eqn:realizationinducedpseudogroup}). We say that:

\begin{mydef}
	\label{def:normalform}
	A pseudogroup $\Gamma$ on $P$ is in \textbf{normal form} \index{pseudogroup!in normal form} if there exists a realization $(P,\Omega)$ of an almost Cartan algebroid $(\CAlg,\Tab)$ over $N$ such that $\Gamma=\Gamma(P,\Omega)$ (see \eqref{eqn:realizationinducedpseudogroup}) and such that the orbits of $\Gamma$ coincide with the fibers of $I:P\to N$. 
\end{mydef}

The following theorem, due to Cartan, gives a criteria for when a pseudogroup induced by a realization is in normal form. Its proof uses the theory of exterior differential systems and the Cartan-K\"ahler Theorem, which is only valid in the real-analytic category and hence the assumption of real-analyticity. There is no known version of the theorem in the smooth category. In this theorem we see the first appearance of the Spencer cohomology of $\Tab$ and, in particular, the notion of involutivity (Definition \ref{def:acyclicinvolutivity}). 

\begin{mytheorem}[Cartan \cite{Cartan1904,Cartan1937-1}, see also Kumpera \cite{Kumpera1964}]
	Let $(P,\Omega)$ be a real-analytic realization of a real-analytic almost Cartan algebroid $(\CAlg,\Tab)$ (i.e., all manifolds and maps are real-analytic). If the tableau bundle $\Tab$ is involutive, then the orbits of $\Gamma(P,\Omega)$ coincide with the fibers of $I:P\to N$, and hence $\Gamma(P,\Omega)$ is in normal form.
	\label{theorem:secondfundamentaltheoremconverse}
\end{mytheorem}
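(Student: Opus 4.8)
The plan is to realise the elements of $\Gamma(P,\Omega)$ as integral manifolds of a Pfaffian exterior differential system on the fibre product $P\times_N P$, and then to run the Cartan--K\"ahler theorem, with involutivity of the tableau bundle $\Tab$ supplying precisely the hypothesis that this system is involutive. First note that, since every $\phi\in\Gamma(P,\Omega)$ satisfies $I\circ\phi=I$, each orbit of $\Gamma(P,\Omega)$ is automatically contained in a fibre of $I$. It therefore suffices to prove the reverse inclusion, and for this it is enough to establish local transitivity: for every $x_0\in P$ and every $y_0$ in the fibre through $x_0$ sufficiently close to it, there is a local symmetry $\phi\in\Gamma(P,\Omega)$ with $\phi(x_0)=y_0$. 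This makes every orbit open in its fibre, hence (being an equivalence class, so also closed) a union of connected components of the fibre; the fibres being connected, orbits then coincide with fibres, which is exactly the assertion that $\Gamma(P,\Omega)$ is in normal form.

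The core construction is the following. Let $p_1,p_2\colon P\times_N P\to P$ be the projections and $J:=I\circ p_1=I\circ p_2$, so that $p_1^*\Omega$ and $p_2^*\Omega$ are both sections of $J^*\CAlg$ and their difference $\theta:=p_1^*\Omega-p_2^*\Omega\in\Omega^1(P\times_N P;J^*\CAlg)$ is well defined; likewise $\varpi:=p_1^*\Pi-p_2^*\Pi$ is a $J^*\Tab$-valued $1$-form. The anchored condition $\rho\circ\Omega=dI$ gives $\rho\circ\theta=dJ-dJ=0$, so $\theta$ takes values in the subbundle $\Ker\rho\subset\CAlg$ (of rank $r-n$), and its $r-n$ independent components generate a Pfaffian system $\mathcal I$. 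The graph of a local symmetry is exactly an integral manifold $\Sigma$ of $\mathcal I$ of dimension $\dim P=r+p$ on which $p_1$ restricts to a local diffeomorphism: given such a $\Sigma$, the map $\phi:=p_2\circ(p_1|_\Sigma)^{-1}$ satisfies $I\circ\phi=I$ (as $\Sigma\subset P\times_N P$) and $\phi^*\Omega=\Omega$ (as $\theta|_\Sigma=0$), so $\phi\in\Gamma(P,\Omega)$, and $\phi(x_0)=y_0$ whenever $(x_0,y_0)\in\Sigma$.

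It remains to produce such $\Sigma$ through the points $(x_0,y_0)$ near the diagonal, and here the structure equation and involutivity enter. Differentiating $\theta$ and substituting the structure equation \eqref{eqn:structureequations} into both factors, a direct computation shows that all bracket terms and all $\Pi\wedge\Omega$ cross-terms are congruent to zero modulo $\mathcal I$ (they acquire a factor of $\theta$), leaving
\[
d\theta \equiv \varpi\wedge p_1^*\Omega \pmod{\mathcal I}.
\]
Two consequences follow. There is no pure $p_1^*\Omega\wedge p_1^*\Omega$ term on the right, so the torsion of $\mathcal I$ vanishes and can be absorbed; integral elements transverse to the fibres of $p_1$ thus exist at every point and form an affine space. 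Moreover the only principal $2$-forms are the $\varpi\wedge p_1^*\Omega$, so the symbol of $\mathcal I$ at such an integral element is canonically identified with the tableau bundle $\Tab$ (this is the very reason $\Tab$ was singled out as a tableau bundle), and its prolongations $\Tab^{(1)},\Tab^{(2)},\dots$ are governed by the Spencer complex of $\Tab$. Hence involutivity of $\Tab$ in the sense of Definition \ref{def:acyclicinvolutivity} is precisely the statement that $\mathcal I$ is an involutive Pfaffian system with $\{p_1^*\omega_i,p_1^*\pi_\lambda\}$ as independent variables. With torsion absorbed and $\mathcal I$ involutive, the Cartan--K\"ahler theorem (applicable since all data are real-analytic) yields, through each such $(x_0,y_0)$, a real-analytic integral manifold of dimension $r+p$ whose independent variables are $p_1^*\omega_i,p_1^*\pi_\lambda$; such a $\Sigma$ is automatically a graph over a neighbourhood of $x_0$ in $P$ and passes through $(x_0,y_0)$, producing the desired $\phi$ with $\phi(x_0)=y_0$.

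I expect the main obstacle to be the symbol-and-prolongation bookkeeping of the third paragraph: verifying rigorously that the symbol of $\mathcal I$ is the tableau $\Tab$ itself (not merely a bundle of the same rank), and that the EDS-theoretic notion of involutivity matches the Spencer-cohomological involutivity of Definition \ref{def:acyclicinvolutivity}, including the choice of independent variables for which the Cartan characters of $\mathcal I$ are computed by the prolongations $\Tab^{(k)}$ and the Cartan test is satisfied. By comparison, the torsion computation, the identification of graphs with integral manifolds, and the reduction to local transitivity are routine.
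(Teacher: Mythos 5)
The paper does not actually prove this theorem: it is stated with an attribution to Cartan and Kumpera, the text only recording that ``its proof uses the theory of exterior differential systems and the Cartan--K\"ahler Theorem,'' and the only EDS argument sketched in the paper is the analogous one for the Third Fundamental Theorem (Theorem \ref{theorem:thirdfundamentaltheoremlocal}), run on the anchored frame bundle $\mathrm{Fr}(P)$ rather than on $P\times_N P$. Your strategy is exactly the one the cited sources use, and the key computations check out: the first $n$ components of $\theta$ vanish identically because $\Omega$ is anchored, the Maurer--Cartan combination gives $d\theta\equiv\varpi\wedge p_1^*\Omega\pmod{\mathcal I}$ with no torsion (take $\varpi=0$ to see integral elements exist at every point), the space of transverse integral elements over a point is an affine space modeled on $\Tab^{(1)}$ (the $\pi_\mu$-components of $\varpi$ are forced to vanish by the linear independence of the matrices $A^\lambda$, matching Proposition \ref{prop:freedominchoosingpi}), and the EDS tableau is $\Tab$ extended by zero to the $\Tab$-directions, so that the equivalence between Cartan's involutivity of $\mathcal I$ and the Spencer-cohomological involutivity of Definition \ref{def:acyclicinvolutivity} is exactly Serre's theorem as recorded in Appendix \ref{section:tableaubundles}. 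The obstacle you single out is indeed where the work lies, but it is standard.

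There is, however, one genuine gap in your reduction step: you deduce that orbits are open and closed in the fibers of $I$ and then conclude they equal the fibers because ``the fibres are connected.'' Connectedness of the fibers is not a hypothesis, and a locally defined symmetry can in principle be needed to join points in different components of a fiber. The gap disappears if you simply run Cartan--K\"ahler at an \emph{arbitrary} point $(x_0,y_0)\in P\times_N P$ rather than only near the diagonal: the torsion vanishes and the tableau is involutive at every point of $P\times_N P$, so the theorem produces an integral manifold, hence a $\phi\in\Gamma(P,\Omega)$ with $\phi(x_0)=y_0$, for every pair $(x_0,y_0)$ in the same fiber. This gives orbits $=$ fibers directly, with no connectedness or openness argument at all. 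Two smaller points you should make explicit: that $p_2|_\Sigma$ is also an immersion (so $\phi$ is a local diffeomorphism -- this follows because a vector $(X,0)\in T\Sigma$ forces $\Omega(X)=0$ and then $\Pi(X)=0$, hence $X=0$), and that Cartan--K\"ahler requires the integral elements to be ordinary, which is precisely what involutivity plus the constant-rank setting supplies.
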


\begin{myexample}
	\label{example:realizationcartan}
	Locally, a realization of an almost Cartan algebroid corresponds to Cartan's notion of a realization (see Section \ref{section:cartansrealizationproblem}). Continuing from Example \ref{example:preliealgebroidcartan}:
	\begin{itemize}
		\item $P=V\subset \mathbb{R}^N$ is an open subset with coordinates $(x_1,...,x_N)$, and $I:P\to N$ is the projection onto the first $n$ coordinates,
		\begin{equation*}
		I=(I_1,...,I_n):\mathbb{R}^N\to \mathbb{R}^n,\;\;\;\;\; I_a(x) = x_a.
		\end{equation*}
		\item $\Omega$ and $\Pi$ can be decomposed as
		\begin{equation*}
		\Omega = \omega_i \ I^*e^i ,\;\;\;\;\; \Pi= \pi_\lambda \ I^*t^\lambda,
		\end{equation*}
		with $\omega_i,\pi_\lambda\in\Omega^1(P)$. The anchored condition on $\Omega$ is equivalent to 
		\begin{equation*}
		\omega_1=dx_1,...,\omega_n=dx_n.
		\end{equation*}
		Equation \eqref{eqn:structureequations} becomes
		\begin{equation}
		\label{eqn:structureequationsframe}
		d\omega_i + \frac{1}{2} c_i^{jk}\;\omega_j\wedge\omega_k = a_i^{\lambda j} \; \pi_\lambda\wedge \omega_j,
		\end{equation}
		where $c_i^{jk}$ and $a_i^{\lambda j}$ are functions on $\mathbb{R}^n$ viewed as functions on $\mathbb{R}^N$ that are constant along the fibers of $I$. Condition \eqref{eqn:vbisomorphism} is equivalent to $\{\omega_1,...,\omega_r,\pi_1,...,\pi_p\}$ being a coframe. 
		\item The induced pseudogroup on $\mathbb{R}^N$ is
		\begin{equation}
		\begin{split}
		\Gamma(P,\Omega)= \{\; \phi\in\LDiff(\mathbb{R}^N) \;|\; \phi^*I_a=I_a,\;\phi^*\omega_i=\omega_i \;\}.
		\end{split}
		\end{equation}
	\end{itemize}
\end{myexample}

As we mentioned above, realizations of almost Cartan algebroids behave well under gauge equivalence:

\begin{myprop}
	\label{prop:realizationgauge}
	A realization $(P,\Omega)$ of an almost Cartan algebroid $(\CAlg,\Tab)$ serves as a realization of any gauge equivalent one $(\CAlg^\eta,\Tab)$, where $\eta\in\Gamma(\Hom(\CAlg,\Tab))$ (see Definition \ref{def:gaugeequivalence}). Moreover, if $\Pi\in\Omega^1(P;I^*\Tab)$ is a choice for the realization of $(\CAlg,\Tab)$ as in Definition \ref{def:realization}, then $\Pi^\eta\in\Omega^1(P;I^*\Tab)$ defined by
	\begin{equation*}
	\Pi^\eta(X) = \Pi(X) + (I^*\eta)(\Omega(X)),\hspace{1cm} \forall \; X\in\mathfrak{X}(P),
	\end{equation*}
	is a choice for the realization of $(\CAlg^\eta,\Tab)$. 
\end{myprop}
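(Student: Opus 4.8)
The plan is to verify directly that the pair $(P,\Omega)$, together with the auxiliary form $\Pi^\eta$, satisfies all the requirements of Definition \ref{def:realization} for the gauge-transformed almost Cartan algebroid $(\CAlg^\eta,\Tab)$ — which is itself an almost Cartan algebroid by Lemma \ref{lemma:gaugetransformation}. Two of the requirements are immediate and carry no content: since $\CAlg^\eta$ has the \emph{same} underlying vector bundle and the \emph{same} anchor $\rho$ as $\CAlg$, the form $\Omega$ is still anchored ($\rho\circ\Omega=dI$) and still pointwise surjective, neither condition referring to the bracket. Likewise $\Pi^\eta=\Pi+(I^*\eta)\circ\Omega$ genuinely lies in $\Omega^1(P;I^*\Tab)$, because $\eta$ takes values in $\Tab$. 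So the real work is the structure equation and the isomorphism condition.

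The heart of the proof is the new structure equation $d\Omega+\tfrac12[\Omega,\Omega]^\eta=\Pi^\eta\wedge\Omega$, which I would establish by comparing the two Maurer--Cartan $2$-forms. Write $\text{MC}_\Omega$ for the form of Proposition \ref{prop:maurercartan2form} attached to $\CAlg$ and $\text{MC}_\Omega^\eta$ for the one attached to $\CAlg^\eta$. Fixing any connection $\nabla$ on the common bundle $\CAlg=\CAlg^\eta$, both are computed from the same $d_\nabla\Omega$, so their difference is purely bracket-theoretic:
\[
\text{MC}_\Omega^\eta-\text{MC}_\Omega=\tfrac12\bigl([\Omega,\Omega]^\eta_\nabla-[\Omega,\Omega]_\nabla\bigr).
\]
The key observation is that the $\CAlg^\eta$-torsion of $\nabla$ exceeds the $\CAlg$-torsion by the genuine ($C^\infty$-bilinear) tensor $S(\alpha,\beta):=\eta(\alpha)(\beta)-\eta(\beta)(\alpha)$, directly from the definition of $[\cdot,\cdot]^\eta$. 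Feeding this into the shuffle formula for the graded bracket — using $[\Omega,\Omega]_\nabla(X,Y)=2[\Omega(X),\Omega(Y)]_\nabla$ and the antisymmetry of the torsion — collapses the difference to the pointwise expression (with $I^*\eta$ understood)
\[
\tfrac12\bigl([\Omega,\Omega]^\eta_\nabla-[\Omega,\Omega]_\nabla\bigr)(X,Y)=\eta(\Omega(X))(\Omega(Y))-\eta(\Omega(Y))(\Omega(X)),
\]
which is precisely $\bigl((I^*\eta)\circ\Omega\bigr)\wedge\Omega$ evaluated on $(X,Y)$. Adding the original structure equation $\text{MC}_\Omega=\Pi\wedge\Omega$ then yields $\text{MC}_\Omega^\eta=\Pi\wedge\Omega+\bigl((I^*\eta)\circ\Omega\bigr)\wedge\Omega=\Pi^\eta\wedge\Omega$, as required. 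Working with the connection-fixed torsion tensor (rather than invoking Lemma \ref{lemma:maurercartanexpression} on pointwise sections) is what keeps this clean: the difference of torsions is tensorial, so the computation is purely pointwise and I never have to check separately whether the test vectors span $TP$ or lie in $\ker\Omega$.

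Finally, for the isomorphism condition \eqref{eqn:vbisomorphism}, I would observe that $(\Omega,\Pi^\eta)=\Phi\circ(\Omega,\Pi)$, where $\Phi\colon I^*(\CAlg\oplus\Tab)\to I^*(\CAlg\oplus\Tab)$ is the unipotent (``upper-triangular'') bundle map $(a,t)\mapsto\bigl(a,\,t+(I^*\eta)(a)\bigr)$; since $\Phi$ is a vector bundle isomorphism, with inverse $(a,t)\mapsto\bigl(a,\,t-(I^*\eta)(a)\bigr)$, and $(\Omega,\Pi)$ is an isomorphism by hypothesis, so is $(\Omega,\Pi^\eta)$. This completes the verification for the $\eta$-transform; for a general gauge-equivalent $(\CAlg',\Tab')$ — which by Definition \ref{def:gaugeequivalence} is merely \emph{isomorphic} to some $(\CAlg^\eta,\Tab)$ — one then transports $(P,\Omega)$ through that isomorphism, which is routine. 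I expect the only real obstacle to be bookkeeping in the structure-equation step: correctly matching the $\tfrac12$ factor, the shuffle signs, and the placement of the $I^*$'s when passing between the torsion-tensor computation and the $\Tab$-valued wedge. The anchored, surjectivity, and isomorphism conditions are essentially formal by comparison.
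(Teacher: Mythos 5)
Your proof is correct, but it reaches the structure equation by a different route than the paper. The paper verifies $d\Omega+\tfrac12[\Omega,\Omega]^\eta=\Pi^\eta\wedge\Omega$ by evaluating both sides on the spanning family of vector fields $X_\alpha,X_S$ determined by $(\Omega,\Pi)^{-1}$ (the ``dual'' point of view, via Lemma \ref{lemma:maurercartanexpression}), whereas you compute the difference $\text{MC}^\eta_\Omega-\text{MC}_\Omega$ globally as a tensor identity: the two Maurer--Cartan forms share the same $d_\nabla\Omega$, the two torsions differ by the $C^\infty$-bilinear tensor $(\alpha,\beta)\mapsto\eta(\alpha)(\beta)-\eta(\beta)(\alpha)$, and feeding this through the shuffle formula gives exactly $\bigl((I^*\eta)\circ\Omega\bigr)\wedge\Omega$. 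Your version avoids choosing test vectors and any appeal to the frame $X_\alpha,X_S$, so the structure-equation step is a clean pointwise computation; the paper's version is less self-contained at this step but is uniform with the surrounding arguments (Lemma \ref{lemma:realizationsecondcomponentcanonical}, Theorem \ref{theorem:realizationimpliescartanalgebroid}, and the systatic-space material), all of which lean on the same adapted vector fields. For the isomorphism condition \eqref{eqn:vbisomorphism} the two arguments are the same observation in dual guises: your unipotent automorphism $(a,t)\mapsto(a,t+(I^*\eta)(a))$ of $I^*(\CAlg\oplus\Tab)$ is precisely what the paper expresses by exhibiting the preimage fields $X^\eta_\alpha=X_\alpha-X_{\eta(\alpha)}$, $X^\eta_S=X_S$. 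Your closing remark about transporting through an isomorphism for a general gauge-equivalent $(\CAlg',\Tab')$ is harmless but not needed, since the statement only concerns the $\eta$-transforms $(\CAlg^\eta,\Tab)$ themselves.
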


\begin{proof}
	Given $\alpha\in\Gamma(\CAlg)$ and $S\in\Gamma(\Tab)$, we write $X_\alpha,X_S\in\mathfrak{X}(P)$ for the unique vector fields that satisfy $(\Omega,\Pi)(X_\alpha)=I^*\alpha$ and $(\Omega,\Pi)(X_S)=I^*S$. One now easily checks that
	\begin{equation*}
	d\Omega + \frac{1}{2}[\Omega,\Omega]^\eta = \Pi^\eta\wedge\Omega
	\end{equation*}
	is satisfied by applying both sides of the equation on all pairs of the type $(X_\alpha,X_{\alpha'})$, $(X_\alpha,X_S)$, $(X_S,X_{S'})$. The formula for $\Pi^\eta$ implies that the vector fields $X^\eta_\alpha:=X_\alpha-X_{\eta(\alpha)}$ and $X^\eta_S:=X_S\in\mathfrak{X}(P)$ satisfy $(\Omega,\Pi^\eta)(X^\eta_\alpha)=I^*\alpha$ and $(\Omega,\Pi^\eta)(X^\eta_S)=I^*S$, from which we deduce that $(\Omega,\Pi^\eta):TP\to I^*(\CAlg\oplus\Tab)$ is an isomorphism. 
\end{proof}

\subsubsection{A Dual Point of View on Realizations} It is useful to keep in mind the following ``dual'' point of view of the notion of a realization, in which information is retained in the inverse of the pair $(\Omega,\Pi)$ rather than in $(\Omega,\Pi)$ itself.

Let $(P,\Omega)$ be a realization of an almost Cartan algebroid $(\CAlg,\Tab)$ over $N$. Given a choice of $\Pi$ as in Definition \ref{def:realization}, the inverse of the isomorphism \eqref{eqn:vbisomorphism} is the map
\begin{equation}
(\Omega,\Pi)^{-1}:I^*(\CAlg\oplus\Tab)\xrightarrow{\simeq} TP.
\label{eqn:infinitesimalactionofcartanalgebroid}
\end{equation}
Intuitively, one should view this map as an ``infinitesimal action'' of the object $\CAlg\oplus\Tab$ on the surjective submersion $I:P\to N$. This ``action map'' can be decomposed as the sum of the two vector bundle maps
\begin{equation*}
\Psi_{\CAlg,\Pi}:I^*\CAlg\to TP \hspace{1cm} \text{and} \hspace{1cm} \Psi_{\Tab,\Pi}:I^*\Tab\to TP.
\end{equation*}
We write
\begin{equation}
\label{eqn:realizationinversemap}
\begin{split}
&X_\alpha := (\Omega,\Pi)^{-1}(I^*\alpha)\in\mathfrak{X}(P), \hspace{1cm} \forall\;\alpha\in\Gamma(\CAlg) \\
&X_S = (\Omega,\Pi)^{-1}(I^*S)\in\mathfrak{X}(P), \hspace{1cm} \forall\;S\in\Gamma(\Tab).
\end{split}
\end{equation}
Thus, such vector fields are characterized by the conditions
\begin{equation*}
\begin{split}
&\Omega(X_\alpha) = I^*\alpha,\hspace{1cm}\Pi(X_\alpha) = 0, \\
&\Omega(X_S) = 0,\hspace{1.45cm} \Pi(X_S) = I^*S. \\
\end{split}
\end{equation*}
They should be thought of as the fundamental vector fields of the ``infinitesimal action'', and they provide the ``dual'' point of view. 

\begin{mylemma}
	\label{lemma:actionproperties}
	\label{lemma:structureequationsaction}
	Let $(P,\Omega)$ be a realization of an almost Cartan algebroid $(\CAlg,\Tab)$ and fix a choice of $\Pi$. Then,
	\begin{equation*}
	\begin{split}
	& \Omega([X_\alpha,X_{\alpha'}]) = I^*[\alpha,\alpha'], \\
	& \Omega([X_\alpha,X_S]) = I^*S(\alpha), \\
	& \Omega([X_S,X_{S'}]) = 0,
	\end{split}
	\end{equation*}
	for all $\alpha,\alpha'\in\Gamma(\CAlg)$ and $S,S'\in\Gamma(\Tab)$. In particular, $\Ker\Omega\subset TP$ is an involutive distribution. 
\end{mylemma}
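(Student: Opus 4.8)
The plan is to read off all three identities from a single master formula: the Maurer--Cartan expression $\text{MC}_\Omega = d\Omega + \tfrac{1}{2}[\Omega,\Omega]$ can be evaluated in two ways, once via Lemma \ref{lemma:maurercartanexpression} and once via the structure equation \eqref{eqn:structureequations}, and comparing the two outputs on the relevant pairs of fundamental vector fields yields the claims. Throughout I will use the defining properties $\Omega(X_\alpha)=I^*\alpha$, $\Pi(X_\alpha)=0$, $\Omega(X_S)=0$, $\Pi(X_S)=I^*S$, together with the fact that for $1$-forms the second wedge operation reads $(\Pi\wedge\Omega)(X,Y)=\Pi(X)(\Omega(Y))-\Pi(Y)(\Omega(X))$.

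First I would observe that Lemma \ref{lemma:maurercartanexpression} applies to any pair of vector fields of the form $X_\gamma$ with $\Omega(X_\gamma)=I^*\gamma$, and in particular to the symbol fields $X_S$, since $\Omega(X_S)=0=I^*0$; that is, $X_S$ is a legitimate choice of $X_0$. This is the one slightly delicate point and the crux of the whole argument: recognizing that the symbol vector fields are lifts of the zero section lets me feed all three pairs $(X_\alpha,X_{\alpha'})$, $(X_\alpha,X_S)$, $(X_S,X_{S'})$ into the same lemma, obtaining
\begin{align*}
\text{MC}_\Omega(X_\alpha,X_{\alpha'}) &= -\Omega([X_\alpha,X_{\alpha'}]) + I^*[\alpha,\alpha'], \\
\text{MC}_\Omega(X_\alpha,X_S) &= -\Omega([X_\alpha,X_S]), \\
\text{MC}_\Omega(X_S,X_{S'}) &= -\Omega([X_S,X_{S'}]),
\end{align*}
where the last two right-hand sides use $I^*[\alpha,0]=0$ and $I^*[0,0]=0$.

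Next I would evaluate the same three quantities using the structure equation $\text{MC}_\Omega=\Pi\wedge\Omega$. On $(X_\alpha,X_{\alpha'})$ both $\Pi(X_\alpha)$ and $\Pi(X_{\alpha'})$ vanish, so $(\Pi\wedge\Omega)(X_\alpha,X_{\alpha'})=0$; on $(X_\alpha,X_S)$ the only surviving term is $-\Pi(X_S)(\Omega(X_\alpha))=-(I^*S)(I^*\alpha)=-I^*S(\alpha)$; and on $(X_S,X_{S'})$ both $\Omega(X_S)$ and $\Omega(X_{S'})$ vanish, so the wedge is zero. Matching these against the three expressions above immediately gives $\Omega([X_\alpha,X_{\alpha'}])=I^*[\alpha,\alpha']$, $\Omega([X_\alpha,X_S])=I^*S(\alpha)$, and $\Omega([X_S,X_{S'}])=0$.

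Finally, for involutivity of $\Ker\Omega$, I would note that since $(\Omega,\Pi)$ is a vector bundle isomorphism onto $I^*(\CAlg\oplus\Tab)$, the distribution $\Ker\Omega$ is a subbundle locally framed by the fields $X_{S}$ as $S$ runs over a local frame of $\Tab$. The third identity shows that $\Omega$ annihilates the bracket of any two such frame fields, and since $\Omega$ is $C^\infty(P)$-linear, a routine Leibniz-rule computation extends this to arbitrary sections of $\Ker\Omega$ (the derivative terms are multiples of $\Omega(X_{S_i})=0$). Hence $[\Gamma(\Ker\Omega),\Gamma(\Ker\Omega)]\subset\Gamma(\Ker\Omega)$, and involutivity follows from the Frobenius theorem. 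The only genuine subtlety, as noted, is the identification $X_S=X_0$ that brings the symbol fields within reach of Lemma \ref{lemma:maurercartanexpression}; everything else is bookkeeping with the two expressions for $\text{MC}_\Omega$.
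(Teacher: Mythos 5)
Your proof is correct and is exactly the argument the paper intends: its own proof consists of the single line ``follows directly from the structure equation together with Lemma \ref{lemma:maurercartanexpression},'' and your write-up simply fills in the details, including the key observation that $X_S$ qualifies as a lift of the zero section so that Lemma \ref{lemma:maurercartanexpression} applies to all three pairs. No gaps.
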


\begin{proof}
	Follows directly from the structure equation \eqref{eqn:structureequations} together with lemma \ref{lemma:maurercartanexpression}.
\end{proof}

The fact that $\Ker\Omega$ is an involutive distribution is one first consequence of the structure equations. Another important consequence is: 

\begin{mylemma}
	\label{lemma:realizationsecondcomponentcanonical}
	Let $(P,\Omega)$ be a realization of an almost Cartan algebroid $(\CAlg,\Tab)$. The map
	\begin{equation}
	\label{eqn:canonicalinclusionisotropyalgebroid}
	\Psi_\Tab = \Psi_{\Tab,\Pi}:I^*\Tab \to  TP
	\end{equation}
	is independent of the choice of $\Pi$. Thus, there is a canonical isomorphism
	\begin{equation*}
	\Psi_\Tab:I^*\Tab\xrightarrow{\simeq} \Ker\Omega.
	\end{equation*}
\end{mylemma}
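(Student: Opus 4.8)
The plan is to prove both assertions by comparing two admissible choices of $\Pi$ and extracting the decisive constraint from the structure equation. Suppose $\Pi,\Pi'\in\Omega^1(P;I^*\Tab)$ both satisfy the requirements of Definition \ref{def:realization} for the same realization $(P,\Omega)$, and set $\theta:=\Pi-\Pi'\in\Omega^1(P;I^*\Tab)$. Since the left-hand side of the structure equation \eqref{eqn:structureequations} depends only on $\Omega$ (and the almost Lie algebroid structure), subtracting the two instances of \eqref{eqn:structureequations} yields the single identity $\theta\wedge\Omega=0$. This is the relation I would exploit.

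The key step, and the main obstacle, is to upgrade $\theta\wedge\Omega=0$ into the pointwise statement that $\theta$ annihilates $\Ker\Omega$. Unwinding the wedge operation defined before Definition \ref{def:realization}, the vanishing of $\theta\wedge\Omega$ reads $\theta(X)(\Omega(Y))=\theta(Y)(\Omega(X))$ for all $X,Y\in\mathfrak{X}(P)$. Fix $p\in P$ and take $X\in\Ker\Omega_p$, so $\Omega(X)=0$; then $\theta(X)(\Omega(Y))=0$ for every $Y$. Here I use that $\Omega$ is pointwise surjective: as $Y$ ranges over $T_pP$, the value $\Omega(Y)$ sweeps out all of $\CAlg_{I(p)}$, so the endomorphism $\theta(X)\in\Tab_{I(p)}\subset\Hom(\CAlg,\CAlg)$ kills every vector and hence $\theta(X)=0$. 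Thus $\theta|_{\Ker\Omega}=0$.

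With this in hand, independence of $\Psi_\Tab$ is immediate. Writing $X_S$ and $X'_S$ for the vector fields associated to $S\in\Gamma(\Tab)$ via $\Pi$ and $\Pi'$ respectively (see \eqref{eqn:realizationinversemap}), both lie in $\Ker\Omega$ because $\Omega(X_S)=\Omega(X'_S)=0$. Evaluating $\Pi$ on $X'_S$ gives $\Pi(X'_S)=\Pi'(X'_S)+\theta(X'_S)=I^*S+0=I^*S$, using $X'_S\in\Ker\Omega$ and the previous step. Hence $(\Omega,\Pi)(X'_S)=(0,I^*S)=(\Omega,\Pi)(X_S)$, and since $(\Omega,\Pi)$ is a vector bundle isomorphism we conclude $X_S=X'_S$. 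This proves $\Psi_{\Tab,\Pi}=\Psi_{\Tab,\Pi'}$, so the map $\Psi_\Tab$ in \eqref{eqn:canonicalinclusionisotropyalgebroid} is canonical.

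Finally, to identify $\Psi_\Tab$ as an isomorphism onto $\Ker\Omega$, I would argue directly from the isomorphism \eqref{eqn:vbisomorphism}. The inclusion $\image\Psi_\Tab\subset\Ker\Omega$ holds since $\Omega(X_S)=0$. Conversely, $(\Omega,\Pi)$ carries $\Ker\Omega$ isomorphically onto $\{0\}\oplus I^*\Tab$, because $v\in\Ker\Omega$ iff $(\Omega,\Pi)(v)=(0,\Pi(v))$; its inverse on this subbundle is exactly $\Psi_\Tab$. Thus $\Psi_\Tab:I^*\Tab\xrightarrow{\simeq}\Ker\Omega$, as claimed. (Equivalently, one may note that $(\Omega,\Pi)$ forces $\dim P=\operatorname{rank}(\CAlg\oplus\Tab)$, so $\Ker\Omega$ and $I^*\Tab$ both have rank equal to that of $\Tab$, and an injective bundle map between bundles of equal rank is an isomorphism.)
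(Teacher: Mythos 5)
Your proof is correct and follows essentially the same route as the paper: both arguments subtract the two structure equations to obtain $(\Pi-\Pi')\wedge\Omega=0$ and then exploit the pointwise surjectivity of $\Omega$ to conclude that the difference annihilates $\Ker\Omega$ (the paper does this by evaluating on the specific pairs $(X_S,X_\alpha)$, which is the same computation). Your slightly more general phrasing, that $\Pi-\Pi'$ vanishes on all of $\Ker\Omega$, and your explicit identification of $\Psi_\Tab$ as the inverse of $(\Omega,\Pi)$ restricted to $\{0\}\oplus I^*\Tab$ are fine and add nothing that would change the substance.
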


\begin{proof}
	Fix a choice of $\Pi$. We must show that if $\Pi'$ is another such choice, then $\Pi'(X_S)=I^*S$, or equivalently, that $\Pi'(X_S)(I^*\alpha)= \Pi(X_S)(I^*\alpha)$ for any $\alpha\in\Gamma(C)$. Subtracting the structure equations for $\Pi$ and $\Pi'$ from each other, we see that $(\Pi'-\Pi)\wedge\Omega=0$. Thus, for any $\alpha\in\Gamma(\CAlg)$,
	\begin{equation*}
	\begin{split}
	0&= \big((\Pi'-\Pi)\wedge\Omega\big) (X_S,X_\alpha) \\ &  = \Pi'(X_S)(\Omega(X_\alpha)) - \Pi'(X_\alpha)(\cancel{\Omega(X_S)}) - \Pi(X_S)(\Omega(X_\alpha)) + \Pi(X_\alpha)(\cancel{\Omega(X_S)}) \\
	&= \Pi'(X_S)(I^*\alpha) - \Pi(X_S)(I^*\alpha). \qedhere
	\end{split}
	\end{equation*}
\end{proof}

Thus, in the notation $X_\alpha$ and $X_S$, one should keep in mind that $X_\alpha$ depends on the choice of a $\Pi$, while $X_S$ does not. 

\begin{myremark}
	While condition \ref{eqn:structureequations} in the definition of a realization is rather natural, condition \ref{eqn:vbisomorphism} is less so. In the examples of realizations coming from Lie pseudogroups, this condition is always satisfied. Going beyond Lie pseudogroups, it would be interesting to relax this condition in two possible directions: 1) weaken the definition of a realization by dropping condition \ref{eqn:vbisomorphism} or requiring a weaker condition; 2) in the dual point of view, requiring of \eqref{eqn:infinitesimalactionofcartanalgebroid} to be an isomorphism is like requiring of the ``infinitesimal action'' to be free (injectivity) and transitive (surjectivity), and one may relax these conditions.  
\end{myremark}

\subsubsection{Freedom in Choosing $\Pi$}

In the definition of a realization (Definition \ref{def:realization}), we require the existence of a 1-form $\Pi$, which is, in general, not unique. The ambiguity in the choice of $\Pi$ can be be described in terms of the 1st prolongation of $\Tab$ (Definition \ref{def:tableaubundleprolongation}). For simplicity, let us assume that the first prolongation $\Tab^{(1)}$ is of constant rank. Fixing a $\Pi\in\Omega^1(P;I^*\Tab)$ that satisfies \eqref{eqn:structureequations} and \eqref{eqn:vbisomorphism} as a ``reference point'' (which also determines a choice of the maps \eqref{eqn:realizationinversemap}), we have a vector bundle isomorphism
\begin{equation}
\label{eqn:identification1stprolongation}
I^*\Hom(\CAlg,\Tab)\cong \{\; \hat{\xi}\in\Hom(TP;I^*\Tab)\;|\; \hat{\xi}(X_s)=0\;\;\forall S\in\Gamma(\Tab) \; \},\hspace{0.5cm} \xi\mapsto \hat{\xi}, 
\end{equation}
where $\hat{\xi}$ is uniquely determined by the conditions
\begin{gather}
\hat{\xi}(X_\alpha) = \xi(I^*\alpha)\hspace{0.5cm}  \forall\;\alpha\in\Gamma(\CAlg),\\
\hat{\xi}(X_S)=0\hspace{0.5cm} \forall\;S\in\Gamma(\Tab). \label{eqn:realizationconditions}
\end{gather}
The isomorphism \eqref{eqn:identification1stprolongation} restricts to the isomorphism 
\begin{equation}
\label{eqn:realization1prolongationg}
I^*\Tab^{(1)}\cong \{\; \hat{\xi}\in\Hom(TP;I^*\Tab)\;|\; \hat{\xi}(X_s)=0\;\;\forall S\in\Gamma(\Tab) \; \text{and}\; \hat{\xi}\wedge\Omega=0 \; \}.
\end{equation} 
At the level of sections, we obtain a linear isomorphism between sections $\xi\in\Gamma(I^*\Tab^{(1)})$ and 1-forms $\hat{\xi}\in\Omega^1(P;I^*\Tab)$ that satisfy both \eqref{eqn:realizationconditions} and $\hat{\xi}\wedge\Omega=0$. From now on, we write $\xi=\hat{\xi}$.

\begin{myprop}
	\label{prop:freedominchoosingpi}
	Let $(P,\Omega)$ be a realization of an almost Cartan algebroid $(\CAlg,\Tab)$ and assume that $\Tab^{(1)}$ is of constant rank. The subspace of $\Omega^1(P;I^*\Tab)$ consisting of elements $\Pi$ satisfying \eqref{eqn:structureequations} and \eqref{eqn:vbisomorphism} is an affine space modeled on $\Gamma(I^*\Tab^{(1)})$.
\end{myprop}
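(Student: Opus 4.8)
The plan is to fix one valid choice $\Pi_0$ as a reference point---such a $\Pi_0$ exists precisely because $(P,\Omega)$ is assumed to be a realization, so the set in question is nonempty---and to show that the prescription $\xi \mapsto \Pi_0 + \hat{\xi}$, using the identification $\xi = \hat\xi$ from \eqref{eqn:realization1prolongationg}, sets up a free and transitive action of $\Gamma(I^*\Tab^{(1)})$ on the set of valid $\Pi$'s. Concretely, I would prove two complementary statements: (i) if $\Pi$ and $\Pi'$ both satisfy \eqref{eqn:structureequations} and \eqref{eqn:vbisomorphism}, then their difference $\Pi' - \Pi$ is of the form $\hat\xi$ for a (necessarily unique) $\xi\in\Gamma(I^*\Tab^{(1)})$; and (ii) conversely, for any $\xi\in\Gamma(I^*\Tab^{(1)})$, the $1$-form $\Pi_0 + \hat\xi$ again satisfies \eqref{eqn:structureequations} and \eqref{eqn:vbisomorphism}. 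Together with the uniqueness built into the isomorphism \eqref{eqn:realization1prolongationg}, these give exactly the asserted affine structure.

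For (i), set $\hat\eta := \Pi' - \Pi$; I would verify that $\hat\eta$ satisfies the two conditions characterizing the image of \eqref{eqn:realization1prolongationg}. The condition $\hat\eta\wedge\Omega = 0$ follows immediately by subtracting the two structure equations $d\Omega + \tfrac{1}{2}[\Omega,\Omega] = \Pi\wedge\Omega$ and $d\Omega + \tfrac{1}{2}[\Omega,\Omega] = \Pi'\wedge\Omega$, since the left-hand side is independent of the choice of $\Pi$ (this is the same computation used in the proof of Lemma \ref{lemma:realizationsecondcomponentcanonical}). The condition $\hat\eta(X_S) = 0$ for all $S\in\Gamma(\Tab)$ is where Lemma \ref{lemma:realizationsecondcomponentcanonical} does the real work: the vector fields $X_S$, together with the identity $\Pi(X_S) = I^*S$, are independent of the chosen $\Pi$, so $\Pi'(X_S) = I^*S = \Pi(X_S)$ and hence $\hat\eta(X_S) = 0$. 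These are precisely conditions \eqref{eqn:realizationconditions} and $\hat\eta\wedge\Omega = 0$, so $\hat\eta$ lies in the subspace \eqref{eqn:realization1prolongationg} and corresponds to a unique $\xi\in\Gamma(I^*\Tab^{(1)})$.

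For (ii), write $\Pi' := \Pi_0 + \hat\xi$. The structure equation \eqref{eqn:structureequations} for $\Pi'$ is immediate, since $\Pi'\wedge\Omega = \Pi_0\wedge\Omega + \hat\xi\wedge\Omega = \Pi_0\wedge\Omega$ using $\hat\xi\wedge\Omega = 0$. For the isomorphism condition \eqref{eqn:vbisomorphism}, I would argue by injectivity together with a rank count. Since the $\Omega$-component is unchanged, $\Ker(\Omega,\Pi')\subset\Ker\Omega$; but by Lemma \ref{lemma:realizationsecondcomponentcanonical} the distribution $\Ker\Omega$ is the image of $\Psi_\Tab$, i.e.\ it is pointwise spanned by the $X_S$, and $\hat\xi$ vanishes on each $X_S$ by \eqref{eqn:realizationconditions}. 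Hence $\Pi'$ agrees with $\Pi_0$ on $\Ker\Omega$, where $\Pi_0$ is already injective, so $(\Omega,\Pi')$ is injective; as $\mathrm{rk}\,TP = \mathrm{rk}(\CAlg) + \mathrm{rk}(\Tab)$ (because $(\Omega,\Pi_0)$ is an isomorphism), injectivity upgrades to an isomorphism.

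The main obstacle---or rather the one point that must not be glossed over---is the appearance of the condition $\hat\eta(X_S)=0$ in step (i): it is not a formal consequence of the structure equation alone but relies essentially on the canonicity of the vector fields $X_S$ established in Lemma \ref{lemma:realizationsecondcomponentcanonical}. Everything else reduces to the bilinearity of the wedge operation and the already-recorded identification \eqref{eqn:realization1prolongationg} of $I^*\Tab^{(1)}$ with $1$-forms $\hat\xi$ that annihilate $\Ker\Omega$ and satisfy $\hat\xi\wedge\Omega = 0$; the constant-rank hypothesis on $\Tab^{(1)}$ is used only to guarantee that this identification is an isomorphism of vector bundles, so that $\Gamma(I^*\Tab^{(1)})$ is the correct modeling vector space.
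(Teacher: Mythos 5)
Your proposal is correct and follows essentially the same route as the paper's proof: both directions rest on Lemma \ref{lemma:realizationsecondcomponentcanonical} (the canonicity of the $X_S$ and of $\Pi(X_S)=I^*S$) together with subtracting the two structure equations, and the constant-rank hypothesis enters exactly where you say it does. The only cosmetic difference is in verifying \eqref{eqn:vbisomorphism} for $\Pi_0+\hat\xi$: the paper exhibits the explicit automorphism $(I^*\alpha,I^*S)\mapsto(I^*\alpha,I^*S+\xi(I^*\alpha))$ of $I^*(\CAlg\oplus\Tab)$, whereas you argue via injectivity on $\Ker\Omega$ plus a rank count; both are equally valid.
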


\begin{proof}
	Fix a choice of $\Pi\in\Omega^1(P;I^*\Tab)$ satisfying \eqref{eqn:structureequations} and \eqref{eqn:vbisomorphism}. Given any other choice $\Pi'$, Lemma \ref{lemma:realizationsecondcomponentcanonical} implies that the difference $\Pi'-\Pi\in\Omega^1(P;I^*\Tab)$ satisfies $(\Pi'-\Pi)(X_S)=0$ for all $S\in\Gamma(\Tab)$, and the structure equations imply that $(\Pi'-\Pi)\wedge\Omega=0$. Hence, $\Pi'-\Pi\in\Gamma(I^*\Tab^{(1)})$. Conversely, let $\xi\in\Gamma(I^*\Tab^{(1)})$. We claim that $\Pi+\xi$ satisfies conditions \eqref{eqn:structureequations} and \eqref{eqn:vbisomorphism}. Because $\xi$ satisfies $\hat{\xi}\wedge\Omega=0$, it follows that $\Pi+\xi\in\Omega^1(P;I^*\Tab)$ satisfies \eqref{eqn:structureequations}. Moreover, the composition
	\begin{equation*}
	\big( (\Omega,\Pi+\xi)\circ(\Omega,\Pi)^{-1} \big)(I^*\alpha,I^*S) = (I^*\alpha,I^*S+\xi(I^*\alpha))
	\end{equation*}
	is a vector bundle automorphism of $I^*(\CAlg\oplus\Tab)$, which implies that $(\Omega,\Pi+\xi)$ satisfies \eqref{eqn:vbisomorphism}.	
\end{proof}

\subsection{Cartan Algebroids}
\label{section:cartanalgebroids}

While the notion of an almost Cartan algebroid allows us to talk about structure equations, the correct underlying infinitesimal structure is more subtle. This is already clear in Cartan's local picture, where the coefficients $c^k_{ij}$ and $a^k_{i\lambda}$ suffice to write down the structure equations, but the fact that they come from structure equations implies that they should form a Cartan data (Theorem \ref{theorem:necessaryconditions}). Globally, these extra conditions are encoded in the notion of a \textit{Cartan algebroid}.

The following notion is standard for Lie algebroids, but it also makes sense for almost Lie algebroids. Let $\CAlg$ be an almost Lie algebroid over $N$. A \textbf{$\CAlg$-connection} on a vector bundle $\Tab$ over $N$ is a bilinear operation 
\begin{equation*}
\nabla:\Gamma(\CAlg)\times \Gamma(\Tab)\to \Gamma(\Tab),\hspace{1cm} (\alpha,T)\to \nabla_\alpha(T)
\end{equation*}
satisfying
\begin{equation*}
\nabla_{f\alpha}(T) = f\nabla_\alpha(T),\hspace{1cm} \nabla_\alpha(fT) = f\nabla_\alpha(T) + L_{\rho(\alpha)}(f)T,
\end{equation*}
for all $\alpha\in\Gamma(\CAlg),\;T\in\Gamma(\Tab)$ and $f\in C^\infty(N)$. For the following definition, recall also that $\Hom(\CAlg,\CAlg)$ is a bundle of Lie algebras with the fiberwise commutator bracket $[T,S] = T\circ S - S\circ T$. 

\begin{mydef}
	\label{def:cartanalgebroid}
	\index{Cartan algebroid}
	A \textbf{Cartan algebroid} is an almost Cartan algebroid $(\CAlg,\Tab)$ over $N$ such that:
	\begin{enumerate}
		\item $\Tab\subset \Hom(\CAlg,\CAlg)$ is closed under the commutator bracket,
		\item there exists a vector bundle map $t:\Lambda^2\CAlg\to\Tab,\; (\alpha,\beta)\mapsto t_{\alpha,\beta}$, such that
		\begin{equation}
		\label{eqn:cartanalgebroidconditionjacobi}
		[[\alpha,\beta],{\gamma}] + [[\beta,{\gamma}],\alpha] + [[{\gamma},\alpha],\beta] = t_{\alpha,\beta}({\gamma}) + t_{\beta,{\gamma}}(\alpha) + t_{{\gamma},\alpha}(\beta)
		\end{equation}
		for all $\alpha,\beta,\gamma\in\Gamma(\CAlg)$,
		\item there exists a $\CAlg$-connection $\nabla$ on $\Tab$ such that
		\begin{equation}
		\label{eqn:cartanalgebroidconditionactiong}
		T([\alpha,\beta]) - [T(\alpha),\beta] - [\alpha,T(\beta)] = \nabla_\beta(T)(\alpha) - \nabla_\alpha(T)(\beta)
		\end{equation}
		for all $\alpha,\beta\in\Gamma(\CAlg),\;T\in\Gamma(\Tab)$.
	\end{enumerate}
\end{mydef}

Thus, $t$ controls the failure of the Jacobi identity and $\nabla$ controls the failure of $\Tab$ to act on $\CAlg$ by derivations. Condition 1 can be restated as the condition that $\Tab\subset\Hom(\CAlg,\CAlg)$ must be a subbundle as a bundle of Lie algebras, or, equivalently, a Lie subalgebroid (since bundles of Lie algebras are the same thing Lie algebroids with zero anchor).

\begin{myremark}
	It is interesting to note that if we were to relax the definition of an almost Cartan algebroid and only require of $\Tab$ to be a vector subbundle of $\Hom(\CAlg,\CAlg)$, and not necessarily of $\Hom(\CAlg,\Ker\rho)$, then the fact that $\Tab$ actually lies in $\Hom(\CAlg,\Ker\rho)$ would follow from condition 3 in the above definition. Indeed, replacing $\beta$ by $f\beta$ in this condition, where $f\in C^\infty(N)$, one sees that $L_{\rho(T(\alpha))}(f)\beta = 0$, which implies that $\rho\circ T$ must vanish for all $T\in\Gamma(\Tab)$. 
\end{myremark}

\begin{myexample}
	\label{example:cartanalgebroidcartan}
	Locally, a Cartan algebroid is the same thing as a Cartan data. Continuing from Examples \ref{example:preliealgebroidcartan} and \ref{example:realizationcartan}:
	\begin{itemize}
		\item Condition $(1)$ is equivalent to the existence of functions $\epsilon^\lambda_{\eta\mu}$ on $N$ such that
		\begin{equation*}
		[t_\eta,t_\mu] = \epsilon^\lambda_{\eta\mu}t_\lambda.
		\end{equation*}
		This is precisely Equation (C1) in Theorem \ref{theorem:necessaryconditions}. 
		\item The bundle map $t:\Lambda^2\CAlg\to \Tab$ can be written as
		\begin{equation*}
		t(e_i,e_j) = \nu^\lambda_{ij}t_\lambda,
		\end{equation*}
		and a straightforward computation shows that condition $(2)$ is equivalent to (C2).
		\item A $\CAlg$-connection on $\Tab$ is determined by
		\begin{equation*}
		\nabla_{e_i}(t_\mu) = \xi^\lambda_{\mu i}t_\lambda,
		\end{equation*}
		and we readily verify that condition $(3)$ is equivalent to (C3). \qedhere
	\end{itemize}
\end{myexample}

Cartan algebroids behave well under gauge equivalences (see Section \ref{section:gaugeequivalence}):

\begin{mylemma}
	\label{lemma:gaugetransformationcartanalgebroid}
	Let $(\CAlg,\Tab)$ be a Cartan algebroid over $N$ and let $\xi\in\Gamma(\Hom(\CAlg,\Tab))$ be a gauge equivalence. Then $(\CAlg^\xi,\Tab)$ is again a Cartan algebroid over $N$. Furthermore, if an almost Cartan algebroid is gauge equivalent to a Cartan algebroid, then it is a Cartan algebroid. 
\end{mylemma}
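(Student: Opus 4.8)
The plan is to check the three conditions of Definition \ref{def:cartanalgebroid} for $(\CAlg^\xi,\Tab)$, using Lemma \ref{lemma:gaugetransformation} to know it is already an almost Cartan algebroid. Write $\xi_\alpha:=\xi(\alpha)\in\Gamma(\Tab)$, so that $[\alpha,\beta]^\xi=[\alpha,\beta]+\xi_\alpha(\beta)-\xi_\beta(\alpha)$. Because the symbol space $\Tab$ is left unchanged, Condition 1 (closedness of $\Tab$ under the commutator bracket) is immediate, and the real content is to produce, for the new bracket, a controlling map $t^\xi$ and a $\CAlg$-connection $\nabla^\xi$. I expect both to be explicit in terms of $t$, $\nabla$ and $\xi$; the point of the proof is that certain Leibniz-type anomalies cancel so that these candidates are genuine tensors.

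For Condition 3, note first that $\CAlg^\xi$ has the same anchor $\rho$, so a $\CAlg$-connection and a $\CAlg^\xi$-connection are the same datum. I propose
\begin{equation*}
\nabla^\xi_\alpha(T):=\nabla_\alpha(T)+[\xi_\alpha,T]+\xi(T(\alpha)),
\end{equation*}
where $[\xi_\alpha,T]$ is the fiberwise commutator in $\Hom(\CAlg,\CAlg)$. Both corrections lie in $\Tab$, the commutator by Condition 1 and $\xi(T(\alpha))$ since $T(\alpha)\in\Gamma(\CAlg)$. A short check of the two Leibniz rules shows $\nabla^\xi$ is a connection. To finish, I would expand $T([\alpha,\beta]^\xi)-[T(\alpha),\beta]^\xi-[\alpha,T(\beta)]^\xi$, substitute the old Condition 3 for the $\xi$-free part, and verify term by term that what remains equals $\nabla^\xi_\beta(T)(\alpha)-\nabla^\xi_\alpha(T)(\beta)$.

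Condition 2 is the heart of the matter. For any almost Lie algebroid the Jacobiator $J(\alpha,\beta,\gamma):=[[\alpha,\beta],\gamma]+[[\beta,\gamma],\alpha]+[[\gamma,\alpha],\beta]$ is $C^\infty(N)$-linear in each argument (the Leibniz terms cancel using $\rho[\alpha,\beta]=[\rho(\alpha),\rho(\beta)]$), hence a tensor, and Condition 2 says precisely that $J=\delta t$, where $\delta$ sends $s\in\Gamma(\Hom(\Lambda^2\CAlg,\Tab))$ to $(\delta s)(\alpha,\beta,\gamma)=s(\alpha,\beta)(\gamma)+s(\beta,\gamma)(\alpha)+s(\gamma,\alpha)(\beta)$. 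Writing $[\cdot,\cdot]^\xi=[\cdot,\cdot]+b$ with $b(\alpha,\beta)=\xi_\alpha(\beta)-\xi_\beta(\alpha)$, the new Jacobiator splits as $J^\xi=J+D_1+D_2$ with $D_1=\sum_{\mathrm{cyc}}\big([b(\alpha,\beta),\gamma]+b([\alpha,\beta],\gamma)\big)$ and $D_2=\sum_{\mathrm{cyc}}b(b(\alpha,\beta),\gamma)$. As $J\in\image\delta$, it suffices to put $D_1,D_2\in\image\delta$. For $D_1$ I would expand $\xi_\gamma([\alpha,\beta])$ by the old Condition 3; the genuine $\CAlg$-bracket terms then cancel in cyclic pairs, and the survivors assemble into $\delta s$ for the map $s(\alpha,\beta)=\xi([\alpha,\beta])-\nabla_\alpha(\xi_\beta)+\nabla_\beta(\xi_\alpha)$ --- whose Leibniz anomalies cancel, making it a bona fide vector bundle map. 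For $D_2$, the composition terms reorganize via Condition 1 into $-\delta u$ with $u(\alpha,\beta)=[\xi_\alpha,\xi_\beta]$, and the rest into $\delta v$ with $v(\alpha,\beta)=\xi(b(\alpha,\beta))$. Thus Condition 2 holds with $t^\xi:=t+s+v-u$.

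The main obstacle I anticipate is the bookkeeping in $D_1$: correctly isolating the non-tensorial contributions of $\xi([\alpha,\beta])$ and of the covariant derivatives and checking that their Leibniz anomalies cancel exactly, which is where the old Condition 3 must be invoked at the right place; the $D_2$ computation is more mechanical and only uses Condition 1. The ``furthermore'' is then formal. Gauge transformations compose additively, $(\CAlg^\xi)^\zeta=\CAlg^{\xi+\zeta}$, so the transformation by $\xi$ is invertible by $-\xi$; and Conditions 1--3 transport along isomorphisms of almost Cartan algebroids by conjugating $t$ and $\nabla$, so being a Cartan algebroid is isomorphism-invariant. If an almost Cartan algebroid $(\CAlg',\Tab')$ is gauge equivalent to a Cartan algebroid $(\CAlg,\Tab)$, unwinding Definition \ref{def:gaugeequivalence} gives $\eta$ with $(\CAlg,\Tab)\cong((\CAlg')^{\eta},\Tab')$; isomorphism-invariance makes $((\CAlg')^{\eta},\Tab')$ Cartan, and the first part, applied with the gauge map $-\eta$, makes $(\CAlg',\Tab')$ Cartan.
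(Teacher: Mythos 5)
Your proposal is correct and follows essentially the same route as the paper: one verifies the three axioms of Definition \ref{def:cartanalgebroid} directly, and your candidate tensors $t^\xi=t+s+v-u$ and $\nabla^\xi_\alpha(T)=\nabla_\alpha(T)+[\xi_\alpha,T]+\xi(T(\alpha))$ coincide term for term with the formulas given in the paper's proof. The only difference is that you spell out the cancellation of bracket terms and Leibniz anomalies that the paper leaves as a ``straightforward computation,'' and your checks go through.
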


\begin{proof}
	We know that $(\CAlg^\xi,\Tab)$ is an almost Cartan algebroid and we must verify that it is a Cartan algebroid by checking the three conditions of Definition \ref{def:cartanalgebroid}. The first condition is immediately satisfied. For the other two, choose $t$ and $\nabla$ for the Cartan algebroid $(\CAlg,\Tab)$. A straightforward computation shows that the remaining conditions are satisfied with $t^\xi:\Lambda^2\CAlg^\xi\to\Tab$ and $\nabla^\xi:\Gamma(\CAlg^\xi)\times\Gamma(\Tab)\to\Gamma(\Tab)$ defined by
	\begin{equation*}
	\begin{split}
	& t^\xi_{\alpha,\beta}:= t_{\alpha,\beta} - \nabla_\alpha(\xi(\beta)) + \nabla_\beta(\xi(\alpha)) - [\xi(\alpha),\xi(\beta)] + \xi([\alpha,\beta]) \\ & \hspace{1.5cm} -\xi(\xi(\beta)(\alpha)) + \xi(\xi(\alpha)(\beta)), \\
	& \nabla^\xi_\alpha(S):= \nabla_\alpha(S) + [\xi(\alpha),S] + \xi(S(\alpha)).
	\end{split}	
	\end{equation*}	
	The second assertion follows from the fact that gauge equivalence is an equivalence relation. 
\end{proof}

\subsubsection{Freedom in Choosing $t$ and $\nabla$}

Similar to realizations, the freedom in the choice of $t$ and $\nabla$ in the definition of a Cartan algebroid $(\CAlg,\Tab)$ can be expressed in terms of the Spencer complex of $\Tab$.

\begin{myprop}
	\label{prop:freedomcartanalgebroid}
	Let $(\CAlg,\Tab)$ be a Cartan algebroid over $N$. 
	\begin{enumerate}
		\item The subspace of $\Gamma(\Hom(\Lambda^2\CAlg,\Tab))$ consisting of elements $t$ satisfying \eqref{eqn:cartanalgebroidconditionjacobi} is an affine space modeled on $\Gamma(Z^{0,2}(\Tab))$.
		\item For each $S\in\Gamma(\Tab)$, the subspace of $\Gamma(\Hom(\CAlg,\Tab))$ consisting of elements $\nabla(S)$ satisfying \eqref{eqn:cartanalgebroidconditionactiong} is an affine space modeled on $\Gamma(\Tab^{(1)})$. 
	\end{enumerate}
\end{myprop}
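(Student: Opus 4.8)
The plan is to read each equation as an inhomogeneous linear equation of the shape $\delta(\text{unknown}) = (\text{fixed source})$, where $\delta$ is a piece of the Spencer differential of the tableau $\Tab$. Once this is done, the solution set is, whenever nonempty, an affine space modeled on $\Ker\delta$, and the only real work is to identify that kernel with the advertised Spencer group and to invoke the Cartan algebroid hypothesis for nonemptiness. Both parts follow this identical template.

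For part (1), I would first introduce the Jacobiator
\[
J(\alpha,\beta,\gamma) := [[\alpha,\beta],\gamma] + [[\beta,\gamma],\alpha] + [[\gamma,\alpha],\beta].
\]
Its total antisymmetry is purely algebraic (cancellation of the double brackets built from $\alpha,\gamma$ and from $\beta,\gamma$), and its $C^\infty(N)$-linearity is the standard computation using the Leibniz identity together with $\rho[\alpha,\beta]=[\rho(\alpha),\rho(\beta)]$; hence $J\in\Gamma(\Hom(\Lambda^3\CAlg,\CAlg))$ is a fixed tensor independent of $t$. Next I define the Spencer coboundary
\[
(\delta t)(\alpha,\beta,\gamma) := t_{\alpha,\beta}(\gamma) + t_{\beta,\gamma}(\alpha) + t_{\gamma,\alpha}(\beta),\qquad \delta:\Gamma(\Hom(\Lambda^2\CAlg,\Tab))\to\Gamma(\Hom(\Lambda^3\CAlg,\CAlg)),
\]
which, using the antisymmetry of $t$ in its two subscripts, is exactly the Koszul differential $\Tab\otimes\Lambda^2\CAlg^*\to\CAlg\otimes\Lambda^3\CAlg^*$ (with $\CAlg=\Tab^{(-1)}$). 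Then \eqref{eqn:cartanalgebroidconditionjacobi} reads precisely $\delta t = J$. Consequently any two solutions differ by an element of $\Ker\delta$, and any solution plus an element of $\Ker\delta$ is again a solution; since $\Ker\delta$ is by definition the Spencer cocycle space $Z^{0,2}(\Tab)$, the solution set is an affine space modeled on $\Gamma(Z^{0,2}(\Tab))$. It is nonempty because the defining condition (2) of a Cartan algebroid (Definition \ref{def:cartanalgebroid}) supplies at least one such $t$.

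For part (2), I fix $S\in\Gamma(\Tab)$ and set $\Phi_S(\alpha,\beta):= S([\alpha,\beta]) - [S(\alpha),\beta] - [\alpha,S(\beta)]$. A short computation, using that $S$ is tensorial, the Leibniz rule, and $\rho\circ S=0$ (valid since $\Tab\subset\Hom(\CAlg,\Ker\rho)$), shows that $\Phi_S$ is $C^\infty(N)$-linear and antisymmetric, so $\Phi_S\in\Gamma(\Hom(\Lambda^2\CAlg,\CAlg))$ is fixed once $S$ is chosen. Writing $\mu:=\nabla(S)\in\Gamma(\Hom(\CAlg,\Tab))$ with $\mu(\alpha)=\nabla_\alpha(S)$, equation \eqref{eqn:cartanalgebroidconditionactiong} with $T=S$ becomes $\mu(\beta)(\alpha)-\mu(\alpha)(\beta)=\Phi_S(\alpha,\beta)$, i.e. $\delta_1\mu = -\Phi_S$ for the Spencer differential $(\delta_1\mu)(\alpha,\beta):=\mu(\alpha)(\beta)-\mu(\beta)(\alpha)$. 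As before the solution set is an affine space modeled on $\Ker\delta_1$, and by definition of the first prolongation $\Ker\delta_1=\{\,\mu\in\Hom(\CAlg,\Tab): \mu(\alpha)(\beta)=\mu(\beta)(\alpha)\,\}=\Tab^{(1)}$; nonemptiness follows from the defining condition (3) of Definition \ref{def:cartanalgebroid}. Note this matches the symmetry condition $\hat{\xi}\wedge\Omega=0$ already used to identify $\Tab^{(1)}$ in Proposition \ref{prop:freedominchoosingpi}.

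The argument is entirely formal once the sources $J$ and $\Phi_S$ are recognized as fixed tensors and the right-hand sides as Spencer differentials, so I expect no genuine obstacle. The only points demanding care are the verifications that $J$ and $\Phi_S$ are truly $C^\infty(N)$-linear and (anti)symmetric, so that the equations are equalities of tensors in the stated bundles, and the bookkeeping that matches $\delta$ and $\delta_1$ with the Spencer complex conventions of the appendix, ensuring the kernels are correctly named $Z^{0,2}(\Tab)$ and $\Tab^{(1)}$.
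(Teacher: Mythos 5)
Your proposal is correct and follows essentially the same route as the paper: both read \eqref{eqn:cartanalgebroidconditionjacobi} and \eqref{eqn:cartanalgebroidconditionactiong} as inhomogeneous linear equations whose right-hand sides are the Spencer coboundary \eqref{eqn:spencercoboundaryoperator} applied to the unknown, so that the solution sets are affine over $\Ker\delta$, identified with $\Gamma(Z^{0,2}(\Tab))$ and $\Gamma(\Tab^{(1)})$ respectively. The paper's proof is terser (it omits the explicit verification that the Jacobiator and $\Phi_S$ are tensorial, and the explicit nonemptiness remark), but the content is identical.
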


\begin{proof}
	By \eqref{eqn:cartanalgebroidconditionjacobi}, the difference of two choices $t$ and $t'$ satisfies $\delta(t'-t) = 0$, where $\delta$ is the coboundary operator \eqref{eqn:spencercoboundaryoperator}. Conversely, given a choice of a $t$ and $\xi\in \Gamma(Z^{0,2}(\Tab))$, clearly \eqref{eqn:cartanalgebroidconditionjacobi} is satisfied when replacing $t$ by $t+\xi$. This proves items 1. Similarly, in item 2, given two choices $\nabla$ and $\nabla'$ and $S\in\Gamma(\Tab)$, $\delta(\nabla(S)) = 0$ by \eqref{eqn:cartanalgebroidconditionactiong}. Conversely, given a choice of $\nabla$ and $\xi\in\Gamma(\Tab^{(1)})$, \eqref{eqn:cartanalgebroidconditionactiong} is satisfied when replacing $\nabla(S)$ by $\nabla(S)+\xi$. 
\end{proof}

\subsubsection{The Need for Cartan Algebroids for the Existence of Realizations}

In the local picture, we saw that if an almost Cartan data admits a realization, then it is a Cartan data (Theorem \ref{theorem:necessaryconditions}). In the modern picture, this translates to:

\begin{mytheorem} 
	\label{theorem:realizationimpliescartanalgebroid}
	If an almost Cartan algebroid admits a realization, then it is a Cartan algebroid. 
\end{mytheorem}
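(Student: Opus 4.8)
The plan is to lift Cartan's local computation behind Theorem~\ref{theorem:necessaryconditions} to an invariant statement, with the global framing $(\Omega,\Pi)\colon TP \xrightarrow{\sim} I^*(\CAlg\oplus\Tab)$ of a realization playing the role of the local coframe $\{\omega_i,\pi_\lambda\}$ and the fundamental vector fields $X_\alpha,X_S$ of the dual point of view \eqref{eqn:realizationinversemap} playing the role of the dual frame. The engine will be the ordinary Jacobi identity for the Lie bracket of vector fields on $P$. Since $(\Omega,\Pi)$ is a fibrewise isomorphism, I first extend the fundamental-field construction to arbitrary sections by setting $X_a:=(\Omega,\Pi)^{-1}(a,0)$ and $X_R:=(\Omega,\Pi)^{-1}(0,R)$ for $a\in\Gamma(I^*\CAlg)$ and $R\in\Gamma(I^*\Tab)$, so that every bracket of fundamental fields splits into its $\CAlg$- and $\Tab$-parts. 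Applying $\Omega$ and invoking Lemma~\ref{lemma:actionproperties}, together with its extension $\Omega([X_\alpha,X_R])=R(I^*\alpha)$ to non-basic $R$ (which carries no correction term because $X_R\in\Ker dI$), isolates the $\Pi$-parts as the three candidate structure tensors
\begin{equation*}
\tau(\alpha,\beta):=\Pi([X_\alpha,X_\beta]),\qquad \sigma_\alpha(S):=\Pi([X_\alpha,X_S]),\qquad \kappa(S,S'):=\Pi([X_S,X_{S'}]).
\end{equation*}

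A short preliminary computation shows these are tensorial: from $X_{f\alpha}=(I^*f)X_\alpha$, $X_{fS}=(I^*f)X_S$ and the identities $X_S\in\Ker dI$, $dI(X_\alpha)=I^*\rho(\alpha)$, one finds that $\tau$ and $\kappa$ are genuine tensors while $\sigma$ obeys exactly the Leibniz rule of a $\CAlg$-connection on $I^*\Tab$, the term $L_{\rho(\alpha)}(f)$ appearing as required. The main step is then to feed the three types of triples of fundamental fields into the vector-field Jacobi identity and apply $\Omega$. The triple $(X_\alpha,X_S,X_{S'})$ gives $\kappa(S,S')=-[S,S']$, the commutator taken in $\Hom(\CAlg,\CAlg)$; since $\kappa$ is $\Tab$-valued this is precisely condition~1 (closure of $\Tab$ under the commutator). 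The triple $(X_\alpha,X_\beta,X_\gamma)$ gives $I^*\big([[\alpha,\beta],\gamma]+\mathrm{cyc}\big)=\sum_{\mathrm{cyc}}\tau(\alpha,\beta)(\gamma)$, the invariant form of condition~2 with $t=\tau$. The triple $(X_\alpha,X_\beta,X_S)$ gives $S([\alpha,\beta])-[S(\alpha),\beta]-[\alpha,S(\beta)]=\sigma_\beta(S)(\alpha)-\sigma_\alpha(S)(\beta)$, the invariant form of condition~3 with $\nabla=\sigma$. These three outputs are (C1)--(C3) read off coordinate-free, matching Definition~\ref{def:cartanalgebroid} on the nose (compare Example~\ref{example:cartanalgebroidcartan}).

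The one genuinely delicate point, and where I expect the real work to lie, is the descent from $P$ to $N$ --- the invariant replacement for Cartan's ``restrict to the slice $\{x_{n+1}=\dots=x_N=0\}$''. For condition~1 nothing is needed, since $\kappa(S,S')=-[S,S']$ is automatically the pullback of a section of $\Tab$ over $N$. For conditions~2 and~3 the tensors $\tau$ and $\sigma$ need not individually be basic, so I would restrict along a local section $s\colon U\to P$ of $I$ to produce $t^U:=s^*\tau$ and $\nabla^U:=s^*\sigma$ over $U\subseteq N$ satisfying \eqref{eqn:cartanalgebroidconditionjacobi} and \eqref{eqn:cartanalgebroidconditionactiong}. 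Globalising is then soft: by Proposition~\ref{prop:freedomcartanalgebroid} the admissible $t$'s and $\nabla$'s form affine spaces modelled on sections of (constant-rank) Spencer bundles, so the local choices patch by a partition of unity into a global $t$ and $\nabla$. Thus the substance of the proof is not the three identities --- those are forced by the automatic Jacobi identity --- but rather establishing closure of $\Tab$ under the commutator and organising the passage from fibrewise data on $P$ to honest bundle maps on $N$.
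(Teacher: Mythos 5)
Your proposal is correct and takes essentially the same route as the paper: the paper obtains the identical three identities by applying the differential consequence $d_\nabla(d\Omega+\frac{1}{2}[\Omega,\Omega]-\Pi\wedge\Omega)=0$ of the structure equation to the triples $(X_\alpha,X_{\alpha'},X_{\alpha''})$, $(X_\alpha,X_{\alpha'},X_S)$ and $(X_\alpha,X_S,X_{S'})$ --- the dual packaging of your vector-field Jacobi identity --- defines $t$ and $\nabla$ by the same formulas $\Pi([X_\alpha,X_{\alpha'}])$ and $\Pi([X_\alpha,X_S])$, and descends to $N$ by restricting along a local section of $I$ followed by a partition of unity. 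Closure of $\Tab$ under the commutator appears there as your identity $\kappa(S,S')=-I^*[S,S']$ (equation \eqref{eqn:realizationcartanalgebroidproof}), so nothing is missing.
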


\begin{proof}
	This proof is essentially a global version of the proof of Theorem \ref{theorem:necessaryconditions}. Choose $\Pi\in\Omega^1(P;I^*\CAlg)$ as in definition \ref{def:realization}. The 2-form $d\Omega+\frac{1}{2}[\Omega,\Omega] - \Pi\wedge\Omega\in \Omega^2(P;I^*\CAlg)$ vanishes, as well as its ``differential consequence'' $d_\nabla(d\Omega+\frac{1}{2}[\Omega,\Omega] - \Pi\wedge\Omega)\in \Omega^3(P;I^*\CAlg)$, for any choice of $\nabla$. Applying $d_\nabla(d\Omega+\frac{1}{2}[\Omega,\Omega] - \Pi\wedge\Omega)=0$ on a triple of vector fields of type $X_\alpha,X_{\alpha'},X_{\alpha''}\in\mathfrak{X}(P)$ and using Lemmas \ref{lemma:maurercartanexpression} and \ref{lemma:actionproperties} implies the identity
	\begin{equation*}
	\begin{split}
	& I^*([[\alpha,{\alpha'}],{\alpha''}]] + [[{\alpha'},{\alpha''}],\alpha]] + [[{\alpha''},\alpha],{\alpha'}]]) = \\ & \;\;\;\;\;\;\;\;\Pi([X_\alpha,X_{\alpha'}])(I^*{\alpha''}) + \Pi([X_{\alpha'},X_{\alpha''}])(I^*\alpha) + \Pi([X_{\alpha''},X_\alpha])(I^*{\alpha'}),
	\end{split}
	\end{equation*}   
	applying it on $X_\alpha,X_{\alpha'},X_S$ implies
	\begin{equation*}
	\begin{split}
	I^*(S([\alpha,{\alpha'}]) - [S(\alpha),{\alpha'}]] - [\alpha,S({\alpha'})]]) 
	= \Pi([X_{\alpha'},X_S])(I^*\alpha) - \Pi([X_\alpha,X_S])(I^*{\alpha'}),
	\end{split}
	\end{equation*}
	and applying it on $X_\alpha,X_S,X_{S'}$ implies
	\begin{equation}
	\label{eqn:realizationcartanalgebroidproof}
	\begin{split}
	I^*({S'}\circ S(\alpha)  - S\circ {S'}(\alpha)) = \Pi([X_S,X_{S'}])(I^*\alpha).
	\end{split}
	\end{equation}
	The three latter equations are equalities in $\Gamma(I^*\CAlg)$. Choosing a local section $\eta$ of $I:P\to N$ with domain $U\subset N$ and precomposing each of the equations with $\eta$ produces the three conditions in definition \ref{def:cartanalgebroid}, but restricted to $U$, where the maps $t$ and $\nabla$ at a point $x\in U$ are given by
	\begin{equation}
	\label{eqn:proofrealizationtocartanalgebroid}
	\begin{split}
	& (t_{\alpha,{\alpha'}})_x = \Pi([X_\alpha,X_{\alpha'}])_{\eta(x)}, \\
	& (\nabla_\alpha(S))_x = \Pi([X_\alpha,X_S])_{\eta(x)}.
	\end{split}
	\end{equation}
	The fact that $\Pi(X_\alpha)=\Pi(X_{\alpha'})=0$ and $\Pi(X_S) = I^*S$ implies that $t$ defines a tensor and $\nabla$ a connection. A standard partition of unity argument produces a global $t$ and $\nabla$.
\end{proof}

\begin{myremark}
	The notion of an almost Cartan algebroid contains the minimal amount of structure that is needed in order to define the notion of a realization. However, the above theorem shows that the relevant structure is actually that of a Cartan algebroid. Thus, from now one we will talk about realizations of Cartan algebroids rather than of almost Cartan algebroids. 
\end{myremark}

\begin{mycor}
	\label{cor:realizationimpliescartanalgebroid}
	If $(P,\Omega)$ is a realization of $(\CAlg,\Tab)$, then for any choice of $\Pi$, $t$ and $\nabla$, 
	\begin{equation*}
	\begin{split}
	&\big( (\alpha,\alpha')\mapsto \Pi([X_\alpha,X_{\alpha'}]) - I^*t_{\alpha,\alpha'}\big) \;\; \in  \Gamma(Z^{0,2}(\Tab)) ,  \\
	&\big( \alpha\mapsto \Pi([X_\alpha,X_S]) - I^*\nabla_\alpha(S) \big) \;\; \in \Gamma(\Tab^{(1)}),  \\
	&\Pi([X_S,X_{S'}]) + I^*[S,S'] = 0
	\end{split}
	\end{equation*}
	for all $S,S'\in\Tab$ (see Appendix \ref{section:tableaubundles} for the definition of $Z^{0,2}(\Tab)$ and $\Tab^{(1)}$).
\end{mycor}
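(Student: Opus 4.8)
The plan is to read off each of the three displayed statements directly from the computations already performed in the proof of Theorem \ref{theorem:realizationimpliescartanalgebroid}, and to interpret them through the Spencer-complex identifications from Appendix \ref{section:tableaubundles}. The point of the corollary is that the three identities derived in that proof --- namely the global versions of (C1)--(C3) --- are not merely equalities at the points lying in the image of a chosen local section $\eta$, but hold as genuine tensorial statements, so that the discrepancy between $\Pi([X_\bullet,X_\bullet])$ and the chosen $t,\nabla$ lands in the appropriate Spencer subspaces.

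First I would recall that, by the proof of Theorem \ref{theorem:realizationimpliescartanalgebroid}, the three conditions of Definition \ref{def:cartanalgebroid} hold for $(\CAlg,\Tab)$ with $t$ and $\nabla$ given by \eqref{eqn:proofrealizationtocartanalgebroid}; that is, one \emph{possible} choice of $t$ and $\nabla$ is $t_{\alpha,\alpha'}=\Pi([X_\alpha,X_{\alpha'}])\circ\eta$ and $\nabla_\alpha(S)=\Pi([X_\alpha,X_S])\circ\eta$. By Proposition \ref{prop:freedomcartanalgebroid}, any \emph{other} choice of $t$ differs from this one by an element of $\Gamma(Z^{0,2}(\Tab))$, and any other choice of $\nabla(S)$ differs by an element of $\Gamma(\Tab^{(1)})$. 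This immediately gives the first two assertions, once one checks that the expressions $\alpha'\mapsto\Pi([X_\alpha,X_{\alpha'}])$ and $\alpha\mapsto\Pi([X_\alpha,X_S])$ are $C^\infty(N)$-linear, i.e.\ tensorial --- which is exactly the content of the final sentence of the proof of Theorem \ref{theorem:realizationimpliescartanalgebroid}, where $\Pi(X_\alpha)=0$ and $\Pi(X_S)=I^*S$ are used to kill the Leibniz-type correction terms. Since the difference of two tensors lying in $\Gamma(\Hom(\Lambda^2\CAlg,\Tab))$ (resp.\ $\Gamma(\Hom(\CAlg,\Tab))$) and satisfying the same defining equation \eqref{eqn:cartanalgebroidconditionjacobi} (resp.\ \eqref{eqn:cartanalgebroidconditionactiong}) lands in the model affine space, the two inclusions follow.

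For the third identity, I would appeal to equation \eqref{eqn:realizationcartanalgebroidproof}, which states precisely that $I^*(S'\circ S(\alpha)-S\circ S'(\alpha))=\Pi([X_S,X_{S'}])(I^*\alpha)$ for all $\alpha$. Rewriting $S'\circ S-S\circ S'=-[S,S']$ in the commutator bracket on $\Hom(\CAlg,\CAlg)$ and using that $I^*$ is injective on sections, this rearranges to $\Pi([X_S,X_{S'}])+I^*[S,S']=0$ as an equality in $\Gamma(I^*\Tab)$ (here one uses Lemma \ref{lemma:realizationsecondcomponentcanonical} to know that $\Pi([X_S,X_{S'}])$ is indeed $\Tab$-valued and independent of the choice of $\Pi$). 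The only genuine subtlety --- and the step I expect to require the most care --- is the verification of $C^\infty(N)$-linearity in the first two bullets: one must confirm that replacing $\alpha$ or $\alpha'$ (resp.\ $S$) by $f\alpha$ produces no anomalous term, so that the expressions really define sections of the Hom-bundles rather than mere differential operators. This is a direct consequence of the conditions $\Pi(X_\alpha)=0$ together with the Leibniz rule for the bracket $[X_{f\alpha},\,\cdot\,]=f[X_\alpha,\,\cdot\,]+(\text{terms annihilated by }\Pi)$, but it is where the argument must be written out explicitly rather than merely cited.
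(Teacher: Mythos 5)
Your proposal is correct and follows essentially the same route as the paper's own proof: the first two inclusions are obtained by comparing the given $t$ and $\nabla$ with the particular choices \eqref{eqn:proofrealizationtocartanalgebroid} produced in the proof of Theorem \ref{theorem:realizationimpliescartanalgebroid} (evaluated along a local section through a given point of $P$) and invoking Proposition \ref{prop:freedomcartanalgebroid}, while the third identity is just a restatement of \eqref{eqn:realizationcartanalgebroidproof} using the commutator convention $[S,S']=S\circ S'-S'\circ S$. Your extra attention to the $C^\infty(N)$-linearity of the expressions is welcome but is the same observation the paper makes at the end of the proof of Theorem \ref{theorem:realizationimpliescartanalgebroid}, where $\Pi(X_\alpha)=0$ and $\Pi(X_S)=I^*S$ are used to show $t$ is a tensor and $\nabla$ a connection.
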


\begin{proof}
	The proof of this corollary is contained in the proof of Theorem \ref{theorem:realizationimpliescartanalgebroid}. The third equation in the statement of the corollary is precisely \eqref{eqn:realizationcartanalgebroidproof}. The first two equations in the statement follow from Proposition \eqref{prop:freedomcartanalgebroid} together with \eqref{eqn:proofrealizationtocartanalgebroid}, first locally by choosing a local section of $I:P\to N$, and then globally by a standard partition of unity argument.  
\end{proof}

Lemma \ref{lemma:realizationsecondcomponentcanonical} together with the third identity in Lemma \ref{lemma:structureequationsaction} directly imply that the bundle of Lie algebras $\Tab$ of a Cartan algebroid $(\CAlg,\Tab)$ acts canonically on all realizations:

\begin{myprop}
	\label{prop:gactsonrealization}
	Let $(P,\Omega)$ be a realization of a Cartan algebroid $(\CAlg,\Tab)$ over $N$. The canonical vector bundle map
	\begin{equation*}
	\Psi_\Tab:I^*\Tab \to TP
	\end{equation*}
	defines a Lie algebroid action of $\Tab$ on $I:P\to N$. Moreover, the action is infinitesimally free (i.e. $\Psi_\Tab$ is injective) and its image is $\Ker\Omega\subset TP$.
\end{myprop}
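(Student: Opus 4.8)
The plan is to check directly that $\Psi_\Tab$ satisfies the three defining axioms of a Lie algebroid action of the bundle of Lie algebras $\Tab$ on $I:P\to N$, and then to obtain infinitesimal freeness and the description of the image for free from Lemma \ref{lemma:realizationsecondcomponentcanonical}. Throughout I write $X_S:=\Psi_\Tab(I^*S)$ for the fundamental vector field of $S\in\Gamma(\Tab)$, characterized by $\Omega(X_S)=0$ and $\Pi(X_S)=I^*S$. Since $\Tab$ is a bundle of Lie algebras its anchor vanishes, so the anchor-compatibility axiom of an action amounts to requiring that every $X_S$ be tangent to the fibers of $I$.

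Two of the three axioms are immediate. That $\Psi_\Tab$ is a vector bundle map over the identity---equivalently that $S\mapsto X_S$ is $C^\infty(N)$-linear, $X_{fS}=(I^*f)X_S$---follows at once from $X_S=(\Omega,\Pi)^{-1}(I^*S)$ together with the fact that $(\Omega,\Pi)^{-1}$ is a vector bundle isomorphism. For verticality (anchor-compatibility) I would invoke the anchored condition $\rho\circ\Omega=dI$: combined with $\Omega(X_S)=0$ it gives $dI(X_S)=\rho(\Omega(X_S))=0$, so each $X_S$ is $I$-vertical.

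The heart of the matter is the bracket axiom. Here I would combine the third identity of Lemma \ref{lemma:structureequationsaction}, which reads $\Omega([X_S,X_{S'}])=0$ and therefore places $[X_S,X_{S'}]$ in $\Ker\Omega$, with the third identity of Corollary \ref{cor:realizationimpliescartanalgebroid}, which reads $\Pi([X_S,X_{S'}])=-I^*[S,S']$. Since $(\Omega,\Pi):TP\xrightarrow{\simeq}I^*(\CAlg\oplus\Tab)$ is an isomorphism and $X_{[S',S]}$ is by definition the unique vector field with $\Omega$-component $0$ and $\Pi$-component $I^*[S',S]=-I^*[S,S']$, I conclude
\begin{equation*}
[X_S,X_{S'}]=X_{[S',S]}.
\end{equation*}
This exhibits $S\mapsto X_S$ as the fundamental-vector-field map of an action (a bracket morphism onto $\Ker\Omega$, in the sign convention appropriate to the action), which is exactly the remaining axiom. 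It is precisely at this step that the full Cartan algebroid hypothesis enters, through the integrability consequence packaged in Corollary \ref{cor:realizationimpliescartanalgebroid}; mere involutivity of $\Ker\Omega$ would place $[X_S,X_{S'}]$ in $\Ker\Omega$ but would not force its $\Pi$-component to be the basic form $-I^*[S,S']$, hence would not pin down the bracket.

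Finally, infinitesimal freeness and the identification of the image require no further work: Lemma \ref{lemma:realizationsecondcomponentcanonical} already asserts that $\Psi_\Tab$ is independent of the choice of $\Pi$ and restricts to a canonical isomorphism $I^*\Tab\xrightarrow{\simeq}\Ker\Omega$. In particular $\Psi_\Tab$ is injective, so the action is infinitesimally free, and $\image\Psi_\Tab=\Ker\Omega$. I expect the bracket axiom to be the only genuinely delicate step, both because it is where the Cartan data conditions are used and because of the sign bookkeeping relating the commutator bracket on $\Tab$ to the bracket of the fundamental vector fields.
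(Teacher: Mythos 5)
Your proof is correct and is, in substance, the argument the paper intends; the paper itself offers no written proof, merely asserting in the sentence preceding the proposition that the statement follows from Lemma \ref{lemma:realizationsecondcomponentcanonical} together with the third identity of Lemma \ref{lemma:structureequationsaction}. Those two facts give you the canonical vector bundle map, its injectivity, its image $\Ker\Omega$, and the fact that $[X_S,X_{S'}]$ lies in $\Ker\Omega$ --- but, as you rightly point out, they do not by themselves identify \emph{which} element of $\Ker\Omega\cong I^*\Tab$ that bracket is, so they do not establish the bracket axiom of an action. Your appeal to the third identity of Corollary \ref{cor:realizationimpliescartanalgebroid}, which yields $\Pi([X_S,X_{S'}])=-I^*[S,S']$, supplies exactly the ingredient that the paper's one-line justification glosses over, and this is where the Cartan algebroid hypothesis genuinely enters. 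Your handling of the resulting sign (the map $S\mapsto X_S$ is an anti-homomorphism for the commutator bracket on $\Tab$) is consistent with the paper's later conventions: the bracket on the systatic space $\mathcal{S}$ in Section \ref{section:systatic} carries the term $-[S,T]$ precisely to absorb this sign. The linearity and verticality checks are as routine as you say.
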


\subsubsection{The Third Fundamental Theorem}

In this modern formulation, Cartan's realization problem (Problem \ref{problem:cartansrealizationproblem}) becomes the question of whether a Cartan algebroid admits a realization. The following theorem, due to Cartan, provides a partial solution to this problem. The main ingredient in the proof is the Cartan-K\"ahler Theorem (c.f. Theorem \ref{theorem:secondfundamentaltheoremconverse}). For the proof, see Cartan's original papers \cite{Cartan1904,Cartan1937-1} or Kumpera's proof in \cite{Kumpera1964}.

\begin{mytheorem}
	\label{theorem:thirdfundamentaltheoremlocal}
	(the third fundamental theorem) Let $(\CAlg,\Tab)$ be a real-analytic Cartan algebroid over $N$ (i.e. all manifolds and maps are analytic). If the tableau bundle $\Tab$ is involutive, then every $x\in N$ has a neighborhood $U\subset N$ such that the restricted Cartan algebroid $(\CAlg_U,\Tab_U)$ over $U$ (see Example \ref{example:restrictioncartanalgebroid}) admits a realization.  
\end{mytheorem}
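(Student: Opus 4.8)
The plan is to descend to Cartan's local coordinate description, recast the existence of a realization as the solvability of an exterior differential system (EDS), and then invoke the Cartan--K\"ahler theorem; the real-analyticity hypothesis is what feeds Cartan--K\"ahler, while the involutivity of $\Tab$ is exactly what makes the system involutive. First I would reduce to local Cartan data. Fix $x\in N$. By Example \ref{example:cartanalgebroidcartan}, after passing to a coordinate neighbourhood $U\cong\mathbb{R}^n$ of $x$ and trivializing frames of $\CAlg_U$ and $\Tab_U$, the restricted algebroid is encoded by functions $c_i^{jk},a_i^{\lambda j}\in C^\infty(U)$ forming a \emph{Cartan data}: conditions $(C0)$--$(C3)$ of Theorem \ref{theorem:necessaryconditions} hold, the auxiliary functions $\nu_\lambda^{jk},\xi_\lambda^{\mu j},\epsilon_\lambda^{\eta\mu}$ being the local avatars of $t$, of the connection $\nabla$, and of the commutators $[t_\eta,t_\mu]$ (cf.\ Example \ref{example:cartanalgebroidcartan}). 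By the local description of a realization (Example \ref{example:realizationcartan}), constructing a realization of $(\CAlg_U,\Tab_U)$ near $x$ is the same as solving Cartan's realization problem (Problem \ref{problem:cartansrealizationproblem}): producing independent $1$-forms $\omega_1,\dots,\omega_r,\pi_1,\dots,\pi_p$ on an open $V\subset\mathbb{R}^{N}$ with $\omega_a=dI_a$ satisfying \eqref{eqn:structureequationslocal}.

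The core step is to encode this as an EDS with an independence condition. One cannot simply declare a coframe with the prescribed $d\omega_i$, because the structure equations leave $d\pi_\lambda$ undetermined, and the $\pi_\lambda$ are themselves fixed by the $\omega_i$ only modulo the first prolongation $\Tab^{(1)}$ (this ambiguity is the local shadow of Proposition \ref{prop:freedominchoosingpi}). Accordingly I would build a manifold $\Sigma$ by adjoining to $U$ fibre coordinates for the components of the $\pi_\lambda$ together with the higher-order coefficients of $d\pi_\lambda$ that remain free once the Cartan data is fixed, carrying a tautological coframe; the EDS $\mathcal I$ is generated by the $2$-forms
\begin{equation*}
\Theta_i \;:=\; d\omega_i+\tfrac12 c_i^{jk}\,\omega_j\wedge\omega_k-a_i^{\lambda j}\,\pi_\lambda\wedge\omega_j
\end{equation*}
together with the forms prescribing $d\pi_\lambda$ up to those free coefficients, and the independence condition is $\omega_1\wedge\cdots\wedge\omega_r\wedge\pi_1\wedge\cdots\wedge\pi_p\neq 0$. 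The $N$-dimensional integral manifolds of $\mathcal I$ meeting this condition are exactly the realizations, and by construction the symbol (tableau) of $\mathcal I$ is identified with $\Tab$ and its prolongation with $\Tab^{(1)}$.

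Finally I would run Cartan's test and apply Cartan--K\"ahler. Closedness $d\mathcal I\subset\mathcal I$ must be verified: differentiating $\Theta_i$ and substituting the coframe decomposition of $d\pi_\lambda$, the resulting $3$-forms lie in the algebraic ideal generated by the $\Theta_j$ \emph{precisely because} $(C1)$--$(C3)$ hold; this is the computation of Theorem \ref{theorem:necessaryconditions} (and of its global counterpart Theorem \ref{theorem:realizationimpliescartanalgebroid}) read in reverse. One then computes the reduced Cartan characters of $\mathcal I$; since the symbol is $\Tab$, Cartan's numerical test $s_1+2s_2+\cdots=\dim\Tab^{(1)}$ coincides with the involutivity of $\Tab$ in the sense of the Spencer complex (Definition \ref{def:acyclicinvolutivity}), so the hypothesis makes $\mathcal I$ involutive. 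As all data are real-analytic, the Cartan--K\"ahler theorem then yields an integral manifold through the fibre over $x$ satisfying the independence condition, i.e.\ an open $V\subset\mathbb{R}^N$ with forms solving \eqref{eqn:structureequationslocal}; translating back through Example \ref{example:realizationcartan} gives a realization of $(\CAlg_U,\Tab_U)$ on a (possibly smaller) neighbourhood, as claimed. The main obstacle I anticipate is precisely this last step: arranging $\Sigma$ so that the symbol of $\mathcal I$ is honestly $\Tab$ and its prolongation $\Tab^{(1)}$, verifying $d\mathcal I\subset\mathcal I$ from $(C1)$--$(C3)$, and matching the numerical Cartan test to Spencer-cohomological involutivity; this is also where real-analyticity is indispensable, Cartan--K\"ahler having no smooth analogue (cf.\ the remarks preceding Theorem \ref{theorem:secondfundamentaltheoremconverse}).
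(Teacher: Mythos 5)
Your proposal follows essentially the same route as the paper, which itself only sketches the argument and defers the details to Cartan and Kumpera: pass to an auxiliary bundle of (anchored) coframes carrying tautological forms $\overline{\Omega},\overline{\Pi}$, take the exterior differential system generated by the components of $d\overline{\Omega}+\frac{1}{2}[\overline{\Omega},\overline{\Omega}]-\overline{\Pi}\wedge\overline{\Omega}$, use the Cartan-algebroid conditions (C1)--(C3) for closure and the involutivity of $\Tab$ for involutivity of the system, and conclude by Cartan--K\"ahler in the real-analytic category. The only cosmetic difference is that the paper's auxiliary space is the frame bundle $\mathrm{Fr}(P)$ over the total space $P\subset\mathbb{R}^{r+p}$ rather than a bundle over $U$, but the strategy is the same.
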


\begin{myremark}
	Note that the existence of local solutions to the realization problem trivially implies the existence of a global solution, since realizations of $(\CAlg_U,\Tab_U)$ and $(\CAlg_V,\Tab_V)$, with $U,V\subset N$ open subsets, induce a realization of $(\CAlg_{U\cup V},\Tab_{U\cup V})$ by simply taking the disjoint union of the two realizations. More interesting is the question of whether there exists a global realization $(P,\Omega)$ with $P$ connected. This global problem is still open in the analytic case, while in the smooth case both the local and global problems are open. These problems have proven to be very difficult ones, and, at least in the smooth category, they may require new ideas and possibly new analytic tools, such as an analogue of the Cartan-K\"ahler theorem in the smooth setting. We hope that this modern formulation will provide new insights into this fascinating problem. In Chapter 7 of \cite{Yudilevich2016-2} (and see also \cite{Yudilevich2016}), we propose one possible new approach for tackling the realization problem that is based on a reformulation of the problem that will be discussed in the Section \ref{section:cartanpairs}. 
\end{myremark}

The main ingredients of Kumpera's proof in \cite{Kumpera1964} (which is along the lines of Cartan's proof, but presented in a rigorous and clear fashion)  are the theory of exterior differential systems and the Cartan-K\"ahler Theorem (see e.g. \cite{Bryant1991} for an introduction to these tools). Let us explain the general idea of the proof. Let $(\CAlg,\Tab)$ be a Cartan algebroid over $N$. Since we are looking for local solutions, we may assume that $N$ is an open subset of $\mathbb{R}^n$. Let $r$ and $p$ be the ranks of $\CAlg$ and $\Tab$, respectively, and let $\mathrm{pr}:\mathbb{R}^{r+p} \to \mathbb{R}^n$ be the projection onto the first $n$ coordinates. Set $P:=\mathrm{pr}^{-1}(N)\subset \mathbb{R}^{r+p}$ and denote the restriction of $\mathrm{pr}$ to $P$ by $I:P\to N$. Given $q\in P$, we would like to find a 1-form $\Omega^1(P;I^*\CAlg)$ defined locally around $q$ such that $(P,\Omega)$ is a realization of $(\CAlg,\Tab)$. Note that, since the problem is local, we may shrink $P$ to an arbitrarily small open neighborhood of $q$, and consequently shrink $N$ to $I(P)$.

The main idea is to consider the bundle of ``anchored frames'' of $P$. More precisely, recall that $\rho:\CAlg\to TN$ is the anchor of the almost Lie algebroid $\CAlg$ and let us also write $\rho:\CAlg\oplus\Tab\to TN$ for the map that sends $(\alpha,T)\mapsto \rho(\alpha)$. We consider the following space of linear isomorphisms:
\begin{equation*}
\mathrm{Fr}(P):= \{\; \xi:T_pP\xrightarrow{\simeq} (\CAlg\oplus \Tab)_{I(p)} \;|\; p\in P \text{ and } \rho\circ\xi=dI|_{T_pP}   \;\}.
\end{equation*}
We denote the natural projection from $\mathrm{Fr}(P)$ to $P$ by $\pi$ and the composition $I\circ \pi$ also by $I$, 
\begin{center}
	\begin{tikzpicture}[description/.style={fill=white,inner sep=2pt},bij/.style={above,sloped,inner sep=.5pt}]	
	\matrix (m) [matrix of math nodes, row sep=1.3em, column sep=2.5em, 
	text height=1.5ex, text depth=0.25ex]
	{ 
		\mathrm{Fr}(P)   \\
		P  \\
		N. \\
	};
	\path[->,font=\scriptsize]
	(m-1-1) edge node[right] {$\pi$} (m-2-1)
	(m-2-1) edge node[right] {$I$} (m-3-1)
	(m-1-1) edge[bend right=40] node[left] {$I$} (m-3-1);
	\end{tikzpicture}
\end{center}
The bundle of ``anchored frames'' $\mathrm{Fr}(P)$ comes equipped with two tautological 1-forms, one with values in $\CAlg$ and one with values in $\Tab$:
\begin{equation*}
\begin{split}
& \overline{\Omega} \in \Omega^1(\mathrm{Fr}(P);I^*\CAlg),\hspace{0.5cm} \overline{\Omega}_\xi =  \xi^\CAlg\circ d\pi, \\
& \overline{\Pi} \in \Omega^1(\mathrm{Fr}(P);I^*\Tab),\hspace{0.5cm} \overline{\Pi}_\xi =  \xi^\Tab\circ d\pi.
\end{split}
\end{equation*}
Here $\xi^\CAlg$ and $\xi^\Tab$ denote the $\CAlg$ and $\Tab$ components of $\xi\in\mathrm{Fr}(P)$, respectively. On $\mathrm{Fr}(P)$, we have the following 2-form:
\begin{equation}
\label{eqn:structureequation2form}
d\overline{\Omega} + \frac{1}{2}[\overline{\Omega},\overline{\Omega}] - \overline{\Pi}\wedge \overline{\Omega} \in \Omega^2(\mathrm{Fr}(P);I^*\CAlg).
\end{equation}
The key observation is that a local solution to the realization problem is the same thing as a local section $\eta$ of $\pi:\mathrm{Fr}(P)\to P$ that pulls-back \eqref{eqn:structureequation2form} to zero. Indeed, if this is the case, then $\eta^*\overline{\Omega}\in\Omega^1(P;I^*\CAlg)$ satisfies the structure equation as well as the coframe condition, and is, hence, a solution. 

The main challenge is to construct such a local section $\eta$. The strategy taken in \cite{Kumpera1964} (and by Cartan) is to consider the exterior differential system on $\mathrm{Fr}(P)$ spanned by the components of the vector bundle-valued 2-form \eqref{eqn:structureequation2form}. Integral manifolds of dimension $r+p$ of this exterior differential system that project diffeomorphically to $P$ correspond to the desired local sections. One proves that if $\Tab$ is involutive, then the assumptions of the Cartan-K\"ahler theorem are satisfied, and, hence, such integral manifolds exist. The three integrability conditions in the definition of a Cartan algebroid play a crucial role in proving this. 

\subsection{Examples}

The most important source of examples of Cartan algebroids and realizations is Cartan's Second Fundamental Theorem, which is the subject of Section \ref{section:thesecondfundamentaltheorem}. We will see that any Lie pseudogroup gives rise to a Cartan algebroid and a realization (explicit examples are computed in Section \ref{section:cartanexamples}), and, more generally, Lie-Pfaffian groupoids that satisfy certain conditions give rise to Cartan algebroids and realizations. In this section, we discuss some more general examples and constructions. 

\begin{myexample} [Truncated Lie algebras]
	\label{example:truncatedliealgebras}
	In \cite{Singer1965}, Singer and Sternberg study transitive Lie algebra sheaves (which are, at least morally, the infinitesimal counterpart of transitive Lie pseudogroups) and show that these gives rise to an algebraic structure which they call truncated Lie algebras (Definition 4.1 in \cite{Singer1965}). Truncated Lie algebras are the same thing as Cartan algebroids over a point modulo gauge equivalence. 
\end{myexample}

\begin{myexample}[Abstract Atiyah sequences]
	A transitive Lie algebroid $A$ over a manifold $N$ is the same thing as a Cartan algebroid $(A,0)$ over $N$. Transitive Lie algebroids are also known as ``abstract Atiyah sequences'' for the following reason. 
	
	Given a principal $G$-bundle $\pi:P\to N$ (where $G$ is a Lie group acting from the left), one has an associated exact sequence of vector bundles over $N$ known as the ``Atiyah sequence of P'',
	\begin{equation*}
	0\to P[\mathfrak{g}] \to TP/G \xrightarrow{d\pi} TN \to 0,
	\end{equation*}
	where $\mathfrak{g}$ is the Lie algebra of $G$ and $P[\mathfrak{g}] = (P\times \mathfrak{g}) / G$. The middle term
	\begin{equation*}
	A(P):= TP/G
	\end{equation*}
	has the structure of a transitive Lie algebroid: the anchor is induced by $d\pi$, while the bracket comes from the Lie bracket of vector fields on $P$ and the identification $\Gamma(A(P)) = \mathfrak{X}(P)^G$ (see \cite{Mackenzie2005}, Section 3.2, for more details). The relevance of this sequence comes from the fact that connections on $P$ are the same thing as splittings of the sequence, while the curvature of a connection appears as the failure of the splitting to preserve the Lie brackets. The quotient map $TP\to TP/G$ induces a tautological form
	\begin{equation}
	\label{eqn:atiyausequencetautologicalform}
	\Omega\in\Omega^1(P;A(P)),
	\end{equation}
	with which $(P,\Omega)$ becomes a realization of the Cartan algebroid $(A(P),0)$. 
	
	In general, given a general transitive Lie algebroid $A$ over $N$, there is an exact sequence 
	\begin{equation*}
	0\to \Ker(\rho) \to A\xrightarrow{\rho} TN \to 0
	\end{equation*}
	called an ``abstract Atiyah sequence''. The question of whether there exists a principal bundle $P$ so that $A$ is isomorphic to $A(P)$ is equivalent to the integrability of $A$ as a Lie algebroid. Hence, the integrability of the Lie algebroid $A$ is also closely related to the existence of a realization of the Cartan algebroid $(A,0)$; the only difference is that a general realization $P$ might have an induced action of $\mathfrak{g}$, but this action may fail to integrate to an action of $G$. 
\end{myexample}

\begin{myexample}[Lie groups as pseudogroups]
	\label{example:liegroupsaspseudogroups}
	Changing a bit the point of view of the previous example, any Lie group $G$ can be realized as a pseudogroup in normal form by making it act freely and properly on a space $P$. To be more precise, assume that $\pi:P\to N$ is a principal $G$-bundle, then the left multiplication by elements in $G$ induces a pseudogroup $\Gamma_{G,P}$ on $P$. To see that this is a pseudogroup in normal form, we just use the Lie algebroid $A(P)$ from the previous example and the associated tautological form $\Omega$ in \eqref{eqn:atiyausequencetautologicalform}. It is not difficult to see that $\Gamma_{G,P}$ is characterized by the invariance of $\pi$ and $\Omega$. Note that up to Cartan equivalence of pseudogroups (see Section \ref{section:equivalenceofpseudogroups}), the choice of $P$ is not so important. If $Q$ is another principal $G$-bundle, then $\Gamma_{G,P}$ and $\Gamma_{G,Q}$ admit a common isomorphic prolongation, namely $\Gamma_{G,P\times Q}$ (along the canonical projections from $Q\times P$ to $P$ and $Q$, respectively). The simplest choice for $P$, which we already saw in Example \ref{example:liegroupsliealgebras}, would be $P=G$ with the left action of $G$. Here, $N$ is a point, $A(P)$ is the Lie algebra $\mathfrak{g}$ of $G$ and
	\begin{equation*}
	\Omega=\Omega_\mathrm{MC}\in\Omega^1(G;\mathfrak{g})	
	\end{equation*}
	is the Maurer-Cartan form. 
\end{myexample}

\begin{myexample}
	Here is a general construction of Cartan algebroids that underlies Cartan's proof of the Second Fundamental Theorem (to be discussed in Section \ref{section:cartanalgebroidrealizationofpfaffiangroupoid}). Start with a Lie algebroid $A$ over $N$ and a connection $\nabla:\mathfrak{X}(N)\times\Gamma(A)\to\Gamma(A)$. Define
	\begin{equation*}
	\CAlg=\CAlg(A) := TM\oplus A ,\hspace{1cm} \Tab:=\Hom(TM,A).
	\end{equation*}
	The bracket of $\CAlg$ is defined by
	\begin{equation*}
	[(X,\alpha),(Y,\beta)]_\nabla := ([X,Y],[\alpha,\beta]_\nabla+\nabla_X(\beta) - \nabla_Y(\alpha)),
	\end{equation*}
	which uses the $A$-torsion of $\nabla$,
	\begin{equation*}
	[\alpha,\beta]_\nabla = [\alpha,\beta] - \nabla_{\rho(\alpha)}\beta + \nabla_{\rho(\beta)}\alpha/
	\end{equation*}
	The anchor of $\CAlg$ is just the projection onto the first component. The vector bundle $\Tab$ becomes a bundle of Lie algebras when endowed with the Lie bracket
	\begin{equation*}
	[T,S] := T\circ\rho\circ S - S\circ\rho \circ T,
	\end{equation*}
	where $\rho$ is the anchor of $A$. It can be realized as a subbundle
	\begin{equation*}
	\Tab\hookrightarrow \Hom(\CAlg,\CAlg),\hspace{1cm} T\mapsto \hat{T},
	\end{equation*}
	by setting
	\begin{equation*}
	\hat{T}(\alpha,X) := (0,T(\rho(\alpha)-X)),\hspace{1cm} \forall\; \alpha\in\Gamma(\CAlg),\;X\in\mathfrak{X}(M).
	\end{equation*}

	\begin{myprop}
		The pair $(\CAlg(A),\Tab)$ in the above example is a Cartan algebroid. Up to gauge equivalence, it is independent of the choice of a connection $\nabla$. 
	\end{myprop}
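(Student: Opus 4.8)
The plan is to verify the two assertions in turn: first that $(\CAlg(A),\Tab)$ satisfies the three conditions of Definition \ref{def:cartanalgebroid}, and second that the gauge-equivalence class does not depend on $\nabla$. Throughout I would exploit the splitting $\CAlg = TM\oplus A$, so that it suffices to check each tensorial identity on generators of the form $(X,0)$ and $(0,\alpha)$; by multilinearity and the Leibniz rules already built into the bracket, this reduces every verification to a finite number of bracket computations.

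For the Cartan-algebroid conditions, I would proceed as follows. Condition 1 (closure of $\Tab$ under the commutator bracket) is essentially already recorded in the example: the embedding $T\mapsto\hat T$ was chosen precisely so that the fiberwise commutator in $\Hom(\CAlg,\CAlg)$ restricts to the bracket $[T,S]=T\circ\rho\circ S - S\circ\rho\circ T$ on $\Tab=\Hom(TM,A)$, so I would check that $\widehat{[T,S]} = \hat T\circ\hat S - \hat S\circ\hat T$ by a direct evaluation on $(\alpha,X)$ using $\hat T(\alpha,X)=(0,T(\rho(\alpha)-X))$ and the fact that $\rho$ annihilates the $A$-component of the image. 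For condition 2, I would compute the Jacobiator of $[\cdot,\cdot]_\nabla$ on three generators: the $TM$-component vanishes because the bracket of vector fields satisfies Jacobi, and the $A$-component produces curvature-type terms $R_\nabla(X,Y)\gamma$ together with the Jacobiator of the $A$-torsion bracket, which I expect to assemble into the symmetric expression $t_{u,v}(w)+t_{v,w}(u)+t_{w,u}(v)$ for a tensor $t:\Lambda^2\CAlg\to\Tab$ built essentially out of the curvature of $\nabla$. Condition 3 is the analogous computation measuring the failure of $\hat T$ to be a derivation of $[\cdot,\cdot]_\nabla$; I would produce the required $\CAlg$-connection $\nabla^{\Tab}$ on $\Tab$ out of $\nabla$ (the induced connection on $\Hom(TM,A)$, with the anchor contribution accounted for) and verify \eqref{eqn:cartanalgebroidconditionactiong} again on generators.

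For the independence statement, I would show that changing the connection from $\nabla$ to $\nabla' = \nabla + \theta$, where $\theta\in\Omega^1(N;\Hom(A,A))$ — more precisely a tensorial difference viewed appropriately as a map $\CAlg\to\Tab$ — changes the bracket $[\cdot,\cdot]_\nabla$ exactly by the gauge term $\eta(\alpha)(\beta)-\eta(\beta)(\alpha)$ of Definition \ref{def:gaugeequivalence} for a suitable $\eta\in\Gamma(\Hom(\CAlg,\Tab))$ determined by $\theta$. Since $\Tab$ itself is manifestly independent of $\nabla$, this exhibits $(\CAlg(A)^{\nabla'},\Tab)$ as the gauge transform $(\CAlg(A)^{\nabla},\Tab)^{\eta}$, and Lemma \ref{lemma:gaugetransformationcartanalgebroid} then guarantees it remains a Cartan algebroid; hence the class is well defined.

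The main obstacle I anticipate is bookkeeping in condition 2: correctly identifying the tensor $t$ and checking that all the curvature and torsion terms that appear in the Jacobiator are genuinely $C^\infty(N)$-linear (so that $t$ is a bona fide bundle map $\Lambda^2\CAlg\to\Tab$ rather than a differential operator) and that they land in $\Tab=\Hom(TM,A)\subset\Hom(\CAlg,\Ker\rho)$. The subtlety is that the mixed brackets $[(X,0),(0,\beta)]_\nabla=(0,\nabla_X\beta)$ feed first derivatives of the $A$-entries into the Jacobiator, and I must confirm these cancel against the derivative terms coming from the $A$-torsion bracket, leaving only the pointwise curvature. Once the cancellations are organized, the remaining verifications are routine multilinear algebra.
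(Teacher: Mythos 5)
Your plan coincides with the paper's own (direct) proof: the paper verifies the three axioms on generators, writes down an explicit $\bar{t}$ involving both the curvature $R$ of $\nabla$ and a second tensor $\bar{R}$ measuring the failure of $\nabla$ to be a derivation of the bracket of $A$ (exactly the two sources of terms you anticipate from the curvature and from the Jacobiator of the torsion bracket), writes down $\bar{\nabla}$ (for which it additionally fixes an auxiliary torsion-free connection on $TM$ --- a choice your ``induced connection on $\Hom(TM,A)$'' would also implicitly require, and which is harmless by Proposition \ref{prop:freedomcartanalgebroid}), and proves independence of $\nabla$ by exhibiting $\nabla'-\nabla$, extended by zero on $TM$, as the gauge equivalence, which is verbatim your argument. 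The only element you do not mention is the paper's alternative indirect proof, which identifies $(\CAlg(A),\Tab)$ with the almost Cartan algebroid of the Lie-Pfaffian groupoid $(J^1\mathcal{G},\omega)$ and invokes Theorem \ref{theorem:piintegralcartanehresmannconnection}.
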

	
	\begin{proof}
		First, given two connections $\nabla$ and $\nabla'$ as in the example, we would like to show that their induced almost Cartan algebroids are gauge equivalent. Indeed, the difference of the two connections induces a vector bundle map $(\nabla'-\nabla):A\to \Hom(TM,A)$, which, in turn, induces a vector bundle map from $\CAlg=TM\oplus A$ to $\Tab=\Hom(TM,A)$ by acting trivially on $TM$. This is the desired gauge equivalence (simple computation). 
		
		There are two ways to prove that $(\CAlg,\Tab)$ is a Cartan algebroid \textendash\, a direct proof and an indirect one. The direct proof is to show that $(\CAlg,\Tab)$ is a Cartan algebroid by verifying the axioms of Definition \ref{def:cartanalgebroid}. A straightforward computation shows that axiom 1 is satisfied. We are left with finding a vector bundle map $\bar{t}:\Lambda^2\CAlg\to\Tab$ and a connection $\bar{\nabla}:\Gamma(\CAlg)\times \Gamma(\Tab)\to \Gamma(\Tab)$ with which axioms 2 and 3 of are satisfied. The idea is straightforward (but the computations are a bit tedious): compute the left hand side of the equations in both axioms and try to decompose the resulting expressions so as to obtain a suitable $\bar{t}$ and $\bar{\nabla}$. We will write down explicit solutions, i.e. expressions for $\bar{t}$ and $\bar{\nabla}$ that are obtained in this way, and leave it as an exercise to check that these satisfy axioms 2 and 3. To write down $\bar{t}$, we first define the tensors $R:\Lambda^2TM\to \Hom(A,A)$ and $\bar{R}:\Lambda^2A\to \Hom(TM,A)$ by
		\begin{equation*}
		\begin{split}
		R_{X,Y}(\alpha)&:= \nabla_{[X,Y]}\alpha - \nabla_X\nabla_Y\alpha + \nabla_Y\nabla_X\alpha,\\
		\bar{R}_{\alpha,\beta}(X)&:= \nabla_X[\alpha,\beta] - [\nabla_X\alpha,\beta] - [\alpha,\nabla_X\beta] + \nabla_{\rho(\nabla_X\alpha)}\beta - \nabla_{\rho(\nabla_X\beta)}\alpha  \\ & \hspace{1cm} - \nabla_{[X,\rho(\alpha)]}\beta + \nabla_{[X,\rho(\beta)]}\alpha,
		\end{split}
		\end{equation*}
		Then, for all $X,Y,Z\in\mathfrak{X}(M)$ and $\alpha,\beta,\gamma\in\Gamma(A)$,
		\begin{equation*}
		\begin{split}
		& \bar{t}_{(X,\alpha),(Y,\beta)}(Z,\gamma):= (0,\bar{R}_{\alpha,\beta}(\rho(\gamma)-Z) + \frac{1}{2}R_{\rho(\gamma)-Z,\rho(\alpha)-X}(\beta) + \frac{1}{2}R_{\rho(\beta)-Y,\rho(\gamma)-Z}(\alpha) ).
		\end{split}
		\end{equation*} 
		To write down $\bar{\nabla}$, we choose a torsion-free connection on $TM$, which (by abuse of notation) we denote by $\nabla:\mathfrak{X}(M)\times\mathfrak{X}(M)\to\mathfrak{X}(M)$. Thus, $[X,Y]=\nabla_X Y - \nabla_Y X$ for all $X,Y\in\mathfrak{X}(M)$. 
		\begin{equation*}
		\begin{split}
		& (\bar{\nabla}_{(X,\alpha)}\hat{T})(Y,\beta):= [(X,\alpha),\hat{T}(Y,\beta)]_\nabla + \big(0,T\big(\rho (\nabla_{\rho(\alpha)-X}\beta)  - \nabla_{\rho(\alpha)}\rho(\beta) + \nabla_X Y \big)\big),
		\end{split}
		\end{equation*}
		for all $X,Y\in\mathfrak{X}(M)$, $\alpha,\beta \in\Gamma(A)$ and $T\in\Gamma(\Tab)$.
		
		Alternatively, as an indirect proof, we can construct a realization of $(\CAlg(A),\Tab)$ by noting that this example is a simple case of the construction described in Section \ref{section:cartanalgebroidrealizationofpfaffiangroupoid} (we recommend returning to this example after reading that section). Assuming that $A$ integrates to a Lie groupoid $\mathcal{G}$ (in fact, it suffices to have an integration to a local groupoid which always exists), $(\CAlg(A),\Tab)$ is precisely the almost Cartan algebroid constructed out the of Lie-Pfaffian groupoid $(J^1\mathcal{G},\omega)$, where $\omega\in\Omega^1(J^1\mathcal{G};t^*A)$ is the Cartan form. Note that a linear connection on $A$ is the same thing as a choice of a splitting of \eqref{eqn:secondfundamentaltheoremsplitting}. Finally, $(J^1\mathcal{G},\omega)$ admits an integral Cartan-Ehresmann connection, which is the same thing as a section of the affine bundle $\text{pr}:J^2\mathcal{G}\to J^1\mathcal{G}$, and the proposition follows from Theorem \ref{theorem:piintegralcartanehresmannconnection}. 
	\end{proof}
	
\end{myexample}

\begin{myexample}(Restrictions of Cartan Algebroids)
	\label{example:restrictioncartanalgebroid}
	A Cartan algebroid $(\CAlg,\Tab)$ over $N$ can be restricted to any submanifold $S\subset N$, giving rise to a Cartan algebroid $(\CAlg_S,\Tab_S)$ over $S$, where $\Tab_S:= \Tab|_S$ (the restriction of the vector bundle to $S$), $\CAlg_S := \{\; \alpha\in\CAlg\;|\; \rho(\alpha)\in TS \;\}$ and the bracket is uniquely determined by
	\begin{equation*}
	[\alpha|_S,\beta|_S] = [\alpha,\beta]|_S,
	\end{equation*}
	for all $\alpha,\beta\in \Gamma(\CAlg)$. A realization $(P,\Omega)$ of $(\CAlg,\Tab)$ induces a realization $(P_S,\Omega_S)$ of $(\CAlg_S,\Tab_S)$ by taking the restrictions $P_S := I^{-1}(S)$, $I_S:=I|_{P_S}$ and $\Omega_S:=\Omega|_S$. This restriction operation underlies Cartan's trick of restricting a realization to a complete transversal that will be discussed in Section \ref{section:firstfundametnaltheorem}. 
\end{myexample}

\subsection{An Alternative Approach to Cartan Algebroids: Cartan Pairs}
\label{section:cartanpairs}

We conclude this section with an alternative but equivalent point of view on Cartan algebroids and the realization problem that is more intuitive and which has the advantage that the formulas (namely \eqref{eqn:cartanalgebroidconditionjacobi} and \eqref{eqn:cartanalgebroidconditionactiong}) become substantially simpler. We will see that Cartan algebroids up to gauge equivalence are the same thing as  \textit{Cartan pairs} up to isomorphism, and that realizations of one induce realizations of the other. 

\subsubsection{Cartan Pairs}

Let $A$ be an almost Lie algebroid over $N$. The Jacobiator of $A$ is the tensor $\text{Jac}_A\in \Gamma(\Hom(\Lambda^3 A,A))$ that is defined at the level of sections by 
\begin{equation*}
\text{Jac}_A(\alpha,\beta,\gamma) := [[\alpha,\beta],\gamma] + [[\beta,\gamma],\alpha] + [[\gamma,\alpha],\beta], \;\;\;\;\; \forall\; \alpha,\beta,\gamma\in \Gamma(A). 
\end{equation*}
We say that a vector subbundle $\Tab\subset A$ is \textbf{involutive} if $[\Gamma(\Tab),\Gamma(\Tab)]\subset \Gamma(\Tab)$.  

\begin{mydef}
	\label{def:cartanpair}
	A \textbf{Cartan pair} over a manifold $N$ is a pair $(A,\Tab)$ consisting of a transitive almost Lie algebroid $(A,[\cdot,\cdot],\rho)$ over $N$ and an involutive vector subbundle $\Tab\subset \Ker\rho \subset  A$ such that
	\begin{equation}
	\label{eqn:weakjacobiator}
	\mathrm{Jac}_A \equiv 0\;(\mathrm{mod}\; \Tab)
	\end{equation}  
\end{mydef}

Condition \eqref{eqn:weakjacobiator} can be rephrased as saying that $\mathrm{Jac}_A$, applied on any three sections of $A$, must take values in $\Tab$. When $\Tab=0$, a Cartan pair is simply a transitive Lie algebroid. 

A Cartan pair $(A,\Tab)$ has an associated vector bundle map
\begin{equation}
\label{eqn:cartanpairtableaubundle}
\iota: \Tab \to \Hom(A/\Tab,A/\Tab),
\end{equation}
which is defined at the level of sections by
\begin{equation*}
\iota(T)(\text{pr}(\alpha)) := \text{pr}([T,\alpha]),\hspace{1cm} \forall \; T\in\Gamma(\Tab), \; \alpha\in \Gamma(A),
\end{equation*}
where $\text{pr}:A\to A/\Tab$ is the quotient map. Note that the map is well defined (i.e. the formula does not depend on the representative $\alpha$) because $\Tab$ is involutive, and the right hand side is indeed $C^\infty(N)$-linear in both the $\alpha$ and $T$ slots because $\Tab$ is killed by both $\rho$ and $\text{pr}$. Equipping $\Hom(A/\Tab,A/\Tab)$ with the commutator bracket, we have that:

\begin{mylemma}
	\label{lemma:cartanpairpreservesbracket}
	Let $(A,\Tab)$ be a Cartan pair. The map \eqref{eqn:cartanpairtableaubundle} preserves the brackets, i.e.
	\begin{equation*}
	\iota([S,T]) = [\iota(S),\iota(T)],\hspace{1cm} \forall \; S,T\in\Gamma(\Tab).
	\end{equation*}  
\end{mylemma}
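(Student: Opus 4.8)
The plan is to verify the claimed identity by a direct computation at the level of sections, reducing it to the defining weak Jacobi condition of a Cartan pair. First I would note that the left-hand side is even meaningful: since $\Tab$ is involutive, $[S,T]\in\Gamma(\Tab)$ for $S,T\in\Gamma(\Tab)$, so $\iota([S,T])$ is defined. Both $\iota([S,T])$ and $[\iota(S),\iota(T)]$ are endomorphisms of $A/\Tab$, and since $\iota$ is $C^\infty(N)$-linear (as recorded just before the lemma statement), it suffices to check that these two endomorphisms agree when applied to a class $\mathrm{pr}(\alpha)$ for an arbitrary $\alpha\in\Gamma(A)$.

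Next I would expand both sides. By the definition of $\iota$ one has $\iota([S,T])(\mathrm{pr}(\alpha)) = \mathrm{pr}([[S,T],\alpha])$, while
\[
[\iota(S),\iota(T)](\mathrm{pr}(\alpha)) = \iota(S)(\mathrm{pr}([T,\alpha])) - \iota(T)(\mathrm{pr}([S,\alpha])) = \mathrm{pr}([S,[T,\alpha]]) - \mathrm{pr}([T,[S,\alpha]]).
\]
Hence the difference of the two sides equals $\mathrm{pr}$ applied to the combination $[[S,T],\alpha] - [S,[T,\alpha]] + [T,[S,\alpha]]$. The key observation is that this is exactly the Jacobiator $\mathrm{Jac}_A(S,T,\alpha)$: using only antisymmetry of the bracket,
\[
\mathrm{Jac}_A(S,T,\alpha) = [[S,T],\alpha] + [[T,\alpha],S] + [[\alpha,S],T] = [[S,T],\alpha] - [S,[T,\alpha]] + [T,[S,\alpha]].
\]
The defining condition of a Cartan pair, $\mathrm{Jac}_A\equiv 0\;(\mathrm{mod}\;\Tab)$, then yields $\mathrm{Jac}_A(S,T,\alpha)\in\Gamma(\Tab)$, so its image under $\mathrm{pr}$ vanishes; therefore the difference of the two sides is zero, which is the assertion.

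I do not expect a genuine obstacle here: the statement is a direct consequence of the weak Jacobi identity, and in an honest Lie algebroid (where $\Tab=0$) it would just be the Jacobi identity itself. The only point requiring care is keeping the signs straight when rewriting the Jacobiator via antisymmetry so that it lines up with the ``associator'' combination produced by the commutator $[\iota(S),\iota(T)]$. Well-definedness of $\iota$ (independence of the representative $\alpha$ and $C^\infty(N)$-linearity) has already been established in the setup preceding the lemma, so it need not be revisited in the proof.
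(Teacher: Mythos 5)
Your proposal is correct and is essentially the paper's own proof: the paper likewise applies the weak Jacobi condition to the triple $(S,T,\alpha)$ and identifies $\mathrm{pr}\circ\mathrm{Jac}_A(S,T,\alpha)$ with $\iota([S,T])(\alpha)-[\iota(S),\iota(T)](\alpha)$. Your sign bookkeeping in rewriting the Jacobiator as the commutator defect is accurate, so nothing further is needed.
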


\begin{proof}
	Applying \eqref{eqn:weakjacobiator} on $\alpha\in\Gamma(A),\;S,T\in\Gamma(\Tab)$, 
	\begin{equation*}
	\begin{split}
	0 &= \text{pr}([[S,T],\alpha] + [[T,\alpha],S] + [[\alpha,S],T] ) \\
	&= \iota([S,T])(\alpha) - [\iota(S),\iota(T)](\alpha) \qedhere
	\end{split}
	\end{equation*}
\end{proof}

\begin{mydef}
	\index{Cartan pair!standard}
	A Cartan pair $(A,\Tab)$ is said to be \textbf{standard} if the map \eqref{eqn:cartanpairtableaubundle} is injective.
\end{mydef}

\begin{mylemma}
	If a Cartan pair $(A,\Tab)$ is standard, then $\Tab$ is a bundle of Lie algebras. 
\end{mylemma}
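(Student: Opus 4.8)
The plan is to realize $\Tab$ as a Lie algebroid with vanishing anchor, i.e. a bundle of Lie algebras. Since $\Tab\subset\Ker\rho$ and $\Tab$ is involutive, the bracket of $A$ restricts to a bracket on $\Gamma(\Tab)$, and the Leibniz identity together with $\rho|_\Tab=0$ forces this restricted bracket to be $C^\infty(N)$-bilinear, hence a genuine fiberwise (tensorial) operation. Thus $\Tab$ is automatically a bundle of anti-symmetric algebras, and the only thing left to establish is the fiberwise Jacobi identity, namely $\mathrm{Jac}_A(S,T,U)=0$ for all $S,T,U\in\Gamma(\Tab)$. Note first that $\mathrm{Jac}_A(S,T,U)\in\Gamma(\Tab)$: this is immediate from involutivity, since every nested bracket of sections of $\Tab$ lands back in $\Gamma(\Tab)$ (condition \eqref{eqn:weakjacobiator} would also suffice, but involutivity gives it directly).

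The key step is to feed $\mathrm{Jac}_A(S,T,U)$ through the bracket-preserving map $\iota$ of \eqref{eqn:cartanpairtableaubundle} and then invoke standardness. By Lemma \ref{lemma:cartanpairpreservesbracket}, $\iota$ carries each nested $\Tab$-bracket occurring in $\mathrm{Jac}_A(S,T,U)$ to the corresponding nested commutator in $\Hom(A/\Tab,A/\Tab)$, so that
\[
\iota(\mathrm{Jac}_A(S,T,U)) = [[\iota(S),\iota(T)],\iota(U)] + [[\iota(T),\iota(U)],\iota(S)] + [[\iota(U),\iota(S)],\iota(T)].
\]
The right-hand side is precisely the Jacobiator of $\iota(S),\iota(T),\iota(U)$ for the commutator bracket on endomorphisms of $A/\Tab$; since the commutator bracket of any associative algebra satisfies the Jacobi identity, this expression vanishes. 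Hence $\iota(\mathrm{Jac}_A(S,T,U))=0$. Because the Cartan pair is standard, $\iota$ is fiberwise injective, and an element of $\Gamma(\Tab)$ killed by a fiberwise injective bundle map must vanish identically. Therefore $\mathrm{Jac}_A(S,T,U)=0$, which is the Jacobi identity on $\Tab$.

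There is no real analytic obstacle here; the entire content lies in recognizing that standardness is exactly the rigidity device that upgrades the ``Jacobi-up-to-$\Tab$'' hypothesis into an honest Jacobi identity on $\Tab$ itself, by transporting the question into the associative model $\Hom(A/\Tab,A/\Tab)$ where Jacobi is free. The only point demanding a little care is verifying that the restricted bracket is tensorial in the first place, so that the phrase ``bundle of Lie algebras'' is meaningful pointwise; this is handled by the vanishing of $\rho$ on $\Tab$ as noted above.
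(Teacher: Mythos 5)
Your proof is correct and follows essentially the same route as the paper: push the Jacobiator through the bracket-preserving map $\iota$ of \eqref{eqn:cartanpairtableaubundle}, observe that it lands in the commutator algebra $\Hom(A/\Tab,A/\Tab)$ where Jacobi holds automatically, and conclude by injectivity of $\iota$ (standardness). Your preliminary check that the restricted bracket is tensorial (via the Leibniz rule and $\rho|_\Tab=0$) is a welcome detail the paper leaves implicit, but it does not change the argument.
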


\begin{proof}
	Since $\iota$ preserves the bracket, then $\iota(\Tab)\subset \Hom(A/\Tab,A/\Tab)$ is closed under the bracket. Now, since the bracket of $\iota(\Tab)$ satisifies the Jacobi identity, it follows that $\iota\circ\text{Jac}_\Tab=0$, and so $\text{Jac}_\Tab=0$ by injectivity of $\iota$.
\end{proof}

As we will see, the following notion of an isomorphism of Cartan pairs will play the role that gauge equivalence has for Cartan algebroids. 

\begin{mydef}
	\label{def:isomorphismcartanpairs}
	Two Cartan pairs $(A,\Tab)$ and $(A',\Tab')$ over $N$ are isomorphic if there exists a vector bundle isomorphism $\psi:A\to A'$ such that $\psi(\Tab)=\Tab'$, $\rho\circ\psi=\rho'$ and
	\begin{equation}
	\label{eqn:isomorphismcartanpairs}
	\psi([\alpha,\beta]) \equiv [\psi(\alpha),\psi(\beta)] \;\;\;(\mathrm{mod}\; \Tab),\hspace{1cm} \forall\;\alpha,\beta\in\Gamma(A).
	\end{equation}
\end{mydef}

It is straightforward to check that, as a consequence of \eqref{eqn:isomorphismcartanpairs}, an isomorphism $\psi$ between two Cartan pairs commutes with the two maps $\iota$ and $\iota'$ (defined in \eqref{eqn:cartanpairtableaubundle}) induced by each of the Cartan pairs. 

\subsubsection{Cartan Algebroids vs. Cartan Pairs}

Up to isomorphism on the one side and gauge equivalence on the other,
Cartan pairs and Cartan algebroids are the same thing. We start by constructing  a Cartan pair out of a Cartan algebroid $(\CAlg,\Tab)$ over $N$. The construction is analogous to the construction of a non-abelian extension of a Lie algebroid (\cite{Mackenzie1987}, Chapter 4, Section 3), and depends on a choice of $t:\Lambda^2\CAlg\to \Tab$ and $\nabla:\Gamma(\CAlg)\times\Gamma(\Tab)\to\Gamma(\Tab)$ as in Definition \ref{def:cartanalgebroid} of a Cartan algebroid. Let $(t,\nabla)$ be such a pair. We set
\begin{equation*}
A:=\CAlg\oplus \Tab
\end{equation*}
and equip it with an anchor induced by the anchor of $\CAlg$,
\begin{equation}
\label{eqn:cartanpairprolongationanchor}
\rho:A\to TN,\hspace{1cm} \rho(\alpha,S) := \rho(\alpha),
\end{equation}
and a bracket 
\begin{equation}
\label{eqn:cartanpairprolongationbracket}
[\cdot,\cdot]:\Gamma(A)\times \Gamma(A)\to \Gamma(A)
\end{equation}
defined by
\begin{equation*}
\label{eqn:cartanpairprolongationbracketformula}
[(\alpha,S),(\beta,T)]:= ([\alpha,\beta] + S(\beta) - T(\alpha) ,\;-t_{\alpha,\beta} + \nabla_\alpha T - \nabla_{\beta} S + [S,T]),
\end{equation*}
for all $\alpha,\beta\in\Gamma(\CAlg),\; S,T\in\Gamma(\Tab)$. 

\begin{myprop}
	\label{prop:cartanalgebroidtocartanpair}
	Let $(\CAlg,\Tab)$ be a Cartan algebroid over $N$. The induced pair $(\CAlg\oplus\Tab,\Tab)$, for a fixed choice of $(t,\nabla)$, is a standard Cartan pair. Moreover, up to isomoprhism, the resulting Cartan pair is independent of the choice of $(t,\nabla)$. 
\end{myprop}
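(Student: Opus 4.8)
The plan is to check directly that $(\CAlg\oplus\Tab,\Tab)$ meets the three requirements of a Cartan pair (Definition \ref{def:cartanpair}): that $A:=\CAlg\oplus\Tab$ with anchor $\rho(\alpha,S)=\rho(\alpha)$ and the prescribed bracket is a transitive almost Lie algebroid, that $0\oplus\Tab$ is an involutive subbundle of $\Ker\rho$, and that $\mathrm{Jac}_A$ lands in $\Tab$; then to verify standardness and independence of $(t,\nabla)$. First I would dispatch the structural checks. Antisymmetry is immediate from the antisymmetry of the $\CAlg$-bracket, of $t$, and of the commutator on $\Tab$. The Leibniz identity follows by expanding $[(\alpha,S),f(\beta,T)]$ and using that each $S\in\Tab$ is $C^\infty(N)$-linear, that $t$ is a tensor, and that $\nabla$ is a $\CAlg$-connection, so that the terms $L_{\rho(\alpha)}(f)\beta$ and $L_{\rho(\alpha)}(f)T$ appear with the correct coefficient. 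Anchor-compatibility uses crucially that $\Tab\subset\Hom(\CAlg,\Ker\rho)$, which kills the $\rho(S(\beta))$ and $\rho(T(\alpha))$ contributions, and transitivity is inherited from $\CAlg$. Involutivity is clear since $[(0,S),(0,T)]=(0,[S,T])$ with $[S,T]\in\Gamma(\Tab)$ by condition 1 of Definition \ref{def:cartanalgebroid}, and plainly $0\oplus\Tab\subset\Ker\rho$.

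The heart of the argument, and the step I expect to be the main obstacle, is \eqref{eqn:weakjacobiator}, i.e. the vanishing of the $\CAlg$-component of $\mathrm{Jac}_A$ (membership in $\Tab=0\oplus\Tab$ being equivalent to vanishing of that component). I would compute the $\CAlg$-component of $[[(\alpha,S),(\beta,T)],(\gamma,U)]$, namely $[[\alpha,\beta]+S(\beta)-T(\alpha),\gamma]+\big(-t_{\alpha,\beta}+\nabla_\alpha T-\nabla_\beta S+[S,T]\big)(\gamma)-U([\alpha,\beta]+S(\beta)-T(\alpha))$, and take the cyclic sum, writing $\mathfrak{S}$ for the sum over the three cyclic permutations of $\big((\alpha,S),(\beta,T),(\gamma,U)\big)$. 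The cancellation then splits into three independent blocks. The purely $\CAlg$-valued terms $\mathfrak{S}\,[[\alpha,\beta],\gamma]$ cancel $\mathfrak{S}\,(-t_{\alpha,\beta}(\gamma))$ by condition 2, equation \eqref{eqn:cartanalgebroidconditionjacobi}. The terms linear in a single endomorphism (those of the form $[S(\beta),\gamma]$, the $(\nabla\,\cdot)(\cdot)$ terms, and $-U([\alpha,\beta])$) regroup, for each of $S$, $T$, $U$ separately, into exactly the combination of condition 3, equation \eqref{eqn:cartanalgebroidconditionactiong}; applying \eqref{eqn:cartanalgebroidconditionactiong} (say to $S$ on the pair $\beta,\gamma$) rewrites the bracket terms so that, together with the $\nabla$-terms, each block collapses to $S([\beta,\gamma])$, which is cancelled by the matching $-S([\beta,\gamma])$ coming from $\mathfrak{S}\,(-U([\alpha,\beta]))$. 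Finally the commutator terms $\mathfrak{S}\,[S,T](\gamma)$ cancel the double-composition terms $\mathfrak{S}\,(-U(S(\beta))+U(T(\alpha)))$ purely algebraically once $[S,T]=S\circ T-T\circ S$ is expanded. Careful bookkeeping of the cyclic images is the only genuine difficulty; crucially, no identity beyond conditions 2 and 3 is required, and one neither obtains nor needs the full Jacobi identity for $A$ (the $\Tab$-component of $\mathrm{Jac}_A$ is, in general, nonzero).

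Next I would establish standardness. Under the canonical identification $A/\Tab\cong\CAlg$, $(\alpha,S)\mapsto\alpha$, one computes $[(0,S),(\alpha,T)]=(S(\alpha),\,-\nabla_\alpha S+[S,T])$, so the map $\iota$ of \eqref{eqn:cartanpairtableaubundle} is simply $\iota(S)(\alpha)=S(\alpha)$. Thus $\iota$ is the inclusion $\Tab\hookrightarrow\Hom(\CAlg,\CAlg)$ with which $\Tab$ was given, hence injective, and the Cartan pair is standard. As a sanity check, the $\CAlg$-component $S(\alpha)$ is independent of the chosen lift $T$, reconfirming that $\iota$ is well defined.

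Finally, for independence of $(t,\nabla)$, the key observation is that the $\CAlg$-component of the bracket, $[\alpha,\beta]+S(\beta)-T(\alpha)$, involves neither $t$ nor $\nabla$; two choices $(t,\nabla)$ and $(t',\nabla')$ therefore yield brackets that agree in their $\CAlg$-component and differ only within $0\oplus\Tab$. Hence the identity map $\Id\colon\CAlg\oplus\Tab\to\CAlg\oplus\Tab$ satisfies $\Id([\alpha,\beta]_{(t,\nabla)})\equiv[\alpha,\beta]_{(t',\nabla')}\pmod{\Tab}$, as well as $\rho\circ\Id=\rho$ and $\Id(\Tab)=\Tab$, which is precisely the data of an isomorphism of Cartan pairs in the sense of \eqref{eqn:isomorphismcartanpairs} and Definition \ref{def:isomorphismcartanpairs}. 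This is consistent with Proposition \ref{prop:freedomcartanalgebroid}, which identifies $t'-t$ and $\nabla'(S)-\nabla(S)$ with Spencer cocycles, but the present argument requires nothing beyond the fact that only the $\Tab$-component of the bracket is affected.
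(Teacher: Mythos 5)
Your proposal is correct and follows essentially the same route as the paper: the weak Jacobi condition \eqref{eqn:weakjacobiator} is reduced to conditions 2 and 3 of Definition \ref{def:cartanalgebroid} (the paper evaluates $\mathrm{pr}\circ\mathrm{Jac}_A$ on the four types of pure triples built from $(\alpha,0)$ and $(0,S)$ and invokes tensoriality, whereas you carry out the full cyclic sum and regroup into the same blocks), and independence of $(t,\nabla)$ is witnessed by the identity map in both. Your explicit check that $\iota$ of \eqref{eqn:cartanpairtableaubundle} recovers the given inclusion $\Tab\hookrightarrow\Hom(\CAlg,\CAlg)$, hence injectivity, is a detail the paper leaves implicit even though standardness is asserted in the statement.
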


\begin{proof}
	Fix a choice of $t$ and $\nabla$. Clearly $\Tab\subset \Ker\rho$, and it is straightforward to verify that $A$ is an almost Lie algebroid. So, we are only left with checkgin that, for any $\alpha,\beta,\gamma\in\Gamma(\CAlg)$ and $S,T,U\in\Gamma(\Tab)$, 
	\begin{equation*}
	\begin{split}
	& \text{pr}\circ\text{Jac}_A\big((\alpha,0),(\beta,0),(\gamma,0)\big) \\ & \hspace{1cm} = [[\alpha,\beta],{\gamma}] + [[\beta,{\gamma}],\alpha] + [[{\gamma},\alpha],\beta] - t_{\alpha,\beta}({\gamma}) - t_{\beta,{\gamma}}(\alpha) - t_{{\gamma},\alpha}(\beta) =0 , \\
	& \text{pr}\circ\text{Jac}_A\big((\alpha,0),(\beta,0),(0,T)\big) \\ & \hspace{1cm} = -T([\alpha,\beta]) + [T(\alpha),\beta] + [\alpha,T(\beta)] + \nabla_\beta(T)(\alpha) - \nabla_\alpha(T)(\beta)=0, \\
	& \text{pr}\circ\text{Jac}_A\big((\alpha,0),(0,S),(0,T)\big)  =  [S,T](\alpha) - S(T(\alpha)) + T(S(\alpha))  = 0, \\
	& \text{pr}\circ\text{Jac}_A\big((0,S),(0,T),(0,U)\big)  =  0 .
	\end{split}
	\end{equation*}  
	Given another choice $(t',\nabla')$, the identity map $\Id:\mathcal{C}\oplus\Tab\to \mathcal{C}\oplus\Tab$ gives an isomorphism between the Cartan pair obtained using $(t,\nabla)$ and that obtained using $(t',\nabla')$. 
\end{proof}

\begin{myremark}
	If we relax the notion of a Cartan algebroid by requiring for there to be a map $\Tab\to \Hom(\CAlg,\CAlg)$ rather than an inclusion $\Tab\subset \Hom(\CAlg,\CAlg)$, then we will obtain Cartan pairs that are not necessarily standard. In the case of Cartan pairs, it is much more natural to impose the \textit{standard} property separately rather than add it to the initial definition. In the case of a Cartan algebroid, we chose to impose the stronger property in order to be consistent with Cartan's local picture. 
\end{myremark}

In the other direction, a standard Cartan pair $(A,\Tab)$ induces a Cartan algebroid $(A/\Tab,\Tab)$. The construction depends on a choice of a splitting of the short exact sequence
\begin{equation}
\label{eqn:cartanpairsplitting}
\begin{tikzpicture}[description/.style={fill=white,inner sep=2pt},bij/.style={above,sloped,inner sep=.5pt}]	
\matrix (m) [matrix of math nodes, row sep=2.5em, column sep=1.6em, 
text height=2.5ex, text depth=0.25ex]
{ 
	0 & \Tab & A & A/\Tab & 0. \\
};
\path[->,font=\scriptsize]
(m-1-1) edge node[auto] {} (m-1-2)
(m-1-2) edge node[auto] {} (m-1-3)
(m-1-3) edge node[below] {$ \text{pr} $} (m-1-4)
(m-1-4) edge node[auto] {} (m-1-5)
(m-1-4) edge[bend right=50] node[above] {$ \xi $} (m-1-3);
\end{tikzpicture}
\end{equation}
We equip the vector bundle $A/\Tab$ with the bracket
\begin{equation*}
[\cdot,\cdot]:\Gamma(A/\Tab)\times \Gamma(A/\Tab)\to \Gamma(A/\Tab),\hspace{1cm} [\alpha,\beta]:= \text{pr}([\xi(\alpha),\xi(\beta)]),
\end{equation*}
and the anchor
\begin{equation*}
\rho:A/\Tab\to TN,\hspace{1cm} \rho(\alpha) := \rho(\xi(\alpha)).
\end{equation*}
Note that the bracket depends on the choice of $\xi$, but the anchor does not. Since the Cartan pair is standard, we have an inclusion $\Tab \hookrightarrow \Hom(A/\Tab,A/\Tab)$. 

\begin{myprop}
	\label{prop:cartanpairtocartanalgebroid}
	Let $(A,\Tab)$ be a standard Cartan pair and let $\xi:A/\Tab\to A$ be a a choice of a splitting \eqref{eqn:cartanpairsplitting}. The pair $(A/\Tab,\Tab)$ equipped with the structure defined above is a Cartan algebroid. Moreover, up to gauge equivalence, the resulting Cartan algebroid is independent of the choice of $\xi$.
\end{myprop}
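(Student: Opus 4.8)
The plan is to mirror the construction of Proposition \ref{prop:cartanalgebroidtocartanpair}: the splitting $\xi$ identifies $A$ with $\CAlg\oplus\Tab$, where $\CAlg:=A/\Tab$, and from the bracket of $A$ one reads off the data $t$ and $\nabla$ as $\Tab$-components. First I would verify that $(\CAlg,[\cdot,\cdot],\rho)$ is a transitive almost Lie algebroid. The anchor $\rho(\alpha)=\rho(\xi\alpha)$ is well defined and independent of $\xi$ because $\Tab\subset\Ker\rho$, and it is surjective since $\rho:A\to TN$ is. The Leibniz identity follows from that of $A$ together with the $C^\infty(N)$-linearity of $\xi$, and the compatibility $\rho([\alpha,\beta])=[\rho\alpha,\rho\beta]$ follows from that of $A$ using that $\mathrm{id}-\xi\circ\mathrm{pr}$ takes values in $\Tab\subset\Ker\rho$. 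Since the pair is standard, $\iota$ from \eqref{eqn:cartanpairtableaubundle} is an injection $\Tab\hookrightarrow\Hom(\CAlg,\CAlg)$, and its image lands in $\Hom(\CAlg,\Ker\rho)$ because $\rho(\iota(T)(\mathrm{pr}\,\alpha))=\rho(\mathrm{pr}[T,\alpha])=\rho([T,\alpha])=[\rho T,\rho\alpha]=0$. Thus $(\CAlg,\Tab)$ is an almost Cartan algebroid.

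Next I would define the candidate data by taking $\Tab$-components: $t_{\alpha,\beta}:=\xi[\alpha,\beta]-[\xi\alpha,\xi\beta]$ and $\nabla_\alpha T:=(\mathrm{id}-\xi\circ\mathrm{pr})[\xi\alpha,T]$. A short Leibniz computation, using $\rho T=0$ and that $\rho(\xi\alpha)=\rho(\alpha)$, shows that $t$ is a skew tensor $\Lambda^2\CAlg\to\Tab$ and $\nabla$ a $\CAlg$-connection on $\Tab$. The key point is that, with these choices, the map $\Phi:\CAlg\oplus\Tab\to A$, $\Phi(\alpha,S)=\xi\alpha+S$, intertwines the bracket \eqref{eqn:cartanpairprolongationbracketformula} with the bracket of $A$; this is a direct check on the four generating types of pairs, using $\mathrm{pr}[\xi\alpha,T]=-\iota(T)(\alpha)$ for the mixed terms and involutivity of $\Tab$ for the $\Tab$-$\Tab$ term. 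Consequently the four Jacobiator identities computed in the proof of Proposition \ref{prop:cartanalgebroidtocartanpair}, which express $\mathrm{pr}\circ\mathrm{Jac}$ of the bracket \eqref{eqn:cartanpairprolongationbracketformula} purely as the defect of conditions (1)--(3) of Definition \ref{def:cartanalgebroid} (and are valid for any $t,\nabla$), hold verbatim for $A$. Since $\mathrm{Jac}_A\equiv 0\;(\mathrm{mod}\;\Tab)$, i.e. $\mathrm{pr}\circ\mathrm{Jac}_A=0$, all three defects vanish, yielding exactly conditions (2) and (3), while condition (1) is Lemma \ref{lemma:cartanpairpreservesbracket} together with involutivity. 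Hence $(\CAlg,\Tab)$ is a Cartan algebroid.

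For independence of the splitting, two splittings differ by an $\eta\in\Gamma(\Hom(\CAlg,\Tab))$, say $\xi'=\xi+\eta$, since $\mathrm{pr}\circ(\xi'-\xi)=0$. Expanding $[\alpha,\beta]_{\xi'}=\mathrm{pr}[\xi'\alpha,\xi'\beta]$, killing the term $\mathrm{pr}[\eta\alpha,\eta\beta]$ by involutivity of $\Tab$, and rewriting the two cross terms via $\iota$, I get $[\alpha,\beta]_{\xi'}=[\alpha,\beta]_\xi+\eta(\alpha)(\beta)-\eta(\beta)(\alpha)$, which is precisely the gauge-transformed bracket $[\cdot,\cdot]^\eta$ from before Lemma \ref{lemma:gaugetransformation}. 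As both the anchor and the inclusion $\iota$ are manifestly independent of the splitting, the Cartan algebroid obtained from $\xi'$ equals $(\CAlg^\eta,\Tab)$ built from $\xi$, hence is gauge equivalent to the latter in the sense of Definition \ref{def:gaugeequivalence}.

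I expect the main obstacle to be organizing the bracket bookkeeping so as to \emph{reuse}, rather than redo, the four Jacobiator computations of Proposition \ref{prop:cartanalgebroidtocartanpair}; the one genuinely computational step is verifying that $\Phi$ intertwines the two brackets on all four generating types, after which the equivalence of conditions (1)--(3) with $\mathrm{Jac}_A\equiv 0\;(\mathrm{mod}\;\Tab)$ is immediate.
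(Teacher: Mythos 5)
Your proposal is correct and follows essentially the same route as the paper: you choose the same $t_{\alpha,\beta}=\xi[\alpha,\beta]-[\xi\alpha,\xi\beta]=-\eta([\xi\alpha,\xi\beta])$ and $\nabla_\alpha T=\eta([\xi\alpha,T])$, derive conditions (2) and (3) of Definition \ref{def:cartanalgebroid} from $\mathrm{pr}\circ\mathrm{Jac}_A=0$ applied to the triples $(\xi\alpha,\xi\beta,\xi\gamma)$ and $(\xi\alpha,\xi\beta,T)$, get condition (1) from Lemma \ref{lemma:cartanpairpreservesbracket} plus involutivity, and realize two splittings as differing by the gauge equivalence $\xi'-\xi$. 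Packaging the Jacobiator check through the bracket isomorphism $\Phi(\alpha,S)=\xi\alpha+S$ so as to reuse the four identities from Proposition \ref{prop:cartanalgebroidtocartanpair} is a tidy way of organizing the same computation, not a different argument.
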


\begin{proof}
	We must show the existence of a vector bundle map $t:\Lambda^2(A/\Tab)\to \Tab$ and an $A/\Tab$-connection $\nabla:\Gamma(A/\Tab)\times \Gamma(\Tab)\to\Gamma(\Tab)$ as in Definition \ref{def:cartanalgebroid}. Let us denote by $\eta:A\to \Tab$ the left splitting induced by the right splitting $\xi$. We define $t$ and $\nabla$ by
	\begin{equation*}
	t_{\alpha,\beta}(\gamma):= - \eta([\xi(\alpha),\xi(\beta)]),\hspace{1cm} \nabla_\alpha(T):= \eta([\xi(\alpha),T]).
	\end{equation*}
	By the definition of a Cartan pair, $\text{pr}\circ \text{Jac}_A=0$. Applying this equality to the triples  $(\xi(\alpha),\xi(\beta),\xi(\gamma))$ and $(\xi(\alpha),\xi(\beta),T)$, where $\alpha,\beta,\gamma\in\Gamma(A/\Tab)$ and $T\in\Gamma(\Tab)$, a straightforward computation shows that \eqref{eqn:cartanalgebroidconditionjacobi} and \eqref{eqn:cartanalgebroidconditionactiong} are satisfied, and Lemma \ref{lemma:cartanpairpreservesbracket} implies the first axiom of Definition \ref{def:cartanalgebroid}. For the final assertion, given two splittings $\xi$ and $\xi'$, the difference $(\xi'-\xi):A/\Tab\to\Tab$ defines the desired gauge equivalence. 	
\end{proof}

The above constructions define the following correspondence:

\begin{mytheorem}
	\label{theorem:cartanpairscartanalgebroids}
	There is a $1-1$ correspondence (given by the constructions above) between Cartan algebroids over $N$ up to gauge equivalence (Definition \ref{def:gaugeequivalence}) and Cartan pairs over $N$ up to isomorphism (Definition \ref{def:isomorphismcartanpairs}). 
\end{mytheorem}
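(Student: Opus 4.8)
The plan is to verify that the two constructions of Propositions~\ref{prop:cartanalgebroidtocartanpair} and~\ref{prop:cartanpairtocartanalgebroid} are mutually inverse at the level of equivalence classes. Denote by $F$ the assignment $(\CAlg,\Tab)\mapsto(\CAlg\oplus\Tab,\Tab)$ of Proposition~\ref{prop:cartanalgebroidtocartanpair} and by $G$ the assignment $(A,\Tab)\mapsto(A/\Tab,\Tab)$ of Proposition~\ref{prop:cartanpairtocartanalgebroid}. The ``moreover'' clauses of those propositions already show that $F$ is well defined up to isomorphism of Cartan pairs and that $G$ is well defined up to gauge equivalence of Cartan algebroids. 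Since $F$ always lands among \emph{standard} Cartan pairs and $G$ is defined on standard Cartan pairs, the correspondence is understood to be with standard Cartan pairs. It remains to produce canonical isomorphisms exhibiting $G\circ F\cong\mathrm{id}$ and $F\circ G\cong\mathrm{id}$.

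For $G\circ F$, I would begin with $(\CAlg,\Tab)$ and a choice of $(t,\nabla)$, forming $(\CAlg\oplus\Tab,\Tab)$, and then apply $G$ using the tautological splitting $\xi\colon\CAlg\to\CAlg\oplus\Tab$, $\alpha\mapsto(\alpha,0)$. The quotient $(\CAlg\oplus\Tab)/\Tab$ is canonically $\CAlg$; under this identification the induced anchor is $\rho((\alpha,0))=\rho(\alpha)$, and the induced bracket is $\mathrm{pr}([(\alpha,0),(\beta,0)])$. Evaluating the prolongation bracket~\eqref{eqn:cartanpairprolongationbracket} gives $[(\alpha,0),(\beta,0)]=([\alpha,\beta],-t_{\alpha,\beta})$, whose image in $\CAlg$ is exactly $[\alpha,\beta]$; a similar short computation shows the induced inclusion $\Tab\hookrightarrow\Hom(\CAlg,\CAlg)$ recovers the original one. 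Thus $G\circ F$ recovers $(\CAlg,\Tab)$ exactly for the tautological splitting, and hence a gauge-equivalent Cartan algebroid for any splitting.

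For $F\circ G$, I would begin with a standard Cartan pair $(A,\Tab)$, fix a splitting $\xi$ of~\eqref{eqn:cartanpairsplitting} with associated left splitting $\eta\colon A\to\Tab$, and form $(A/\Tab,\Tab)$ with $t_{\alpha,\beta}=-\eta([\xi(\alpha),\xi(\beta)])$ and $\nabla_\alpha(T)=\eta([\xi(\alpha),T])$. After applying $F$ I obtain $((A/\Tab)\oplus\Tab,\Tab)$, and the natural comparison map is
\[
\psi\colon(A/\Tab)\oplus\Tab\to A,\qquad\psi(\bar\alpha,S):=\xi(\bar\alpha)+S,
\]
which is a vector bundle isomorphism precisely because $A=\xi(A/\Tab)\oplus\Tab$. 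One checks at once that $\psi(\Tab)=\Tab$ and $\rho\circ\psi=\rho$. For the bracket condition~\eqref{eqn:isomorphismcartanpairs} I would expand $\psi$ of the prolongation bracket on one side and the bracket of $A$ on $\psi(\bar\alpha,S),\psi(\bar\beta,T)$ on the other; modulo $\Tab$ both collapse to $\mathrm{pr}([\xi(\bar\alpha),\xi(\bar\beta)])+\mathrm{pr}([S,\xi(\bar\beta)])+\mathrm{pr}([\xi(\bar\alpha),T])$, using the definition of $\iota$ through $S(\bar\beta)=\mathrm{pr}([S,\xi(\bar\beta)])$, the definitions of $t$ and $\nabla$ via $\eta$, and the fact that $[S,T]\in\Gamma(\Tab)$ vanishes mod $\Tab$ by involutivity. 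Hence $\psi$ is an isomorphism of Cartan pairs.

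The only step requiring genuine care is this last bracket identity, where the definitions of $\iota$, $t$ and $\nabla$ must be unwound simultaneously and one must track that the $\Tab$-valued components together with $[S,T]$ drop out exactly upon projecting modulo $\Tab$; everything else is formal. Once both round trips are identified with the identity up to the relevant equivalence, the well-definedness of $F$ and $G$ on equivalence classes promotes these identifications to a genuine bijection between Cartan algebroids up to gauge equivalence and standard Cartan pairs up to isomorphism, which is the claim.
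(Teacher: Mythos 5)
Your two round-trip computations are correct, and they are exactly the ``remaining details'' that the paper's one-line proof declines to write out: checking with the tautological splitting that $G\circ F$ returns $(\CAlg,\Tab)$ on the nose, and exhibiting $\psi(\bar\alpha,S)=\xi(\bar\alpha)+S$ as an isomorphism of Cartan pairs for $F\circ G$ (your bookkeeping of the bracket modulo $\Tab$, using $S(\bar\beta)=\mathrm{pr}([S,\xi(\bar\beta)])$ and the vanishing of $[S,T]$ mod $\Tab$, checks out).

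There is, however, one genuine gap. You assert that the ``moreover'' clauses of Propositions \ref{prop:cartanalgebroidtocartanpair} and \ref{prop:cartanpairtocartanalgebroid} show that $F$ and $G$ are well defined on equivalence classes. They do not: those clauses only say that the output is independent, up to the relevant equivalence, of the \emph{auxiliary choices} ($(t,\nabla)$ for $F$, the splitting $\xi$ for $G$) made for a \emph{fixed} input. They say nothing about what happens when the \emph{input} is replaced by a gauge-equivalent Cartan algebroid or an isomorphic Cartan pair, which is what descent to equivalence classes requires. This cannot be recovered from your round trips without circularity. It is precisely this missing step (for $F$) that constitutes the entirety of the paper's written proof: given a gauge equivalence $\eta:\CAlg\to\Tab$, the map $\psi:(\CAlg\oplus\Tab)\to(\CAlg^\eta\oplus\Tab)$, $(\alpha,T)\mapsto(\alpha,T-\eta(\alpha))$, is an isomorphism between the induced Cartan pairs (a one-line check: the $\eta$-terms in $[\alpha,\beta]^\eta$ cancel against the shifts $S-\eta(\alpha)$, $T-\eta(\beta)$ in the first component). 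You would also need the analogous statement for $G$: an isomorphism of standard Cartan pairs $\psi:(A,\Tab)\to(A',\Tab')$ carries a splitting $\xi$ to a splitting $\psi\circ\xi\circ\bar\psi^{-1}$, and since $\psi$ preserves brackets modulo $\Tab$ the induced brackets on $A/\Tab$ and $A'/\Tab'$ correspond under $\bar\psi$, so the resulting Cartan algebroids are isomorphic, hence gauge equivalent. Both checks are short, but neither is covered by what you cite, so they must be added for the bijection on equivalence classes to be established.
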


\begin{proof}
	Given a Cartan algebroid $(\mathcal{C},\Tab)$ and a gauge equivalence $\eta:\mathcal{C}\to \Tab$, the map $\psi:(\mathcal{C}\oplus\Tab)\to (\mathcal{C}^\eta\oplus\Tab),\; (\alpha,T)\mapsto (\alpha,T-\eta(\alpha))$, defines an isomorphism between the Cartan pair induced by $(\mathcal{C},\Tab)$ and the Cartan pair induced by the gauge equivalent one $(\mathcal{C}^\eta,\Tab)$. The remaining details are straightforward to verify. 
\end{proof}

\subsubsection{Realizations of Cartan Pairs}

The notion of a realization takes a more elegant form in the Cartan pair picture. The existence of realizations of a Cartan algebroid is equivalent to the existence of a reliazation of its induced Cartan pair, and vice versa. 

\begin{mydef}
	\label{def:realizationcartanpair}
	\index{realization!of a Cartan pair}
	A \textbf{realization} of a Cartan pair $(A,\Tab)$ over $N$ is a pair $(P,\Omega)$ consisting of a surjective submersion $I:P\to N$ and an anchored 1-form $\Omega\in\Omega^1(P;I^*A)$, such that 
	\begin{equation}
	\label{eqn:cartanpairrealizationstructureequation}
	d\Omega+\frac{1}{2}[\Omega,\Omega] \equiv 0\;(\mathrm{mod}\; \Tab)
	\end{equation}
	and such that $\Omega$ is pointwise an isomorphism. 
\end{mydef}

The proof of the following proposition is straightforward:

\begin{myprop}
	Given a realization $(P,\Omega)$ of a Cartan algebroid $(\CAlg,\Tab)$ with a fixed choice of $\Pi$ as in Definition \ref{def:realization}, the pair $(P,(\Omega,\Pi))$ is a realization of the induced standard Cartan pair  $(\CAlg\oplus\Tab,\Tab)$. Conversely, given a realization $(P,\Omega)$ of a standard Cartan pair $(A,\Tab)$, the pair $(P,\emph{pr}\circ\Omega)$ is a realization of the induced Cartan algebroid $(A/\Tab,\Tab)$. 
\end{myprop}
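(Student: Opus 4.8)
The plan is to verify, in each direction, the three defining conditions of the relevant realization — the anchored condition, pointwise invertibility, and the structure equation — observing that the first two are formal and that the structure equation carries all the content. I would treat the two implications symmetrically, since the constructions are mutually inverse.

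For the forward direction, fix the choice of $\Pi$ and set $\tilde\Omega := (\Omega,\Pi)\in\Omega^1(P;I^*(\CAlg\oplus\Tab))$, where $\CAlg\oplus\Tab$ carries the Cartan-pair bracket \eqref{eqn:cartanpairprolongationbracket}. Since the anchor of the Cartan pair is $\rho(\alpha,S)=\rho(\alpha)$ and $\rho\circ\Omega=dI$, the form $\tilde\Omega$ is anchored; and pointwise invertibility of $\tilde\Omega$ is literally condition \eqref{eqn:vbisomorphism}. It remains to check $d\tilde\Omega+\tfrac12[\tilde\Omega,\tilde\Omega]\equiv0\ (\mathrm{mod}\ \Tab)$, i.e. that the $\CAlg$-component of this Maurer--Cartan $2$-form vanishes. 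First I would decompose the $2$-form along $\CAlg\oplus\Tab$; the only point requiring attention is that the $\CAlg$-component of the bracket contains the symbol-action terms $S(\beta)-T(\alpha)$, and these are precisely what reproduce the right-hand side $\Pi\wedge\Omega$ of the structure equation. Concretely, the $\CAlg$-component of $d\tilde\Omega+\tfrac12[\tilde\Omega,\tilde\Omega]$ reduces to $d\Omega+\tfrac12[\Omega,\Omega]-\Pi\wedge\Omega$, the defect of \eqref{eqn:structureequations}, which is zero. Equivalently — and this is the cleanest way to organize the computation — I would evaluate on the canonical frame $\{X_\alpha,X_S\}$ of $TP$ determined by $(\Omega,\Pi)$, apply the Maurer--Cartan formula of Lemma \ref{lemma:maurercartanexpression} to the almost Lie algebroid $\CAlg\oplus\Tab$, and read off the three cases from Lemma \ref{lemma:structureequationsaction}: on $(X_\alpha,X_{\alpha'})$, $(X_\alpha,X_S)$, and $(X_S,X_{S'})$ the $\CAlg$-component matches the $\CAlg$-component of $[(\alpha,0),(\alpha',0)]$, $[(\alpha,0),(0,S)]$, and $[(0,S),(0,S')]$ respectively, so the defect lies in $\Tab$.

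For the converse, let $\xi:A/\Tab\to A$ be the splitting used to define the Cartan-algebroid structure on $A/\Tab$ (Proposition \ref{prop:cartanpairtocartanalgebroid}) and let $\eta:A\to\Tab$ be the complementary left splitting. I would set $\bar\Omega:=\mathrm{pr}\circ\Omega$ and, as the required auxiliary form, $\Pi:=\eta\circ\Omega\in\Omega^1(P;I^*\Tab)$. The anchored condition for $\bar\Omega$ follows from the identity $\rho\circ\mathrm{pr}=\rho$ (valid because $\Tab\subset\Ker\rho$) together with $\rho\circ\Omega=dI$; and since $(\bar\Omega,\Pi)=(\mathrm{pr},\eta)\circ\Omega$ is the composite of the splitting isomorphism $(\mathrm{pr},\eta):A\xrightarrow{\simeq}(A/\Tab)\oplus\Tab$ with the pointwise isomorphism $\Omega$, it is again a pointwise isomorphism, giving \eqref{eqn:vbisomorphism}. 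For the structure equation I would apply $\mathrm{pr}$ to the hypothesis $d\Omega+\tfrac12[\Omega,\Omega]\equiv0\ (\mathrm{mod}\ \Tab)$: using $[\bar\alpha,\bar\beta]=\mathrm{pr}[\xi\bar\alpha,\xi\bar\beta]$ and tracking the correction produced by the $\Tab$-part $\eta\circ\Omega=\Pi$ of $\Omega$, the projected Maurer--Cartan $2$-form of $A$ equals $d\bar\Omega+\tfrac12[\bar\Omega,\bar\Omega]-\Pi\wedge\bar\Omega$, so this expression vanishes, which is exactly the structure equation \eqref{eqn:structureequations} for $(\bar\Omega,\Pi)$.

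The main obstacle is purely computational and identical in both directions: accounting for how the symbol-action terms in the Cartan-pair bracket translate into the term $\Pi\wedge\Omega$ of the structure equation, with all signs tracked consistently. Everything else is formal bookkeeping about the splitting $A=\CAlg\oplus\Tab$. Finally, I would note that the two constructions are manifestly inverse to one another (up to the auxiliary choices of $\Pi$, respectively $\xi$), consistent with the equivalence of Theorem \ref{theorem:cartanpairscartanalgebroids}; this yields the asserted ``vice versa'', namely that a Cartan algebroid admits a realization if and only if its associated Cartan pair does.
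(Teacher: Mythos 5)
The paper offers no proof of this proposition (it is declared ``straightforward''), so there is nothing to compare against line by line; your overall strategy --- check anchoring and pointwise invertibility formally, then reduce the Cartan-pair Maurer--Cartan equation to the structure equation \eqref{eqn:structureequations}, organizing the computation on the frame $\{X_\alpha,X_S\}$ via Lemmas \ref{lemma:maurercartanexpression} and \ref{lemma:structureequationsaction} --- is certainly the intended one, and the anchoring/invertibility/mutual-inverse parts of your argument are correct and routine.

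However, the one step that carries all the content does not close with the signs as displayed in the paper, and you assert the cancellation rather than verify it. With the bracket on $\CAlg\oplus\Tab$ whose $\CAlg$-component is $[\alpha,\beta]+S(\beta)-T(\alpha)$ (see \eqref{eqn:cartanpairprolongationbracket}), and $\tilde\Omega:=(\Omega,\Pi)$, the $\CAlg$-component of $[\tilde\Omega(X),\tilde\Omega(Y)]$ is $[\Omega(X),\Omega(Y)]+\Pi(X)(\Omega(Y))-\Pi(Y)(\Omega(X))=[\Omega(X),\Omega(Y)]+(\Pi\wedge\Omega)(X,Y)$, so the $\CAlg$-component of $d\tilde\Omega+\tfrac12[\tilde\Omega,\tilde\Omega]$ is $d\Omega+\tfrac12[\Omega,\Omega]+\Pi\wedge\Omega=2\,\Pi\wedge\Omega$, not the defect $d\Omega+\tfrac12[\Omega,\Omega]-\Pi\wedge\Omega$ that you claim. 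You can see the failure already on the frame you propose to use: Lemma \ref{lemma:actionproperties} gives $\Omega([X_\alpha,X_S])=I^*S(\alpha)$, while the $\CAlg$-component of $[(\alpha,0),(0,S)]$ is $-S(\alpha)$, so Lemma \ref{lemma:maurercartanexpression} yields $-2I^*S(\alpha)$ rather than $0$. The statement is therefore consistent with the displayed bracket only up to a sign: either the terms $S(\beta)-T(\alpha)$ should enter the bracket with the opposite sign (very likely a typo in the source), or the realization of the Cartan pair must be taken to be $(P,(\Omega,-\Pi))$. A correct write-up has to detect this and fix a convention. The same sign propagates to your converse: with $\iota$ as in \eqref{eqn:cartanpairtableaubundle} one finds $\mathrm{pr}\bigl(d\Omega+\tfrac12[\Omega,\Omega]\bigr)=d\bar\Omega+\tfrac12[\bar\Omega,\bar\Omega]+(\eta\circ\Omega)\wedge\bar\Omega$, so the auxiliary form satisfying \eqref{eqn:structureequations} is $\Pi=-\eta\circ\Omega$, not $\eta\circ\Omega$ as you set it.
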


\begin{myremark}
	In the case of a Cartan pair $(A,0)$ (i.e. $A$ is simply a transitive Lie algebroid), one can obtain a solution to the realization problem by integrating the Lie algebroid to a Lie groupoid (when the Lie algebroid is integrable), in which case the Maurer-Cartan form on any source fiber of the Lie groupoid defines a solution. In \cite{Crainic2003-1}, the authors present a method for integrating Lie algebroids to Lie groupoids by constructing a Lie groupoid out of the space of so called $A$-paths of the Lie algebroid. A large part of this construction does not rely on the fact that the Lie algebroid one starts with satisfies the Jacobi identity. The point of view of Cartan pairs \textendash\ ``transitive Lie algebroids that satisfy the Jacobi identity modulo $\Tab$'' \textendash\ suggests a new method for tackling the realization problem: imitating the construction in \cite{Crainic2003-1} and pinpointing the precise role of the Jacobi identity along the way. In \cite{Yudilevich2016-2} (Chapter 7, and see also \cite{Yudilevich2016}), yet another method for solving the realization problem in the case of a transitive Lie algebroid is introduced, one which identifies the precise role of the Jacobi identity. Using the point of view of Cartan pairs, one may also attempt to use this method to tackle Cartan's realization problem.
\end{myremark}

\section{Cartan's Second Fundamental Theorem}
\label{section:thesecondfundamentaltheorem}

Cartan's Second Fundamental Theorem states that:

\begin{mytheorem} (the second fundamental theorem)
	\label{theorem:secondfundamentaltheorem}
	\index{fundamental theorem!second}
	Any Lie pseudogroup is Cartan equivalent to a pseudogroup in normal form (Definition \ref{def:normalform}).
\end{mytheorem}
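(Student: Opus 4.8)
The plan is to follow Cartan's original strategy, made rigorous through the intermediary of Lie-Pfaffian groupoids, and to produce an \emph{explicit} isomorphic prolongation of $\Gamma$ that is in normal form; since Cartan equivalence is by definition the equivalence relation generated by isomorphic prolongation (Definition \ref{def:isomorphicprolongation}), exhibiting such a prolongation immediately yields the theorem.

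First I would fix the order $k$ of $\Gamma$ (the order of its defining PDE) and pass to the jet groupoid $\mathcal{G}:=J^k\Gamma\rightrightarrows M$, the Lie subgroupoid of $J^k(M,M)$ whose arrows are the $k$-jets $j^k_x\phi$ of elements $\phi\in\Gamma$, so that $\Gamma$ is recovered as its pseudogroup of local solutions. The groupoid $\mathcal{G}$ carries the tautological Cartan form $\omega$ (reviewed in Appendix \ref{section:jetgroupoids}), valued in $t^*A$ with $A$ the algebroid of $\mathcal{G}$. The first substantive step is to verify that $(\mathcal{G},\omega)$ is a Lie-Pfaffian groupoid in the sense of Definition \ref{def:pfaffiangroupoid}: one checks that $\omega$ is multiplicative and pointwise surjective and that its symbol, the kernel of the induced bundle map, is the tableau bundle $\mathfrak{g}_k$ of the PDE. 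This is the content of Section \ref{section:liepseudogroups}, and it isolates exactly the structure used below.

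The heart of the proof, and the step I expect to be the main obstacle, is to manufacture a realization $(P,\Omega)$ of a Cartan algebroid $(\CAlg,\Tab)$ purely out of the Lie-Pfaffian data $(\mathcal{G},\omega)$. I would take $P:=\mathcal{G}$ with $I:=s\colon P\to M$ the source map (so $N=M$ and $\Tab=\mathfrak{g}_k$), and let $\Omega$ be the form built from $\omega$. The auxiliary $\Tab$-valued form $\Pi$ — equivalently the symbol part of a coframe — is not canonical: it requires a choice of complement to $\ker\Omega$, i.e. an integral Cartan--Ehresmann connection on $(\mathcal{G},\omega)$, equivalently a section of the affine bundle $J^{k+1}\Gamma\to J^k\Gamma$. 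The delicate points are (i) showing $\Pi$ can be chosen so that $(\Omega,\Pi)\colon TP\to I^*(\CAlg\oplus\Tab)$ is a pointwise isomorphism, the coframe condition \eqref{eqn:vbisomorphism}, which is where the tableau and Spencer machinery enter and where one may be forced to prolong; and (ii) verifying the structure equation \eqref{eqn:structureequations}, which follows from multiplicativity of $\omega$ and the Maurer--Cartan identity for jet groupoids. By Theorem \ref{theorem:realizationimpliescartanalgebroid} the resulting almost Cartan algebroid is automatically a Cartan algebroid, and by Proposition \ref{prop:realizationgauge} different choices of $\Pi$ give gauge-equivalent data, so the output is well defined up to gauge equivalence.

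It remains to identify $\Gamma(P,\Omega)$ and check normal form. The left translations $L_{j^k\phi}$, $\phi\in\Gamma$, are local diffeomorphisms of $P=\mathcal{G}$ preserving both $I=s$ and $\Omega$ (left-invariance of the Cartan form), hence generate a subpseudogroup of $\Gamma(P,\Omega)$; conversely, a rigidity argument for Maurer--Cartan-type forms shows that any local symmetry preserving $s$ and $\Omega$ is such a left translation, so $\Gamma(P,\Omega)=\langle L_{j^k\phi}\rangle$. Since $t\circ L_{j^k\phi}=\phi\circ t$, this is exactly the action of $\Gamma$ on $P$ along $\pi:=t\colon P\to M$, i.e. an isomorphic prolongation of $\Gamma$. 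Finally, because any two arrows with the same source differ by a left translation from $J^k\Gamma$, the orbits of $\Gamma(P,\Omega)$ are precisely the source fibers, namely the fibers of $I$; hence $\Gamma(P,\Omega)$ is in normal form (Definition \ref{def:normalform}) and $\Gamma$ is Cartan equivalent to it. The genuine difficulty throughout is the coframe and symbol bookkeeping of the third paragraph; the rest is the groupoid and bisection formalism together with the rigidity of the Cartan form.
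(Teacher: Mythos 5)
Your proposal follows essentially the same route as the paper: pass to the standard Lie--Pfaffian groupoid $(J^k\Gamma,\omega)$, use a section of the (nonempty, constant-rank) affine bundle of first-order prolongations over $J^k\Gamma$ --- i.e.\ an integral Cartan--Ehresmann connection, whose existence is exactly Lemma \ref{lemma:liepseudogrouphasintegralcartanehresmannconnection} using axioms 1 and 2d of Definition \ref{def:liepseudogroup} --- to produce the auxiliary form $\Pi$ and hence a realization on the total space of $J^k\Gamma$, and then identify the induced pseudogroup with the prolongation of $\Gamma$ by translations; this is precisely the combination of Theorems \ref{theorem:firstfundamentaltheorem} and \ref{theorem:piintegralcartanehresmannconnection} and Corollary \ref{cor:secondfundamentaltheorem}. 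The one point that needs repair is the handedness of your conventions. The Cartan form is valued in $t^*A^{k-1}\Gamma$ and is \emph{not} literally left-invariant: multiplicativity gives $L_b^*\omega=b\cdot\omega$ for a holonomic bisection $b$, an equivariance whose two sides even live in different pullback bundles, so your claim that the left translations $L_{j^k\phi}$ preserve $\Omega$ fails as stated, and with $I=s$ your $\Omega$ is not a form valued in $I^*\CAlg$ at all. What the standard $\omega$ is literally invariant under are the right translations $\psi_b(g)=g\cdot b(s(g))^{-1}$, which preserve the \emph{target}; accordingly the paper takes $I=t$, so that $\Omega=(dt,\omega)$ is anchored over $I$, realizes $\Gamma(J^k\Gamma,\Omega)$ as the right-translation prolongation, and obtains the isomorphic prolongation of $\Gamma$ along $s$. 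Your mirror-image version does work, but only after replacing $\omega$ by its source trivialization $g\mapsto g^{-1}\cdot\omega_g\in s^*A^{k-1}\Gamma$ (equivalently, conjugating the whole construction by the groupoid inversion); once that substitution is made, the remaining steps --- the rigidity argument identifying the symmetries of $(I,\Omega)$ with translations, and the transitivity of $J^k\Gamma$ on $I$-fibers giving the normal-form condition --- match the paper's argument.
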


This theorem shows that, up to Cartan equivalence, the study of Lie pseudogroups is the same as the study of pseudogroups in normal form, and hence to the study of Cartan algebroids and their realizations. In this section, after recalling the definitions of a Lie pseudogroup (Definition \ref{def:liepseudogroup}) and of Cartan equivalence of pseudogroups (Definition \ref{def:isomorphicprolongation}), we present a modern proof of the Second Fundamental Theorem. This proof is the result of our endeavor to understand Cartan's constructions (in \cite{Cartan1904,Cartan1937-1}) conceptually and in a global, coordinate-free fashion. In our proof, we use the language of jet groupoids and algebroids and, more abstractly, the language of Lie-Pfaffian groupoids and algebroids. These are recalled in Appendices \ref{section:jetgroupoids} and \ref{section:Liepfaffaingroupoidandalgebroid}. The Lie-Pfaffian groupoid framework isolates the essential properties of a Lie pseudogroup and, consequently, proofs become substantially simpler and more transparent. For the reader that is not familiar with these objects, we recommend reading Appendices \ref{section:jetgroupoids} and \ref{section:Liepfaffaingroupoidandalgebroid} in preparation for this section. 

In the course of our work, we found Cartan's examples of the Second Fundamental Theorem to be a useful guide to understanding the general theory. In Section \ref{section:cartanexamples}, we cite two such examples and run them through the machinery of the modern proof to ``rediscover'' Cartan's formulas. 
	
\subsection{Lie Pseudogroups and Cartan Equivalence}
\label{section:liepseudogroups}

Intuitively, a Lie pseudogroup is ``a pseudogroup that is defined by a system of partial differential equations''. The language of jet groupoids and algebroids allows us to make this definition precise. 

\subsubsection{Jet Groupoids and Algebroids}

To fix notation, let us review the main ingredients in the framework of jet groupoids and algebroids. For more details, see Appendix \ref{section:jetgroupoids}. With any manifold $M$, we associate the tower of jet groupoids
\begin{equation}
\label{eqn:towerofjetgroupoids}
... \xrightarrow{\pi} J^3M \xrightarrow{\pi} J^2M  \xrightarrow{\pi} J^1M \xrightarrow{\pi} J^0M,
\end{equation}
where $J^kM\rightrightarrows M$, the $k$-th jet groupoid of $M$, is the Lie groupoid whose space of arrows consists of all $k$-jets $j^k_x\phi$ of locally defined diffeomorphisms $\phi\in\LDiff(M)$ of $M$, and the projections $\pi:J^kM\to J^{k-1}M,\; j^k_x\phi\mapsto j^{k-1}_x\phi$, are Lie groupoid morphisms and surjective submersions. The Lie algebroid of $J^kM$, the $k$-th jet algebroid of $M$, is denoted by $A^kM$, and the induced projections by $l:A^kM\to A^{k-1}M$. Thus, at the infinitesimal level we have the tower a jet algebroids
\begin{equation}
\label{eqn:towerofjetalgebroids}
... \xrightarrow{\pi} A^3M \xrightarrow{l} A^2M  \xrightarrow{l} A^1M \xrightarrow{l} A^0M.
\end{equation}
The $k$-th jet groupoid $J^kM$ acts linearly on the $k-1$-th jet algebroid $A^{k-1}M$ by conjugation, giving rise to the adjoint representation (see \eqref{eqn:jetgroupoidadjointrepresentation}). The kernel 
\begin{equation*}
\Tab^kM=\Ker\; l\subset A^kM
\end{equation*}
of each projection in \eqref{eqn:towerofjetalgebroids} is called the $k$-th symbol space of $M$. Its elements are canonically identified with vector-valued homogeneous polynomials of degree $k$ on $M$ via the canonical isomorphism $\Tab^kM\cong S^kT^*M\otimes TM$. 

For our purposes, the most important piece of structure of a jet groupoid is its Cartan form, the tautological multiplicative 1-form 
\begin{equation*}
\omega\in\Omega^1(J^kM;t^*A^{k-1}M)
\end{equation*}
with values in the adjoint representation defined by the Formula \eqref{eqn:cartanformmultaiplicativepde}. The pair $(J^kM,\omega)$, i.e. the $k$-th jet groupoid equipped with the Cartan form, has the structure of a Lie-Pfaffian groupoid (see Definition \ref{def:pfaffiangroupoid}). At the infinitesimal level, the Cartan form on $J^kM$ induces (via \eqref{eqn:linearizingcartanform}) a connection-like operator 
\begin{equation*}
D:\mathfrak{X}(M)\times \Gamma(A^{k}M)\to \Gamma(A^{k-1}M)
\end{equation*}
on $A^kM$ called the Spencer operator. The pair $(A^kM,D)$ has the structure of a Lie-Pfaffian algebroid (Definition \ref{def:liepfaffianalgebroid}). 

\subsubsection{Lie Pseudogroups}

With any pseudogroup $\Gamma$ on $M$, we associate the tower
\begin{equation}
\label{eqn:towerofjetgroupoidspseudogroup}
... \xrightarrow{\pi} J^3\Gamma \xrightarrow{\pi} J^2\Gamma  \xrightarrow{\pi} J^1\Gamma \xrightarrow{\pi} J^0\Gamma,
\end{equation}
a subsequence of \eqref{eqn:towerofjetgroupoids}, where the \textbf{$k$-th jet groupoid} of $\Gamma$
\begin{equation*}
J^k\Gamma := \{\; j^k_x\phi\;|\; \phi\in \Gamma,\; x\in\Dom(\phi) \;\} \subset J^kM
\end{equation*}
is a subgroupoid of the $k$-th jet groupoid of $M$, and the restrictions of the projections from \eqref{eqn:towerofjetgroupoids}, which we also denote by $\pi:J^{k+1}\Gamma\to J^k\Gamma$, are surjective groupoid morphisms. In general, the sequence may fail to be smooth in the sense that the $J^k\Gamma$'s may fail to be submanifolds of the $J^kM$'s (if they are, then they are automatically Lie subgroupoids), and the projections may fail to be submersions. If $J^k\Gamma$ does turn out to be a Lie subgroupoid for some $k$, then it has an associated Lie subalgebroid 
\begin{equation*}
A^k\Gamma:= A(J^k\Gamma)\subset A^kM
\end{equation*}
of the $k$-th jet algebroid of $M$. In this case, we may define the \textbf{$k$-th symbol space} of $\Gamma$ to be
\begin{equation*}
\Tab^k\Gamma:= \Tab^kM\cap A^k\Gamma.
\end{equation*}
In general, $\Tab^k\Gamma$ is not a vector bundle (it may fail to be of constant rank) but a discrete vector bundle (see Section \ref{section:tableaubundles}), and it is a vector bundle if and only if it is of constant rank. 

\begin{mydef}
	\label{def:liepseudogroup}
	\index{Lie pseudogroup}
	A \textbf{Lie pseudogroup of (at least) order $k>0$} on a manifold $M$ is a pseudogroup $\Gamma$ on $M$ satisfying:
	\begin{enumerate}
		\item For any $\phi\in\LDiff(M)$, if $j^k_x\phi\in J^k\Gamma$ for all $x\in\Dom(\phi)$, then $\phi\in \Gamma$. 
		\item 
		\begin{enumerate}
			\item $J^k\Gamma\subset J^kM$ is a Lie subgroupoid,
			\item $J^{k-1}\Gamma\subset J^{k-1}M$ is a Lie subgroupoid,
			\item $\pi:J^k\Gamma\to J^{k-1}\Gamma$ is a submersion (hence $\Tab^k\Gamma$ is of constant rank), 
			\item $(\Tab^k\Gamma)^{(1)}$ is of constant rank. 
		\end{enumerate}
	\end{enumerate}
	A Lie pseudogroup is of \textbf{finite type} if $(\Tab^k\Gamma)^{(l)}=0$ for some $l>0$, and otherwise it is of \textbf{infinite type}. 
\end{mydef}

In this definition, the $k$-th jet groupoid $J^k\Gamma\subset J^kM$ should be interpreted as a system of PDEs and axiom 1 should be interpreted as the condition that $\Gamma$ consists of its full set of local solutions. Thus, one may study Lie pseudogroups by studying this special class of systems of PDEs. This was Lie's  original approach. Axiom 2 is a set of regularity conditions that allow us to study this system of PDEs geometrically. In particular, axioms 2a and 2b ensure that $J^k\Gamma$ and $J^{k-1}\Gamma$ have associated Lie algebroids, which we denote by $A^k\Gamma$ and $A^{k-1}\Gamma$, respectively, and axiom 2c implies that the projection
\begin{equation*}
l:A^k\Gamma\to A^{k-1}\Gamma
\end{equation*}
is surjective (and, hence, the symbol space $\Tab^k\Gamma$ is of constant rank).

\begin{myremark}
	While axiom 1 is standard, the regularity conditions that are imposed in axiom 2 vary in the literature (e.g. compare axioms 1 and 2 in Section 3 of \cite{Guillemin1966}, Definition IV.1 in \cite{Kuranishi1961} and Definition 3.1 in \cite{Olver2009}). In our definition, we impose sufficient conditions so as to allow us to detach $J^k\Gamma$, ``the defining system of PDEs'', from its ambient jet groupoid $J^kM$, and to handle it abstractly as a Lie-Pfaffian groupoid, as we will explain. These conditions, however, are sufficient but not necessary. Another possible definition would be to simply replace axiom 2 by the condition that $J^k\Gamma$ have the structure of a Lie-Pfaffian groupoid. We chose the current form of the definition in order to keep the conditions as explicit as possible.
\end{myremark}

In Section \ref{section:cartanexamples}, we will revisit Cartan's examples of Lie pseudogroups that we saw in the introduction (one of finite type and one of infinite type), and we will explicitly compute their jet groupoids and algebroids, their associated Cartan forms, Spencer operators, etc. As was also mentioned in the introduction, Lie pseudogroups arise in differential geometry as the local symmetries of geometric structures. Here is a general class of examples to keep in mind:

\begin{myexample}
	The pseudogroup $\Gamma$ of local automorphisms of any integrable $G$-structure is a Lie pseudogroup of order 1. Pseudogroups of local symmetries of foliations, symplectic structures, complex structures and integral affine structures are just a few of the examples that arise in this way (see \cite{Sternberg1983,Crainic2015} for more on $G$-structures). The assumption of integrability ensures that $\Gamma$ is ``large enough''. In fact, such pseudogroups are always transitive since these structures are locally homogeneous (they have a local normal form). The 1st jet groupoid $J^1\Gamma$ is in this case canonically isomorphic to the gauge groupoid of the $G$-structure (recall that any principal $G$-bundle $P\to M$ gives rise to a gauge groupoid $\mathcal{G}\text{auge}(P) \rightrightarrows M$, with $\mathcal{G}\text{auge}(P) = P\times P / G$). We also note that the restriction of the Cartan form on $J^1M$ to $J^1\Gamma$ is precisely the lift of the tautological form of the $G$-structure. 
\end{myexample}

\subsubsection{Lie Pseudogroups as Lie-Pfaffian Groupoids}
\label{section:liepseudogroupsasliepfaffiangroupoids}

In this section we prove that a Lie pseudogroup induces a Lie-Pfaffian groupoid, which, in turn, encodes it as its set of local holonomic bisections. In Section \ref{section:proofofthesecondfundamentaltheorem}, we use this abstract point of view in our proof of Cartan's Second Fundamental Theorem. The language of Lie-Pfaffian groupoids has the advantage of making the proof more tractable, and in many ways, more conceptual. In Appendix \ref{section:Liepfaffaingroupoidandalgebroid}, we have collected all the necessary background material on Lie-Pfaffian groupoids and algebroids. 

In our definition of a Lie pseudogroup (Definition \ref{def:liepseudogroup}) we have imposed certain regularity conditions. These were put into place precisely to ensure that the Cartan form of the ambient jet groupoid restricts nicely to the ``defining system of PDEs'': 

\begin{myprop}
	\label{prop:cartanformliepseudogroup}
	Let $\Gamma$ be a Lie pseudogroup on $M$ of order $k$. The adjoint representation $A^{k-1}M$ of $J^kM$ restricts to a representation $A^{k-1}\Gamma$ of $J^k\Gamma$ (which we call the \textbf{adjoint representation of $J^k\Gamma$}), and the Cartan form $\omega\in\Omega^1(J^kM;t^*A^{k-1}M)$ on $J^kM$ restricts to
	\begin{equation*}
	\omega = \omega|_{J^k\Gamma}\in\Omega^1(J^k\Gamma;t^*A^{k-1}\Gamma),
	\end{equation*}
	a multiplicative and pointwise surjective 1-form on $J^k\Gamma$ with values in the adjoint representation (which we call the \textbf{Cartan form of $J^k\Gamma$}).
\end{myprop}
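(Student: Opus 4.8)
The plan is to establish the three assertions in turn — that the adjoint representation restricts, that the restricted Cartan form is valued in the restricted adjoint representation, and that it remains multiplicative and pointwise surjective — using throughout that $J^k\Gamma\subset J^kM$ and $J^{k-1}\Gamma\subset J^{k-1}M$ are embedded Lie subgroupoids (axioms 2a and 2b of Definition~\ref{def:liepseudogroup}) and that $\pi\colon J^k\Gamma\to J^{k-1}\Gamma$ is a submersion (axiom 2c). First I would treat the adjoint representation. The action of $g=j^k_x\phi\in J^kM$ on $A^{k-1}M$ is by conjugation: it is the differential at the unit $1_x$ of the groupoid automorphism $C_\phi\colon J^{k-1}M\to J^{k-1}M$, $j^{k-1}_p\psi\mapsto j^{k-1}_{\phi(p)}(\phi\circ\psi\circ\phi^{-1})$, which depends only on $j^k_x\phi$. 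When $g\in J^k\Gamma$, i.e.\ $\phi\in\Gamma$, the pseudogroup axioms (closure under composition and inversion) give $\phi\circ\psi\circ\phi^{-1}\in\Gamma$ whenever $\psi\in\Gamma$, so $C_\phi$ maps $J^{k-1}\Gamma$ into itself. Since $C_\phi$ preserves this subgroupoid together with its unit section, its differential $\mathrm{Ad}_g=d(C_\phi)_{1_x}$ preserves $A^{k-1}\Gamma=A(J^{k-1}\Gamma)$, carrying $A^{k-1}_x\Gamma$ to $A^{k-1}_{t(g)}\Gamma$. This exhibits $A^{k-1}\Gamma$ as a subrepresentation, hence as a representation of $J^k\Gamma$.

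The second step, where the explicit geometry of the Cartan form enters, is to show that $\omega_g(X)\in A^{k-1}_{t(g)}\Gamma$ for every $X\in T_gJ^k\Gamma$. I would use the description of $\omega$ from Formula~\eqref{eqn:cartanformmultaiplicativepde}: writing $g=j^k_x\phi$, $y=t(g)$, and letting $b_\phi\colon x'\mapsto j^{k-1}_{x'}\phi$ be the holonomic bisection of the source map $s_{k-1}$ through $\pi(g)$ determined by $j^k_x\phi$, the form is obtained by right-translating to the unit the difference between the projected velocity and the holonomic lift of the source velocity,
\[
\omega_g(X)=dR_{\pi(g)^{-1}}\bigl(d\pi(X)-db_\phi(ds(X))\bigr)\in A^{k-1}_yM .
\]
If $X$ is tangent to $J^k\Gamma$ then $d\pi(X)\in T_{\pi(g)}J^{k-1}\Gamma$, while $\phi\in\Gamma$ forces $b_\phi$ to take values in $J^{k-1}\Gamma$, so $db_\phi$ lands in $TJ^{k-1}\Gamma$ as well; thus both terms, and their difference, lie in $T_{\pi(g)}J^{k-1}\Gamma\cap\ker(ds_{k-1})$. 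Right translation by $\pi(g)^{-1}$ is internal to the subgroupoid $J^{k-1}\Gamma$, so the result lies in $A^{k-1}_y\Gamma$. Hence $\omega|_{J^k\Gamma}\in\Omega^1(J^k\Gamma;t^*A^{k-1}\Gamma)$ is well defined.

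It remains to check multiplicativity and pointwise surjectivity. Multiplicativity is inherited for free, since the multiplicativity identity of $\omega$ involves only composable arrows, composable tangent vectors, and the adjoint action, all of which restrict to the embedded subgroupoid $J^k\Gamma$ and its subrepresentation $A^{k-1}\Gamma$. For pointwise surjectivity I would note that, because $s=s_{k-1}\circ\pi$, the value $\omega_g(X)$ depends on $X$ only through $v:=d\pi(X)$, and by axiom 2c this $v$ ranges over all of $T_{\pi(g)}J^{k-1}\Gamma$ as $X$ ranges over $T_gJ^k\Gamma$. Restricting to $v\in\ker(ds_{k-1})\cap T_{\pi(g)}J^{k-1}\Gamma$, the holonomic correction $db_\phi(ds(X))$ vanishes and the formula reduces to $v\mapsto dR_{\pi(g)^{-1}}(v)$, which is exactly the canonical isomorphism of the source-fibre tangent space of $J^{k-1}\Gamma$ onto $A^{k-1}_y\Gamma$. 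Surjectivity of $\omega_g|_{T_gJ^k\Gamma}$ follows.

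I expect the second step to be the main obstacle: it is the only point that genuinely uses the explicit shape of the Cartan form rather than soft functoriality, and it is precisely where the pseudogroup closure property — $\phi\in\Gamma$ implies that the holonomic bisection $b_\phi$ is valued in $J^{k-1}\Gamma$ — is indispensable. The role of the regularity conditions then becomes transparent: axioms 2a and 2b keep the restricted objects smooth and allow right translation to remain inside the subgroupoid, while axiom 2c is exactly what upgrades ``valued in $A^{k-1}\Gamma$'' to ``surjective onto $A^{k-1}\Gamma$''.
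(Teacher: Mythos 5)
Your proof is correct and follows essentially the same route as the paper: the adjoint representation restricts because conjugation by elements of $\Gamma$ preserves $J^{k-1}\Gamma$, the form takes values in $A^{k-1}\Gamma$ because both $d\pi$ and the holonomic lift land in $TJ^{k-1}\Gamma$, multiplicativity is inherited, and surjectivity comes from axiom 2c. The only (cosmetic) difference is that the paper packages the surjectivity step as an explicit Ehresmann connection giving a decomposition $TJ^k\Gamma = H\oplus\Ker ds$ with $\omega$ killing $H$, whereas you observe directly that $\omega_g$ factors through $d\pi$ and restrict to the $s$-vertical part — the content is identical.
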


\begin{proof}
	The first assertion follows directly from the defining formula \eqref{eqn:jetgroupoidadjointrepresentation} of the adjoint representation. Next, denoting the kernel of $\omega$ on $J^k\Gamma$ by $C_\omega=\Ker\omega\subset TJ^k\Gamma$, we construct an Ehresmann connection on $s:J^k\Gamma\to M$ by choosing a splitting of $d\pi:TJ^k\Gamma\to \pi^*TJ^{k-1}\Gamma$ and composing it at each point $j^k_x\phi\in J^k\Gamma$ with $(d(j^{k-1}\phi))_x$. This induces a decomposition $TJ^k\Gamma= H \oplus \Ker ds$, where $\omega$ kills the horizontal component $H$ (by the definition of $\omega$) and maps the second component surjectively onto $t^*A^{k-1}\Gamma$ by axiom 2c of a Lie pseudogroup. Finally, since $\omega$ is the restriction of a multiplicative form, it is multiplicative. 
\end{proof}

We this proposition, it is now simple to prove that: 

\begin{myprop}
	\label{prop:liepseudogroupasliepfaffiangroupoid}
	Let $\Gamma$ be a Lie pseudogroup on $M$ of order $k$. The pair $(J^k\Gamma,\omega)$ is a standard (Definition \ref{def:standard}) Lie-Pfaffian groupoid. Furthermore, there is a bijection 
	\begin{equation*}
	\Gamma \to \LBis(J^k\Gamma,\omega),\hspace{0.5cm} \phi\mapsto j^k\phi,
	\end{equation*}
	identifying $\Gamma$ with the generalized pseudogorup of local holonomic bisections $\LBis(J^k\Gamma,\omega)$. 
\end{myprop}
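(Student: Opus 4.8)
The plan is to split the statement into its two assertions and reduce each, as far as possible, to the work already done in Proposition \ref{prop:cartanformliepseudogroup} together with the defining (tautological) property of the Cartan form on the ambient jet groupoid $J^kM$, which is recalled in Appendix \ref{section:jetgroupoids}.

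For the first assertion I would recall the axioms of a Lie-Pfaffian groupoid (Definition \ref{def:pfaffiangroupoid}) and check them one by one. Proposition \ref{prop:cartanformliepseudogroup} already supplies the two essential ingredients: the restricted Cartan form $\omega\in\Omega^1(J^k\Gamma;t^*A^{k-1}\Gamma)$ is multiplicative and pointwise surjective, with values in a genuine representation $A^{k-1}\Gamma$ of $J^k\Gamma$. The only points left to verify are the constant-rank conditions on the symbol. The symbol space of $(J^k\Gamma,\omega)$, namely $\Ker\omega\cap\Ker ds$, is identified with $\Tab^k\Gamma$, which is of constant rank by axiom 2c of Definition \ref{def:liepseudogroup}, while its first prolongation $(\Tab^k\Gamma)^{(1)}$ is of constant rank by axiom 2d. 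For the standard property (Definition \ref{def:standard}), whose content is an injectivity of the symbol, I would deduce it from the fact that $(J^kM,\omega)$ is itself a standard Lie-Pfaffian groupoid: standardness is inherited upon restriction to the Lie subgroupoid $J^k\Gamma\subset J^kM$, since the relevant injective map for $J^kM$ simply restricts to the subbundle $\Tab^k\Gamma\subset\Tab^kM$, and injectivity is preserved under restriction.

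For the bijection I would first check that the map is well defined. Given $\phi\in\Gamma$, its prolongation $j^k\phi:\Dom(\phi)\to J^k\Gamma$ is a section of the source map with $t\circ j^k\phi=\phi$ a diffeomorphism, hence a local bisection, and it takes values in $J^k\Gamma$ because $\phi\in\Gamma$. The defining property of the Cartan form asserts exactly that a section of $s$ is killed by $\omega$ precisely when it is holonomic; thus $(j^k\phi)^*\omega=0$ and $j^k\phi\in\LBis(J^k\Gamma,\omega)$. Injectivity is immediate, since $\phi$ is recovered as $t\circ j^k\phi$.

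Surjectivity is where the two axioms of a Lie pseudogroup enter, and I expect it to be the only step requiring genuine care. Let $b\in\LBis(J^k\Gamma,\omega)$ be a local holonomic bisection, i.e.\ a local bisection with $b^*\omega=0$. Regarding $b$ as a section of $s:J^kM\to M$ valued in the subgroupoid $J^k\Gamma$, the condition $b^*\omega=0$ is unchanged when $\omega$ is viewed as the ambient Cartan form, since the latter restricts to the former. Hence, again by the characterization of holonomic sections, there is a $\phi\in\LDiff(M)$ with $b=j^k\phi$ and $\phi=t\circ b$. Because $b$ takes values in $J^k\Gamma$, we have $j^k_x\phi\in J^k\Gamma$ for every $x\in\Dom(\phi)$, and axiom 1 of Definition \ref{def:liepseudogroup} then forces $\phi\in\Gamma$. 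The main obstacle is thus the bookkeeping at the interface between the abstract Pfaffian-groupoid notion of a holonomic bisection (defined by the vanishing of $\omega$) and the concrete jet-theoretic prolongation; once the defining property of the Cartan form is invoked, the entire group-like and sheaf-like content is absorbed into axiom 1.
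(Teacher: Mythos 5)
Your treatment of standardness and of the bijection is correct and coincides with the paper's argument: standardness is inherited from the injectivity of the ambient symbol map of $J^kM$, and the bijection follows from Proposition \ref{prop:detectingholonomicsections} together with axiom 1 of Definition \ref{def:liepseudogroup}. However, there is a genuine gap in your verification that $(J^k\Gamma,\omega)$ is a Lie-Pfaffian groupoid. After Proposition \ref{prop:cartanformliepseudogroup} has supplied multiplicativity and pointwise surjectivity, the condition that actually remains to be checked is axiom 2 of Definition \ref{def:pfaffiangroupoid}, namely $C_\omega\cap\Ker dt=C_\omega\cap\Ker ds$, and you never address it. This is not a constant-rank statement: it is the structural identity that distinguishes a Lie-Pfaffian groupoid from a mere Pfaffian groupoid, and it is verified as in Example \ref{example:liepfaffiangroupoidjetgroupoids} by observing that on $C_\omega$ one has $dt=(d\phi)_x\circ ds$ at a point $j^k_x\phi$, i.e.\ the canonical isomorphism $s^*TM\cong t^*TM$ intertwines $ds|_{C_\omega}$ and $dt|_{C_\omega}$, so the two kernels coincide. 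Once this is in place, axiom 1 (involutivity of $C_\omega\cap\Ker ds$) comes for free by Remark \ref{remark:consequencesliepfaffiangroupoid}.

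Conversely, the conditions you do verify are not part of Definition \ref{def:pfaffiangroupoid}: constant rank of the symbol $\Tab^k\Gamma$ is already absorbed into the pointwise surjectivity of $\omega$ onto the vector bundle $t^*A^{k-1}\Gamma$ established in Proposition \ref{prop:cartanformliepseudogroup}, and constant rank of $(\Tab^k\Gamma)^{(1)}$ plays no role in this proposition at all --- it is only invoked later, in Lemma \ref{lemma:liepseudogrouphasintegralcartanehresmannconnection}, to show that the classical prolongation is an affine bundle and hence that an integral Cartan-Ehresmann connection exists. So you should delete the constant-rank discussion and replace it with the verification of $C_\omega\cap\Ker dt=C_\omega\cap\Ker ds$.
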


\begin{proof}
Having the previous proposition at hand, we are only left with checking axiom 2 of Definition \ref{def:pfaffiangroupoid}. This axiom is verified as in Example \ref{example:liepfaffiangroupoidjetgroupoids}, where we showed that a jet groupoid of a manifold, equipped with its Cartan form, is a Lie-Pfaffian groupoid. The symbol map is injective (and hence the Lie-Pfaffian groupoid is standard), because the symbol map of the ambient Lie-Pfaffian groupoid $J^kM$ (which is simply the inclusion \eqref{eqn:symbolspacetableau}) is injective. The final assertion is a consequence of Proposition \ref{prop:detectingholonomicsections} together with axiom 1 of the definition of a Lie pseudogroup. 
\end{proof}

The Lie-Pfaffian groupoid of a Lie pseudogroup satisfies the following property which will be crucial in our proof of Cartan's Second Fundamental Theorem:

\begin{mylemma}
	\label{lemma:liepseudogrouphasintegralcartanehresmannconnection}
	Let $\Gamma$ be a Lie pseudogroup on $M$ of order $k$. The induced Lie-Pfaffian groupoid $(J^k\Gamma,\omega)$ admits an integral Cartan-Ehresmann connection (Definition \ref{def:cartanehresmannconnection}).
\end{mylemma}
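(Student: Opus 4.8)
The plan is to reduce the existence of an integral Cartan-Ehresmann connection to the existence of a smooth section of an affine bundle, and then to produce such a section by a partition-of-unity argument. Recall from the abstract theory of Lie-Pfaffian groupoids (Appendix \ref{section:Liepfaffaingroupoidandalgebroid}, and in particular Theorem \ref{theorem:piintegralcartanehresmannconnection}) that an integral Cartan-Ehresmann connection on $(J^k\Gamma,\omega)$ is precisely a smooth section of the projection onto the first prolongation of the Pfaffian groupoid, i.e. the space of integral elements of the Cartan distribution $C_\omega=\Ker\omega$ sitting over $J^k\Gamma$, which I denote
\[
\pi:(J^k\Gamma)^{(1)}\to J^k\Gamma.
\]
So it suffices to show that this projection is a \emph{surjective affine bundle}, since a section landing in the space of integral elements is automatically an integral connection.

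First I would establish surjectivity. Given any point $g=j^k_x\phi\in J^k\Gamma$ with $\phi\in\Gamma$ (such a $\phi$ exists by the very definition of $J^k\Gamma$), the prolonged jet $j^{k+1}_x\phi$ is tangent to $J^k\Gamma$ at $g$ and hence defines an integral element of $C_\omega$ in the fiber of $\pi$ over $g$; thus every fiber is nonempty. The crux is then to show that $(J^k\Gamma)^{(1)}$ is a smooth manifold and that $\pi$ is a surjective submersion whose fibers are affine spaces modeled on the first prolongation $(\Tab^k\Gamma)^{(1)}$ of the symbol tableau. This is exactly the role of the regularity conditions built into Definition \ref{def:liepseudogroup}: axiom 2c, which makes $\Tab^k\Gamma$ of constant rank, and, decisively, axiom 2d, which makes $(\Tab^k\Gamma)^{(1)}$ of constant rank, together guarantee that the fibers of $\pi$ have locally constant dimension, turning $\pi$ into a genuine locally trivial affine bundle modeled on the pullback of $(\Tab^k\Gamma)^{(1)}$. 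The \textbf{main obstacle} is precisely this identification of the fibers with the symbol prolongation and the verification that the constant-rank hypotheses render the total space smooth; everything here should be obtained by adapting to the subgroupoid $J^k\Gamma\subset J^kM$ the corresponding statement for the ambient jet groupoid (the affine bundle $J^{k+1}M\to J^kM$), using that $\omega$ on $J^k\Gamma$ is the restriction of the ambient Cartan form (Proposition \ref{prop:cartanformliepseudogroup}).

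Finally, a surjective affine bundle $\pi:E\to B$ over a paracompact Hausdorff manifold $B$ always admits a global smooth section: cover $B$ by a locally finite family of trivializing open sets, choose a local section over each, and glue them with a subordinate partition of unity, the convex combination being well defined because the fibers are affine spaces. Applying this to $\pi:(J^k\Gamma)^{(1)}\to J^k\Gamma$ produces the required section; by construction its image consists of integral elements, so it defines an integral Cartan-Ehresmann connection on $(J^k\Gamma,\omega)$ in the sense of Definition \ref{def:cartanehresmannconnection}, completing the proof.
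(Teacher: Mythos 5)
Your strategy coincides with the paper's: both reduce the lemma to showing that the classical prolongation $P_\omega(J^k\Gamma)\to J^k\Gamma$ is a (nonempty) affine bundle modeled on $t^*(\Tab^k\Gamma)^{(1)}$, both get nonemptiness of the fibers from axiom 1 of Definition \ref{def:liepseudogroup} via the point $j^{k+1}_x\phi$ for a representative $\phi\in\Gamma$, both invoke the constant-rank hypothesis on $(\Tab^k\Gamma)^{(1)}$ from axiom 2d, and both finish by taking a global section of the resulting affine bundle. The one step you explicitly flag as the ``main obstacle'' --- why the family of nonempty affine fibers of locally constant dimension actually assembles into a \emph{smooth, locally trivial} affine subbundle --- is genuinely the heart of the proof, and your suggested route (``locally constant fiber dimension, hence locally trivial'') is not by itself an argument; note also that Proposition \ref{prop:smoothnessclassicalprolongation} cannot be cited here, since it takes the existence of a smooth global section as a \emph{hypothesis}. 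The paper closes this gap with a specific observation: $P_\omega(J^k\Gamma)$ is the intersection, inside the affine bundle $J^1(J^kM)|_{J^k\Gamma}\to J^k\Gamma$, of the two smooth affine subbundles $J^1(J^k\Gamma)$ and $J^{k+1}M|_{J^k\Gamma}$, and an intersection of two affine subbundles is automatically again a smooth affine subbundle as soon as each fiber is nonempty and the intersection of the modeling vector bundles (here precisely $(\Tab^k\Gamma)^{(1)}$) has constant rank. If you supply this intersection argument, or an equivalent local-triviality argument, your proof is complete; as written it identifies all the right ingredients but leaves the decisive smoothness step unproved.
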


\begin{proof}
	The key idea is that the classical prolongation $P_\omega(J^k\Gamma)$ is the intersection of two affine subbundles of an affine bundle, and, in general, the intersection of two affine subbundles is again an affine bundle if and only if the intersection is non-empty in each fiber and if the intersection of the modeling vector bundles is of constant rank (see Proposition 1.1.6 in \cite{Yudilevich2016-2}). Indeed, here, $P_\omega(J^k\Gamma)$ is the intersection of $J^1(J^k\Gamma)\to J^k\Gamma$ (the first jets of sections of the source map of $J^k\Gamma$) and the restriction of $\pi:J^{k+1}M\to J^kM$ to $J^k\Gamma\subset J^kM$, both viewed as affine subbundles of the restriction of $\pi:J^1(J^kM)\to J^kM$ to $J^k\Gamma$. 
	
	Now, this intersection is an affine bundle because the intersection of the modeling vector bundles is precisely $(\Tab^k\Gamma)^{(1)}$, which is assumed to be of constant rank, and each fiber over $j^k_x\phi\in J^k\Gamma$ contains at least one point $j^{k+1}_x\phi$, where $\phi\in\Gamma$ is some representative of $j^k_x\phi\in J^k\Gamma$ (thus, the main point is that, by definition, the PDE $J^k\Gamma$ contains a solution through each point). Finally, since $P_\omega(J^k\Gamma)\to J^k\Gamma$ has sections (any affine bundle does), then integral Cartan-Ehresmann connections exist. 
\end{proof}

\subsubsection{Cartan Equivalence of Pseudogroups}
\label{section:equivalenceofpseudogroups}

Haefliger, in his work on the transverse structure of foliations (\cite{Haefliger1958}), recast the notion of a pseudogroup in the framework of Lie groupoids. He observed that pseudogroups are the same thing as \textit{effective \'etale Lie groupoids}. Recall that a Lie groupoid $\mathcal{G}\rightrightarrows M$ is called \textbf{\'etale} \index{Lie groupoid!effective \'etale} if its source map $s:\mathcal{G}\to M$ (and hence its target map $t:\mathcal{G}\to M$) is a local diffeomorphism, i.e.  each arrow $g \in \mathcal{G}$ has an open neighborhood $U$ such that $s|_U$ is a diffeomorphism onto its image. An \'etale groupoid is called \textbf{effective} if for any pair of local bisections $b$ and $b'$ with a common domain, $t\circ b=t\circ b'$ implies $b=b'$. Haefliger's correspondence is as follows: given a pseudogroup $\Gamma$ on $M$, one constructs the groupoid 
\begin{equation*}
\mathcal{G}\text{erm}(\Gamma) \rightrightarrows M
\end{equation*}
whose space of arrows consists of all germs of elements of $\Gamma$. We denote an arrow $\text{germ}_x\phi$, i.e. the germ at $x\in \Dom(\phi)$ of $\phi\in \Gamma$, and the structure maps are:
\begin{gather*}
s(\text{germ}_x\phi) = x,\hspace{1cm} t(\text{germ}_x\phi) = \phi(x), \hspace{1cm} 1_x = \text{germ}_x\Id, \\
\text{germ}_{\phi(x)}\phi'\cdot\text{germ}_x\phi = \text{germ}_x(\phi'\circ\phi),\hspace{1cm} (\text{germ}_x\phi)^{-1} = \text{germ}_{\phi(x)}\phi^{-1}.
\end{gather*}
Every element $\phi\in\Gamma$ gives rise to a local bisection $b_\phi$ of $\mathcal{G}\text{erm}(\Gamma)$ defined by $b_\phi(x) = \text{germ}_x\phi$ for all $x\in\Dom(\phi)$, and the smooth structure of $\mathcal{G}\text{erm}(\Gamma)$ (with a possibly non-Hausdorff nor second countable topology) is uniquely determined by the requirement that each such local bisection is a diffeomorphism onto its image. With this structure, $\mathcal{G}\text{erm}(\Gamma) \rightrightarrows M$ becomes an effective \'etale Lie groupoid. In the reverse direction, any effective \'etale Lie groupoid $\mathcal{G}\rightrightarrows M$ induces the pseudogroup 
\begin{equation*}
\Gamma(\mathcal{G}) := \{ \; \phi_b=t\circ b\;|\; b \text{ local bisection of } \mathcal{G} \; \} \subset \LDiff(M).
\end{equation*}
To summarize:

\begin{myprop}
	\label{prop:pseudogorupsetalegroupoids}
	Let $M$ be a manifold. There is a 1-1 correspondence between
	\begin{equation*}
	\{ \; \text{pseudogroups $\Gamma$ on $M$} \; \} \hspace{0.5cm} \longleftrightarrow \hspace{0.5cm} \{\; \text{effective \'etale Lie groupoids $\mathcal{G} \rightrightarrows M$} \;\}
	\end{equation*}
	given by $\Gamma\mapsto \mathcal{G}\mathrm{erm}(\Gamma)$ in the right direction and $\mathcal{G}\mapsto \Gamma(\mathcal{G})$ in the left.
\end{myprop}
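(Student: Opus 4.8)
The plan is to verify that the two assignments are well-defined and that they are mutually inverse. Most of the work is routine bookkeeping with germs and local bisections; the one genuinely delicate point is the reconstruction $\mathcal{G}\mathrm{erm}(\Gamma(\mathcal{G})) \cong \mathcal{G}$, where effectiveness is the essential hypothesis.

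First I would check that the two constructions land in the right categories. That $\mathcal{G}\mathrm{erm}(\Gamma)$ is a groupoid with the stated structure maps is a direct consequence of the group-like axioms of $\Gamma$. It is étale because its smooth structure is \emph{defined} by declaring each $b_\phi$ a diffeomorphism onto its image: since $s \circ b_\phi = \mathrm{Id}$, the source map restricted to the (open) image of $b_\phi$ is precisely $b_\phi^{-1}$, a diffeomorphism, and these images cover $\mathcal{G}\mathrm{erm}(\Gamma)$. It is effective because any local bisection $b$ satisfies $b(x) = \mathrm{germ}_x(t\circ b)$ (a germ of a diffeomorphism is determined by the diffeomorphism it represents), so $t\circ b = t \circ b'$ forces $b = b'$. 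In the other direction, that $\Gamma(\mathcal{G})$ satisfies the group-like axioms follows from the groupoid operations on bisections (composition, inverse, unit), and the sheaf-like axioms follow from the fact that on an étale groupoid the local bisections form a sheaf: near any arrow $g$ there is a unique local bisection, obtained as a local inverse of the étale map $s$, so restriction and gluing of the $\phi_b$'s reduce to restriction and gluing of these local sections.

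Next I would establish the two composites. For $\Gamma(\mathcal{G}\mathrm{erm}(\Gamma)) = \Gamma$: by construction every local bisection of $\mathcal{G}\mathrm{erm}(\Gamma)$ agrees locally with some $b_\phi$ (with $\phi\in\Gamma$), and $t\circ b_\phi = \phi$, so $\Gamma \subseteq \Gamma(\mathcal{G}\mathrm{erm}(\Gamma))$ trivially, while the reverse inclusion follows from writing an arbitrary $\phi_b$ as a gluing of elements of $\Gamma$ and invoking the sheaf-like axiom of $\Gamma$. For the reconstruction $\mathcal{G}\mathrm{erm}(\Gamma(\mathcal{G})) \cong \mathcal{G}$ I would define a map $\Phi:\mathcal{G}\to \mathcal{G}\mathrm{erm}(\Gamma(\mathcal{G}))$ sending an arrow $g$ with $s(g)=x$ to $\mathrm{germ}_x\phi_{b_g}$, where $b_g$ is the (locally unique) bisection through $g$. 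I would then verify that $\Phi$ is a morphism of groupoids over $M$ and that it is a bijection.

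The hard part will be the bijectivity of $\Phi$, and this is exactly where the two hypotheses on $\mathcal{G}$ enter. Surjectivity uses that $\mathcal{G}$ is étale: any germ $\mathrm{germ}_x\phi$ in the target comes from some $\phi = t\circ b \in \Gamma(\mathcal{G})$, and since bisections through the arrow $g = b(x)$ are locally unique we get $b_g = b$ near $x$, hence $\Phi(g) = \mathrm{germ}_x\phi$. Injectivity is where \emph{effectiveness} is indispensable: if $\Phi(g) = \Phi(g')$ with $s(g)=s(g')=x$, then $\phi_{b_g}$ and $\phi_{b_{g'}}$ have the same germ at $x$, so $t\circ b_g = t\circ b_{g'}$ near $x$; effectiveness then forces $b_g = b_{g'}$ near $x$, whence $g = b_g(x) = b_{g'}(x) = g'$. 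Without effectiveness the groupoid could carry arrows acting trivially near a point, which are invisible to $\Gamma(\mathcal{G})$ and cannot be recovered, so the correspondence would fail. Finally, since both $\mathcal{G}$ and $\mathcal{G}\mathrm{erm}(\Gamma(\mathcal{G}))$ are étale and $\Phi$ intertwines the source maps and the distinguished local bisections, $\Phi$ is automatically a local diffeomorphism, and being a bijective morphism it is an isomorphism of Lie groupoids.
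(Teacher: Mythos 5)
Your proof is correct and is the standard argument; note that the paper itself omits the proof entirely (``We omit the proof, which is a straightforward exercise''), deferring to Haefliger and to Moerdijk--Mr\v{c}un, so there is no in-paper argument to compare against. All the essential points are in place: the \'etale structure of $\mathcal{G}\mathrm{erm}(\Gamma)$ from the defining atlas of charts $b_\phi$, effectiveness from the fact that a continuous section of $s$ is locally of the form $x\mapsto \mathrm{germ}_x\phi$ and hence recoverable from $t\circ b$, and --- the one genuinely delicate step --- injectivity of the reconstruction map $\Phi$ via effectiveness. One spot to tighten: in verifying the gluing axiom for $\Gamma(\mathcal{G})$ you attribute it only to \'etaleness (``local bisections form a sheaf''), but local uniqueness of sections of $s$ through a \emph{given arrow} does not by itself guarantee that the lifts $b_i$ of $\phi|_{U_i}$ agree on overlaps $U_i\cap U_j$ --- for that you must invoke effectiveness (since $t\circ b_i = t\circ b_j = \phi$ there), exactly as you do later for the injectivity of $\Phi$. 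With that one sentence added, the argument is complete.
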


We omit the proof, which is a straightforward exercise. For more on this correspondence, see also \cite{Haefliger1958} (Chapter I, Section 6) and \cite{Moerdijk2003} (Example 5.23). 

The correct notion of an ``isomorphism'' between pseudogroups is not at all obvious. Consider the pseudogroup on $\mathbb{R}$ generated by
\begin{equation*}
\{ \; \phi:\mathbb{R}\to \mathbb{R},\; x\mapsto x+a\;|\;   a\in\mathbb{R} \; \},
\end{equation*}
and the pseudogroup on $\mathbb{R}^2$ generated by
\begin{equation*}
\{ \; \phi:\mathbb{R}^2\to \mathbb{R}^2,\; (x,y)\mapsto (x+a,y)\;|\;   a\in\mathbb{R} \; \}.
\end{equation*}
Intuitively, these two pseudogroups should be ``isomorphic'', because there is a bijection between their generators that preserves the group-like structures. Thus, one would expect that the notion of ``isomorphism'' should be flexible enough to allow us to identify these two pseudogroups, and, in general, to identify pseudogroups that act on manifolds of varying dimension. In \cite{Cartan1937-1} (p. 1336), Cartan writes: \textit{``The notion of an `abstract group' does not lend itself to the theory of infinite Lie pseudogroups with the same level of purity as it does in the finite case, and it is for this reason that it has been proven difficult to find a simple analytic characterization for the notion of isomorphism. It is remarkable that M. Vessiot and I were simultaneously led to the same definition of an isomorphism of two Lie pseudogroups.'' } 

Cartan's notion of ``isomorphism'', which we call \textit{Cartan equivalence}, is best formulated in terms of Haefliger's point of view on pseudogroups. In the following definition, $\mathcal{G} \ltimes P\rightrightarrows P$ denotes the action groupoid associated with an action of a Lie groupoid $\mathcal{G}\rightrightarrows M$ on a surjective submersion $\pi:P\to M$.

\begin{mydef}
	\label{def:isomorphicprolongation}
	\index{isomorphic prolongation!pseudogroup}
	A pseudogroup $\widetilde{\Gamma}$ on $P$ is an \textbf{isomorphic prolongation} of a pseudogroup $\Gamma$ on $M$ along a surjective submersion $\pi:P\to M$ be if there exist an action of $\mathcal{G}\mathrm{erm}(\Gamma)\rightrightarrows M$ on $\pi:P\to M$ and an isomorphism of Lie groupoids
	\begin{equation*}
	\mathcal{G}\mathrm{erm}(\widetilde{\Gamma}) \cong \mathcal{G}\mathrm{erm}(\Gamma)\ltimes P.
	\end{equation*}
	A pseudogroup $\Gamma$ on $M$ is \textbf{Cartan equivalent} to a pseudogroup $\Gamma'$ on $M'$ if they admit a common isomorphic prolongation. In this case, we write $\Gamma \sim \Gamma'$. 
\end{mydef}

The notion of Cartan equivalence is illustrated in the following diagram:
\begin{center}
	\begin{tikzpicture}[every node/.style={align=center,text depth=0ex,text height=2ex,text width=4em}]
	\matrix (m) [matrix of math nodes,column sep=-1.5em,row sep=1.5em, row 1/.style={nodes={align=left}}]
	{ 
		& \hspace{-0.3cm} \widetilde{\Gamma} \acts \; P & \\
		\hspace{-0.7cm} \Gamma \acts \; M & & \hspace{0.35cm} M' \;\, \text{\reflectbox{$\acts$}} \; \Gamma'\\
	};
	\path[->,font=\scriptsize]
	(m-1-2) edge node[auto] {} (m-2-1)
	(m-1-2) edge node[auto] {} (m-2-3);
	\end{tikzpicture}
\end{center}

\begin{myremark}
	Cartan makes a distinction between two types of prolongations: \textit{holo\'edrique} and \textit{m\'eri\'edrique} prolongation. The former corresponds to the above notion of an \textit{isomorphic prolongation}, while the latter is like a ``covering map'' of pseudogroups (see \cite{Cartan1937-1}, p. 1336).
\end{myremark}

\begin{myprop}
	Cartan equivalence defines an equivalence relation on the set of all pseudogroups. 	
\end{myprop}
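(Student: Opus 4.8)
The plan is to verify the three axioms of an equivalence relation for $\sim$, using throughout Haefliger's correspondence (Proposition \ref{prop:pseudogorupsetalegroupoids}) to pass freely between pseudogroups and effective \'etale Lie groupoids, and the fact (built into Definition \ref{def:isomorphicprolongation}) that an isomorphic prolongation is recorded by a groupoid isomorphism onto an action groupoid. Symmetry is immediate, since the defining condition ``$\Gamma$ and $\Gamma'$ admit a common isomorphic prolongation'' is visibly symmetric in its two arguments. For reflexivity I would exhibit $\Gamma$ itself as a common isomorphic prolongation of $\Gamma$ with itself: taking $P=M$ and $\pi=\Id_M$, the tautological action of $\mathcal{G}\mathrm{erm}(\Gamma)$ on $\Id_M:M\to M$ (by source and target) has action groupoid canonically isomorphic to $\mathcal{G}\mathrm{erm}(\Gamma)$, so $\mathcal{G}\mathrm{erm}(\Gamma)\cong\mathcal{G}\mathrm{erm}(\Gamma)\ltimes M$ and hence $\Gamma\sim\Gamma$.

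Transitivity carries all the content. Suppose $\Gamma\sim\Gamma'$ via a common prolongation $\widetilde\Gamma$ on $P$, with submersions $\pi:P\to M$ and $\pi':P\to M'$, and $\Gamma'\sim\Gamma''$ via a common prolongation $\widetilde\Gamma'$ on $P'$, with $\sigma':P'\to M'$ and $\sigma'':P'\to M''$. Writing $\mathcal{G}'=\mathcal{G}\mathrm{erm}(\Gamma')$, the prolongation data provide an action of $\mathcal{G}'$ on $P$ along $\pi'$ with $\mathcal{G}\mathrm{erm}(\widetilde\Gamma)\cong\mathcal{G}'\ltimes P$, and an action of $\mathcal{G}'$ on $P'$ along $\sigma'$ with $\mathcal{G}\mathrm{erm}(\widetilde\Gamma')\cong\mathcal{G}'\ltimes P'$. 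I would then form the fiber product $Q:=P\times_{M'}P'$, let $\mathcal{G}'$ act diagonally on $Q$ over $M'$, and take as candidate common prolongation of $\Gamma$ and $\Gamma''$ the pseudogroup $\widehat\Gamma$ on $Q$ determined by $\mathcal{G}\mathrm{erm}(\widehat\Gamma)\cong\mathcal{G}'\ltimes Q$.

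The construction is then completed by an iteration lemma for action groupoids, which I expect to be a routine check on arrows: whenever $\mathcal{G}$ acts on $\pi:P\to M$ and $Q\to P$ is a $(\mathcal{G}\ltimes P)$-space, the induced $\mathcal{G}$-action on the composite $Q\to P\to M$ satisfies $(\mathcal{G}\ltimes P)\ltimes Q\cong\mathcal{G}\ltimes Q$, since an arrow $((g,p),q)$ of the left-hand side is uniquely encoded by the pair $(g,q)$. Applying this with $\mathcal{G}=\mathcal{G}'$ and the diagonal action shows $\mathcal{G}'\ltimes Q\cong\mathcal{G}\mathrm{erm}(\widetilde\Gamma)\ltimes Q$ (for the action along $\mathrm{pr}_P:Q\to P$) and, symmetrically, $\cong\mathcal{G}\mathrm{erm}(\widetilde\Gamma')\ltimes Q$ (along $\mathrm{pr}_{P'}$); thus $\widehat\Gamma$ prolongs both $\widetilde\Gamma$ and $\widetilde\Gamma'$. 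Composing prolongations by a second use of the same isomorphism makes $\widehat\Gamma$ a common isomorphic prolongation of $\Gamma$ along $\pi\circ\mathrm{pr}_P$ and of $\Gamma''$ along $\sigma''\circ\mathrm{pr}_{P'}$, giving $\Gamma\sim\Gamma''$.

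The main obstacle, and the only step that is not formal, is showing that the action groupoid $\mathcal{G}'\ltimes Q$ is again an \emph{effective} \'etale Lie groupoid, so that Haefliger's correspondence actually produces the pseudogroup $\widehat\Gamma$. \'Etaleness is easy: $s:\mathcal{G}'\ltimes Q\to Q$ is a local diffeomorphism because $\mathcal{G}'$ is \'etale, every local section of $s$ being of the form $q\mapsto(b(\sigma_Q(q)),q)$ for a local bisection $b$ of $\mathcal{G}'$, where $\sigma_Q:Q\to M'$ is the projection. Effectiveness is the delicate point, and I would deduce it from the effectiveness of $\mathcal{G}'\ltimes P$ (which holds precisely because $\widetilde\Gamma$ is a genuine pseudogroup). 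Concretely, if a bisection $q\mapsto(b(\sigma_Q(q)),q)$ induces the identity on an open set $W\subset Q$, then since $\mathrm{pr}_P:Q\to P$ is an open submersion the corresponding bisection of $\mathcal{G}'\ltimes P$ induces the identity on the open set $\mathrm{pr}_P(W)\subset P$; effectiveness of $\mathcal{G}'\ltimes P$ then forces $b$ to be the unit bisection on $\pi'(\mathrm{pr}_P(W))$, so $\mathcal{G}'\ltimes Q$ is effective. With $\widehat\Gamma$ thereby well defined, the transitivity argument above closes.
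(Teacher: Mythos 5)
Your proof is correct and follows essentially the same route as the paper: reflexivity and symmetry are immediate, and transitivity is obtained by building a common isomorphic prolongation on the fibered product $Q=P\times_{M'}P'$ with the diagonal action of $\mathcal{G}\mathrm{erm}(\Gamma')$, transferred to actions of $\mathcal{G}\mathrm{erm}(\Gamma)$ and $\mathcal{G}\mathrm{erm}(\Gamma'')$ via the isomorphisms onto action groupoids. The only difference is one of bookkeeping: you package the transfer as an explicit iteration lemma $(\mathcal{G}\ltimes P)\ltimes Q\cong\mathcal{G}\ltimes Q$ and spell out the effectiveness of $\mathcal{G}'\ltimes Q$, details the paper leaves as ``straightforward to verify.''
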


\begin{proof}
	Symmetry and reflexivity are clear. For transitivity, let $\Gamma,\Gamma',\Gamma''$ be pseudogroups on $M,M',M''$, respectively, such that $\Gamma\sim \Gamma'$ and $\Gamma'\sim \Gamma''$, i.e. $\Gamma$ and $\Gamma'$ have a common isomorphic prolongation on $P$, and $\Gamma'$ and $\Gamma''$ on $P'$. Using this data, we may construct an isomorphic prolongation of $\Gamma$ and $\Gamma'$ on the fibered product $Q:=P\times_{M'}P'$. 
	\begin{center}
		\begin{tikzpicture}[every node/.style={align=center,text depth=0ex,text height=2ex,text width=4em}]
		\matrix (m) [matrix of math nodes,column sep=-1.5em,row sep=1.5em]
		{ 
			& & P\times_{M'} P' \\
			& P & & P' \\
			M & & M' & & M''\\
		};
		\path[->,font=\scriptsize]
		(m-1-3) edge node[auto] {} (m-2-2)
		(m-1-3) edge node[auto] {} (m-2-4)
		(m-2-2) edge node[auto] {} (m-3-1)
		(m-2-2) edge node[auto] {} (m-3-3)
		(m-2-4) edge node[auto] {} (m-3-3)
		(m-2-4) edge node[auto] {} (m-3-5);
		\end{tikzpicture}
	\end{center}
	Define an action of $\mathcal{G}\text{erm}(\Gamma)$ on $Q$ as follows: let $\phi\in \Gamma$, $x\in\Dom(\phi)$ and $(p,p')\in Q$ such that $p$ projects to $x$. The isomorphism $\mathcal{G}\text{erm}(\Gamma)\ltimes P\xrightarrow{\simeq} \mathcal{G}\text{erm}(\Gamma')\ltimes P$ maps the pair $(\text{germ}_x(\phi),p)$ to a pair $(\text{germ}_x(\phi'),p)$. Set $\text{germ}_x(\phi)\cdot (p,p') := (\text{germ}_x(\phi')\cdot p,\text{germ}_x(\phi')\cdot p')$. Similarly, we can define an action of $\mathcal{G}\text{erm}(\Gamma'')$ on $Q$, and it is straightforward to verify that $\Gamma(\mathcal{G}\text{erm}(\Gamma)\ltimes Q)$ is an isomorphic prolongation of $\Gamma$ (and similarly for $\Gamma''$), and that $\mathcal{G}\text{erm}(\Gamma'')\ltimes Q \cong \mathcal{G}\text{erm}(\Gamma)\ltimes Q$. 
\end{proof}

\subsubsection{From Pseudogroups to Generalized Pseudogroups}

All the basic notions and constructions that we presented for pseudogroups extend rather straightforwardly to generalized pseudogroups (see Appendix \ref{section:generalizedpseudogroups}). For instance, given a generalized pseudogroup $\Gamma$ on $\mathcal{G}\rightrightarrows M$, we may construct $J^k\Gamma$, the groupoid of all $k$-jets of elements of $\Gamma$, and obtain the tower of jet groupoids as in \eqref{eqn:towerofjetgroupoidspseudogroup}. Of course, under suitable regularity conditions, we also obtain a tower of jet algebroids on the infinitesimal side. 

Also the notions of isomorphic prolongation and Cartan equivalence extend naturally. We may construct the germ groupoid $\mathcal{G}\text{erm}(\Gamma)\rightrightarrows M$ of $\Gamma$. It is an \'etale groupoid, but it may fail to be effective. However, this suffices for the following definition, which allows us to compare generalized pseudogroups that act on different Lie groupoids, and, in particular, to compare pseudogroups (viewed as generalized pseudogroups, see \ref{example:pseudogroupasgeneralizedpseudogroup}) with generalized pseudogroups:

\begin{mydef}
	\label{def:isomorphicprolongationgeneralized}
	\index{isomorphic prolongation!pseudogroup}
	A generalized pseudogroup $\widetilde{\Gamma}$ on $\widetilde{\mathcal{G}}\rightrightarrows P$ is an \textbf{isomorphic prolongation} of a generalized pseudogroup $\Gamma$ on $\mathcal{G}\rightrightarrows M$ along a surjective submersion $\pi:P\to M$ be if there exist an action of $\mathcal{G}\mathrm{erm}(\Gamma)\rightrightarrows M$ on $\pi:P\to M$ and an isomorphism of Lie groupoids
	\begin{equation*}
	\mathcal{G}\mathrm{erm}(\widetilde{\Gamma}) \cong \mathcal{G}\mathrm{erm}(\Gamma)\ltimes P.
	\end{equation*}
	A generalized pseudogroup $\Gamma$ on $\mathcal{G}\rightrightarrows M$ is \textbf{Cartan equivalent} to a generalized pseudogroup $\Gamma'$ on $\mathcal{G}'\rightrightarrows M'$ if they admit a common isomorphic prolongation. In this case, we write $\Gamma \sim \Gamma'$. 
\end{mydef}

\subsection{Proof of the Second Fundamental Theorem}
\label{section:proofofthesecondfundamentaltheorem}

We now turn to the proof of Theorem \ref{theorem:secondfundamentaltheorem}. We have seen that a Lie pseudogroup $\Gamma$ of order $k$ gives rise to a Lie-Pfaffian groupoid $(J^k\Gamma,\omega)$, which, in turn, encodes $\Gamma$ as its generalized pseudogroup of local solutions (Proposition \ref{prop:liepseudogroupasliepfaffiangroupoid}). A Lie-Pfaffian groupoid that arises in this way satisfies the following two properties:
\begin{itemize}
	\item[(a)] it is standard (Proposition \ref{prop:liepseudogroupasliepfaffiangroupoid}),
	\item[(b)] it admits an integral Cartan-Ehresmann connection (Lemma \ref{lemma:liepseudogrouphasintegralcartanehresmannconnection}).
\end{itemize}
The core of the proof of Theorem \ref{theorem:secondfundamentaltheorem} consists of two ``ingredients'', the following general facts about Lie-Pfaffian groupoids:
\begin{enumerate}
	\item The Canonical Prolongation: Any Lie-Pfaffian groupoid $(\mathcal{G},\omega)$ comes with a pseudogroup on its total space $\mathcal{G}$ which we call the \textit{canonical prolongation}. If $(\mathcal{G},\omega)$ satisfies property (a), then the canonical prolongation is characterized as the pseudogroup on $\mathcal{G}$ that consists of those elements in $\LDiff(\mathcal{G})$ that preserve the target map $t:\mathcal{G}\to M$ and $\omega\in\Omega^1(\mathcal{G};t^*E)$ (Theorem \ref{theorem:firstfundamentaltheorem}).
	\item Constructing a Realization: Any Lie-Pfaffian groupoid $(\mathcal{G},\omega)$ satisfying properties (a) and (b) gives rise to a Cartan algebroid and a realization whose induced pseudogroup of local symmetries is precisely the canonical prolongation (Theorem \ref{theorem:piintegralcartanehresmannconnection}).
\end{enumerate}
In Section \ref{section:secondfundamentaltheoremproof} we prove that Theorem \ref{theorem:secondfundamentaltheorem} follows from these two facts.

\subsubsection{Ingredient 1: The Canonical Prolongation}
\label{section:firstfundametnaltheorem}

Let $\mathcal{G}$ be a Lie groupoid over $M$ with source and target maps $s$ and $t$. Any local bisection $b\in\LBis(\mathcal{G})$, say with
\begin{equation*}
\phi_b := t\circ b: U\to V,\hspace{1cm} U,V\subset M,
\end{equation*}
induces a locally defined diffeomorphism $\psi_b\in\LDiff(\mathcal{G})$ given by:
\begin{equation}
\label{eqn:canonicalprolongation}
\psi_b:s^{-1}(U)\to s^{-1}(V),\hspace{1cm} g\mapsto g\cdot b(s(g))^{-1}. 
\end{equation}
We call $\psi_b$ the \textbf{prolongation} of $b$. From its definition, it follows that $s\circ\psi_b = \phi_b\circ s$, i.e. $\psi_b\in\LDiff(\mathcal{G})$ covers $\phi_b\in\LDiff(M)$ along the source map. 

Now, let $(\mathcal{G},\omega)$ be a Lie-Pfaffian groupoid over $M$, and recall that $\LBis(\mathcal{G},\omega)$ denotes the set local holonomic bisections of $(\mathcal{G},\omega)$, i.e. all local bisections $b$ of $\mathcal{G}$ that satisfy $b^*\omega=0$. We call the pseudogroup on $\mathcal{G}$ generated by all prolongations of elements of $\LBis(\mathcal{G},\omega)$, denoted by
\begin{equation}
\label{eqn:canonicalprolongationpseudogroup}
\mathrm{prol}(\LBis(\mathcal{G},\omega)) := \langle \, \{\; \psi_b\;|\; b\in\LBis(\mathcal{G},\omega) \;\} \, \rangle \subset \LDiff(\mathcal{G}),
\end{equation}
the \textbf{canonical prolongation} of $\LBis(\mathcal{G},\omega)$. 

\begin{myprop}
	\label{prop:prolongationiscartanequivalent}
	Let $(\mathcal{G},\omega)$ be a Lie-Pfaffian groupoid. The pseudogroup $\mathrm{prol}(\LBis(\mathcal{G},\omega))$ is an isomorphic prolongation of the generalized pseudogroup $\LBis(\mathcal{G},\omega)$ along the source map $s:\mathcal{G}\to M$, and hence the two are Cartan equivalent. 
\end{myprop}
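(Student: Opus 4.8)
The plan is to pin down, by a direct groupoid computation, the action of $\mathcal{G}\mathrm{erm}(\LBis(\mathcal{G},\omega))$ on $s:\mathcal{G}\to M$ for which the prolongation construction \eqref{eqn:canonicalprolongation} is the action map, and then to exhibit the germ groupoid of the canonical prolongation \eqref{eqn:canonicalprolongationpseudogroup} as the associated action groupoid, so that Definition \ref{def:isomorphicprolongationgeneralized} applies verbatim. Write $\Gamma:=\LBis(\mathcal{G},\omega)$ and $\widetilde{\Gamma}:=\mathrm{prol}(\LBis(\mathcal{G},\omega))$. The first step is to record the action. For $b\in\Gamma$ and $x\in\Dom(b)$, the arrow $\mathrm{germ}_x b\in\mathcal{G}\mathrm{erm}(\Gamma)$ has source $x$ and target $\phi_b(x)=t(b(x))$; given $g\in\mathcal{G}$ with $s(g)=x$, set
\[
\mathrm{germ}_x b\cdot g := \psi_b(g) = g\cdot b(x)^{-1}.
\]
This is well defined: the product makes sense since $s(g)=x=t(b(x)^{-1})$, it lands over $s(\psi_b(g))=\phi_b(x)=t(\mathrm{germ}_x b)$ as required by the moment map $s$, and it depends only on the value $b(x)$, a fortiori only on $\mathrm{germ}_x b$. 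The action axioms reduce to two identities for the prolongation map: $\psi_u=\Id_{\mathcal{G}}$ for the unit bisection $u$, and
\[
\psi_{b'}\circ\psi_b = \psi_{b'\ast b},
\]
where $b'\ast b$ is the composition of bisections, $(b'\ast b)(x)=b'(\phi_b(x))\cdot b(x)$; both follow from a one-line computation using associativity and left-cancellation. Since $\Gamma$ is closed under composition and inversion as a generalized pseudogroup, these identities show the prolongations $\{\psi_b\}$ satisfy the group-like axioms and realize the claimed action.

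Next I would construct the comparison morphism
\[
F:\ \mathcal{G}\mathrm{erm}(\Gamma)\ltimes\mathcal{G}\ \longrightarrow\ \mathcal{G}\mathrm{erm}(\widetilde{\Gamma}),\qquad (\mathrm{germ}_x b,\,g)\longmapsto \mathrm{germ}_g\psi_b\quad(s(g)=x).
\]
It is well defined because if $b,b'$ have the same germ at $x$ then they agree near $x$, hence $\psi_b,\psi_{b'}$ agree on $s^{-1}$ of a neighborhood and so near $g$. The groupoid-structure checks are immediate: $F$ sends source $g$ to source $g$, target $\psi_b(g)$ to target $\psi_b(g)$, units to units, and respects composition precisely by the identity $\psi_{b'}\circ\psi_b=\psi_{b'\ast b}$. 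Injectivity uses left-cancellation together with $s$ being a submersion: base points agree since the source of a germ is intrinsic, and if $\psi_b=\psi_{b'}$ near $g$ then $b(s(h))=b'(s(h))$ for $h$ near $g$; as $s(h)$ sweeps out a neighborhood of $x$, this forces $\mathrm{germ}_x b=\mathrm{germ}_x b'$ as germs of bisections (the correct statement for the possibly non-effective germ groupoid).

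The surjectivity of $F$ is the step that deserves care: an element of $\widetilde{\Gamma}$ is by definition only locally — after restriction and gluing — a prolongation. Here I would invoke that $\{\psi_b:b\in\Gamma\}$ already satisfies the group-like axioms (by the identities above), so the pseudogroup it generates has the \emph{same} germs as its generating set; hence every $\mathrm{germ}_g\phi$ with $\phi\in\widetilde{\Gamma}$ is of the form $\mathrm{germ}_g\psi_b$, and $F$ is onto. Finally $F$ is an isomorphism of Lie groupoids, not merely of abstract ones: both source maps are étale and $F$ covers the identity on $\mathcal{G}$, so a continuous bijection over $\mathcal{G}$ intertwining the two étale structures (each defined by declaring the tautological local bisections to be diffeomorphisms onto their images) is automatically a diffeomorphism. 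I expect the main obstacle to be exactly this last bookkeeping — checking compatibility of the (possibly non-Hausdorff) smooth structures and confirming that the ``germs of the generated pseudogroup equal germs of the generators'' step genuinely needs only the group-like axioms. With $F$ an isomorphism and the action in hand, the two structures satisfy Definition \ref{def:isomorphicprolongationgeneralized}, so $\widetilde{\Gamma}$ is an isomorphic prolongation of $\Gamma$ along $s$ and the two are Cartan equivalent.
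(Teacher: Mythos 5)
Your proposal is correct and follows essentially the same route as the paper: the paper's proof simply exhibits the action $\mathrm{germ}_x b\cdot g = g\cdot b(s(g))^{-1}$ and asserts that $(\mathrm{germ}_x b,g)\mapsto \mathrm{germ}_g\psi_b$ is an isomorphism $\mathcal{G}\mathrm{erm}(\LBis(\mathcal{G},\omega))\ltimes\mathcal{G}\xrightarrow{\simeq}\mathcal{G}\mathrm{erm}(\mathrm{prol}(\LBis(\mathcal{G},\omega)))$, leaving the verification to the reader. You supply exactly those verifications (the identity $\psi_{b'}\circ\psi_b=\psi_{b'\ast b}$, injectivity via left-cancellation, surjectivity via the fact that sheafifying a set closed under the group-like axioms creates no new germs, and compatibility of the \'etale smooth structures), all of which are sound.
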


\begin{proof}
	Given the action of the germ groupoid $\mathcal{G}\text{erm}(\LBis(\mathcal{G},\omega))\rightrightarrows M$ on $s:\mathcal{G}\to M$ by $\text{germ}_xb \cdot g =  g \cdot b(s(g))^{-1}$, it is straightforward to check that 
	\begin{equation*}
	\mathcal{G}\mathrm{erm}(\LBis(\mathcal{G},\omega))\ltimes \mathcal{G} \xrightarrow{\simeq} \mathcal{G}\mathrm{erm}(\mathrm{prol}(\LBis(\mathcal{G},\omega))),\hspace{1cm} (\mathrm{germ}_xb,g)\mapsto \mathrm{germ}_g\psi_b,
	\end{equation*}
	is an isomorphism of Lie groupoids. 
\end{proof}

\begin{myexample}
	\label{example:liepseudogroupprolongation}(Lie pseudogroups)
	Let $\Gamma$ be a Lie pseudogroup on $M$ of order $k$, and let $(J^k\Gamma,\omega)$ be its induced Lie-Pfaffian groupoid. Writing out \eqref{eqn:canonicalprolongation} in this case, any $\phi\in \Gamma$, say $\phi:U\to V$ with $U,V\subset M$, induces $\phi^k\in\LDiff(J^k\Gamma)$ given by
	\begin{equation}
	\label{eqn:kprolongationpseudogroup}
	\phi^k: s^{-1}(U) \to s^{-1}(V),\hspace{1cm} j^k_x\phi' \mapsto j^k_x\phi'\cdot (j^k_x\phi)^{-1}.
	\end{equation}
	This generates the pseudogroup 
	\begin{equation*}
	\Gamma^k:= \langle \{ \phi^k\;|\; \phi\in\Gamma \} \rangle \subset \LDiff(J^k\Gamma).
	\end{equation*}
	In the literature, $\Gamma^k$ is often called the \textbf{$k$-th prolongation} of $\Gamma$. 
\end{myexample}

In what he calls \textit{The Fundamental Theorem} (pp. 1337-1339 in \cite{Cartan1937-1}), Cartan shows that the $k$-th prolongation of a Lie pseudogroup of order $k$ (see example above) is characterized as the pseudogroup preserving a collection of functions and 1-forms. This theorem generalizes to the setting of Lie-Pfaffian groupoids as follows

\begin{mytheorem}
	\label{theorem:firstfundamentaltheorem}
	Let $(\mathcal{G},\omega)$ be a Lie-Pfaffian groupoid and assume that it is standard (Definition \ref{def:standard}). Then
	\begin{equation*}
	\mathrm{prol}(\LBis(\mathcal{G},\omega)) = \{\; \phi\in\LDiff(\mathcal{G})\;|\; \phi^*t=t,\;\phi^*\omega=\omega \;\}.
	\end{equation*}
\end{mytheorem}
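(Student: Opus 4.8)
The plan is to prove the two inclusions separately, using that the right-hand side $R:=\{\phi\in\LDiff(\mathcal{G})\mid \phi^*t=t,\ \phi^*\omega=\omega\}$ is manifestly a pseudogroup (closed under composition, inversion, restriction and gluing). It therefore suffices to show that every generator $\psi_b$ of $\mathrm{prol}(\LBis(\mathcal{G},\omega))$ lies in $R$, and conversely that every element of $R$ is, locally, a prolongation $\psi_b$ of a holonomic bisection. Throughout I will use three structural facts about a Lie-Pfaffian groupoid from Appendix \ref{section:Liepfaffaingroupoidandalgebroid}: the multiplicativity of $\omega$, written on composable pairs as $m^*\omega=\mathrm{pr}_1^*\omega+g\cdot\mathrm{pr}_2^*\omega$; the vanishing $u^*\omega=0$ on the unit section $u\colon M\to\mathcal{G}$, together with the resulting inversion rule $\iota^*\omega=-(\,\cdot\,)^{-1}\cdot\omega$; and the pointwise surjectivity of $\omega$, sharpened by the \emph{standard} hypothesis (Definition \ref{def:standard}) that the symbol map is injective.

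\emph{Forward inclusion.} That $\psi_b^*t=t$ is immediate from $t(g\cdot h)=t(g)$. For the form, write the prolongation as $\psi_b=m\circ(\mathrm{id}_{\mathcal{G}},\iota\circ b\circ s)$, a map into the composable pairs $\mathcal{G}^{(2)}$ (the two factors compose because $s(b(x))=x$). Pulling back the multiplicativity identity gives
\[
\psi_b^*\omega=\omega+g\cdot(\iota\circ b\circ s)^*\omega .
\]
Now $(\iota\circ b\circ s)^*\omega=s^*b^*\iota^*\omega=-s^*\bigl(b(\cdot)^{-1}\cdot b^*\omega\bigr)$ by the inversion rule, and this vanishes precisely because $b$ is holonomic, i.e. $b^*\omega=0$. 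Hence $\psi_b^*\omega=\omega$, so $\psi_b\in R$; since $R$ is a pseudogroup containing every generator, $\mathrm{prol}(\LBis(\mathcal{G},\omega))\subseteq R$.

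\emph{Reverse inclusion.} Given $\phi\in R$ on an open $W\subseteq\mathcal{G}$, I reconstruct a candidate bisection directly, \emph{without} reference to the units (which $W$ need not meet): set
\[
\widetilde b\colon W\to\mathcal{G},\qquad \widetilde b(g):=\phi(g)^{-1}\cdot g ,
\]
which is well defined since $s(\phi(g)^{-1})=t(\phi(g))=t(g)$ by $\phi^*t=t$, and which satisfies $s(\widetilde b(g))=s(g)$. The heart of the proof is the rigidity claim that $\widetilde b$ is constant along the source fibres, so that $\widetilde b=b\circ s$ for a local section $b$ of $s$; granting this, $\phi(g)=g\cdot b(s(g))^{-1}=\psi_b(g)$ on $W$, and the inversion computation above, read backwards, forces $b^*\omega=0$, i.e. $b\in\LBis(\mathcal{G},\omega)$. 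Because $\mathrm{prol}(\LBis(\mathcal{G},\omega))$ is a pseudogroup (a sheaf), these local identifications $\phi=\psi_b$ glue to $\phi\in\mathrm{prol}(\LBis(\mathcal{G},\omega))$, completing $R\subseteq\mathrm{prol}(\LBis(\mathcal{G},\omega))$.

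The rigidity claim is the main obstacle, and it is exactly where standardness enters. The mechanism is that $\phi^*\omega=\omega$ together with pointwise surjectivity first forces $\phi$ to cover a local diffeomorphism of $M$ along $s$, and then to act on each source fibre as a right translation — the groupoid analogue of the fact that a diffeomorphism preserving a Maurer–Cartan form is a translation (cf. the characterisation of left translations by invariance of $\Omega_{\mathrm{MC}}$ in Example \ref{example:liegroupsliealgebras}). The subtlety is that $\omega$ restricted to a source fibre is \emph{not} a coframe: its kernel consists of the symbol (jet-fibre) directions, and it is precisely the injectivity of the symbol map in the standard hypothesis that rigidifies these directions and rules out any extra freedom of $\phi$ along the fibre. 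This is transparent in the jet-groupoid model $(J^k\Gamma,\omega)$: in jet coordinates, $t$- and $\omega$-invariance force the source component of $\phi$ to depend only on the base point and pin down the jet components, and one computes directly that $\widetilde b(g)=\phi(g)^{-1}\cdot g$ equals the holonomic jet of the covered diffeomorphism, hence is source-basic and holonomic. I expect the coordinate-free argument to follow this same two-step analysis, deducing the covering property and the right-translation property from the invariance of $\ker\omega$ under $\phi$ and the standardness of the symbol; this propagation is the technical core of the theorem.
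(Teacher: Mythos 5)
Your forward inclusion is correct and is essentially the paper's argument: write $\psi_b$ as a composition through the multiplication, apply multiplicativity, and use the inversion rule $-(i^*\omega)_g=g^{-1}\cdot\omega_g$ to reduce everything to $b^*\omega=0$. The skeleton of your reverse inclusion also matches the paper: the map $\widetilde b(g)=\phi(g)^{-1}\cdot g$ is exactly the map $H=H_\phi$ used there, and the reduction of the theorem to the claim that $H$ is constant along $s$-fibres is the right reduction.

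The problem is that the rigidity claim --- which you yourself call ``the technical core of the theorem'' --- is not proved. What you offer in its place is a heuristic (``$\phi$ must act on each source fibre as a right translation''), which is a restatement of the conclusion, plus a coordinate verification in the jet-groupoid model and the sentence ``I expect the coordinate-free argument to follow this same two-step analysis.'' That is precisely the step where the content lies, and it does not follow from invariance of $\Ker\omega$ alone. The actual mechanism is: first prove \emph{directly}, via multiplicativity and $\phi^*\omega=\omega$, that $H^*\omega=0$ (you cannot obtain this ``by reading the inversion computation backwards,'' because that presupposes the identification $\phi=\psi_b$, which in turn presupposes rigidity --- the order of the claims matters). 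Then, for $X\in\Ker ds$, the identities $H^*\omega=0$ and $s\circ H=s$ place $dH(X)$ in $\Ker\omega\cap\Ker ds\cong t^*\Tab$, and one computes
\begin{equation*}
\partial\bigl(dR_{g^{-1}}(dH(X))\bigr)(Y)=\delta\omega\bigl(dH(X),dH(\cdot)\bigr)=(H^*\delta\omega)(X,\cdot)=0,
\end{equation*}
using Lemma \ref{lemma:symbolmapdifferentialcartanform} and the implication $H^*\omega=0\Rightarrow H^*\delta\omega=0$; injectivity of the symbol map (standardness) then kills $dH(X)$. This is the step your proposal is missing. A second, smaller omission: you never verify that $b=H\circ\eta$ is actually a \emph{bisection}, i.e.\ that $t\circ b$ is a local diffeomorphism; this requires the Lie-Pfaffian axiom $\Ker\omega\cap\Ker dt=\Ker\omega\cap\Ker ds$ together with $\Image\,db\subset\Ker\omega$, and it is not automatic.
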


\begin{proof}
	We first prove the right inclusion, i.e. that $\psi_b^*t=t$ and $\psi_b^*\omega=\omega$ for any $b\in\LBis(\mathcal{G},\omega)$. The first equality is clear from the defining formula \eqref{eqn:canonicalprolongation} of $\psi_b$. The second equality relies on the multiplicativity of $\omega$. Let $g\in \mathcal{G}$ and $X\in T_g\mathcal{G}$, then
	\begin{equation*}
	\begin{split}
	(\psi_b^*\omega)_g(X) &= (m^*\omega)_{(g,b(s(g))^{-1})}(X,di\circ db\circ ds(X)) \\
	&= \omega_g(X) + g\cdot(i^*\omega)_{b(s(g))}(db\circ ds(X)) \\
	&= \omega_g(X) - \psi_b(g)\cdot (\cancel{b^*\omega})_{s(g)}(ds(X)),
	\end{split}
	\end{equation*}
	where $m, i$ and $s$ are the multiplication, inverse and source maps, respectively. Here, the definition of $\psi_b$ is used in the first equality, the multiplicativity property \eqref{eqn:multiplicativeform} in the second and the following general identity for multiplicative forms in the third:
	\begin{equation}
	-(i^*\omega)_g = g^{-1}\cdot \omega_g, \hspace{1cm} \forall \; g\in\mathcal{G}.
	\label{eqn:multiplicativeformsinverse}
	\end{equation}
	The latter is obtained by applying \eqref{eqn:multiplicativeform} on a pair $(X,di(X))$ with $X\in T_g\mathcal{G}$. 
	
	For the left inclusion, we have to prove that if $\phi\in\LDiff(\mathcal{G})$ satisfies $\phi^*t=t$ and $\phi^*\omega=\omega$, then, locally, $\phi=\psi_b$ for some $b\in\LBis(\mathcal{G},\omega)$. Locally here mean that, for every $g\in\Dom(\phi)$, there exists $b\in\LBis(\mathcal{G},\omega)$ with $s(g)\in\Dom(b)$ and an open neighborhood $U\subset \Dom(\phi)$ of $g$ with $s(U)\subset \Dom(b)$ such that $\phi|_U=\psi_b|_U$. Spelling out $\psi_b$, this equality becomes 
	\begin{equation}
	\label{eqn:proofcanonicalprolongation1}
	b(s(h)) = \phi(h)^{-1}\cdot h,\hspace{1cm} \forall\; h\in U. 
	\end{equation}
	To prove this, we consider the map
	\begin{equation*}
	H=H_\phi:\Dom(\phi)\to \mathcal{G},\hspace{1cm} h\mapsto \phi(h)^{-1}\cdot h,
	\end{equation*}
	which is well defined because $\phi^*t=t$, and choose a local section $\eta$ of the source map $s:\mathcal{G}\to M$ with $g\in\Image(\eta)$ and $\Image(\eta)\subset \Dom(\phi)$. We set $b:=H\circ \eta$.  The left inclusion now follows from the following three claims that we will prove: 1) $H^*\omega=0$ (and hence $b^*\omega=0$); 2) if we sufficiently shrink the domain of $b$ around $s(g)$, then $b$ is a bisection; and 3) $H$ is locally constant along the $s$-fibers (and hence \eqref{eqn:proofcanonicalprolongation1} holds for a small enough neighborhood $U$ of $g$).
	
	1) Let $X\in T\mathcal{G}|_{\Dom(\phi)}$, then
	\begin{equation*}
	\begin{split}
	(H^*\omega)_g(X) &= (m^*\omega)_{(\phi(g)^{-1},g)} (di\circ d\phi(X),X) \\
	&= (i^*\omega)_{\phi(g)}(d\phi(X)) + \phi(g)^{-1}\cdot\omega_g(X) \\
	&= -\phi(g)^{-1}\cdot (\phi^*\omega)_g(X) + \phi(g)^{-1}\cdot\omega_g(X) \\
	&= 0,
	\end{split}
	\end{equation*}
	where the definition of $H$ was used in the first equality, the multiplicativity property in the second, \eqref{eqn:multiplicativeformsinverse} in the third and the assumption that $\phi^*\omega=\omega$ in the fourth.
	
	2) From the definition of $H$, it follows that $b=H\circ\eta$ is a local section of the source map $s$. We are left to show that $t\circ b$ is a diffeomorphism, where we are allowed to shrink the domain of $b$ to an arbitrarily small neighborhood of $s(g)$.  By the inverse function theorem, it is sufficient to check that $(d(t\circ b))_{s(g)}$ is a linear isomoprhism, or, by dimension count, that it is injective. Since $b$ is a section of $s$, it is injective, so it is enough to check that $\Ker dt\cap \Image\; db=\{0\}$. Now, since $H^*\omega=0$, then $\Image\;db\subset \Ker\omega$, and since $\Ker\omega\cap\Ker dt = \Ker\omega\cap\Ker ds$ (by the definition of a Lie-Pfaffian groupoid), then $\Ker dt\cap \Image\;db = \Ker ds\cap \Image\; db$. But $\Ker ds\cap \Image\; db=\{0\}$, because $b$ is a section of $s$, and so we are done. 
	
	3) The map $H$ is locally constant along the $s$-fibers if and only if $dH(X)=0$ for all $X\in \Ker ds|_{\Dom(\phi)}$. Let $X\in \Ker ds|_{\Dom(\phi)}$. Note that since $H^*\omega=0$ and $s\circ H=s$, then $dH(X)\in \Ker\omega\cap\Ker ds$. Now, since we are assuming that $(\mathcal{G},\omega)$ is standard (i.e. its symbol map is injective), the vanishing of $dH(X)$ follows from:
	\begin{equation*}
	\begin{split}
	\partial(dR_{g^{-1}}(dH(X)))(Y) &= \delta\omega(dH(X), db\circ(d\phi_b)^{-1})(Y))) \\ 
	&= \delta\omega(dH(X), dH((d(\eta\circ\phi_b^{-1}))(Y))) \\ 
	&= (H^*\delta\omega)(X,(d(\eta\circ \phi_b^{-1}))(Y))\\
	&=0,
	\end{split}
	\end{equation*}
	for all $Y\in T_{t(g)}M$, where $\phi_b=t\circ b$. Here, in the first equality Lemma \ref{lemma:symbolmapdifferentialcartanform} was used, while the last equality follows from the fact that 
	\begin{equation}
	\label{eqn:killingomegakillingdifferentialomega}
	H^*\omega=0 \hspace{1cm} \Rightarrow \hspace{1cm} H^*\delta\omega=0.
	\end{equation}
	The latter is a consequence of the fact that $\delta\omega=d_\nabla\omega|_{\Ker\omega}$ together with the basic fact that the pullback operation commutes with the differential, which, in the case of vector bundle-valued forms, means that $H^*d_\nabla\omega = d_{H^*\nabla} H^*\omega$, where $\nabla$ is a choice of a connection on the coefficients of $\omega$ and $H^*\nabla$ is the pull-back connection.
\end{proof}

\begin{myremark}
	This proof is a nice illustration of the advantage of working with Lie-Pfaffian groupoids. In the main application, when the Lie-Pfaffian groupoid is $(J^k\Gamma,\omega)$ with $\Gamma$ a Lie pseudogroup, this theorem can also be proven by induction on the order of the jets (this is done e.g. in Theorem 4.1 of \cite{Guillemin1966} in the case of transitive pseudogroups).  However, the above proof is substantially simpler, avoiding the need to work directly with jets and using solely the Cartan form and a small number of its essential properties. 
\end{myremark}

\subsubsection{Aside: Restricting to a Transversal}
\label{section:restrictingtotransversal}

Actually, Theorem \ref{theorem:firstfundamentaltheorem} can be improved, in some sense, by restricting the canonical prolongation to a \textit{complete transversal} of the underlying Lie-Pfaffian groupoid, thus obtaining a smaller prolongation than the canonical prolongation constructed above. This trick is employed by Cartan in examples (and even mentioned in his general proof of the Second Fundamental Theorem) in order to reduce the dimension of the realization he constructs out of a given Lie pseudogroup. We will see two examples of this in Section \ref{section:cartanexamples}. Let us explain this ``trick'' in detail. 

Let $(\mathcal{G},\omega)$ be a Lie-Pfaffian groupoid over $M$ and let $N\subset M$ be any submanifold. Because the orbits of the canonical prolongation $\mathrm{prol}(\LBis(\mathcal{G},\omega))$ are contained in the $t$-fibers of $\mathcal{G}$, then we may restrict each of its elements to the submanifold 
\begin{equation*}
\mathcal{G}_N:=t^{-1}(N)\subset \mathcal{G},
\end{equation*}
thus obtaining a pseudogroup on $\mathcal{G}_N$ which we denote by
\begin{equation*}
\mathrm{prol}(\LBis(\mathcal{G},\omega))|_N:= \{ \; \phi|_{\mathcal{G}_N} \; | \; \phi \in \mathrm{prol}(\LBis(\mathcal{G},\omega)) \; \} \subset \LDiff(\mathcal{G}_N).
\end{equation*}
We now ask whether Proposition \ref{prop:prolongationiscartanequivalent} and Theorem \ref{theorem:firstfundamentaltheorem} continue to hold for this pseudogroup. This is the case when the submanifold $N$ is ``nice'' in the following sense:

\begin{mydef}
	Let $\mathcal{G}$ be a Lie groupoid over $M$ with Lie algebroid $A$. A \textbf{transversal} to $\mathcal{G}$ is a submanifold $N\subset M$ that intersects the orbits of $\mathcal{G}$ transversely, i.e.
	\begin{equation*}
	TN + \rho(A)|_N = TM|_N.
	\end{equation*}
	A transversal is \textbf{complete} if it intersects each orbit at least once. 
\end{mydef}

An important consequence of this condition is that if $N$ is a complete transversal of a Lie groupoid $\mathcal{G}$, then the restriction of the source map $s_N:=s|_{\mathcal{G}_N}:\mathcal{G}_N\to M$ is a surjective submersion (the restriction of the target map $t_N:=t|_{\mathcal{G}_N}:\mathcal{G}_N\to M$ is a surjective submersion for any submanifold $N$). The proof of the following proposition is now a straightforward adaptation of the proof of Proposition \ref{prop:prolongationiscartanequivalent}, and will be omitted:

\begin{myprop}
	\label{prop:prolongationiscartanequivalenttransversal}
	Let $(\mathcal{G},\omega)$ be a Lie-Pfaffian groupoid over $M$ and let $N\subset M$ be a complete transversal of $\mathcal{G}$. The pseudogroup $\mathrm{prol}(\LBis(\mathcal{G},\omega))|_N\subset \LDiff(\mathcal{G}_N)$ is an isomorphic prolongation of the generalized pseudogroup $\LBis(\mathcal{G},\omega)$ along $s_N:\mathcal{G}_N\to M$, and hence the two are Cartan equivalent. 
\end{myprop}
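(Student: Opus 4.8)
The plan is to reuse the proof of Proposition \ref{prop:prolongationiscartanequivalent} almost verbatim, with the transversality hypothesis entering at exactly one point. The first observation I would make is that every prolongation $\psi_b$ preserves the \emph{target} map: from the defining formula \eqref{eqn:canonicalprolongation} one has $t\circ\psi_b=t$, since right-translating $g$ by $b(s(g))^{-1}$ leaves $t(g)$ unchanged. Hence $\psi_b$ carries $t$-fibers to $t$-fibers, and, being a local diffeomorphism of $\mathcal{G}$, its restriction $\psi_b|_{\mathcal{G}_N}$ is a genuine local diffeomorphism of $\mathcal{G}_N=t^{-1}(N)$, mapping $s^{-1}(U)\cap\mathcal{G}_N$ diffeomorphically onto $s^{-1}(V)\cap\mathcal{G}_N$. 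This is what makes $\mathrm{prol}(\LBis(\mathcal{G},\omega))|_N$ a bona fide pseudogroup on $\mathcal{G}_N$.

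Next I would transport the germ-groupoid action of Proposition \ref{prop:prolongationiscartanequivalent}, namely $\mathrm{germ}_xb\cdot g = g\cdot b(s(g))^{-1}$, to the restricted setting. If $g\in\mathcal{G}_N$ has $s(g)=x$, then $t(g\cdot b(x)^{-1})=t(g)\in N$, so the result again lies in $\mathcal{G}_N$, now with source $\phi_b(x)$; thus $\mathcal{G}\mathrm{erm}(\LBis(\mathcal{G},\omega))\rightrightarrows M$ acts on $s_N:\mathcal{G}_N\to M$. This is the sole place where completeness of $N$ is used: it guarantees (as recorded before the statement) that $s_N$ is a surjective submersion, which is exactly the requirement for $s_N$ to serve as the base map $\pi$ in Definition \ref{def:isomorphicprolongationgeneralized}. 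With the action in place, the candidate isomorphism is
\begin{equation*}
\mathcal{G}\mathrm{erm}(\LBis(\mathcal{G},\omega))\ltimes \mathcal{G}_N \xrightarrow{\simeq} \mathcal{G}\mathrm{erm}\big(\mathrm{prol}(\LBis(\mathcal{G},\omega))|_N\big),\hspace{0.8cm} (\mathrm{germ}_xb,g)\mapsto \mathrm{germ}_g(\psi_b|_{\mathcal{G}_N}),
\end{equation*}
the literal restriction of the isomorphism produced in Proposition \ref{prop:prolongationiscartanequivalent}.

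The only verification requiring attention is that this map remains a well-defined isomorphism of \'etale Lie groupoids after restriction; compatibility with source, target, multiplication and inversion is formally identical to the unrestricted case. Well-definedness is clear, since $\mathrm{germ}_g(\psi_b|_{\mathcal{G}_N})$ depends on $b$ only through $\mathrm{germ}_xb$. For injectivity, if $\psi_b|_{\mathcal{G}_N}$ and $\psi_{b'}|_{\mathcal{G}_N}$ agree near $g$, then the identity $b(s(h))=\psi_b(h)^{-1}\cdot h$ (valid for $h\in\mathcal{G}_N$ near $g$) forces $b=b'$ on a neighborhood of $x=s(g)$, because $s_N$ is an open map onto $M$; this is precisely where the surjective-submersion property, and hence completeness, is needed. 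Surjectivity holds because every generator of $\mathrm{prol}(\LBis(\mathcal{G},\omega))|_N$ is by construction a restricted prolongation $\psi_b|_{\mathcal{G}_N}$. This exhibits $\mathrm{prol}(\LBis(\mathcal{G},\omega))|_N$ as an isomorphic prolongation of $\LBis(\mathcal{G},\omega)$ along $s_N$, and the two are therefore Cartan equivalent. I expect the main (though mild) obstacle to be bookkeeping the interplay between restriction to $\mathcal{G}_N$ and the openness of $s_N$, that is, ensuring that germs over $M$ are not lost when one passes to the transversal.
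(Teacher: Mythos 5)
Your proof is correct and is precisely the ``straightforward adaptation'' of Proposition \ref{prop:prolongationiscartanequivalent} that the paper invokes (and omits): you correctly note that $t\circ\psi_b=t$ so the prolongations restrict to $\mathcal{G}_N$, transport the same germ-groupoid action and isomorphism, and isolate the role of complete transversality as supplying the surjective-submersion property of $s_N$ needed both for Definition \ref{def:isomorphicprolongationgeneralized} and for the openness used in the injectivity check. Nothing further is needed.
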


Writing $\omega_N:=\omega|_{\mathcal{G}_N}$, we also have the following version of Theorem \ref{theorem:firstfundamentaltheorem}:

\begin{mytheorem}
\label{theorem:firstfundamentaltheoremtransversal}
Let $(\mathcal{G},\omega)$ be a Lie-Pfaffian groupoid over $M$ and assume that it is standard, and let $N\subset M$ be a complete transversal of $\mathcal{G}$. Then
\begin{equation*}
\mathrm{prol}(\LBis(\mathcal{G},\omega))|_N = \{\; \phi\in\LDiff(\mathcal{G}_N)\;|\; \phi^*t_N=t_N,\;\phi^*\omega_N=\omega_N \;\}.
\end{equation*}
\end{mytheorem}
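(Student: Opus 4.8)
The plan is to reproduce the proof of Theorem \ref{theorem:firstfundamentaltheorem} essentially verbatim, tracking the single point where the completeness of the transversal $N$ is actually used. That point is the observation (already recorded above) that completeness forces $s_N:\mathcal{G}_N\to M$ to be a surjective submersion; this is the only new structural input, and it is exactly what lets us rerun the bisection construction of the non-transversal case. The standardness hypothesis on $(\mathcal{G},\omega)$ (Definition \ref{def:standard}) will again be invoked only at the very end.

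The right inclusion is immediate and needs no completeness. Every generator $\psi_b$ of $\mathrm{prol}(\LBis(\mathcal{G},\omega))$ preserves the $t$-fibers, since $t\circ\psi_b=t$ by \eqref{eqn:canonicalprolongation}, so every element $\Phi$ of $\mathrm{prol}(\LBis(\mathcal{G},\omega))$ preserves $\mathcal{G}_N=t^{-1}(N)$ and its restriction is a well-defined element of $\LDiff(\mathcal{G}_N)$. By Theorem \ref{theorem:firstfundamentaltheorem}, $\Phi^*t=t$ and $\Phi^*\omega=\omega$ on $\mathcal{G}$; restricting these identities to $\mathcal{G}_N$ yields $\phi^*t_N=t_N$ and $\phi^*\omega_N=\omega_N$ for $\phi=\Phi|_{\mathcal{G}_N}$.

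For the left inclusion I would take $\phi\in\LDiff(\mathcal{G}_N)$ with $\phi^*t_N=t_N$ and $\phi^*\omega_N=\omega_N$, and set up, exactly as before, the map
\[
H=H_\phi:\mathcal{G}_N\to\mathcal{G},\qquad h\mapsto \phi(h)^{-1}\cdot h,
\]
which is well defined because $\phi^*t_N=t_N$ and satisfies $s\circ H=s$. The three claims of the original argument should go through with only cosmetic changes. First, $H^*\omega=0$: the computation is literally the one in Theorem \ref{theorem:firstfundamentaltheorem}, as it only ever evaluates $\omega$ on vectors $d\phi(X)$ tangent to $\mathcal{G}_N$, where the hypothesis $\phi^*\omega_N=\omega_N$ and the multiplicativity identity \eqref{eqn:multiplicativeformsinverse} on $\mathcal{G}$ both apply. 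Second, choosing a local section $\eta$ of $s_N$ (which exists because $s_N$ is a submersion) and setting $b:=H\circ\eta$, one checks that $b$ is a section of the full source map $s$ with $\Image\;db\subset\Ker\omega$; the chain $\Ker dt\cap\Image\;db=\Ker ds\cap\Image\;db=\{0\}$ together with the dimension count $\dim\Dom(\eta)=\dim M$ shows, after shrinking, that $\phi_b=t\circ b$ is a diffeomorphism, so $b\in\LBis(\mathcal{G},\omega)$.

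The step I expect to be the main obstacle is the third claim, that $H$ is locally constant along the $s_N$-fibers, i.e. $dH(X)=0$ for $X\in\Ker ds_N=\Ker ds\cap T\mathcal{G}_N$. The symbol-map computation $\partial(dR_{g^{-1}}(dH(X)))(Y)=(H^*\delta\omega)(X,d(\eta\circ\phi_b^{-1})(Y))=0$ must now be carried out with $X$ ranging only over $\Ker ds_N$ rather than the full $\Ker ds$; the care is in verifying that \emph{both} arguments of $H^*\delta\omega$, namely $X$ and $d(\eta\circ\phi_b^{-1})(Y)$, are genuinely tangent to $\mathcal{G}_N$ (they are, since $\eta$ takes values in $\mathcal{G}_N$), so that \eqref{eqn:killingomegakillingdifferentialomega} and Lemma \ref{lemma:symbolmapdifferentialcartanform} may be invoked, after which injectivity of the symbol map (standardness) forces $dH(X)=0$. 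Once the three claims hold, $H=b\circ s_N$ locally, hence $\phi=\psi_b|_{\mathcal{G}_N}$ on a neighborhood of each point, which completes the left inclusion.
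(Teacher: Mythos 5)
Your proposal is correct and is essentially the paper's own proof: the paper simply observes that $\mathcal{G}$ plays a double role in Theorem \ref{theorem:firstfundamentaltheorem} and that, after replacing it by $\mathcal{G}_N$ in its second role (with $s$, $t$, $\omega$ restricted), the argument goes through verbatim, completeness of $N$ being needed only to make $s_N$ a surjective submersion. Your tracking of where tangency to $\mathcal{G}_N$ enters the symbol-map computation is exactly the right point to check, and it checks out.
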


\begin{proof}
	Observe that in Theorem \ref{theorem:firstfundamentaltheorem} (and its proof), $\mathcal{G}$ plays a double role: 1) it is the Lie groupoid underlying the Lie-Pfaffian groupoid $(\mathcal{G},\omega)$ whose generalized pseudogroup of local solutions  $\LBis(\mathcal{G},\omega)$ we consider, and 2) it is the space on which the classical prolongation $\mathrm{prol}(\LBis(\mathcal{G},\omega))$ acts. In the theorem we are currently proving, $\mathcal{G}$ remains unchanged in the first role and is replaced by $\mathcal{G}_N$ in the second role. Modulo this replacement (and  replacing $s$, $t$ and $\omega$ by their restrictions $s_N$, $t_N$ and $\omega_N$), the proof of Theorem \ref{theorem:firstfundamentaltheorem} can be copied verbatim. As explained above, the role of the condition of complete transversality is to ensure that $s_N$ is a surjective submersion.
\end{proof}

\begin{myremark}
	The double role played by $\mathcal{G}$, as explained in the above proof, suggests a more conceptual framework for this theorem, namely that of a Lie-Pfaffian groupoid $(\mathcal{G},\omega)$ acting on a ``Pfaffian bundle'' $(P,\theta)$, i.e. a surjective submersion $\mu:P\to N$ equipped with a vector-bundle valued 1-form $\theta$ (satisfying certain conditions). In our case, the ``Pfaffian bundle'' is $t_N:\mathcal{G}_N\to N$ equipped with $\omega_N$. This data is all that is needed in order to make sense of the ``prolongation'' of the generalized pseudogroup $\LBis(\mathcal{G},\omega)$ to a pseudogroup on $P$, and to prove that the latter is characterized as the local symmetries of the ``Pfaffian bundle'', generalizing Theorem \ref{theorem:firstfundamentaltheoremtransversal}. This direction is currently being investigated in \cite{Cattafi2020}.
\end{myremark}

\begin{myremark}
Ideally, to obtain the ``smallest'' prolongation, one would like to choose a transversal $N$ that crosses each orbit precisely once. In this case, $N$ can be regarded as the orbit space of $\Gamma$, but one which is obtained by choosing a slice rather than by taking a quotient. In fact, this is what Cartan does in local coordinates (as we will see in the examples of Section \ref{section:cartanexamples}). In the global setting, however, this is only possible if the orbits are ``nice enough''. For example, if the Lie pseudogroup is transitive, one takes $N$ to be a point in $M$. In \cite{Guillemin1966}, the authors study Cartan's structure theory in the case of transitive Lie pseudogroups, and, in particular, they prove Theorem \ref{theorem:firstfundamentaltheoremtransversal} in the case where $N$ is a point. 
\end{myremark}		
		
\subsubsection{Ingredient 2: Constructing a Realization}
\label{section:cartanalgebroidrealizationofpfaffiangroupoid}

The second main ingredient of the proof of the Second Fundamental Theorem is to construct a realization of a Cartan algebroid out of the data of a Lie-Pfaffian groupoid. We will present two different -- but equivalent -- constructions. In the current section, we simply provide a recipe, which, in local coordinates, coincides with Cartan's construction of a realization and depends on certain choices that may seem somewhat arbitrary. In Theorem \ref{theorem:piintegralcartanehresmannconnection}, the main result of this section, we will show that the existence of an auxiliary form $\Pi$ as in Definition \ref{def:realization} of a realizatoin is equivalent to the existence of an integral Cartan-Ehresmann connection (Definition \ref{def:cartanehresmannconnection}) on the Lie-Pfaffian groupoid we start with, thus giving a geometric interpretation to the notion of a realization. In the next section, we show how Cartan's structure equations can be understood more conceptually as the pullback of the canonical Maurer-Cartan equation on the prolongation of a Lie-Pfaffian groupoid, a notion that was introduced and studied in \cite{Salazar2013}. We will explain the relation between the two construction and how, in a sense, the latter construction clarifies the ``arbitrariness'' of the former. 

\subsubsection*{Constructing an Almost Cartan Algebroid}

The first step is to construct an almost Cartan algebroid out of a given standard Lie-Pfaffian groupoid. The construction depends only on the induced Lie-Pfaffian algebroid, and hence it suffices for this step to assume that we are given a standard Lie-Pfaffian algebroid $(A,D)$ over $M$ (Definition \ref{def:liepfaffianalgebroid}). This, as we recall, consists of two Lie algebroids $A$ and $E$ over $M$, a surjective Lie algebroid map $\l:A\to E$ and an $l$-connection
\begin{equation*}
D:\mathfrak{X}(M)\times \Gamma(A)\to \Gamma(E).
\end{equation*}
The construction will depend on a splitting $\xi$ of the short exact sequence of vector bundles
\begin{equation}
\begin{tikzpicture}[description/.style={fill=white,inner sep=2pt},bij/.style={above,sloped,inner sep=.5pt}]	
\matrix (m) [matrix of math nodes, row sep=2.5em, column sep=1.6em, 
text height=2.5ex, text depth=0.25ex]
{ 
	0 & \Tab & A & E & 0, \\
};
\path[->,font=\scriptsize]
(m-1-1) edge node[auto] {} (m-1-2)
(m-1-2) edge node[auto] {} (m-1-3)
(m-1-3) edge node[above,yshift=-0.05cm] {$ l $} (m-1-4)
(m-1-4) edge node[auto] {} (m-1-5)
(m-1-4) edge[bend right=50] node[above] {$ \xi $} (m-1-3);
\end{tikzpicture}
\label{eqn:secondfundamentaltheoremsplitting}
\end{equation}
where $\Tab=\Tab(A)$ is the symbol space of $(A,D)$. 

From this data, we construct an almost Cartan algebroid $(\CAlg,\Tab)$ as follows. We set
\begin{equation}
\label{eqn:secondfundamentaltheoremc}
\CAlg:=TM\oplus E.
\end{equation}
The bracket of $\CAlg$ depends on the splitting \eqref{eqn:secondfundamentaltheoremsplitting}. Such a splitting induces a linear connection on $E$ defined by
\begin{equation}
\label{eqn:cconnection}
\nabla^\xi:\mathfrak{X}(M)\times \Gamma(E)\to \Gamma(E),\hspace{1cm} \nabla^\xi_X(\alpha):= D_X(\xi\circ\alpha).
\end{equation}
We consider its torsion		
\begin{equation}
\label{eqn:ctorsion}
c^\xi\in\Gamma(\Hom(\Lambda^2E,E)),\hspace{1cm} c^\xi(\alpha,\beta):= [\alpha,\beta] - \nabla^\xi_{\rho(\alpha)}\beta - \nabla^\xi_{\rho(\beta)}\alpha,
\end{equation}
where $\rho$ is the anchor of $E$. The bracket of $\CAlg$,
\begin{equation}
\label{eqn:cbracket}
[\cdot,\cdot]^\xi:\Gamma(\CAlg)\times \Gamma(\CAlg)\to \Gamma(\CAlg),
\end{equation} 
is defined by
\begin{equation}
\label{eqn:cbracketdef}
[(X,\alpha),(Y,\beta)]^\xi:= \big([X,Y],\; c^\xi(\alpha,\beta) + \nabla^\xi_X(\beta) - \nabla^\xi_Y(\alpha) \big).
\end{equation}
The anchor, which is independent of the splitting, is simply the projection
\begin{equation}
\label{eqn:canchor}
\rho:\CAlg\to TM,\hspace{1cm} (X,\alpha)\mapsto X.
\end{equation}
It is straightforward to verify that $\CAlg$ is a transitive almost Lie algebroid. For the vector subbundle $\Tab\subset\Hom(\CAlg,\CAlg)$, we take the symbol space, where the inclusion is given by composing the symbol map \eqref{eqn:symbolmapliepfaffianalgebroid} (which is injective by the assumption that $(A,D)$ is standard) with the inclusion
\begin{equation}
\Hom(TM,E)\hookrightarrow \Hom(\CAlg,\CAlg), \hspace{1cm} T\mapsto \Big(\hat{T}: (X,\alpha) \mapsto \big(0,T(\rho(\alpha)-X)\big)\Big).
\label{eqn:secondinclusion}
\end{equation}
Indeed, $\hat{T}$ takes values in $\Ker\rho$, and hence $\Tab\subset\Hom(\CAlg,\Ker\rho)$. Note that if we equip $\Hom(TM,E)$ with the bracket
\begin{equation*}
[T,S]:= T\circ \rho\circ S - S\circ \rho\circ T
\end{equation*}
and $\Hom(\CAlg,\CAlg)$ with the commutator bracket, 
then \eqref{eqn:secondinclusion} becomes a Lie algebra map. 

\begin{myremark}
To generalize this construction to non-standard Lie-Pfaffian algebroids, we would need to relax the definition of a Cartan algebroid by requiring for there to be a map $\Tab\to \Hom(\mathcal{C},\mathcal{C})$ rather than an inclusion.
\end{myremark}

\begin{myprop}
	\label{myprop:cgprecartanalgebroid}
	The pair $(\CAlg,\Tab)$ defined above is an almost Cartan algebroid. Up to gauge equivalence, it is independent of the choice of splitting $\xi$.
\end{myprop}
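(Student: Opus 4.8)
The statement has two parts, which I would treat in turn.

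For the first part, the plan is to check the axioms of Definition \ref{def:precartanalgebroid} directly on the data \eqref{eqn:secondfundamentaltheoremc}--\eqref{eqn:secondinclusion}. Anchor compatibility, $\rho([(X,\alpha),(Y,\beta)]^\xi) = [X,Y] = [\rho(X,\alpha),\rho(Y,\beta)]$, is immediate from \eqref{eqn:cbracketdef} and \eqref{eqn:canchor}, and antisymmetry follows once one observes that the torsion $c^\xi$ of \eqref{eqn:ctorsion} is antisymmetric and that $\nabla^\xi_X(\beta) - \nabla^\xi_Y(\alpha)$ changes sign under interchange. For the Leibniz rule I would first verify that $c^\xi$ is $C^\infty(M)$-bilinear (a standard torsion computation using the Leibniz rules of $[\cdot,\cdot]$ and $\nabla^\xi$), so that the only non-tensorial contribution to the $E$-component of the bracket comes from $\nabla^\xi_X(f\beta) = f\nabla^\xi_X(\beta) + L_X(f)\beta$, producing exactly the required term. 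Transitivity is clear since $\rho$ is the projection $TM\oplus E \to TM$, and $\Tab$ is a subbundle of $\Hom(\CAlg,\Ker\rho)$ because the map \eqref{eqn:secondinclusion} is injective (the symbol map \eqref{eqn:symbolmapliepfaffianalgebroid} being injective by standardness) and each $\hat{T}$ visibly lands in $0\oplus E = \Ker\rho$, as already noted in the text.

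For the gauge-independence part, the key observation is that two splittings $\xi,\xi'$ of \eqref{eqn:secondfundamentaltheoremsplitting} differ by a bundle map $\tau := \xi'-\xi$ which, since $l\circ\tau = 0$, takes values in $\Tab = \Ker l$; thus $\tau \in \Gamma(\Hom(E,\Tab))$. Feeding this into \eqref{eqn:cconnection} and using that the symbol map $\partial$ of \eqref{eqn:symbolmapliepfaffianalgebroid} is given on sections by $\partial(T)(X) = D_X T$, I obtain
\begin{equation*}
(\nabla^{\xi'}-\nabla^\xi)_X(\alpha) = D_X(\tau(\alpha)) = \partial(\tau(\alpha))(X),
\end{equation*}
so that the entire difference of the two constructions is controlled by $\tau$ through $\partial$. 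I would then define the gauge transformation $\eta \in \Gamma(\Hom(\CAlg,\Tab))$ by $\eta(X,\alpha) := \tau(\alpha)$, ignoring the $TM$-slot, in direct analogy with the gauge equivalence used for the $\CAlg(A)$ example earlier in this section.

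It remains to show that the identity map is an isomorphism of almost Cartan algebroids from $(\CAlg^\eta,\Tab)$, built from $\xi$, to the pair built from $\xi'$. Since $\Tab$ is literally the same subbundle of $\Hom(\CAlg,\CAlg)$ for both splittings and the anchor is unchanged, this reduces to the bracket identity $[\cdot,\cdot]^{\xi,\eta} = [\cdot,\cdot]^{\xi'}$. The $TM$-component is $[X,Y]$ on both sides, and the correction $\hat{T}$ kills that slot, so only the $E$-component requires work. Using $\eta(X,\alpha)(Y,\beta) = (0,\partial(\tau(\alpha))(\rho(\beta)-Y)) = (0,(\nabla^{\xi'}-\nabla^\xi)_{\rho(\beta)-Y}(\alpha))$ together with the torsion identity $c^{\xi'}(\alpha,\beta) - c^\xi(\alpha,\beta) = (\nabla^{\xi'}-\nabla^\xi)_{\rho(\beta)}(\alpha) - (\nabla^{\xi'}-\nabla^\xi)_{\rho(\alpha)}(\beta)$, the gauge-corrected $E$-component of $[(X,\alpha),(Y,\beta)]^{\xi,\eta}$ collapses exactly to $c^{\xi'}(\alpha,\beta) + \nabla^{\xi'}_X(\beta) - \nabla^{\xi'}_Y(\alpha)$. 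By Lemma \ref{lemma:gaugetransformation}, $(\CAlg^\eta,\Tab)$ is again an almost Cartan algebroid, so this exhibits the gauge equivalence required by Definition \ref{def:gaugeequivalence}.

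I expect the main obstacle to be bookkeeping in this last step: correctly matching the $\rho(\beta)-Y$ and $\rho(\alpha)-X$ arguments produced by the inclusion \eqref{eqn:secondinclusion} against the $\nabla^\xi$- and $c^\xi$-terms, while keeping the sign convention of the torsion consistent (with the antisymmetric convention forced by the almost-Lie-algebroid axiom). Conceptually the only genuine insight is the identification of the connection difference with $\tau$ via the symbol map $\partial$; everything downstream is a careful but routine computation.
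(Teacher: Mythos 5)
Your proposal is correct and follows essentially the same route as the paper: the paper likewise just asserts that $(\CAlg,\Tab)$ is an almost Cartan algebroid (the verification you spell out) and exhibits the gauge equivalence as the difference of the two splittings, regarded as a map $\CAlg=TM\oplus E\to\Tab$ acting trivially on the $TM$-slot. Your identification of the connection difference with $\tau=\xi'-\xi$ via the symbol map, and your use of the antisymmetric torsion convention (which Equation \eqref{eqn:ctorsion} as printed gets wrong by a sign), are exactly the bookkeeping the paper leaves as ``straightforward to verify.''
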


\begin{proof}
	We have already seen that $(\CAlg,\Tab)$ is an almost Cartan algebroid. We are left with showing that, for any two choices of splittings $\xi:E\to A$ and $\xi':E\to A$, the resulting almost Cartan algebroids are gauge equivalent. Taking the difference, we get a map $(\xi-\xi'):E\to\Tab$, which we can interpret as a gauge equivalence by letting it act trivially on the first component, i.e.
	\begin{equation*}
	(\xi-\xi'):\CAlg=TM\oplus E\to \Tab,\hspace{1cm} (X,\alpha)\mapsto (\xi-\xi')(\alpha).
	\end{equation*}
	It is now straightforward to verify that gauge transforming the almost Cartan algebroid $(\CAlg,\Tab)$ with bracket $[\cdot,\cdot]^\xi$ by $\xi-\xi'$ yields the almost Cartan algebroid $(\CAlg_{\xi'},\Tab)$ with bracket $[\cdot,\cdot]^{\xi'}$
\end{proof}

\begin{myremark}
	Note that Propositions \ref{prop:realizationgauge}, together with Proposition \ref{myprop:cgprecartanalgebroid}, implies that if we manage to construct a realization of $(\CAlg,\Tab)$, then it will be independent of the choice of $\xi$. 
\end{myremark}

\begin{myremark}
	It would be more canonical to construct a Cartan pair rather than a Cartan algebroid (see Section \ref{section:cartanpairs} and, in particular, Theorem \ref{theorem:cartanpairscartanalgebroids}). The Cartan pair will consist of the pair $(TM\oplus A,\Tab)$, and will not require the choice of a splitting $\xi$. We have chosen the Cartan algebroid point of view in order to remain closer to Cartan's original constructions.
\end{myremark}

\subsubsection*{Constructing a Realization}
\label{section:constructingrealization}

The second step is to construct a realization of the almost Cartan algebroid $(\CAlg,\Tab)$. To this end, we assume that $(A,D)$ is the Lie-Pfaffian algebroid of a standard Lie-Pfaffian groupoid $(\mathcal{G},\omega)$, and as a candidate for the realization we take the pair $(\mathcal{G},\Omega)$ consisting of the target map $t:\mathcal{G}\to M$ and the 1-form 
\begin{equation*}
\Omega=(dt,\omega)\in\Omega^1(\mathcal{G};t^*\CAlg).
\end{equation*}
Here, we are using that $\mathcal{C}=TM\oplus E$ and we are viewing the differential of the target map $dt$ as a $1$-form on $\mathcal{G}$ with values in $t^*TM$. The problem at hand is to check whether there exists $\Pi\in\Omega^1(\mathcal{G};t^*\Tab)$ as in Definition \ref{def:realization} of a realization, and the idea is to show that such a $\Pi$ corresponds precisely to an integral Cartan-Ehresmann connection on the Lie-Pfaffian groupoid $(\mathcal{G},\omega)$ (see Definition \ref{def:cartanehresmannconnection}). 

To begin with, given a Lie groupoid $\mathcal{G}$, recall that an Ehresmann connection on the source map $s:\mathcal{G}\to M$ has two equivalent descriptions: a right splitting $H:s^*TM\to T\mathcal{G}$ of the short exact sequence 
\begin{equation}
\label{eqn:sesehresmannconnection}
0\to t^*A \cong T^s\mathcal{G} \to T\mathcal{G} \xrightarrow{ds} s^*TM \to 0,
\end{equation}
or a left splitting, also known as a connection $1$-form, which, due to the canonical isomorphism $t^*A \cong T^s\mathcal{G}$, can be viewed as an element of $\Omega^1(\mathcal{G};t^*A)$ that restricts to the Maurer-Cartan form (i.e. right translation) on $T^s\mathcal{G}$. 

Now, given a Lie-Pfaffian groupoid $(\mathcal{G},\omega)$, there is a special class of compatible Ehresmann connections on $s:\mathcal{G}\to M$ called Cartan-Ehresmann connections (Definition \ref{def:cartanehresmannconnection}). A Cartan-Ehresmann connection is an Ehresmann connection $H:s^*TM\to T\mathcal{G}$ that takes values in the Cartan distribution $C_\omega = \Ker\omega\subset T\mathcal{G}$, or, equivalently, a right splitting $H:s^*TM\to C_\omega$ of the short exact sequence
\begin{equation}
0\to t^*\Tab \to C_\omega \xrightarrow{ds} s^*TM \to 0.
\end{equation}
What is the corresponding notion in terms of connection $1$-forms? To answer this question, we use the splitting \eqref{eqn:secondfundamentaltheoremsplitting} that we have fixed earlier. This splitting induces an isomorphism $A\cong E\oplus \Tab$, which, in turn, induces a decomposition
\begin{equation}
\label{eqn:ehresmannconnectiondecomp}
\Omega^1(\mathcal{G};t^*A)\cong \Omega^1(\mathcal{G};t^*E) \oplus \Omega^1(\mathcal{G};t^*\Tab).
\end{equation}
Given a Cartan-Ehresmann connection $H$, the induced connection $1$-form has, therefore, two components. The first component is precisely $\omega$ (easy to see), and we denote the second component by $\Pi=\Pi_H\in \Omega^1(\mathcal{G};t^*\Tab)$. We thus have a map
\begin{equation}
\label{eqn:piintegralcartanehresmannconnection}
H\mapsto \Pi=\Pi_H,
\end{equation}
and:

\begin{mytheorem}
	\label{theorem:piintegralcartanehresmannconnection}
	Let $(\mathcal{G},\omega)$ be a standard Lie-Pfaffian groupoid over $M$. The map \eqref{eqn:piintegralcartanehresmannconnection} defines a 1-1 correspondence between
	\begin{equation}
	\mbox{
		\parbox[c][1cm][c]{4.5cm}{\centering Cartan-Ehresmann \\ connections $H$ on $\mathcal{G}$}
		\hspace{0.5cm} 
		$\longleftrightarrow$
		\hspace{0.5cm}
		\parbox[c][1cm][c]{4.5cm}{\centering $\Pi\in\Omega^1(\mathcal{G};t^*\Tab)$ \\ satisfying \eqref{eqn:vbisomorphism},}
	}
	\label{eqn:secondfundamentaltheoremcorrespondence1}
	\end{equation}
	that restricts to a 1-1 correspondence between
	\begin{equation}
	\mbox{
		\parbox[c][1cm][c]{4.5cm}{\centering integral Cartan-Ehresmann \\ connections $H$ on $\mathcal{G}$}
		\hspace{0.5cm} 
		$\longleftrightarrow$
		\hspace{0.5cm}
		\parbox[c][1cm][c]{4.5cm}{\centering $\Pi\in\Omega^1(\mathcal{G};t^*\Tab)$ \\ satisfying \eqref{eqn:vbisomorphism} and \eqref{eqn:structureequations}.}
	}
	\label{eqn:secondfundamentaltheoremcorrespondence2}
	\end{equation}
	Thus, the pair $(\mathcal{G},\Omega)$, consisting of the target map $t:\mathcal{G}\to M$ and $\Omega=(dt,\omega)$, is a realization of the almost Cartan algebroid $(\CAlg,\Tab)$ (which is then a Cartan algebroid) if and only if $(\mathcal{G},\omega)$ admits an integral Cartan-Ehresmann connection. 
\end{mytheorem}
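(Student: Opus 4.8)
The plan is to reformulate everything in terms of \emph{connection $1$-forms}. Recall that an Ehresmann connection on $s:\mathcal{G}\to M$ is the same as a left splitting of \eqref{eqn:sesehresmannconnection}, i.e. a $1$-form $\theta\in\Omega^1(\mathcal{G};t^*A)$ whose restriction to $T^s\mathcal{G}$ is the canonical isomorphism $\iota:T^s\mathcal{G}\xrightarrow{\simeq} t^*A$ (the Maurer--Cartan form). Using the chosen splitting \eqref{eqn:secondfundamentaltheoremsplitting} of $0\to\Tab\to A\xrightarrow{l}E\to 0$, decompose $\theta=(\theta_E,\theta_\Tab)$ under \eqref{eqn:ehresmannconnectiondecomp}; the map \eqref{eqn:piintegralcartanehresmannconnection} is then just $\theta\mapsto\Pi:=\theta_\Tab$. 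The first step is the elementary but crucial observation that a connection $H$ is Cartan--Ehresmann, i.e. $H\subset C_\omega=\Ker\omega$, if and only if $\theta_E=\omega$. Indeed, writing an arbitrary $X=H(ds\,X)+V$ with $V$ vertical and using $\omega\circ H=0$ together with $\omega|_{T^s\mathcal{G}}=l\circ\iota$ (the symbol space being $\Ker\omega\cap T^s\mathcal{G}$) yields $\omega(X)=\iota_E(V)=\theta_E(X)$. Hence Cartan--Ehresmann connections are in bijection with their $\Tab$-components $\Pi$.

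Next I would verify that such a $\Pi$ satisfies \eqref{eqn:vbisomorphism}, i.e. that $(\Omega,\Pi)=(dt,\omega,\Pi):T\mathcal{G}\to t^*(TM\oplus E\oplus\Tab)$ is a fibrewise isomorphism. The source and target have equal rank $\dim M+\operatorname{rank}A$, so it suffices to prove injectivity. If $dt(X)=0$, $\omega(X)=0$ and $\Pi(X)=0$, then $X\in\Ker\omega\cap\Ker dt=\Ker\omega\cap\Ker ds$ by the defining compatibility of a Lie-Pfaffian groupoid (Definition \ref{def:pfaffiangroupoid}); thus $X$ is $s$-vertical, and $\theta(X)=(\omega(X),\Pi(X))=0$ then forces $X=0$ since $\theta|_{T^s\mathcal{G}}=\iota$ is injective. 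Conversely, a $\Pi$ of this form determines the connection $1$-form $\theta=(\omega,\Pi)$ and hence the horizontal distribution $H=\Ker\theta=\Ker\omega\cap\Ker\Pi$; the same rank count and kernel identity show $H\subset C_\omega$ is a genuine Cartan--Ehresmann connection, and the two assignments are mutually inverse by construction. This establishes \eqref{eqn:secondfundamentaltheoremcorrespondence1}. (One should keep track here of the Maurer--Cartan normalisation $\Pi|_{T^s\mathcal{G}}=\iota_\Tab$, which is exactly what pins the image of $H\mapsto\Pi_H$ inside the forms satisfying \eqref{eqn:vbisomorphism}.)

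The heart of the theorem is the refinement \eqref{eqn:secondfundamentaltheoremcorrespondence2}: that $H$ is moreover \emph{integral} (Definition \ref{def:cartanehresmannconnection}) precisely when the associated $\Pi$ satisfies the structure equation \eqref{eqn:structureequations}, $d\Omega+\tfrac12[\Omega,\Omega]=\Pi\wedge\Omega$. Here I would compute the Maurer--Cartan $2$-form of $\Omega=(dt,\omega)$ componentwise in $\CAlg=TM\oplus E$, evaluating it on the frame $\{X_\alpha,X_S\}$ of \eqref{eqn:realizationinversemap} by means of Lemma \ref{lemma:maurercartanexpression} and the bracket formula \eqref{eqn:cbracketdef}. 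The $TM$-component should vanish identically: it records only the torsion of the auxiliary connection $\nabla^\xi$ \eqref{eqn:cconnection} entering the bracket of $\CAlg$, which is absorbed by $dt$. The $E$-component is the substantive one and, after unwinding the definition of the torsion $c^\xi$ \eqref{eqn:ctorsion} and of the Spencer operator $D$ underlying $\omega$, it reduces to the multiplicative Maurer--Cartan/Spencer identity for $\omega$ restricted to the horizontal distribution $H$. The content of the proof is that this identity holds exactly when $H$ is integral. \textbf{This $E$-component computation is the main obstacle}, since it is where the Lie-Pfaffian data $(A,D)$, the groupoid multiplicativity of $\omega$, and the precise meaning of integrality must all be matched against the algebroid bracket \eqref{eqn:cbracketdef}.

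Finally, the closing assertion is immediate: $(\mathcal{G},\Omega)$ is a realization iff some $\Pi$ satisfies both \eqref{eqn:vbisomorphism} and \eqref{eqn:structureequations}, which by \eqref{eqn:secondfundamentaltheoremcorrespondence2} happens iff $(\mathcal{G},\omega)$ admits an integral Cartan--Ehresmann connection; and in that case $(\CAlg,\Tab)$ is automatically a Cartan algebroid by Theorem \ref{theorem:realizationimpliescartanalgebroid}.
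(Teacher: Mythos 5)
Your overall strategy is the same as the paper's: identify Cartan--Ehresmann connections with connection $1$-forms whose $E$-component is $\omega$, extract $\Pi$ as the $\Tab$-component, check \eqref{eqn:vbisomorphism} by a rank argument, and then test the structure equation on the frame dual to $(\Omega,\Pi)$. Your treatment of \eqref{eqn:secondfundamentaltheoremcorrespondence1} is complete and correct (you argue injectivity of $(dt,\omega,\Pi)$ via $\Ker\omega\cap\Ker dt=\Ker\omega\cap\Ker ds$ where the paper argues surjectivity plus dimension count; both are fine, and your version makes the role of axiom 2 of Definition \ref{def:pfaffiangroupoid} slightly more visible). The closing assertion is also handled correctly.

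However, the equivalence \eqref{eqn:secondfundamentaltheoremcorrespondence2} --- which you yourself flag as ``the main obstacle'' --- is not actually proved; you describe what ought to happen but do not carry out the computation in either direction, so the proposal has a genuine gap exactly where the theorem's content lies. To close it along the lines you sketch, one writes down the inverse frame explicitly: $Y_X=X^H$, $Y_\alpha=\widetilde{\xi(\alpha)}-\rho(\alpha)^H$, $Y_S=\widetilde{S}$ (right-invariant vector fields twisted by horizontal lifts), and evaluates $d\Omega+\tfrac12[\Omega,\Omega]-\Pi\wedge\Omega$ on all six types of pairs using Lemma \ref{lemma:maurercartanexpression}. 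The computation is driven by Lemma \ref{lemma:cartanformspenceroperatorformula}, which gives $\omega([X^H,\widetilde{\xi(\alpha)}])=t^*\nabla^\xi_X(\alpha)$ and $\omega([X^H,\widetilde{S}])=t^*D_X(S)=-t^*S(X)$, together with $\omega([\widetilde{\beta},\widetilde{\beta'}])=t^*(l[\beta,\beta'])$ for right-invariant fields. Integrality of $H$ enters precisely as the vanishing of $\omega([X^H,X'^H])$, which is needed in the pairs $(Y_X,Y_{X'})$, $(Y_X,Y_\alpha)$ and $(Y_\alpha,Y_{\alpha'})$; conversely, applying the structure equation to $(X^H,X'^H)$ yields $\delta\omega(H(\cdot),H(\cdot))=0$, i.e. integrality. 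Two smaller corrections: your heuristic that the $TM$-component ``records the torsion of $\nabla^\xi$'' is off --- that component vanishes simply because $Y_X$ and $Y_\alpha$ are $t$-related to $X$ and $0$ respectively and the $TM$-summand of the bracket \eqref{eqn:cbracketdef} is the ordinary Lie bracket, while $\nabla^\xi$ and $c^\xi$ live entirely in the $E$-component; and you should state the converse direction of \eqref{eqn:secondfundamentaltheoremcorrespondence2} explicitly rather than folding it into ``exactly when''.
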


\begin{proof}
	We begin with \eqref{eqn:secondfundamentaltheoremcorrespondence1}. Let $H$ be a Cartan-Ehresmann connection and let $\Pi=\Pi_H\in \Omega^1(\mathcal{G};t^*\Tab)$ be $1$-form given by \eqref{eqn:piintegralcartanehresmannconnection}. First note that
	\begin{equation}
	\label{eqn:3componentdecomposition}
	(\Omega,\Pi) = (dt,\omega,\Pi): T\mathcal{G} \xrightarrow{\simeq} t^*(TM\oplus E\oplus \Tab)
	\end{equation}
	is pointwise surjective, since $dt$ is surjective onto $t^*TM$ and $(\omega,\Pi)$ restricts to the Maurer-Cartan form on $T^s\mathcal{G}$, which is then surjective onto $t^*(E\oplus\Tab)\cong t^*A$. So, by dimension count, $(\Omega,\Pi)$ is pointwise an isomorphism and \eqref{eqn:vbisomorphism} is satisfied. In fact, we can explicitly describe the inverse of \eqref{eqn:3componentdecomposition}, which will serve us in the second part of the proof. Let us denote the map at the level of sections that is induced by the inverse of \eqref{eqn:3componentdecomposition} by:
	\begin{equation*}
	\mathfrak{X}(M)\to \mathfrak{X}(\mathcal{G}),\; X\mapsto Y_X; \hspace{0.5cm} 
	\Gamma(E)\to \mathfrak{X}(\mathcal{G}),\; \alpha\mapsto Y_\alpha; \hspace{0.5cm}
	\Gamma(\Tab)\to \mathfrak{X}(\mathcal{G}),\; S \mapsto Y_S.
	\end{equation*}
	Thus, $Y_X,Y_\alpha,Y_S\in\mathfrak{X}(\mathcal{G})$ are the unique vector fields that satisfy:
	\begin{equation}
	\label{eqn:3componentdecompositioneqns}
	\begin{split}
	&dt(Y_X) = t^*X,\hspace{1cm}\omega(Y_X) = 0,\hspace{1.3cm} \Pi(Y_X)=0, \\
	&dt(Y_\alpha) = 0,\hspace{1.5cm}\omega(Y_\alpha) = t^*\alpha,\hspace{1cm} \Pi(Y_\alpha)=0, \\
	&dt(Y_S) = 0,\hspace{1.5cm}\omega(Y_S) = 0,\hspace{1.32cm} \Pi(Y_S)=t^*S.
	\end{split}
	\end{equation}
	Given a section $\alpha\in\Gamma(A)$, we denote the induced right invariant vector field by $\widetilde{\alpha}\in\mathfrak{X}(\mathcal{G})$. Also recall that there is canonical isomorphism 
	\begin{equation}
	\label{eqn:secdonfundamentaltheoremstmttm}
	\psi: s^*TM\xrightarrow{\simeq} t^*TM
	\end{equation}
	of vector bundles over $\mathcal{G}$ which is equal to $dt\circ H$ (Lemma \ref{lemma:pfaffiangroupoidtriangle}). We define the map:
	\begin{equation*}
	\mathfrak{X}(M) \to \mathfrak{X}(J^k\Gamma),\hspace{1cm} X \mapsto X^H= H\circ (dt\circ H)^{-1}(t^*X).
	\end{equation*}
	One now readily verifies that
	\begin{equation}
	\label{eqn:3componentdecompositionformula}
	Y_X = X^H, \hspace{0.5cm} Y_\alpha = \widetilde{\xi(\alpha)} -  \rho(\alpha)^H, \hspace{0.5cm} Y_S = \widetilde{S}.
	\end{equation}
	
	In the other direction, choose $\Pi\in\Omega^1(\mathcal{G};t^*\Tab)$ that satisfies \eqref{eqn:vbisomorphism}, so we have an isomorphism \eqref{eqn:3componentdecomposition}. This induces a Cartan-Ehresmann connection $H:s^*TM\to T\mathcal{G}$ as follows: denote the restriction of the inverse of \eqref{eqn:3componentdecomposition} to $t^*TM$ by $H':t^*TM\to T\mathcal{G}$ and set $H=H'\circ \psi$, where $\psi$ is the isomorphism \eqref{eqn:secdonfundamentaltheoremstmttm}. Indeed,
	\begin{equation*}
	ds\circ H = \explain{ds}{=\psi^{-1} \circ dt}\circ H'\circ \psi = \psi^{-1} \circ \underbrace{dt \circ H'}_{=\Id} \circ \psi = \psi^{-1}\circ\psi = \Id.
	\end{equation*}
	It is easy to see that this construction is inverse to \eqref{eqn:piintegralcartanehresmannconnection}. 
	
	We move on to \eqref{eqn:secondfundamentaltheoremcorrespondence2}. Let $H$ be an integral Cartan-Ehresmann connection. We must show that the induced $\Pi$ satisfies the structure equation \eqref{eqn:structureequations}. For this, it is enough to verify that the expression
	\begin{equation}
	\label{eqn:structureequationexpression}
	d\Omega+\frac{1}{2}[\Omega,\Omega]-\Pi\wedge\Omega =  d(dt,\omega)+\frac{1}{2}[(dt,\omega),(dt,\omega)] - \Pi\wedge (dt,\omega)
	\end{equation}
	vanishes when applied to all possible pairs of the type \eqref{eqn:3componentdecompositionformula}. In the following computations, we use Lemma \ref{lemma:maurercartanexpression} to evaluate the Maurer-Cartan expression $\text{MC}_{\Omega}= d\Omega+\frac{1}{2}[\Omega,\Omega]$:
	
	\begin{enumerate}
		\item $\displaystyle
		\begin{aligned}[t]
		(d\Omega &+\frac{1}{2}[\Omega,\Omega] -\Pi\wedge\Omega)(Y_X,Y_{X'}) \\ & = \big(d(dt,\omega)+\frac{1}{2}[(dt,\omega),(dt,\omega)] - \Pi\wedge (dt,\omega)\big) (Y_X,Y_{X'}) \\
		& = - \big(dt([X^H,X'^H]),\underbrace{\cancel{\omega([X^H, X'^H])}}_{\text{$H$ is integral}}\big) + t^*[(X,0),(X',0)]  \\
		& = - \big(\underbrace{t^*[X,X']}_{\mathclap{\text{$Y_X,Y_{X'}$ are $t$-related to $X,X'$ }}},0\big) + t^*([X,X'],0) = 0
		\end{aligned}
		$
		\item $\displaystyle
		\begin{aligned}[t]
		(d\Omega&+\frac{1}{2}[\Omega,\Omega] -\Pi\wedge\Omega)(Y_X,Y_\alpha)\\ & =  - \big(\underbrace{\cancel{dt([Y_X,Y_\alpha])}}_{\mathclap{\text{$Y_\alpha$ is $t$-related to 0 }}},\omega([Y_X,Y_\alpha])\big) + t^*[(X,0),(0,\alpha)] \\ 
		& = -\Big(0,\underbrace{\omega([X^H,\widetilde{\xi(\alpha)}])}_{\mathclap{\text{$=t^*\nabla^\xi_X(\alpha)$ by Lemma \ref{lemma:cartanformspenceroperatorformula}}} } - \underbrace{\cancel{\omega([X^H, \rho(\alpha)^H])}}_{\text{$H$ is integral}} \Big) + t^*(0,\nabla^\xi_X(\alpha)) = 0
		\end{aligned}
		$
		\item $\displaystyle
		\begin{aligned}[t]
		(d\Omega&+\frac{1}{2}[\Omega,\Omega] -\Pi\wedge\Omega)(Y_X,Y_S) \\ & = - \big(dt([Y_X,Y_S]),\omega([Y_X,Y_S])\big) + \Pi(Y_S)((dt,\omega)(Y_X)) \\
		& = - \big(0,\underbrace{\omega([X^H, \widetilde{S}])}_{\mathclap{\text{$=t^*D_X(S)=-t^*S(X)$}}} \big) + t^*(0,S(-X)) = 0
		\end{aligned}
		$
		\item $\displaystyle
		\begin{aligned}[t]
		(d\Omega&+\frac{1}{2}[\Omega,\Omega] -\Pi\wedge\Omega)(Y_\alpha,Y_{\alpha'}) \\ & = - \big(dt([Y_\alpha,Y_{\alpha'}]),\omega([Y_\alpha,Y_{\alpha'}])\big) + t^*[(0,\alpha),(0,\alpha')]  \\
		& = - \big(0,\underbrace{\omega([\widetilde{\xi(\alpha)}, \widetilde{\xi(\alpha')}])}_{=t^*[\alpha,\alpha']} - \underbrace{\omega([\rho(\alpha)^H, \widetilde{\xi(\alpha')}])}_{=t^*\nabla^\xi_{\rho(\alpha)}(\alpha)} + \underbrace{\omega([\rho(\alpha')^H, \widetilde{\xi(\alpha)}])}_{=t^*\nabla^\xi_{\rho(\alpha')}(\alpha)} \big) \\
		& \hspace{0.35cm} + \big( 0,  \underbrace{\cancel{\omega([\rho(\alpha)^H, \rho(\alpha')^H])}}_{\text{$H$ is integral}} \big) - t^*(0,c^\xi(\alpha,\alpha')) = 0
		\end{aligned}
		$
		\item $\displaystyle
		\begin{aligned}[t]
		(d\Omega&+\frac{1}{2}[\Omega,\Omega] -\Pi\wedge\Omega)(Y_\alpha,Y_S) \\ & = - \big(dt([Y_\alpha,Y_S]),\omega([Y_\alpha,Y_S])\big)  + \Pi(Y_S)((dt,\omega)(Y_\alpha)) \\
		& = - \big(0,\underbrace{\cancel{\omega([\widetilde{\xi(\alpha)}, \widetilde{S}])}}_{ \mathclap{\text{ $= d\pi([\xi(\alpha),S]) =0$ } } } - \underbrace{\omega([\rho(\alpha)^H, \widetilde{S}])}_{= -t^*S(\rho(\alpha)) } \big) + t^*(0,S(\rho(\alpha)) =0
		\end{aligned}
		$
		\item $\displaystyle
		\begin{aligned}[t]
		(d\Omega&+\frac{1}{2}[\Omega,\Omega]-\Pi\wedge\Omega)(Y_S,Y_{S'}) \\ & = - \big(dt([Y_S,Y_{S'}]),\omega([Y_S,Y_{S'}])\big) \\ 
		& = - \big(0,\underbrace{\cancel{\omega([\widetilde{S}, \widetilde{S'}])}}_{ \mathclap{\text{ $= d\pi([S,S']) =0$ } } } \big) = 0
		\end{aligned}
		$
	\end{enumerate}
	
	Conversely, let $\Pi\in\Omega^1(\mathcal{G};t^*\Tab)$ satisfy \eqref{eqn:structureequations} and \eqref{eqn:vbisomorphism}. Thus, $\Pi$ induces a Cartan-Ehresmann connection $H$ on $\mathcal{G}$. Let $X,X'\in\mathfrak{X}(M)$ and set $X^H:=H\circ \psi^{-1}(t^*X)$ and $X'^H:=H\circ \psi^{-1}(t^*X')$. 
	Using \eqref{eqn:structureequations},
	\begin{equation*}
	\begin{split}
	0 &= (d\Omega+\frac{1}{2}[\Omega,\Omega]-\Pi\wedge\Omega)(X^H,X'^H) \\ 
	& = - \big(dt([X^H,X'^H]),\omega([X^H, X'^H])\big) + t^*[(X,0),(X',0)]  \\
	& = - \big(0,\delta \omega(H\circ \psi^{-1}(X), H\circ \psi^{-1}(X')) \big).
	\end{split}
	\end{equation*}
	We conclude that $\delta\omega(H(\cdot),H(\cdot))=0$ and, hence, that $H$ is integral. 
\end{proof}

\subsubsection{Aside: the Maurer-Cartan Equation of a Lie-Pfaffian Groupoid}
\label{section:marurercartanequationliepfaffiangroupoid}

The above construction of a realization, which may seem somewhat arbitrary at first, can be understood more conceptually in terms of the Maurer-Cartan equation on the classical prolongation of a Lie-Pfaffian groupoid, which was introduced and studied in \cite{Salazar2013} (but which comes from the classical notion of a prolongation, as its name suggests). 

Let us briefly review the construction of the classical prolongation. For more details, see Appendix \ref{section:Liepfaffaingroupoidandalgebroid}. Let $(\mathcal{G},\omega)$ be a Lie-Pfaffian groupoid, and let
\begin{equation*}
\widetilde{\mathcal{G}}\rightrightarrows M
\end{equation*}
be the subgroupoid of the first jet groupoid $J^1\mathcal{G}\rightrightarrows M$ consisting of all elements $j^1_xb$ that pull-back both $\omega$ and its differential $\delta\omega$ to zero (note that the notation $P_\omega(\mathcal{G})$ is used for $\widetilde{G}$ in Appendix \ref{section:Liepfaffaingroupoidandalgebroid}). We say that $\widetilde{\mathcal{G}}$ is smoothly defined (or 1-integrable)
if the projection $\pi:\widetilde{G}\to\mathcal{G},\;j^1_xb\mapsto b(x)$, admits a global section and if the symbol space of $(\mathcal{G},\omega)$ is of constant rank. This condition implies that $\widetilde{\mathcal{G}}\rightrightarrows M$ is a Lie subgroupoid and $\pi:\widetilde{\mathcal{G}}\to\mathcal{G}$ is an affine bundle. In this case, we may equip $\widetilde{\mathcal{G}}$ with the restriction of the Cartan form of $J^1\mathcal{G}$, which we denote by $\widetilde{\omega}\in\Omega^1(\widetilde{\mathcal{G}};t^*A)$, and the pair $(\widetilde{\mathcal{G}},\widetilde{\omega})$ becomes a Lie-Pfaffian groupoid which we call the \textit{classical prolongation} of $(\mathcal{G},\omega)$. 

The Cartan form $\widetilde{\omega}$ on of the classical prolongation satisfies a Maurer-Cartan equation. The equation is defined in complete analogy to the construction in Section \ref{section:realizationsstructureequations}. The differential of $\widetilde{\omega}$,
\begin{equation*}
d_{\omega}\widetilde{\omega}\in\Omega^2(\widetilde{G};t^*E),
\end{equation*}
is defined by the Koszul-type formula
\begin{equation*}
d_{\omega}\widetilde{\omega}(X,Y):= (t^*D)_X(\widetilde{\omega}(Y)) - (t^*D)_Y(\widetilde{\omega}(X)) - l\circ\widetilde{\omega}([X,Y]), \hspace{0.6cm} \forall\;  X,Y\in\mathfrak{X}(\widetilde{G}),
\end{equation*}
where $D=D_\omega:\mathfrak{X}(M)\times \Gamma(A)\to \Gamma(E)$ is the Spencer operator induced by $\omega$. The torsion 
\begin{equation*}
\{\cdot,\cdot\}_{\omega}\in\Gamma(\Hom(\Lambda^2A,E))
\end{equation*}
is defined by  
\begin{equation*}
\{ \alpha,\beta \}_{\omega}:= [\alpha,\beta] - D_{\rho(\alpha)}\beta + D_{\rho(\beta)}\alpha, \hspace{1cm} \forall\; \alpha,\beta\in\Gamma(A),
\end{equation*}
and we may use this pairing to define a graded bracket 
\begin{equation*}
\{\cdot,\cdot\}_{\omega}: \Omega^p(\widetilde{G};t^*A)\times \Omega^q(\widetilde{G};t^*A) \to \Omega^{p+q}(\widetilde{G};t^*E)
\end{equation*} 
by the usual wedge-like formula. In particular,
\begin{equation*}
\frac{1}{2}\{ \widetilde{\omega},\widetilde{\omega} \}_{\omega}(X,Y) = \{ \widetilde{\omega}(X),\widetilde{\omega}(Y) \}_{\omega}. 
\end{equation*}
Together, we obtain the Maurer-Cartan expression
\begin{equation*}
d_{\omega}\widetilde{\omega} + \frac{1}{2}\{ \widetilde{\omega},\widetilde{\omega} \}_{\omega}\in\Omega^2(\widetilde{G};t^*E).
\end{equation*}
Note that, in contrast to the construction in Section \ref{section:realizationsstructureequations}, this construction is canonical and does not require the choice of a connection. This, however, comes at the cost of ``going one level down'', in the sense that, while $\widetilde{\omega}$ is a form with values in $A$, the Maurer-Cartan expression takes values in $E$. Finally, we have that:

\begin{mytheorem}
	Let $(\mathcal{G},\omega)$ be a Lie-Pfaffian groupoid and assume that its classical prolongation is smoothly defined $(\widetilde{\mathcal{G}},\widetilde{\omega})$. Then
	\begin{equation}
	\label{eqn:maurercartanequationclassicalprolongation}
	d_{\omega}\widetilde{\omega} + \frac{1}{2}\{ \widetilde{\omega},\widetilde{\omega} \}_{\omega} = 0.
	\end{equation}
\end{mytheorem}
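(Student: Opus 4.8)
The plan is to verify the identity \eqref{eqn:maurercartanequationclassicalprolongation} pointwise, by evaluating its left-hand side on a well-chosen frame of vector fields on $\widetilde{\mathcal{G}}$. The first observation is that both terms are genuine $t^*E$-valued $2$-forms: the pairing $\frac12\{\widetilde{\omega},\widetilde{\omega}\}_\omega$ is manifestly tensorial, and the Koszul-type formula defining $d_\omega\widetilde{\omega}$ is $C^\infty(\widetilde{\mathcal{G}})$-linear in each slot, since the derivative terms produced by the $l$-connection Leibniz rule for $D=D_\omega$ cancel exactly against the corresponding terms in $-\,l\circ\widetilde{\omega}([X,Y])$. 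Hence it suffices to check that $\Theta:=d_\omega\widetilde{\omega}+\frac12\{\widetilde{\omega},\widetilde{\omega}\}_\omega$ vanishes on a spanning set of tangent vectors at each point of $\widetilde{\mathcal{G}}$. Before computing, I would record the two structural facts to be used repeatedly: that $\widetilde{\omega}$ is multiplicative, so that on source fibers $\widetilde{\omega}(\widetilde{\alpha})$ is the tautological $t^*A$-value determined by a right-invariant field $\widetilde{\alpha}$ ($\alpha\in\Gamma(\widetilde{A})$, $\widetilde{A}=\mathrm{Lie}(\widetilde{\mathcal{G}})$); and the tautological property of the Cartan form on a jet groupoid, namely that $\widetilde{\omega}$ vanishes on the tangent spaces to holonomic lifts $j^1 b$.

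Second, I would build the frame from the geometry of the prolongation. Because $(\widetilde{\mathcal{G}},\widetilde{\omega})$ is smoothly defined, $\pi:\widetilde{\mathcal{G}}\to\mathcal{G}$ is an affine bundle, and through each point $j^1_x b\in\widetilde{\mathcal{G}}$ passes the holonomic lift $j^1 b$ of a bisection $b$ of $\mathcal{G}$ satisfying $b^*\omega=0$ and $b^*\delta\omega=0$ at $x$; these are exactly the defining equations of $\widetilde{\mathcal{G}}\subset J^1\mathcal{G}$. Writing $X^b:=d(j^1 b)(X)$ for the holonomic lift of $X\in\mathfrak{X}(M)$, the tangent space $T_{j^1_x b}\widetilde{\mathcal{G}}$ is spanned by such horizontal fields together with the source-fiber directions $\widetilde{\alpha}$. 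I would then evaluate $\Theta$ on the three resulting types of pairs.

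Third comes the term-by-term check, which parallels the computation in the proof of Theorem \ref{theorem:piintegralcartanehresmannconnection}. On a pair $(\widetilde{\alpha},\widetilde{\beta})$ of source-fiber directions, multiplicativity fixes $\widetilde{\omega}(\widetilde{\alpha})$ and $\widetilde{\omega}(\widetilde{\beta})$ as the tautological $t^*A$-values determined by $\alpha,\beta$, and the Koszul term $-\,l\circ\widetilde{\omega}([\widetilde{\alpha},\widetilde{\beta}])$ together with the two $D$-terms combines — by the definition of the torsion $\{\cdot,\cdot\}_\omega$ and the compatibility of $D=D_\omega$ with the brackets — to cancel against $\frac12\{\widetilde{\omega},\widetilde{\omega}\}_\omega(\widetilde{\alpha},\widetilde{\beta})$. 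On a mixed pair $(\widetilde{\alpha},X^b)$, the tautological property kills $\widetilde{\omega}(X^b)$, and the identity collapses to the defining relation of the Spencer operator $D=D_\omega$ (the analogue of Lemma \ref{lemma:cartanformspenceroperatorformula}), giving $\Theta(\widetilde{\alpha},X^b)=0$. The \emph{decisive} case is a pair $(X^b,Y^b)$ of horizontal directions: here $\widetilde{\omega}(X^b)=\widetilde{\omega}(Y^b)=0$, so the $D$-terms vanish and $\Theta(X^b,Y^b)=-\,l\circ\widetilde{\omega}([X^b,Y^b])$, which measures the curvature of the holonomic distribution along $j^1 b$.

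The main obstacle is precisely this last curvature term: I must show that $l\circ\widetilde{\omega}([X^b,Y^b])$ equals $(b^*\delta\omega)(X,Y)$ at $x$, so that it vanishes by the prolongation condition $b^*\delta\omega=0$. Concretely this requires relating the $A$-valued differential of the tautological form $\widetilde{\omega}$ on $J^1\mathcal{G}$ to the $E$-valued partial differential $\delta\omega=d_\nabla\omega|_{\Ker\omega}$ of the original form — the same object already appearing in \eqref{eqn:killingomegakillingdifferentialomega} and Lemma \ref{lemma:symbolmapdifferentialcartanform} — and then checking that the auxiliary connection used to define $d_\omega$ and $\delta\omega$ drops out, so that the identity is genuinely connection-independent as asserted. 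Once this identification of the horizontal curvature with $b^*\delta\omega$ is established, all three cases give zero and \eqref{eqn:maurercartanequationclassicalprolongation} follows.
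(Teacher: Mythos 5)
The paper does not actually prove this theorem (it defers to Theorem 6.2.17 and Proposition 6.2.41 of \cite{Salazar2013}), so your argument has to stand on its own. Its overall shape is right --- tensoriality of $d_\omega\widetilde{\omega}+\frac12\{\widetilde{\omega},\widetilde{\omega}\}_\omega$, evaluation on a frame adapted to $\Ker ds$ and a horizontal complement, with the horizontal--horizontal case carrying the content --- and it correctly mirrors the computation in Theorem \ref{theorem:piintegralcartanehresmannconnection}. But there is a genuine gap in your frame: the holonomic lifts $X^b=d(j^1b)(X)$ are in general \emph{not} tangent to $\widetilde{\mathcal{G}}=P_\omega(\mathcal{G})$. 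A point $j^1_xb$ lies in $P_\omega(\mathcal{G})$ because $(db)_x^*\omega=0$ and $(db)_x^*\delta\omega=0$ hold at the single point $x$; for the curve $y\mapsto j^1_yb$ to be tangent to $P_\omega(\mathcal{G})$ at $x$ these conditions must hold to first order in $y$, which is a condition on the $2$-jet of $b$ at $x$ --- precisely the condition that $j^1_xb$ lift to the \emph{second} prolongation. That can fail even when the first prolongation is smoothly defined, and the theorem is asserted without any such hypothesis. So the claimed spanning of $T_{j^1_xb}\widetilde{\mathcal{G}}$ by holonomic lifts and source-fiber directions is false in general, and your decisive case is computed on vectors that need not exist.

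The repair is to replace holonomic lifts by arbitrary $\pi$-projectable sections of the Cartan distribution $\Ker\widetilde{\omega}\subset T\widetilde{\mathcal{G}}$ of the prolongation; by \eqref{eqn:pfaffiangroupoidstransversal} applied to $(\widetilde{\mathcal{G}},\widetilde{\omega})$ these complement $\Ker ds$, so the frame exists. Two observations then make your decisive case go through. First, multiplicativity \eqref{eqn:multiplicativeform} gives $\omega_g(W)=l(dR_{g^{-1}}W)$ for $W\in\Ker ds$, and combined with $(db)_x^*\omega=0$ and the tautological formula for $\widetilde{\omega}$ this yields the identity $l\circ\widetilde{\omega}=\pi^*\omega$ on $T\widetilde{\mathcal{G}}$. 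Second, the tautological formula also shows that \emph{any} $V\in\Ker\widetilde{\omega}$ at $j^1_xb$ satisfies $d\pi(V)=(db)_x(ds(V))$ --- you get the holonomic behaviour of $d\pi(V)$ for free, without requiring $V$ itself to come from a section of $\widetilde{\mathcal{G}}\to M$. Hence for projectable $V,W\in\Gamma(\Ker\widetilde{\omega})$ with projections $\bar V,\bar W\in\Gamma(\Ker\omega)$ one finds
\begin{equation*}
-\,l\circ\widetilde{\omega}([V,W]) \;=\; -\,\omega([\bar V,\bar W]) \;=\; \delta\omega(\bar V,\bar W)\big|_{b(x)} \;=\; \big((db)_x^*\delta\omega\big)(ds V, ds W) \;=\; 0
\end{equation*}
by the defining condition of $P_\omega(\mathcal{G})$ --- exactly the identification with $b^*\delta\omega$ you were after, obtained from the tautological form rather than from a nonexistent holonomic section. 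Your other two cases survive the change of frame, though in the mixed case you should state and verify the compatibility $l\circ D_{\widetilde{\omega}}=D_\omega\circ\widetilde{l}$ of the two Spencer operators (with $\widetilde{l}=\widetilde{\omega}|_{A(\widetilde{\mathcal{G}})}=d\pi|_{A(\widetilde{\mathcal{G}})}$), which follows from $l\circ\widetilde{\omega}=\pi^*\omega$ and Lemma \ref{lemma:cartanformspenceroperatorformula} but is currently folded silently into ``the defining relation of the Spencer operator''.
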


For the proof, see Theorem 6.2.17 together with Proposition 6.2.41 in \cite{Salazar2013}. 

\begin{myremark}
	In \cite{Salazar2013}, the abstract notion of a Lie prolongation of a Lie-Pfaffian groupoid $(\mathcal{G},\omega)$ is introduced, and it is proven that the classical prolongation is universal amongst all Lie prolongations. A Lie prolongation is, roughly speaking, a Lie-Pfaffian groupoid $(\widetilde{\mathcal{G}},\widetilde{\omega})$, together with a map $p:(\widetilde{\mathcal{G}},\widetilde{\omega})\to (\mathcal{G},\omega)$, that satisfies certain compatibility conditions between $\omega$ and $\widetilde{\omega}$. It is then proven that these compatibility conditions are equivalent to the above Maurer-Cartan equation, which can be interpreted as a compatibility condition between $\widetilde{\omega}$ and $\omega$. 
\end{myremark}

Let us now explain how the construction of a realization from the previous section relates to this Maurer-Cartan equation. Let $(\mathcal{G},\omega)$ be a Lie-Pfaffian groupoid and assume that its classical prolongation is smooth. In Appendix \ref{section:Liepfaffaingroupoidandalgebroid} it is shown that a section of the projection $\pi:\widetilde{G}\to \mathcal{G}$ is the same thing as an integral Cartan-Ehresmann connection. Given such a section, say $\eta:\mathcal{G}\to \widetilde{G}$, we can pull-back the Cartan form to obtain a 1-form 
\begin{equation*}
\eta^*\widetilde{\omega}\in\Omega^1(\mathcal{G};t^*A). 
\end{equation*}
Choosing a splitting \eqref{eqn:secondfundamentaltheoremsplitting} as in the construction of the previous section, we obtain a decomposition $A\cong E\oplus \Tab$, and $\eta^*\widetilde{\omega}$ decomposes into two components. The first is precisely $\omega\in \Omega^1(\mathcal{G};t^*E)$, while the second, which we denote by $\Pi\in\Omega^1(\mathcal{G};t^*\Tab)$, is precisely the 1-form which is obtained from the Cartan-Ehresmann connection $\eta$ via the map \eqref{eqn:piintegralcartanehresmannconnection}. Thus, the pair $(\omega,\Pi)$ is nothing but the pull-back of $\widetilde{\omega}$ by $\eta$. This, however, is not quite a realization yet, since Cartan chooses to complete $(\omega,\Pi)$ to a ``coframe'' of $\mathcal{G}$ by including the 1-form $dt\in\Omega^1(\mathcal{G};t^*TM)$, and to view $\omega$ and $dt$ as a single 1-form $\Omega=(dt,\omega)\in\Omega^1(\mathcal{G};t^*\mathcal{C})$, where recall that $\mathcal{C}=TM\oplus E$. Now, using the same ideas as in the proof of Theorem \ref{theorem:piintegralcartanehresmannconnection}, it is not hard to see that the Maurer-Cartan equation \eqref{eqn:maurercartanequationclassicalprolongation} for $\widetilde{\omega}$ is equivalent to the structure equation \eqref{eqn:structureequations} for the induced pair $(\Omega,\Pi)$.

\subsubsection{Proof of the Second Fundamental Theorem}
\label{section:secondfundamentaltheoremproof}

We are now ready to complete the proof of Theorem \ref{theorem:secondfundamentaltheorem}. The two ``ingredients'' have the following implication:

\begin{mycor}
	\label{cor:secondfundamentaltheorem}
	Let $(\mathcal{G},\omega)$ be a Lie-Pfaffian groupoid on $M$. If it is standard and admits an integral Cartan-Ehresmann connection, then the associated pair $(\mathcal{C},\Tab)$ (see Proposition \ref{myprop:cgprecartanalgebroid}) is a Cartan algebroid, $(\mathcal{G},\Omega)$ (see Theorem \ref{theorem:piintegralcartanehresmannconnection}) a realization, and the induced pseudogroup $\Gamma(\mathcal{G},\Omega)$ (see \eqref{eqn:realizationinducedpseudogroup}) is precisely the canonical prolongation of $\LBis(\mathcal{G},\omega)$ (see \eqref{eqn:canonicalprolongationpseudogroup}):
	\begin{equation}
	\label{eqn:inducedpseudogroupcanonicalprolongation}
	\Gamma(\mathcal{G},\Omega) = \mathrm{prol}(\LBis(\mathcal{G},\omega)).
	\end{equation}
	Thus, $\Gamma(\mathcal{G},\Omega)$ is in normal form and it is Cartan equivalent to the generalized pseudogroup $\LBis(\mathcal{G},\omega)$ of local solutions of $(\mathcal{G},\omega)$. 
\end{mycor}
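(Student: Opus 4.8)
The plan is to read the statement as a bundling together of results already proven in this section, applied in the right order, with only one genuinely new (but short) computation needed to identify the two pseudogroups. I would organize it as follows: first extract the auxiliary form $\Pi$ from the integral Cartan-Ehresmann connection, which at once yields that $(\mathcal{G},\Omega)$ is a realization and, through the general obstruction theorem, that $(\mathcal{C},\Tab)$ is a Cartan algebroid; then prove the equality \eqref{eqn:inducedpseudogroupcanonicalprolongation} by unwinding definitions; and finally read off the two closing assertions from it.

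First I would apply Theorem \ref{theorem:piintegralcartanehresmannconnection}. By the correspondence \eqref{eqn:secondfundamentaltheoremcorrespondence2}, an integral Cartan-Ehresmann connection on $(\mathcal{G},\omega)$ is the same datum as a form $\Pi\in\Omega^1(\mathcal{G};t^*\Tab)$ satisfying both \eqref{eqn:vbisomorphism} and the structure equation \eqref{eqn:structureequations}; with $\Omega=(dt,\omega)$ this is precisely the assertion that $(\mathcal{G},\Omega)$ is a realization of the almost Cartan algebroid $(\mathcal{C},\Tab)$ of Proposition \ref{myprop:cgprecartanalgebroid}. This settles the second claim. Since $(\mathcal{C},\Tab)$ then admits a realization, Theorem \ref{theorem:realizationimpliescartanalgebroid} promotes it from an almost Cartan algebroid to a Cartan algebroid, settling the first claim.

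The heart of the proof is the equality \eqref{eqn:inducedpseudogroupcanonicalprolongation}. By the defining formula \eqref{eqn:realizationinducedpseudogroup}, the submersion of the realization is $I=t$ and $\Gamma(\mathcal{G},\Omega)=\{\phi\in\LDiff(\mathcal{G})\mid \phi^*t=t,\ \phi^*\Omega=\Omega\}$. The observation that makes everything fit is that $\phi^*t=t$ forces $\phi^*(dt)=d(t\circ\phi)=dt$, so after decomposing $\Omega=(dt,\omega)$ the equation $\phi^*\Omega=\Omega$ reduces to its $E$-component $\phi^*\omega=\omega$. Hence $\Gamma(\mathcal{G},\Omega)=\{\phi\in\LDiff(\mathcal{G})\mid \phi^*t=t,\ \phi^*\omega=\omega\}$, and because $(\mathcal{G},\omega)$ is standard, Theorem \ref{theorem:firstfundamentaltheorem} identifies this set with $\mathrm{prol}(\LBis(\mathcal{G},\omega))$, which is exactly \eqref{eqn:inducedpseudogroupcanonicalprolongation}.

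The two concluding assertions then follow, and this is where I expect the only real difficulty. Cartan equivalence of $\Gamma(\mathcal{G},\Omega)$ with $\LBis(\mathcal{G},\omega)$ is immediate from \eqref{eqn:inducedpseudogroupcanonicalprolongation} and Proposition \ref{prop:prolongationiscartanequivalent}, which already exhibits $\mathrm{prol}(\LBis(\mathcal{G},\omega))$ as an isomorphic prolongation of $\LBis(\mathcal{G},\omega)$ along $s$. For the \textbf{normal form} claim, Definition \ref{def:normalform} requires, beyond the realization property now in hand, that the orbits of $\Gamma(\mathcal{G},\Omega)$ coincide with the fibers of $t$. Since every prolongation $\psi_b$ preserves $t$, the orbits are contained in the $t$-fibers; the reverse inclusion amounts to joining any two points $g,g'$ of a common $t$-fiber, which unwinds, via $\psi_b(g)=g\cdot b(s(g))^{-1}$, to the existence of a holonomic bisection through the arrow $g'^{-1}g$. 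Thus the substance of the normal-form claim is the existence of a holonomic bisection through every point of $\mathcal{G}$, and this is the main obstacle: the integral Cartan-Ehresmann connection only furnishes a first-order (formal) solution at each point, so I would deduce the orbits-equal-fibers statement not from the abstract hypotheses in isolation but from the motivating case $(\mathcal{G},\omega)=(J^k\Gamma,\omega)$, where every point is a jet $j^k_x\phi$ of some $\phi\in\Gamma$ and $j^k\phi$ is itself a holonomic bisection through it, so that the orbits genuinely fill the $t$-fibers.
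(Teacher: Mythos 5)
Your argument for the realization, the Cartan algebroid, the identity \eqref{eqn:inducedpseudogroupcanonicalprolongation} and the Cartan equivalence is exactly the paper's proof: the authors simply cite Proposition \ref{prop:prolongationiscartanequivalent}, Theorem \ref{theorem:firstfundamentaltheorem} and Theorem \ref{theorem:piintegralcartanehresmannconnection}, together with the same observation you make, that for $\phi\in\LDiff(\mathcal{G})$ the condition $\phi^*t=t$ already forces $\phi^*(dt)=dt$, so that $\phi^*\Omega=\Omega$ reduces to $\phi^*\omega=\omega$. (The promotion of $(\CAlg,\Tab)$ from almost Cartan algebroid to Cartan algebroid via Theorem \ref{theorem:realizationimpliescartanalgebroid} is likewise how the paper gets it; it is even folded into the statement of Theorem \ref{theorem:piintegralcartanehresmannconnection}.)

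Your worry about the normal-form assertion is well placed, and it is worth saying plainly that the paper's proof does not address it either: Definition \ref{def:normalform} requires the orbits of $\Gamma(\mathcal{G},\Omega)$ to coincide with the fibers of $t$, and, as you compute, the nontrivial inclusion amounts to every arrow of $\mathcal{G}$ lying on (a product of arrows lying on) local holonomic bisections. An integral Cartan-Ehresmann connection is only a section of $P_\omega(\mathcal{G})\to\mathcal{G}$, i.e.\ a first-order solution through each point, and in the smooth category this does not produce actual holonomic bisections; so the orbit condition is not a formal consequence of the stated hypotheses. Your fallback to the case $(J^k\Gamma,\omega)$ — where every point $j^k_x\phi$ lies on the holonomic bisection $j^k\phi$ by the very definition of $J^k\Gamma$ — is precisely what makes the downstream application in the proof of Theorem \ref{theorem:secondfundamentaltheorem} correct, and note that the Cartan-equivalence assertion (the part actually used) is unaffected. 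But be aware that this patch proves the normal-form claim only for the intended class of examples, not for an arbitrary standard Lie-Pfaffian groupoid with an integral Cartan-Ehresmann connection; a clean fix would be to add ``every point of $\mathcal{G}$ lies on a local holonomic bisection'' as a hypothesis of the corollary, or to weaken its conclusion accordingly.
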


\begin{proof}
	This is an immediate consequence of Proposition \ref{prop:prolongationiscartanequivalent}, Theorem \ref{theorem:firstfundamentaltheorem} and Theorem \ref{theorem:piintegralcartanehresmannconnection}. Note that, in \eqref{eqn:inducedpseudogroupcanonicalprolongation}, we are using the simple fact that, for $\phi\in\LDiff(\mathcal{G})$, $\phi^*t=t$ if and only if $\phi^*t=t$ and $\phi^*dt=dt$.
\end{proof}

This corollary can also be ``improved'' by restricting to a complete transversal of $\mathcal{G}$, as explained in Section \ref{section:restrictingtotransversal}. Using the notation from Example \ref{example:restrictioncartanalgebroid} for the restrictions of a Cartan algebroid and a realization, we have that:

\begin{mycor}
	\label{cor:secondfundamentaltheoremtransversal}
	Let $(\mathcal{G},\omega)$ be a Lie-Pfaffian groupoid on $M$ and let $N\subset M$ be a complete transversal of $\mathcal{G}$. If $(\mathcal{G},\omega)$ is standard and admits an integral Cartan-Ehresmann connection, then the associated pair $(\mathcal{C}_N,\Tab_N)$ is a Cartan algebroid, $(\mathcal{G}_N,\Omega_N)$ is a realization, and the induced pseudogroup $\Gamma(\mathcal{G}_N,\Omega_N)$ is precisely the canonical prolongation of $\LBis(\mathcal{G},\omega)$:
	\begin{equation}
	\Gamma(\mathcal{G}_N,\Omega_N) = \mathrm{prol}(\LBis(\mathcal{G},\omega)),
	\end{equation}
	Thus, $\Gamma(\mathcal{G}_N,\Omega_N)$ is in normal form and Cartan equivalent to the generalized pseudogroup $\LBis(\mathcal{G},\omega)$ of local solutions of $(\mathcal{G},\omega)$. 
\end{mycor}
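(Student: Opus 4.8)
The plan is to run the argument of Corollary \ref{cor:secondfundamentaltheorem} essentially verbatim, replacing each of its ingredients by the complete-transversal counterpart and invoking the restriction operation of Example \ref{example:restrictioncartanalgebroid} to pass from $M$ down to $N$. First I would observe that, since $(\mathcal{G},\omega)$ is standard and admits an integral Cartan-Ehresmann connection, Corollary \ref{cor:secondfundamentaltheorem} already supplies the unrestricted Cartan algebroid $(\mathcal{C},\Tab)$ over $M$ together with the realization $(\mathcal{G},\Omega)$, where $\Omega=(dt,\omega)$ and the defining submersion is $I=t:\mathcal{G}\to M$. Applying the restriction construction of Example \ref{example:restrictioncartanalgebroid} to the submanifold $N\subset M$ then yields that $(\mathcal{C}_N,\Tab_N)$ is a Cartan algebroid over $N$ and that $(\mathcal{G}_N,\Omega_N)$ — with $\mathcal{G}_N=I^{-1}(N)=t^{-1}(N)$, $I_N=t_N$ and $\Omega_N=(dt_N,\omega_N)$ — is a realization of it. Note that complete transversality is \emph{not} needed for this step, since $t_N:\mathcal{G}_N\to N$ is a surjective submersion for every submanifold $N$.

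Next I would identify the induced pseudogroup. By definition,
\begin{equation*}
\Gamma(\mathcal{G}_N,\Omega_N) = \{\,\phi\in\LDiff(\mathcal{G}_N)\;|\;\phi^*I_N=I_N,\;\phi^*\Omega_N=\Omega_N\,\},
\end{equation*}
and since $I_N=t_N$ and $\Omega_N=(dt_N,\omega_N)$, the condition $\phi^*t_N=t_N$ automatically forces $\phi^*dt_N=dt_N$; hence the defining conditions collapse to $\phi^*t_N=t_N$ and $\phi^*\omega_N=\omega_N$. By Theorem \ref{theorem:firstfundamentaltheoremtransversal} (whose hypothesis is exactly that $N$ be a complete transversal and $(\mathcal{G},\omega)$ standard), this set equals $\mathrm{prol}(\LBis(\mathcal{G},\omega))|_N$, which is the asserted equality. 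This is the point where the complete-transversality hypothesis first enters, through the surjective-submersion property of $s_N$ used inside Theorem \ref{theorem:firstfundamentaltheoremtransversal}.

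Finally, the normal-form and Cartan-equivalence assertions follow by combining the above with Proposition \ref{prop:prolongationiscartanequivalenttransversal}, which states that $\mathrm{prol}(\LBis(\mathcal{G},\omega))|_N$ is an isomorphic prolongation of the generalized pseudogroup $\LBis(\mathcal{G},\omega)$ along $s_N:\mathcal{G}_N\to M$, so the two are Cartan equivalent. To see that $\Gamma(\mathcal{G}_N,\Omega_N)$ is in normal form in the sense of Definition \ref{def:normalform}, I would check that its orbits coincide with the fibers of $I_N=t_N$: for $x\in N$ the fiber $t_N^{-1}(x)$ is exactly the full $t$-fiber $t^{-1}(x)$, which lies entirely inside $\mathcal{G}_N$, and because any $\phi\in\mathrm{prol}(\LBis(\mathcal{G},\omega))$ preserves $t$ it maps $\mathcal{G}_N$ into $\mathcal{G}_N$; thus the restricted prolongation acts transitively on each such fiber, since the unrestricted one does by the normal-form part of Corollary \ref{cor:secondfundamentaltheorem}.

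The proof is largely bookkeeping, and the one point demanding care is keeping the two submersions apart. The realization structure and the normal-form condition are governed by the target map $I_N=t_N:\mathcal{G}_N\to N$, which is automatically a submersion, whereas the isomorphic prolongation — and hence the Cartan equivalence with $\LBis(\mathcal{G},\omega)$ — is governed by the source map $s_N:\mathcal{G}_N\to M$, whose surjective-submersion property is precisely what complete transversality of $N$ guarantees. I expect this double role of $\mathcal{G}_N$, already flagged in the proof of Theorem \ref{theorem:firstfundamentaltheoremtransversal}, to be the only genuine subtlety; everything else is inherited from the unrestricted case.
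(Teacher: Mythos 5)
Your proposal is correct and follows essentially the same route as the paper, which proves this corollary precisely by rerunning the proof of Corollary \ref{cor:secondfundamentaltheorem} with Proposition \ref{prop:prolongationiscartanequivalent} and Theorem \ref{theorem:firstfundamentaltheorem} replaced by their complete-transversal counterparts and by invoking the restriction operation of Example \ref{example:restrictioncartanalgebroid}. Your added bookkeeping --- noting that $t_N$ is a surjective submersion for any submanifold while complete transversality enters only through $s_N$, and checking that the orbits of the restricted prolongation are the $t_N$-fibers --- merely makes explicit what the paper leaves implicit.
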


\begin{proof}
	In the previous proof, replace Proposition \ref{prop:prolongationiscartanequivalent} and Theorem \ref{theorem:firstfundamentaltheorem} with their ``complete transversal counterparts'', Proposition \ref{prop:prolongationiscartanequivalenttransversal} and Theorem \ref{theorem:firstfundamentaltheoremtransversal}, and use the fact that Cartan algebroids and realizations can be restricted to submanifolds (Example \ref{example:restrictioncartanalgebroid}).
\end{proof}

And finally, the proof of the Second Fundamental Theorem:

\begin{proof}[Proof (Theorem \ref{theorem:secondfundamentaltheorem}).] 
Let $\Gamma$ be a Lie pseudogroup on $M$ of order $k$. The pair $(J^k\Gamma,\omega)$, consisting of the $k$-the jet groupoid of $\Gamma$ and its Cartan form, is a standard Lie-Pfaffian groupoid, and $\Gamma$ is Cartan equivalent to its generalized pseudogroup of local solutions $\LBis(J^k\Gamma,\omega)$ (see Section \ref{section:liepseudogroupsasliepfaffiangroupoids}). By Lemma \ref{lemma:liepseudogrouphasintegralcartanehresmannconnection}, $(J^k\Gamma,\omega)$ admits an integral Cartan-Ehresmann connection. Thus, we may apply Corollary \ref{cor:secondfundamentaltheorem} to obtain the realization $(J^k\Gamma,\Omega)$ of the Cartan algebroid $(TM\oplus A^{k-1}\Gamma,\Tab^k\Gamma)$, and the associated pseudogroup $\Gamma(J^k\Gamma,\Omega)$ is in normal form and Cartan equivalent to $\LBis(J^k\Gamma,\omega)$, which, in turn, is Cartan equivalent to $\Gamma$. Using Corollary \ref{cor:secondfundamentaltheoremtransversal} instead, we may also replace this realization by its restriction to any complete transversal to the orbits of $\Gamma$. 
\end{proof}

\subsection{Two Examples of Cartan}
\label{section:cartanexamples}

We conclude this section by looking at two explicit examples from Cartan's work (\cite{Cartan1937-1}, pp. 1344-1347) of the construction of a realization out of a given Lie pseudogroup. In each example, we start by citing Cartan, showing how he constructs a Lie pseudogroup $\widetilde{\Gamma}$ in normal form out of a given Lie pseudogroup $\Gamma$. Then, revisiting the example, we apply the algorithm of our proof, computing the Cartan algebroid and realization induced by Cartan's initial pseudogroup and arriving at Cartan's formulas. Some computations are performed here in a concise manner, and we refer the reader to \cite{Yudilevich2016-2} for more details. 

\subsubsection{Example 1 - Cartan}

Cartan: \textit{``Let $\Gamma$ be the pseudogroup of homographic transformations in one variable 
		\begin{equation}
		\label{eqn:pseudogroupexample1}
		X = \frac{ax+b}{cx+d}\;\;\;\; a,b,c,d\in\mathbb{R},\;\;\; ad-bc\neq 0.
		\end{equation}
		We know that the defining equation of the pseudogroup is
		\begin{equation*}
		X'X''' - \frac{3}{2} (X'')^2 = 0.
		\end{equation*}
		We set
		\begin{equation*}
		X'=u,\;\;\; X''=v,
		\end{equation*}
		and we have the system
		\begin{equation*}
		dX = \omega_1 = udx,\;\;\; du = vdx,\;\;\; dv = \frac{3}{2} \frac{v^2}{u} dx.
		\end{equation*}
		We have
		\begin{equation*}
		d\omega_1 = du\wedge dx = \frac{du-vdx}{u}\wedge udx = \frac{du-vdx}{u} \wedge \omega_1.
		\end{equation*}
		The form $\frac{du-vdx}{u}$ is thus invariant, we denote it by $\omega_2$,
		\begin{equation*}
		\omega_2 = \frac{du}{u} - \frac{v}{u} dx.
		\end{equation*}
		We compute
		\begin{equation*}
		\begin{split}
		d\omega_2 & = -\frac{1}{u} dv \wedge dx + \frac{v}{u^2} du \wedge dx = \big( -\frac{1}{u^2} dv + \frac{v}{u^3} du \big) \wedge \omega_1  \\
		& = \Big( -\frac{1}{u^2} \big( dv - \frac{3}{2} \frac{v^2}{u} dx \big) + \frac{v}{u^3} (du-vdx) \Big) \wedge \omega_1,
		\end{split}
		\end{equation*}
		from which we obtain the new invariant form
		\begin{equation*}
		\omega_3 = -\frac{1}{u^2} dv + \frac{v}{u^3} du + \frac{1}{2} \frac{v^2}{u^3} dx.
		\end{equation*}
		We compute
		\begin{equation*}
		d\omega_3 = \frac{1}{u^3} du\wedge dv + \frac{v}{u^3} dv\wedge dx - \frac{3}{2} \frac{v^2}{u^4} du\wedge dx = \omega_3\wedge\omega_2.
		\end{equation*}
		The structure equations are
		\begin{equation}
		\label{eqn:sl2rexamplethirdform}
		d\omega_1 = \omega_2\wedge\omega_1,\;\;\; d\omega_2 = \omega_3\wedge\omega_1,\;\;\; d\omega_3 = \omega_3\wedge\omega_2.\text{''}
		\end{equation}}

Thus, starting with a pseudogroup $\Gamma$ on $\mathbb{R}$ (with coordinate $x$) Cartan constructs a realization on $\mathbb{R}^3\backslash\{u=0\}$ (with coordinates $x,u,v$) consisting of the 1-forms $\omega_1,\omega_2,\omega_3$. We may now compute the induced pseudogroup $\widetilde{\Gamma}$ on $\mathbb{R}^3\backslash\{u=0\}$. It is generated by the transformations:
	\begin{equation*}
	\bar{x}= \frac{ax+b}{cx+d}, \;\; \bar{u}= u \frac{(cx+d)^2}{ad-bc} ,\;\; \bar{v}=  \frac{v(cx+d)^4+ 2 u c (cx+d)^3}{(ad-bc)^2},  
	\end{equation*}
	where $a,b,c,d\in\mathbb{R}$ and $ad-bc\neq 0$. It is clearly an isomorphic prolongation of (and hence Cartan equivalent to) $\Gamma_0$.

\subsubsection{Example 1 - Revisited}

We consider again the pseudogroup $\Gamma$ on $M=\mathbb{R}$ defined in \eqref{eqn:pseudogroupexample1}. Let $x$ be the coordinate on $M$. The pseudogroup is generated by the following locally defined diffeomorphisms (that are defined where $cx+d\neq 0$):
	\begin{equation*}
	\phi:x\mapsto \frac{ax+b}{cx+d},\;\;\;\;\; a,b,c,d\in\mathbb{R}\;\;\;\text{with}\;\;\; ad-bc\neq 0.
	\end{equation*}
	The first three derivatives of $\phi$ are
	\begin{equation}
	\label{eqn:liepseudogroup2equations}
	\frac{\partial \phi}{\partial x} = \frac{ad-bc}{(cx+d)^2},\;\;\;\;\; \frac{\partial^2\phi}{\partial x^2} = -2c \frac{ad-bc}{(cx+d)^3},\;\;\;\;\;\frac{\partial^3\phi}{\partial x^3} = \frac{3}{2} \left( \frac{\partial^2 \phi}{\partial x^2} \right)^2 \left(\frac{\partial \phi}{\partial x}\right)^{-1}.
	\end{equation}
	One can check that the third equation is the defining equation of $\Gamma$, i.e. $\Gamma$ is of order 3. We must compute $J^2\Gamma$ and $J^3\Gamma$. For the former, it is not difficult to show that $J^2\Gamma=J^2M$, on which we have the coordinates 
	\begin{equation*}
	J^2\Gamma = J^2M= \{\; (X,x,u,v)\;|\; X,x,u,v\in \mathbb{R},\; u\neq 0 \;\},
	\end{equation*}
	where a jet $j^2_x\phi$ is mapped to the coordinates $(\phi(x),x,\frac{\partial \phi}{\partial x}(x),\frac{\partial^2 \phi}{\partial x^2}(x))$. The source map is $s(X,x,u,v)=x$, and hence the Lie algebroid $A^2\Gamma=A^2M$ has a global frame
	\begin{equation*}
	\partial_X(x) := \frac{\partial}{\partial X}(x,x,1,0),\;\;\;\partial_u(x) := \frac{\partial}{\partial u}(x,x,1,0),\;\;\;\partial_v(x) := \frac{\partial}{\partial v}(x,x,1,0),
	\end{equation*}
	and the bracket is readily computed to be 
	\begin{equation*}
	[\partial_X,\partial_u] = 0,\;\;\;[\partial_X,\partial_v]=0,\;\;\;[\partial_u,\partial_v] = \partial_v,
	\end{equation*}
	and the anchor is
	\begin{equation*}
	\rho:A^2M\to TM,\hspace{1cm} \partial_X \mapsto \frac{\partial}{\partial x},\;\; \partial_u\mapsto 0,\;\;\partial_v\mapsto 0.
	\end{equation*}
	Turning to $J^3\Gamma$, the third equation in \eqref{eqn:liepseudogroup2equations} shows that each jet in $J^2\Gamma$ uniquely extends to a jet in $J^3\Gamma$. This implies that there is an isomorphism of Lie groupoids given by the projection $\pi:J^3\Gamma\xrightarrow{\simeq} J^2\Gamma$ ($\Gamma$ is of finite type). Next, one readily computes the Cartan form $\omega\in\Omega^1(J^3\Gamma;t^*A^2\Gamma)$, which takes the following form:
	\begin{equation*}
	\omega = (dX-udx)\,t^*  \partial_X + \frac{1}{u}(du-vdx)\,t^*  \partial_u + \frac{1}{u^2}(dv - \frac{v}{u} du-\frac{1}{2}\frac{v^2}{u}dx)\,t^*  \partial_v
	\end{equation*}
	(it is remarkable that the formulas for the components of the Cartan form precisely coincide with formulas that Cartan obtains using various tricks and manipulations, e.g. see \eqref{eqn:sl2rexamplethirdform}). The Spencer operator $D:\Gamma(A^3\Gamma)\to \Omega^1(M;A^2\Gamma)$ is
	\begin{equation*}
	D: \partial_X\mapsto 0,\;\;\; \partial_u\mapsto -dx\otimes\partial_X,\;\;\; \partial_v\mapsto -dx\otimes\partial_u. 
	\end{equation*}
	
	With this data, we can compute the induced Cartan algebroid and its realization. First,
	\begin{equation*}
	\CAlg=TM\oplus A^2\Gamma,
	\end{equation*}
	for which we take the frame (as before, we make these choices to conform with Cartan's choices)
	\begin{equation*}
	e^1 = -\partial_X,\;\;\; e^2=\partial_u,\;\;\; e^3=-\partial_v,\;\;\; e^4= \frac{\partial}{\partial x} + \partial_X.
	\end{equation*}
	In this example, $\Tab=0$. The bracket on $\CAlg$ is canonical, since there is no choice in splitting the projection from $A^3\Gamma$ to $A^2\Gamma$. Thus, the connection \eqref{eqn:cconnection} coincides with the Spencer operator $D$ and the associated torsion \eqref{eqn:ctorsion} is determined by
	\begin{equation*}
	c(\partial_X,\partial_u) = \partial_X,\;\;\; c(\partial_X,\partial_v) = \partial_u,\;\;\; c(\partial_u,\partial_v) = \partial_v.
	\end{equation*}
	From this, we compute the bracket of $\CAlg$,
	\begin{equation*}
	\begin{split}
	&[e^1,e^2] = e^1,\hspace{1cm} [e^1,e^3] = e^2,\hspace{1cm} [e^1,e^4]=0,\\
	&[e^2,e^3] = e^3,\hspace{1cm} [e^2,e^4] = 0,\hspace{1.1cm} [e^3,e^4]= 0,
	\end{split}
	\end{equation*}
	and the anchor
	\begin{equation*}
	\rho:\CAlg\to TM,\hspace{1cm} e^1\mapsto 0,\;\;e^2\mapsto 0,\;\; e^3\mapsto 0,\;\; e^4\mapsto \frac{\partial}{\partial x}.
	\end{equation*}
	The induced realization $(J^3\Gamma,\Omega)$ of $(\CAlg,0)$ consists of the target map $t:J^3\Gamma\to M$ and the extended Cartan form $\Omega=(dt,\omega)$, which, in terms of our choice of a frame, decomposes as
	\begin{equation*}
	\Omega = \omega_1\,t^*e^1 + \omega_2\,t^*e^2 + \omega_3\,t^*e^3 + \omega_4\,t^*e^4,
	\end{equation*}
	with
	\begin{equation*}
	\omega_1 = udx,\;\;\; \omega_2 = \frac{1}{u}(du-vdx),\;\;\;\omega_3 = -\frac{1}{u^2}(dv-\frac{v}{u}du-\frac{1}{2}\frac{v^2}{u}dx),\;\;\;\omega_4=dX.
	\end{equation*}
	In this case, $\Pi=0$, and 
	\begin{equation*}
	\Omega:J^3\Gamma\xrightarrow{\simeq} t^*\CAlg\hspace{1cm}\text{and}\hspace{1cm} d\Omega+\frac{1}{2}[\Omega,\Omega]=0,
	\end{equation*}
	or, in terms of components, $\omega_1,\omega_2,\omega_3,\omega_4$ is a coframe of $J^3\Gamma$ and
	\begin{equation*}
	\begin{split}
	&d\omega_1 + \omega_1\wedge\omega_2 = 0, \\
	&d\omega_2 + \omega_1\wedge\omega_3 =0, \\
	&d\omega_3 + \omega_2\wedge\omega_3  = 0, \\
	&d\omega_4  = 0.
	\end{split}
	\end{equation*}
	Restricting to the complete transversal $X=0$, we have that $\omega_4=0$, and we recover Cartan's forms and structure equations. 

\subsubsection{Example 2 - Cartan}
\label{example:infinitecartan}

	Cartan: \textit{``Let $\Gamma$ be the pseudogroup on $\mathbb{R}^2$ whose elements are given by
		\begin{equation}
		\label{eqn:example5}
		X=f(x),\;\;\; Y=\frac{y}{f'(x)},
		\end{equation}
		where $f$ is an arbitrary function of $x$ and $f'$ its derivative (nowhere vanishing). The defining equations are
		\begin{equation*}
		dX = \frac{y}{Y}dx ,\;\;\; dY = udx + \frac{Y}{y} dy =: \omega_2,
		\end{equation*}
		they are of 1st order. We set $Y=1$ on the right hand side of both equation, and obtain
		\begin{equation}
		\label{eqn:example2formscartan}
		\omega_1 = ydx,\;\;\; \omega_2 = udx + \frac{1}{y}dy,
		\end{equation}
		with the structure equations 
		\begin{equation*}
		d\omega_1 = \omega_2 \wedge \omega_1,\;\;\; d\omega_2 = \pi\wedge\omega_1,
		\end{equation*}
		where $\pi = \frac{1}{y}du \; (\mathrm{mod}\;dx)$. We remark here that the pseudogroup $\Gamma$ is the isomorphic prolongation of the pseudogroup $X=f(x)$, where the defining equation is $dX=udx$, with
		\begin{equation}
		\omega_1 = udx,\;\;\; d\omega_1 = \pi\wedge \omega_1. \text{''}
		\end{equation}}
	
Here, Cartan starts with a pseudogroup $\Gamma$ on $\mathbb{R}^2$ (with coordinates $x,y$), or, to be more precise, on $\mathbb{R}^2\backslash\{y=0\}$ (otherwise the equations are ill-defined). He then constructs a realization on $\mathbb{R}^3\backslash\{y=0\}$ (with coordinates $x,y,u$) consisting of the 1-forms $\omega_1,\omega_2$. To write the structure equations, he introduces the auxiliary form $\pi$. A computation now shows that the induced pseudogroup in normal form $\widetilde{\Gamma}$ on $\mathbb{R}^3\backslash\{y=0\}$ is generated by
	\begin{equation}
	\label{eqn:example2normalform}
	\bar{x}=f(x),\;\;\; \bar{y}=\frac{y}{f'(x)},\;\;\; \bar{u}=\frac{uf'(x) + f''(x)}{(f'(x))^2}, \;\;\;\;\; f\in \text{Diff}_{\text{loc}}(\mathbb{R}).\qedhere
	\end{equation}  
	As before, it is clearly an isomorphic prolongation of $\Gamma$. 

In this example (as in the previous one), Cartan simplifies the expressions by setting the target variable $Y$ to the fixed value $1$. This is an instance of the simplification obtained by \textit{restricting to a complete transversal}, as explained in Section \ref{section:restrictingtotransversal}. Cartan uses this simplification to reduce the dimension of the space on which the isomorphic prolongation acts, thus obtaining a smaller isomorphic prolongation. However, one may also skip this simplification to obtain the canonical prolongation. Indeed, prior to the simplification of setting $Y=1$, we had the 1-forms 
	\begin{equation}
	\label{eqn:example2forms1}
	\omega_1 =  \frac{y}{Y}dx,\;\;\; \omega_2 =  udx + \frac{Y}{y}dy.
	\end{equation}
	Adding to this data the projection functions 
	\begin{equation*}
	I_1=X,\;\;\; I_2=Y,
	\end{equation*}
	and their differentials 
	\begin{equation}
	\label{eqn:example2forms2}
	\omega_3=dX,\;\;\; \omega_4=dY,
	\end{equation}
	the structure equations are
	\begin{equation*}
	d\omega_1=\frac{1}{Y}(\omega_2-\omega_4)\wedge\omega_1 ,\;\;\; d\omega_2= \frac{1}{Y}\omega_4\wedge\omega_2 + \pi\wedge\omega_1 ,\;\;\; d\omega_3=0,\;\;\; d\omega_4=0,
	\end{equation*}
	with 
	\begin{equation*}
	\pi = \frac{Y}{y} du - \frac{u}{y}dY  \mod{dx}.
	\end{equation*}
	The isomorphic prolongation on $\mathbb{R}^5\backslash\{y=0 \text{ or }Y=0\}$ (with coordinates $x,y,X,Y,u$) is
	\begin{equation*}
	\begin{split}
	& \bar{x} = f(x),\;\; \bar{y}=\frac{y}{f'(x)},\;\; \bar{X} = X,\;\; \bar{Y} = Y,\;\;\bar{u} = \frac{uf'(x)+Yf''(x)}{(f'(x))^2}, \;\;\; f\in \text{Diff}_{\text{loc}}(\mathbb{R}).
	\end{split}
	\end{equation*}
	The restriction to the orbit $\{X=0,\; Y=1\}$ is precisely Cartan's isomorphic prolongation. 
	
\subsubsection{Example 2 - Revisited}
\label{example:infiniterevisited}

Consider again the pseudogroup $\Gamma$ on $M=\mathbb{R}^2\backslash\{y=0\}$ defined in \eqref{eqn:example5}. Let $(x,y)$ be coordinates on $M$. The pseudogroup is generated by 
	\begin{equation}
	\label{eqn:liepseudogroup1pseudogroup}
	\phi:(x,y)\mapsto (\phi_x(x,y),\phi_y(x,y)) = (f(x),\frac{y}{f'(x)}),\;\;\;\;\; f\in \text{Diff}_{\text{loc}}(\mathbb{R}).
	\end{equation}
	The pseudogroup is transitive and, hence, $J^0\Gamma=J^0M$ and $A^0\Gamma=A^0M$. On $J^0\Gamma$ we have coordinates 
	\begin{equation*}
	J^0\Gamma = \{ (X,Y,x,y)\;|\;y\neq 0 \text{ and } Y\neq 0\},
	\end{equation*}
	where $(x,y)$ are the source coordinates and $(X,Y)$ the target. We denote the induced frame on the Lie algebroid $A^0\Gamma$ by
	\begin{equation*}
	\partial_X,\partial_Y\in\Gamma(A^0\Gamma).
	\end{equation*}
	The bracket and anchor of $A^0\Gamma$ are
	\begin{equation*}
	[\partial_X,\partial_Y]=0\hspace{1cm}\text{and}\hspace{1cm} \rho:A^0\Gamma\to TM,\;\;\; \partial_X\mapsto \frac{\partial}{\partial x},\; \partial_Y\mapsto \frac{\partial}{\partial y}.
	\end{equation*}
	The first derivatives of the elements of $\Gamma$ are
	\begin{equation}
	\label{eqn:liepseudogroup1equations}
	\begin{split}
	&\frac{\partial \phi_x}{\partial x} = \frac{y}{\phi_y},\;\;\;\;\;\;\;\;\;\;\;\;\;\;\;\;\;\frac{\partial \phi_x}{\partial y}=0, \\
	&\frac{\partial \phi_y}{\partial x} = -\frac{f''(x)y}{(f'(x))^2},\;\;\;\;\; \frac{\partial \phi_y}{\partial y}=\frac{\phi_y}{y},
	\end{split}
	\end{equation}
	from which we deduce that 
	\begin{equation*}
	J^1\Gamma = \{ (X,Y,x,y,u)\;|\;y\neq 0 \text{ and } Y\neq 0\},
	\end{equation*}
	where a jet $j^1_{(x,y)}\phi$ is assigned the coordinates $(\phi_x(x,y),\phi_y(x,y),x,y,\frac{\partial \phi_y}{\partial x}(x,y))$. The source map is given by $s(X,Y,x,y,u) = (x,y)$, and so the Lie algebroid $A^1\Gamma$ has a global frame $e_X,e_Y,e_u$, where
	\begin{gather*}
	e_X(x,y) = \frac{\partial}{\partial X}(x,y,x,y,0),\;\;\;e_Y(x,y) = \frac{\partial}{\partial Y}(x,y,x,y,0), \\ e_u(x,y) = \frac{\partial}{\partial u}(x,y,x,y,0).
	\end{gather*}
	The projection is given by
	\begin{equation}
	d\pi:A^1\Gamma\to A^0\Gamma,\;\;\;\;\; e_X\mapsto \partial_X,\;e_y\mapsto \partial_Y,\; e_u\mapsto 0.
	\label{eqn:example2revisit-projection}
	\end{equation}
	Next, computing the Cartan form $\omega\in\Omega^1(J^1\Gamma;t^*A^0\Gamma)$, we readily find that
	\begin{equation*}
	\omega = (dX-\frac{y}{Y}dx)\,t^* \partial_X + (dY-udx - \frac{Y}{y}dy)\,t^*  \partial_Y,
	\end{equation*}
	and the Spencer operator $D:\Gamma(A^1\Gamma)\to \Omega^1(M;A^0\Gamma)$ takes the form
	\begin{equation*}
	D: e_X\mapsto 0,\;\;\; e_Y\mapsto \frac{1}{y} (dx\otimes \partial_X-dy\otimes\partial_Y),\;\;\; e_u\mapsto - dx\otimes \partial_Y. 
	\end{equation*}
	
	With this data, we can set out to compute the induced Cartan algebroid and its realization. First, for the Cartan algebroid,
	\begin{equation*}
	\CAlg=TM\oplus A^0\Gamma.
	\end{equation*}
	One natural frame for this vector bundle would be $\frac{\partial}{\partial x},\frac{\partial}{\partial y},\partial_X,\partial_Y$. Cartan's formulas, however, correspond to the frame
	\begin{equation*}
	e^1 = -\partial_X,\;\; e^2 = - \partial_Y,\;\; e^3 = \partial_X + \frac{\partial}{\partial x},\;\; e^4 = \partial_Y + \frac{\partial}{\partial y}.
	\end{equation*}
	For the tableau bundle
	\begin{equation*}
	\Tab=\Ker(d\pi:A^1\Gamma\to A^0\Gamma),
	\end{equation*}
	we choose the frame $t=e_u$. The bracket of $\CAlg$ depends on a choice of a splitting of \eqref{eqn:example2revisit-projection}, for which we choose
	\begin{equation*}
	\xi:A^0\Gamma\to A^1\Gamma,\;\;\;\;\; \partial_X\mapsto e_X,\;\partial_Y\mapsto e_Y.
	\end{equation*}
	The induced connection $\nabla^\xi$ on $A^0\Gamma$ defined in \eqref{eqn:cconnection} is given by
	\begin{equation*}
	\nabla^\xi_{\partial/\partial x}(\partial_X) = 0,\;\;\;\nabla^\xi_{\partial/\partial y}(\partial_X) = 0,\;\;\; \nabla^\xi_{\partial/\partial x}(\partial_Y) = \frac{1}{y} \partial_X,\;\;\; \nabla^\xi_{\partial/\partial y}(\partial_Y) = -\frac{1}{y} \partial_Y,\;\;\; 
	\end{equation*}
	and the torsion $c^\xi$ of $\nabla^\xi$ defined in \eqref{eqn:ctorsion} is given by
	\begin{equation*}
	c^\xi(\partial_X,\partial_Y) = -\frac{1}{y}\partial_X.
	\end{equation*}
	The bracket \eqref{eqn:cbracketdef} on $\CAlg$ is then
	\begin{equation*}
	\begin{split}
	&[e^1,e^2]=\frac{1}{y}e^1,\hspace{1cm} [e^1,e^3] = 0,\hspace{1cm} [e^1,e^4] = -\frac{1}{y}e^1,\\ 
	&[e^2,e^3] = 0,\hspace{1.4cm} [e^2,e^4] = \frac{1}{y}e^2,\hspace{0.55cm} [e^3,e^4] = 0,
	\end{split}
	\end{equation*}
	and the anchor \eqref{eqn:canchor} is
	\begin{equation*}
	\rho: \CAlg\to TM,\hspace{1cm} e^1\mapsto 0,\;\;e^2\mapsto 0, \;\;e^3\to \frac{\partial}{\partial x}, \;\;e^4\mapsto \frac{\partial}{\partial y}.
	\end{equation*}
	The action of $\Tab$ on $\CAlg$ is
	\begin{equation*}
	t(e^1) = e^2,\;\;\; t(e^2) = 0,\;\;\; t(e^3) = 0,\;\;\; t(e^4) = 0.
	\end{equation*}
	Thus, writing $[e^j,e^k]=\sum_{i=1}^4c^{jk}_ie^i$ and $t(e^j)=\sum_{i=1}^4a_i^je^i$, the non-zero structure functions are
	\begin{equation*}
	c^{12}_1=\frac{1}{y},\;\; c^{14}_1=-\frac{1}{y},\;\; c^{23}_2=\frac{1}{y},\;\; a_2^1=1. 
	\end{equation*}
	
	Finally, we describe the realization $(J^1\Gamma,\Omega)$ of $(\CAlg,\Tab)$. The realization consists of the target map $t:J^1\Gamma\to M$ and the extended Cartan form $\Omega=(dt,\omega)$. In terms of the frame of $\CAlg$, $\Omega$ decomposes as
	\begin{equation*}
	\Omega = \omega_1\,t^*  e_1 + \omega_2\,t^*  e_2 + \omega_3\,t^*  e_3 + \omega_4\,t^*  e_4,
	\end{equation*}
	with
	\begin{equation*}
	\omega_1 = \frac{y}{Y}dx,\;\;\; \omega_2 = udx+\frac{Y}{y}dy,\;\;\;\omega_3 = dX,\;\;\; \omega_4=dY.
	\end{equation*}
	These are precisely the forms \eqref{eqn:example2forms1} and \eqref{eqn:example2forms2}, and, when restricting to the orbit $X=0,Y=1$, these are precisely Cartan's forms \eqref{eqn:example2formscartan}.

\section{The Systatic Space and Reduction}
\label{section:systaticspacesmodern}

Cartan's \textit{systatic system} and procedure for reducing the \textit{inessential invariants} (pp. 197-202 in \cite{Cartan1904} and pp. 18-24 in \cite{Cartan1937-1}) are probably the least understood parts of his work on Lie pseudogroups. The language of Cartan algebroids and realizations allows us not only to gain a better understanding of this side of Cartan's work but also to go a step further and to obtain an improved reduction procedure. This, of course, comes at the cost of an increased level of abstraction. In particular, Lie-Pfaffian groupoids and generalized pseudogroups will play an important role. 

This section is divided into three parts. First, we show that any Cartan algebroid has an intrinsic Lie algebroid lying within it which we call the systatic space, and we prove that this Lie algebroid acts canonically on all realizations. Cartan's \textit{systatic system} arises as the involutive distribution at the image of this action. Next, we describe a reduction procedure that allows us to replace any pseudogroup in normal form by a Cartan equivalent ``reduced'' \textit{generalized} pseudogroup, namely one that acts on a space of smaller dimension. Roughly speaking, the procedure amounts to ``dividing out'' by the action of the systatic space. We conclude the section with two examples of the reduction procedure. 

\subsection{The Systatic Space of a Cartan Algebroid}
\label{section:systatic}

In Section \ref{section:cartansrealizationproblem}, we saw that, in Cartan's langauge, a pseudogroup in normal form is a pseudogroup that preserves the data of a realization, namely a family of functions and $1$-forms that satisfy the set of structure equations
\begin{equation*}
d\omega_i + \frac{1}{2} c_i^{jk} \omega_j\wedge \omega_k =  a_i^{\lambda j} \pi_\lambda \wedge \omega_j.
\end{equation*} 
In \cite{Cartan1937-1} (pp. 18-19), by examining the stabilizer groups of a pseudogroup in normal form, Cartan shows that the set of equations
\begin{equation}
\label{eqn:systaticsystem}
a^i_{\lambda j}\omega^j=0,\hspace{1cm} i=1,...,m, \;\lambda=1,...,p
\end{equation}
defines an involutive distirbution on the total space of the realization that consists of all vectors that are invariant under the infinitesimal generators of the stabilizer groups. He calls \eqref{eqn:systaticsystem} the \textit{systatic system}. The modern picture of Cartan algebroids and realizations allos us to clarify the geometric structure that is hidden behind \eqref{eqn:systaticsystem}, as we now explain

\subsubsection*{The Partial Systatic Space}

While the axioms of a Cartan algebroid (Definition \ref{def:cartanalgebroid}) may seem rather obscure, as they require the existence of $t$ and $\nabla$ which themselves are not part of the structure, there are several interesting consequences that are of an intrinsic nature and that turn out to be intimately related to Cartan's systatic system. 

\begin{mydef}
	\label{def:partialsystaticspace}
	\index{systatic space}
	The \textbf{partial systatic space} of a Cartan algebroid $(\CAlg,\Tab)$ over $N$ is the set-theoretical vector subbundle $\mathcal{S}^0\subset \CAlg$ whose fiber at $x\in N$ is
	\begin{equation}
	\label{eqn:systaticspace}
	\mathcal{S}^0_x := \{\; u\in\CAlg_x\;|\; T(u) = 0\;\;\forall \; T\in\Tab_x \;\} .
	\end{equation}
\end{mydef}

Note that $\mathcal{S}^0$ can be expressed as the kernel of the vector bundle map 
\begin{equation*}
\CAlg \to \Hom(\Tab,\CAlg),\hspace{1cm} u\mapsto (T\mapsto T(u)),
\end{equation*}
and hence it is a smooth vector subbundle if and only if it is of constant rank. 

\begin{myassum}
	We will assume that the partial systatic spaces of all Cartan algebroids appearing from now on are of constant rank.
\end{myassum}

As a consequence of Definition \ref{def:cartanalgebroid} of a Cartan algebroid, or more specifically, of conditions 2 and 3 of the definition:

\begin{myprop}
	\label{prop:systaticspaceliealgebroid}
	The systatic space $\mathcal{S}^0$ of a Cartan algebroid $(\CAlg,\Tab)$, equipped with the bracket and anchor inherited from $\CAlg$, is a Lie algebroid.
\end{myprop}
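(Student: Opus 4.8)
The plan is to exhibit $\mathcal{S}^0$ as a vector subbundle of $\CAlg$ that is closed under the inherited bracket and on which the Jacobi identity holds; once these are established, all remaining Lie algebroid axioms come for free. Indeed, $\CAlg$ is an almost Lie algebroid (Definition \ref{def:preliealgebroid}), and, by the standing constant-rank assumption, $\mathcal{S}^0\subset\CAlg$ is a genuine smooth vector subbundle. Consequently, as soon as we know that the bracket of two sections of $\mathcal{S}^0$ is again a section of $\mathcal{S}^0$, the restrictions of $[\cdot,\cdot]$ and $\rho$ to $\mathcal{S}^0$ automatically inherit the Leibniz identity and the compatibility $\rho([\alpha,\beta])=[\rho(\alpha),\rho(\beta)]$ from $\CAlg$. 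Thus only two points require verification: (i) closure, $[\Gamma(\mathcal{S}^0),\Gamma(\mathcal{S}^0)]\subset\Gamma(\mathcal{S}^0)$; and (ii) the Jacobi identity on $\Gamma(\mathcal{S}^0)$.

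For closure, I would fix $\alpha,\beta\in\Gamma(\mathcal{S}^0)$ and an arbitrary local section $T\in\Gamma(\Tab)$, and read off condition \eqref{eqn:cartanalgebroidconditionactiong} of Definition \ref{def:cartanalgebroid}. Since $\alpha$ and $\beta$ are annihilated by every element of $\Tab$, the terms $T(\alpha)$ and $T(\beta)$ on the left-hand side vanish; on the right-hand side, $\nabla_\beta(T)$ and $\nabla_\alpha(T)$ are again sections of $\Tab$ (because $\nabla$ is a $\CAlg$-connection on $\Tab$), so $\nabla_\beta(T)(\alpha)$ and $\nabla_\alpha(T)(\beta)$ vanish as well. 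What survives is precisely $T([\alpha,\beta])=0$. As this holds for every local section $T$ of $\Tab$, we conclude $[\alpha,\beta]\in\Gamma(\mathcal{S}^0)$, so $\mathcal{S}^0$ is involutive.

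For the Jacobi identity, I would take $\alpha,\beta,\gamma\in\Gamma(\mathcal{S}^0)$ and invoke condition \eqref{eqn:cartanalgebroidconditionjacobi}. Its right-hand side is $t_{\alpha,\beta}(\gamma)+t_{\beta,\gamma}(\alpha)+t_{\gamma,\alpha}(\beta)$; since $t$ takes values in $\Tab$, each summand is an element of $\Tab$ applied to a section of $\mathcal{S}^0$, hence zero. Therefore the left-hand side, which is exactly the Jacobiator of $\alpha,\beta,\gamma$ for the bracket of $\CAlg$ (and hence for its restriction to $\mathcal{S}^0$), vanishes. I would emphasize that both arguments use only the \emph{existence} of some $t$ and $\nabla$ as guaranteed by the Cartan algebroid structure, and are insensitive to the particular choices. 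Combining (i) and (ii) with the inherited Leibniz and anchor properties then shows that $(\mathcal{S}^0,[\cdot,\cdot]|_{\mathcal{S}^0},\rho|_{\mathcal{S}^0})$ is a Lie algebroid. The computation itself is short and poses no real difficulty; the only point deserving care is the reduction at the outset, namely that the constant-rank hypothesis makes $\mathcal{S}^0$ a bona fide subbundle and that involutivity alone suffices to transport the almost Lie algebroid axioms of $\CAlg$ onto $\mathcal{S}^0$.
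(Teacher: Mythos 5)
Your proof is correct and follows essentially the same route as the paper's: closure under the bracket from condition \eqref{eqn:cartanalgebroidconditionactiong} (both sides collapse since $\Tab$ and its $\nabla$-derivatives kill sections of $\mathcal{S}^0$), and the Jacobi identity from \eqref{eqn:cartanalgebroidconditionjacobi} (the $t$-terms vanish on $\mathcal{S}^0$). You merely spell out the details the paper leaves implicit, including the correct observation that the argument uses only the existence of some $t$ and $\nabla$.
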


\begin{proof}
	Using the fact that all elements of $\mathcal{S}$ are killed by all elements of $\Tab$, \eqref{eqn:cartanalgebroidconditionactiong} implies that $\mathcal{S}$ is closed under the restriction of the bracket of $\CAlg$ and \eqref{eqn:cartanalgebroidconditionjacobi} reduces to the Jacobi identity.
\end{proof}

\subsubsection*{The Systatic Space}

Given $u,v \in\Gamma(\mathcal{S}^0)$, consider
\begin{equation*}
\text{J}_{u,v}:\CAlg\to \CAlg,
\end{equation*}
the vector bundle map that is defined at the level of sections by
\begin{equation*}
\text{J}_{u,v}(\alpha) = [[u,v],\alpha] + [[v,\alpha],u] + [[\alpha,u],v],\hspace{1cm} \forall \, \alpha\in\Gamma(\CAlg).
\end{equation*}
Given $u\in\Gamma(\mathcal{S}^0)$ and $T\in\Gamma(\Tab)$, consider also
\begin{equation*}
\text{Ad}_u(T) : \CAlg\to \CAlg,
\end{equation*} 
the vector bundle map that is defined at the level of sections by
\begin{equation*}
\text{Ad}_u(T)(\alpha) := [u,T(\alpha)] - T([u,\alpha]),\hspace{1cm} \forall\, \alpha\in\Gamma(\CAlg).
\end{equation*}
Another consequence of conditions 2 and 3 of Definition \ref{def:cartanalgebroid} is:
\begin{myprop}
	Let $(\CAlg,\Tab)$ be a Cartan algebroid. For all $u,v\in\Gamma(\mathcal{S}^0)$ and $T\in\Tab$,
	\begin{equation*}
	\text{J}_{u,v}\in\Gamma(\Tab) \hspace{1cm}\text{and}\hspace{1cm} \text{Ad}_u(T)\in\Gamma(\Tab).
	\end{equation*}
\end{myprop}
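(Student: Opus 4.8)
The plan is to derive both claims by substituting the special sections $u,v\in\Gamma(\mathcal{S}^0)$ into the two structural axioms of a Cartan algebroid (conditions 2 and 3 of Definition \ref{def:cartanalgebroid}), placing $u$ and $v$ in the ``fixed'' slots and leaving an arbitrary $\alpha\in\Gamma(\CAlg)$ in the remaining slot, and then using the defining property of the partial systatic space — that its sections are annihilated by every $T\in\Gamma(\Tab)$ — to eliminate the unwanted terms. I would fix, once and for all, a choice of $t$ and $\nabla$ as in Definition \ref{def:cartanalgebroid}; the output will be independent of this choice because $\text{J}_{u,v}$ and $\text{Ad}_u(T)$ are defined intrinsically.

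For $\text{J}_{u,v}$, I would apply the Jacobiator relation \eqref{eqn:cartanalgebroidconditionjacobi} with $(\alpha,\beta,\gamma)=(u,v,\alpha)$, where now $\alpha\in\Gamma(\CAlg)$ is arbitrary. The left-hand side is by definition $\text{J}_{u,v}(\alpha)$, while the right-hand side is $t_{u,v}(\alpha)+t_{v,\alpha}(u)+t_{\alpha,u}(v)$. Since $t_{v,\alpha}$ and $t_{\alpha,u}$ are sections of $\Tab$ and $u,v\in\Gamma(\mathcal{S}^0)$ are killed by every element of $\Tab$, the last two terms vanish, leaving the sharp identity $\text{J}_{u,v}=t_{u,v}$. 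As $t$ takes values in $\Tab$, this gives $\text{J}_{u,v}\in\Gamma(\Tab)$ (and re-exhibits $\text{J}_{u,v}$ as a genuine vector bundle map).

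For $\text{Ad}_u(T)$, I would apply the connection relation \eqref{eqn:cartanalgebroidconditionactiong} with $(\alpha,\beta)=(u,\alpha)$ and the given $T$. Because $T(u)=0$, the term $[T(u),\alpha]$ drops out, so the relation reduces to $T([u,\alpha])-[u,T(\alpha)]=\nabla_\alpha(T)(u)-\nabla_u(T)(\alpha)$. Now $\nabla_\alpha(T)\in\Gamma(\Tab)$, so $\nabla_\alpha(T)(u)=0$ again by $u\in\Gamma(\mathcal{S}^0)$; rearranging yields $\text{Ad}_u(T)(\alpha)=[u,T(\alpha)]-T([u,\alpha])=\nabla_u(T)(\alpha)$ for all $\alpha$, i.e.\ $\text{Ad}_u(T)=\nabla_u(T)\in\Gamma(\Tab)$.

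I do not anticipate a genuine obstacle: each assertion collapses to a single substitution into the Cartan-algebroid axioms combined with the annihilation property of $\mathcal{S}^0$. The only point worth flagging is the mild one of choice-independence — automatic here, since the argument uses only the \emph{existence} of some $t$ and $\nabla$ — and it is worth recording the stronger equalities $\text{J}_{u,v}=t_{u,v}$ and $\text{Ad}_u(T)=\nabla_u(T)$, which presumably feed directly into the subsequent construction of the systatic space as a Lie algebroid.
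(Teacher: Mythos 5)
Your argument is correct and is exactly the paper's proof: both assertions follow by substituting $u,v\in\Gamma(\mathcal{S}^0)$ into conditions \eqref{eqn:cartanalgebroidconditionjacobi} and \eqref{eqn:cartanalgebroidconditionactiong}, using $T(u)=0$ for $T\in\Tab$ to kill the extra terms and arrive at $\text{J}_{u,v}=t_{u,v}$ and $\text{Ad}_u(T)=\nabla_u(T)$. The paper states these two identities with the details left implicit; you have simply written them out.
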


\begin{proof}
	Choose $t$ and $\nabla$ as in definition \ref{def:cartanalgebroid} of a Cartan algebroid. By \ref{eqn:cartanalgebroidconditionjacobi},
	\begin{equation*}
	\text{J}_{u,v}(\alpha) = t_{u,v}(\alpha),
	\end{equation*}
	and by definition \ref{eqn:cartanalgebroidconditionactiong},
	\begin{equation*}
	\text{Ad}_u(T) = \nabla_u(T). \qedhere
	\end{equation*}
\end{proof}

Thus, while the $\CAlg$-connection $\nabla:\Gamma(\CAlg)\times\Gamma(\Tab)\to\Gamma(\Tab)$ and the vector bundle map $t:\Lambda^2\CAlg\to \Tab$ on a Cartan algebroid $(\CAlg,\Tab)$ are non-canonical, their restrictions to $\mathcal{S}^0$, i.e. the $\mathcal{S}^0$-connection
\begin{equation*}
\text{Ad}:\Gamma(\mathcal{S}^0)\times\Gamma(\Tab)\to \Gamma(\Tab)
\end{equation*}
and 
\begin{equation*}
\text{J}:\Lambda^2\mathcal{S}^0\to \Tab,
\end{equation*}
are canonical.

The two previous propositions can be neatly packaged in a single object by using the following standard construction of a non-abelian extension of a Lie algebroid (see also \cite{Mackenzie1987}, Chapter 4, Section 3). 

\begin{mydef}
	\label{def:systaticspace}
	The \textbf{systatic space} of a Cartan algebroid $(\CAlg,\Tab)$ is the vector bundle
	\begin{equation*}
	\mathcal{S} := \mathcal{S}^0 \oplus \Tab \;\subset\; \CAlg\oplus\Tab.
	\end{equation*}
\end{mydef}

We equip $\mathcal{S}$ with the structure of a Lie algebroid. The anchor $\rho:\mathcal{S}\to TN$ is defined by
\begin{equation*}
\hspace{1cm} \rho(u,T) := \rho(u),\hspace{1cm} \forall\; u\in\Gamma(\mathcal{S}^0),\;T\in\Gamma(\Tab),
\end{equation*}
and the bracket $[\cdot,\cdot]:\Gamma(\mathcal{S})\times \Gamma(\mathcal{S})\to \Gamma(\mathcal{S})$ by the formula
\begin{equation*}
[(u,S),(v,T)]:= ([u,v],\text{J}_{u,v}  + \text{Ad}_u(T)-\text{Ad}_v(S) -[S,T]).
\end{equation*}
The prove that this indeed defines a Lie algebroid structure on $\mathcal{S}$ relies on the following lemma. We denote by $d_{\text{Ad}}$ the de Rham-like operator on $\Omega^*(\mathcal{S}^0;\Tab) = \Hom(\Lambda^*\mathcal{S}^0,\Tab)$ that is induced by the $\mathcal{S}^0$-connection $\text{Ad}$.  

\begin{mylemma}
	\label{eqn:lemmaliealgebroidextensionproperties}
	For all $u,v\in\Gamma(\mathcal{S}^0),\;S,T\in\Gamma(\Tab)$,
	\begin{enumerate}
		\item $d_{\text{Ad}}\text{J}  = 0$,
		\item $\text{Ad}_u (\text{Ad}_v(S)) - \text{Ad}_v (\text{Ad}_u(S)) - \text{Ad}_{[u,v]}(S) = [S,\text{J}_{u,v}]$,
		\item $\text{Ad}_u([S,T]) = [\text{Ad}_u(S),T] + [S,\text{Ad}_u(T)]$.
	\end{enumerate}
\end{mylemma}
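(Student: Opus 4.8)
The plan is to trade the three apparently unrelated identities for a single structural observation: both $\text{J}$ and $\text{Ad}$ are built from the \emph{operators} $\mathrm{ad}_u := [u,\,\cdot\,]\colon \Gamma(\CAlg)\to\Gamma(\CAlg)$ (for $u\in\Gamma(\mathcal{S}^0)$), and the commutator of $\mathbb{R}$-linear operators on $\Gamma(\CAlg)$ satisfies the Jacobi identity unconditionally. First I would record two formulas. Directly from the definitions, for $R\in\Gamma(\Tab)$ one has $\text{Ad}_u(R)=[\mathrm{ad}_u,R]$ as operators, and, rewriting the Jacobiator by antisymmetry of the $\CAlg$-bracket, $\text{J}_{u,v}=\mathrm{ad}_{[u,v]}-[\mathrm{ad}_u,\mathrm{ad}_v]$ as operators on $\Gamma(\CAlg)$. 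By the two propositions preceding the lemma, both right-hand sides lie in $\Tab$, so every identity below is an equality of sections of $\Tab$, even though the individual $\mathrm{ad}_u$'s are not $C^\infty$-linear.

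With these in hand, item 3 is immediate: $\text{Ad}_u([S,T]) = [\mathrm{ad}_u,[S,T]]$, and the operator Jacobi identity gives $[\mathrm{ad}_u,[S,T]] = [[\mathrm{ad}_u,S],T]+[S,[\mathrm{ad}_u,T]] = [\text{Ad}_u(S),T]+[S,\text{Ad}_u(T)]$. Item 2 is the same idea one level up: applying $\text{Ad}_u(R)=[\mathrm{ad}_u,R]$ twice and using the operator Jacobi identity $[\mathrm{ad}_u,[\mathrm{ad}_v,S]]-[\mathrm{ad}_v,[\mathrm{ad}_u,S]]=[[\mathrm{ad}_u,\mathrm{ad}_v],S]$, the left-hand side becomes $[[\mathrm{ad}_u,\mathrm{ad}_v]-\mathrm{ad}_{[u,v]},\,S]$; substituting $[\mathrm{ad}_u,\mathrm{ad}_v]=\mathrm{ad}_{[u,v]}-\text{J}_{u,v}$ collapses this to $[-\text{J}_{u,v},S]=[S,\text{J}_{u,v}]$, as required.

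For item 1 I would expand $(d_{\text{Ad}}\text{J})(u,v,w)$ by the Koszul formula for $d_{\text{Ad}}$ on a $\Tab$-valued $2$-form, then substitute $\text{Ad}_x\text{J}_{y,z}=[\mathrm{ad}_x,\text{J}_{y,z}]$ and $\text{J}_{y,z}=\mathrm{ad}_{[y,z]}-[\mathrm{ad}_y,\mathrm{ad}_z]$ throughout. The resulting operator expression splits into three families that each vanish for a distinct reason: the pure double-commutators $[\mathrm{ad}_u,[\mathrm{ad}_v,\mathrm{ad}_w]]$ and cyclic permutations cancel by the operator Jacobi identity; the mixed terms $[\mathrm{ad}_x,\mathrm{ad}_{[y,z]}]$ cancel in pairs by antisymmetry of the commutator; and the terms $\mathrm{ad}_{[[\cdot,\cdot],\cdot]}$ assemble into $\mathrm{ad}$ applied to the Jacobiator of $\mathcal{S}^0$, which vanishes because $\mathcal{S}^0$ is a genuine Lie algebroid (Proposition \ref{prop:systaticspaceliealgebroid}).

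I expect the only real work to be the bookkeeping in item 1 — tracking the six Koszul terms, the signs produced by antisymmetry, and checking that the surviving $\mathrm{ad}$-of-bracket terms reconstitute precisely the $\mathcal{S}^0$-Jacobiator. A secondary point worth stating once, at the outset, is that although $\mathrm{ad}_u$ is merely $\mathbb{R}$-linear and not a section of $\Hom(\CAlg,\CAlg)$, all manipulations take place in the associative algebra of $\mathbb{R}$-linear operators on $\Gamma(\CAlg)$, where the commutator automatically obeys the Jacobi identity; the final equalities are legitimate because both sides are tensorial, landing in $\Tab$.
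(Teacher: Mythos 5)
Your proof is correct, and it reorganizes the paper's argument in a way that is worth noting. The paper proves item 1 by applying $d_{\text{Ad}}\text{J}(u,v,w)$ to a test section $\alpha\in\Gamma(\CAlg)$, expanding all nine nested brackets per cyclic block, and observing that one family of terms assembles into the Jacobiator of $\mathcal{S}^0$ while the rest cancel pairwise; items 2 and 3 are then dismissed as ``dealt with similarly.'' You instead encode everything in the two operator identities $\text{Ad}_u(R)=[\mathrm{ad}_u,R]$ and $\text{J}_{u,v}=\mathrm{ad}_{[u,v]}-[\mathrm{ad}_u,\mathrm{ad}_v]$ (both of which check out against the definitions) and let the Jacobi identity for commutators in the associative algebra of $\mathbb{R}$-linear operators do the cancelling. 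This is the same computation underneath, but the repackaging buys real clarity: items 2 and 3 become one-line consequences rather than ``similar'' computations left to the reader, the pairwise cancellations in item 1 are explained as instances of operator Jacobi plus antisymmetry rather than observed ad hoc, and the single place where the Lie algebroid structure of $\mathcal{S}^0$ (Proposition \ref{prop:systaticspaceliealgebroid}) enters — the vanishing of $\mathrm{ad}$ applied to the $\mathcal{S}^0$-Jacobiator in item 1 — is isolated exactly where the paper's closing remark says it should be. Your cautionary note that $\mathrm{ad}_u$ is only $\mathbb{R}$-linear (indeed a first-order operator, by the Leibniz rule with $\rho(u)$) while the final identities are equalities of sections of $\Tab$ is the right thing to flag, and it is justified by the two propositions preceding the lemma; one could add the one-line check that $[\mathrm{ad}_u,T]$ is $C^\infty(N)$-linear because the Leibniz defects of $\mathrm{ad}_u\circ T$ and $T\circ\mathrm{ad}_u$ coincide, but this is already guaranteed by those propositions.
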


\begin{proof}
	We explicitly compute the first identity. Let $u,v,w\in\Gamma(\mathcal{S}^0)$ and $\alpha\in\Gamma(\CAlg)$,
	\begin{equation*}
	\begin{split}
	d_{\text{Ad}}\text{J}(u,v,w)(\alpha) & = \text{Ad}_u(J_{v,w})(\alpha)  - J_{[u,v],w}(\alpha) + \text{ cyclic permutations of $u,v,w$} \\
	& = [u, [[v,w],\alpha] ] + [u, [[w,\alpha],v] ] + [u, [[\alpha,v],w] ] \\
	&\hspace{1cm} - [[v,w],[u,\alpha]] - [[[u,\alpha],v],w] - [[w,[u,\alpha]],v] \\
	&\hspace{1cm} - [[[u,v],w],\alpha] -  [[w,\alpha],[u,v]] - [[\alpha,[u,v]],w] \\ & \hspace{1cm} + \text{ cyclic permutations of $u,v,w$}.
	\end{split}
	\end{equation*}
	The $7$th term (together with its cyclic permutations) vanishes by the Jacobi identity of $\mathcal{S}^0$ and all other terms cancel pairwise. The other identities are dealt with similarly. We only point out that the remaining two identities do not rely on the Jacobi identity of $\mathcal{S}^0$.
\end{proof}

\begin{myprop}
	\label{prop:1stsystaticspaceliealgebroid}
	The systatic space $\mathcal{S}$ of a Cartan algebroid $(\CAlg,\Tab)$ is a Lie algebroid. 	
\end{myprop}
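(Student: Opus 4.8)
The plan is to verify the Lie algebroid axioms for $\mathcal{S}=\mathcal{S}^0\oplus\Tab$ directly, using the three identities packaged in Lemma \ref{eqn:lemmaliealgebroidextensionproperties} as the main engine. Since the anchor $\rho(u,T):=\rho(u)$ is just the anchor of $\mathcal{S}^0$ precomposed with the projection, and since $\Tab\subset\Ker\rho$, the Leibniz identity and the compatibility $\rho([\cdot,\cdot])=[\rho(\cdot),\rho(\cdot)]$ follow immediately from the corresponding properties of the Lie algebroid $\mathcal{S}^0$ (Proposition \ref{prop:systaticspaceliealgebroid}) together with the fact that $\text{Ad}$ is a genuine $\mathcal{S}^0$-connection and $\text{J}$ is tensorial. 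So the only substantive point is the Jacobi identity for the bracket
\begin{equation*}
[(u,S),(v,T)]= ([u,v],\;\text{J}_{u,v}+\text{Ad}_u(T)-\text{Ad}_v(S)-[S,T]).
\end{equation*}

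First I would expand the Jacobiator $\text{Jac}_{\mathcal{S}}\big((u,S),(v,T),(w,U)\big)$ and split it according to its two components. The first component lands in $\mathcal{S}^0$ and is simply $\text{Jac}_{\mathcal{S}^0}(u,v,w)$, which vanishes because $\mathcal{S}^0$ is a Lie algebroid. The second component lands in $\Tab$, and collecting terms by the number of ``$\Tab$-slots'' involved yields exactly four types of contributions: a purely $\text{J}$-part (three $\mathcal{S}^0$-entries), a mixed $\text{Ad}\circ\text{J}$ part (two $\mathcal{S}^0$-entries and one $\Tab$-entry), a curvature-type part (one $\mathcal{S}^0$-entry and two $\Tab$-entries), and a purely bracket part (three $\Tab$-entries). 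These four pieces are governed, respectively, by identity (1) ($d_{\text{Ad}}\text{J}=0$), identity (2) (the curvature of $\text{Ad}$ equals $\text{ad}\circ\text{J}$), identity (3) ($\text{Ad}$ acts by derivations on $\Tab$), and the Jacobi identity of the bundle of Lie algebras $\Tab$ (which holds since $\Tab$ is a Lie subalgebroid with commutator bracket, by condition 1 of Definition \ref{def:cartanalgebroid}). This is precisely the standard bookkeeping for a non-abelian extension, and I would cite \cite{Mackenzie1987}, Chapter 4, Section 3, to justify that once these four identities hold, the Jacobiator vanishes.

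The one genuinely delicate point, and the step I expect to require the most care, is the mixed term governed by identity (2): the failure of $\text{Ad}$ to be flat is not zero but is corrected by the term $[S,\text{J}_{u,v}]$, and one must check that this correction is exactly matched by the contribution of the $\text{J}$-terms feeding through the bracket $-[S,T]$ in the second slot. Getting the signs and the cyclic-permutation combinatorics right here is where a direct computation could go astray. The remaining terms cancel more transparently. Since Lemma \ref{eqn:lemmaliealgebroidextensionproperties} has already isolated all three nontrivial identities (and its proof records that (2) and (3) do not even need the Jacobi identity of $\mathcal{S}^0$), the proof of the proposition reduces to assembling them, so I would keep it brief: state that the anchor and Leibniz properties are inherited from $\mathcal{S}^0$, and that the Jacobi identity follows by the standard non-abelian extension computation from the three identities of Lemma \ref{eqn:lemmaliealgebroidextensionproperties} together with the Jacobi identity of $\Tab$.
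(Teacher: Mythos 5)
Your proposal is correct and follows essentially the same route as the paper: both reduce the Jacobi identity for $\mathcal{S}=\mathcal{S}^0\oplus\Tab$ to the three identities of Lemma \ref{eqn:lemmaliealgebroidextensionproperties} (plus the Jacobi identity of the commutator bracket on $\Tab$) via the standard non-abelian extension bookkeeping, citing \cite{Mackenzie1987}. The paper's proof is just a terser statement of the same computation.
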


\begin{proof}
	The proof amounts to computing the Jacobiator of $\mathcal{S}$ and showing that the Jacobi identity holds if and only if the three identities in Lemma \ref{eqn:lemmaliealgebroidextensionproperties} are satisfied. This is a straightforward computation (see also \cite{Mackenzie1987}, Theorem 3.20).
\end{proof}

\subsubsection*{The Systatic Action on Realizations}
\label{section:systaticactionrealization}

The most important property of the systatic space $\mathcal{S}$ is that it acts canonically on all realizations of $(\CAlg,\Tab)$ in the following sense:

\begin{myprop}
	\label{prop:systaticspace1action}
	Let $(P,\Omega)$ be a realization of a Cartan algebroid $(\CAlg,\Tab)$. The map
	\begin{equation}
	\label{eqn:systaticspaceaction}
	a:=(\Omega,\Pi)^{-1}|_{I^*\mathcal{S}}:  I^*\mathcal{S} \to TP
	\end{equation}
	defines a canonical Lie algebroid action of $\mathcal{S}$ on $I:P\to N$ (thus, independent of the choice of $\Pi$) and its image is the involutive distribution
	\begin{equation}
	\label{eqn:systaticdistrbution}
	\{\; X\in TP\;|\; \Omega(X) \in \mathcal{S}^0  \;\}.
	\end{equation}
\end{myprop}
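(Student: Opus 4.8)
The plan is to make the map $a$ completely explicit on generators, verify it is a well-defined (i.e.\ $\Pi$-independent) vector bundle morphism covering the anchor of $\mathcal{S}$, and then check the Lie-algebroid action axiom $[a(I^*\sigma),a(I^*\tau)]=a(I^*[\sigma,\tau])$ on a frame of $\mathcal{S}=\mathcal{S}^0\oplus\Tab$. Writing $a(I^*u)=X_u$ for $u\in\Gamma(\mathcal{S}^0)\subset\Gamma(\CAlg)$ and $a(I^*S)=X_S$ for $S\in\Gamma(\Tab)$ (the fundamental vector fields of \eqref{eqn:realizationinversemap}), I would first record that the image of $a$ is exactly $\{X\in TP:\Omega(X)\in I^*\mathcal{S}^0\}$: indeed $X$ lies in the image iff its unique preimage $(\Omega(X),\Pi(X))$ under \eqref{eqn:vbisomorphism} has $\CAlg$-component in $I^*\mathcal{S}^0$, and by the standing constant-rank assumption this is a genuine subbundle of rank $\mathrm{rk}(\mathcal{S})$.

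Independence of $\Pi$ is the first point requiring an argument. The field $X_S$ is $\Pi$-independent by Lemma \ref{lemma:realizationsecondcomponentcanonical}, so the only issue is $X_u$ for $u\in\mathcal{S}^0$. By Proposition \ref{prop:freedominchoosingpi} any other admissible choice is $\Pi+\xi$ with $\xi\in\Gamma(I^*\Tab^{(1)})$, and a short computation with \eqref{eqn:identification1stprolongation} shows $X_u$ changes by $-X_{\xi(u)}$. Since elements of $\Tab^{(1)}$ satisfy $\xi(\alpha)(\beta)=\xi(\beta)(\alpha)$ and elements of $\mathcal{S}^0$ are annihilated by all of $\Tab$, one gets $\xi(u)(\beta)=\xi(\beta)(u)=0$ for every $\beta$, hence $\xi(u)=0$ and $X_u$ is unchanged. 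The anchor compatibility $dI\circ a=\rho$ (anchor of $\mathcal{S}$) is then immediate from the anchored condition $\rho\circ\Omega=dI$ together with $\Omega(X_u)=I^*u$ and $\Omega(X_S)=0$.

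The core of the proof is the bracket identity, checked on the three generator types against the systatic bracket of Definition \ref{def:systaticspace}. For two fields in the image, matching $\Omega$-components is handled directly by Lemma \ref{lemma:structureequationsaction} ($\Omega([X_u,X_v])=I^*[u,v]$, $\Omega([X_u,X_T])=I^*T(u)=0$, $\Omega([X_S,X_T])=0$), and matching $\Pi$-components reduces, via Corollary \ref{cor:realizationimpliescartanalgebroid}, to showing that the non-canonical defect terms vanish on $\mathcal{S}^0$. Concretely, the cocycle $c(\alpha,\beta):=\Pi([X_\alpha,X_\beta])-I^*t_{\alpha,\beta}\in\Gamma(Z^{0,2}(\Tab))$ satisfies $c(\alpha,\beta)(\gamma)+c(\beta,\gamma)(\alpha)+c(\gamma,\alpha)(\beta)=0$; inserting $\gamma=u\in\mathcal{S}^0$ and using $c(\alpha,\beta)(u)=0$ forces the symmetry $c(u,\alpha)(\beta)=c(u,\beta)(\alpha)$, and then a second $\mathcal{S}^0$-entry gives $c(u,v)(\beta)=c(u,\beta)(v)=0$, so $\Pi([X_u,X_v])=I^*t_{u,v}=I^*\text{J}_{u,v}$. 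An entirely analogous argument shows that the $\Tab^{(1)}$-valued defect $\alpha\mapsto\Pi([X_\alpha,X_T])-I^*\nabla_\alpha(T)$ vanishes at $u\in\mathcal{S}^0$, giving $\Pi([X_u,X_T])=I^*\nabla_u(T)=I^*\text{Ad}_u(T)$, while the third case is the identity $\Pi([X_S,X_T])=-I^*[S,T]$ already recorded in Corollary \ref{cor:realizationimpliescartanalgebroid}. Since the $\Tab$-components of $[\sigma,\tau]_{\mathcal{S}}$ in the three cases are precisely $\text{J}_{u,v}$, $\text{Ad}_u(T)$, and $-[S,T]$, this yields the action axiom on the frame, and hence for all sections by the Leibniz rule.

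Finally, involutivity of the image is then automatic: for a Lie-algebroid action the fundamental vector fields span the image over $C^\infty(P)$ and are closed under bracket, so $\{X:\Omega(X)\in I^*\mathcal{S}^0\}$ is involutive. I expect the main obstacle to be the bookkeeping of signs and correction terms in the $\Pi$-components of the brackets; the conceptual crux, used repeatedly, is the single mechanism ``$\mathcal{S}^0$ is killed by $\Tab$, combined with the symmetry of first prolongations / the Spencer cocycle condition,'' which annihilates every non-canonical defect and is exactly what makes $a$ simultaneously canonical and bracket-preserving even though $t$, $\nabla$, and $\Pi$ are all choices.
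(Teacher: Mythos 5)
Your proof is correct and follows essentially the same route as the paper's: canonicity of $X_u$ comes from the fact that $\Tab$ kills $\mathcal{S}^0$ combined with the symmetry forced by $(\Pi'-\Pi)\wedge\Omega=0$, and the bracket identity is verified on the three generator types using Lemma \ref{lemma:structureequationsaction} for the $\Omega$-components and Corollary \ref{cor:realizationimpliescartanalgebroid} together with the vanishing on $\mathcal{S}^0$ of the $Z^{0,2}(\Tab)$- and $\Tab^{(1)}$-valued defects for the $\Pi$-components. The only cosmetic difference is that you route the canonicity step through Proposition \ref{prop:freedominchoosingpi} (which formally carries a constant-rank hypothesis on $\Tab^{(1)}$), whereas the paper applies $(\Pi'-\Pi)\wedge\Omega=0$ directly and pointwise; the underlying algebra is identical.
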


\begin{myremark}
	Using the local coordinate description of a realization from Example \ref{example:realizationcartan}, we immediately see that, locally, \eqref{eqn:systaticdistrbution} coincides with Cartan's systatic system \eqref{eqn:systaticsystem}. 
\end{myremark}

\begin{proof}
	By Lemma \ref{lemma:realizationsecondcomponentcanonical}, we already know that the map $S\in\Gamma(\Tab)\mapsto X_S\in\mathfrak{X}(P)$ is canonical. Fix a choice of $\Pi\in\Omega^1(P;I^*\Tab)$ as in Definition \ref{def:realization} and consider the induced map $\alpha\in\Gamma(\CAlg)\mapsto X_\alpha\in\mathfrak{X}(P)$. We would like to show that its restriction $u\in\Gamma(\mathcal{S}^0)\mapsto X_u\in\mathfrak{X}(P)$ is canonical, or equivalently, that $\Pi'(X_u)=0$ for any other choice of $\Pi'$ and $u\in\Gamma(\mathcal{S}^0)$. This follows from the structure equations (c.f. the proof of Lemma \ref{lemma:realizationsecondcomponentcanonical}): for any $\alpha\in\Gamma(\CAlg)$,
	\begin{equation*}
	\begin{split}
	0 & = \big( (\Pi'-\Pi)\wedge\Omega \big)(X_u,X_\alpha) \\ 
	& = \Pi'(X_u)(\Omega(X_\alpha)) - \Pi'(X_\alpha)(\Omega(X_u)) \\
	& = \Pi'(X_u)(I^*\alpha) - \cancel{\Pi'(X_\alpha)(I^*u)}. 
	\end{split}
	\end{equation*}
	
	Next, we prove that \eqref{eqn:systaticspaceaction} defines an action. Let $(u,S),(v,T)\in\Gamma(\mathcal{S})$ and let us write $X_{(u,S)} := X_u + X_S$ and $X_{(v,T)}:=X_v+X_T$. We claim that $[X_{(u,S)},X_{(v,T)}] = X_{[(u,S),(v,T)]}$. On the one hand, by the definition of the bracket of $\mathcal{S}$,
	\begin{equation*}
	\begin{split}
	& \Omega\big(X_{[(u,S),(v,T)]}\big) = I^*[u,v], \\
	& \Pi\big(X_{[(u,S),(v,T)]}\big) = I^*\big(\text{J}_{u,v} + \text{Ad}_u(T)-\text{Ad}_v(S) - [S,T] \big).
	\end{split}
	\end{equation*}
	On the other hand, by Lemma \ref{lemma:structureequationsaction} (which follows from the structure equations),
	\begin{equation*}
	\Omega\big( [X_{(u,S)},X_{(v,T)}] \big) = I^*[u,v],
	\end{equation*}
	Furthermore, note that for any $\xi\in\Gamma(\Tab^{(1)})$ and $\xi'\in\Gamma(Z^{0,2}(\Tab))$,
	\begin{equation*}
	\xi(u)(\alpha) = \xi(\alpha)(u)=0,\hspace{1cm} \xi'(u,v)(\alpha) = - \xi'(v,\alpha)(u) - \xi'(\alpha,u)(v) = 0,
	\end{equation*}
	for all $u,v\in\Gamma(\mathcal{S}^0)$ and $\alpha\in\Gamma(\CAlg)$. So, by Corollary \ref{cor:realizationimpliescartanalgebroid} (which also uses the structure equations),
	\begin{equation*}
	\Pi\big( [X_{(u,S)},X_{(v,T)}] \big) = I^*\big(\text{J}_{u,v} + \text{Ad}_u(T)-\text{Ad}_v(S) - [S,T] \big).
	\end{equation*}
	
	The last assertion concerning the image of the action is immediate.
\end{proof}

\begin{mydef}
	\label{def:systaticaction}
	Let $(P,\Omega)$ be a realization of a Cartan algebroid $(\CAlg,\Tab)$ over $N$. The action of the systatic space $\mathcal{S}$ of $(\CAlg,\Tab)$ on $I:P\to N$ is called the \textbf{systatic action}. 
\end{mydef}

\begin{myexample} (Lie groups)
	\label{example:liegroupssystatic}
	Let us consider the example of a realization coming from a Lie group $G$ with Lie algebra $\mathfrak{g}$ (see Example \ref{example:liegroupsaspseudogroups}), where $(G,\Omega_{\mathrm{MC}})$ with $\Omega_{\mathrm{MC}}\in\Omega^1(G;\mathfrak{g})$ the Maurer-Cartan form on $G$ is a realization of the Cartan algebroid $(\mathfrak{g},0)$. In this case: 
	\begin{equation*}
	\mathcal{S} = \mathcal{S}^0=\CAlg=\mathfrak{g}.
	\end{equation*}
	The systatic action is simply the inverse of the Maurer-Cartan form, which, at the level of sections, is given by the map
	\begin{equation}
	\label{eqn:exampleliegroupaction}
	\mathfrak{g}\to \mathfrak{X}(G),\hspace{1cm} X\mapsto X^R,
	\end{equation}
	which sends an element of the Lie algebra $X\in\mathfrak{g}$ to the induced right invariant vector field $X^R\in\mathfrak{X}(G)$, which is defined by $(X^R)_g:= (dR_g)_e(X)$. The pseudogroup $\Gamma(G,\Omega_{\text{MC}})$ is generated by right translations, and it is also characterized as the pseudogroup on $G$ consisting of all local diffeomorphisms that are invariant under the systatic action. By this we mean that $\psi\in\LDiff(G)$ belongs to $\Gamma(G,\Omega_{\text{MC}})$ if and only if
	\begin{equation*}
	d\psi(X^R)= X^R,\hspace{1cm} \forall\; X\in\mathfrak{g}. \qedhere
	\end{equation*}
\end{myexample}

In general, the pseudogroup $\Gamma(P,\Omega)$ is only contained in the pseudogroup of local diffeomorphisms that is invariant the systatic action:

\begin{myprop}
	\label{prop:reductioninvariancegamma}
	Let $(P,\Omega)$ be a realization of a Cartan algebroid $(\CAlg,\Tab)$ over $N$. The pseudogroup $\Gamma(P,\Omega)$ is invariant under the systatic action of $\mathcal{S}$ on $P$, i.e.
	\begin{equation*}
	d\psi(X_u)=X_u,\hspace{1cm} d\psi(X_S)=X_S,
	\end{equation*}
	for all $u\in\Gamma(\mathcal{S}^0),\;S\in\Gamma(\Tab),\;\psi\in\Gamma(P,\Omega)$.
\end{myprop}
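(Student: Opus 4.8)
The plan is to exploit the canonicity of the fundamental vector fields $X_S$ and $X_u$ (for $u\in\Gamma(\mathcal{S}^0)$) with respect to the choice of the auxiliary form $\Pi$, established in Lemma \ref{lemma:realizationsecondcomponentcanonical} and in the proof of Proposition \ref{prop:systaticspace1action}. The naive attempt---to show that the pullback field $\psi^{*}X_S$ has the same $\Omega$- and $\Pi$-components as $X_S$---breaks down immediately, since $\psi\in\Gamma(P,\Omega)$ preserves $\Omega$ and $I$ but has no reason to preserve $\Pi$. The observation that circumvents this is that $\psi^{*}\Pi$ is again an admissible auxiliary form for the realization $(P,\Omega)$.

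First I would establish admissibility. Since $\mathrm{MC}_\Omega=d\Omega+\tfrac12[\Omega,\Omega]$ is independent of the connection (Proposition \ref{prop:maurercartan2form}) and is natural with respect to diffeomorphisms preserving $I$, the hypotheses $\psi^{*}I=I$ and $\psi^{*}\Omega=\Omega$ give $\psi^{*}\mathrm{MC}_\Omega=\mathrm{MC}_{\psi^{*}\Omega}=\mathrm{MC}_\Omega$. Pulling back the structure equation \eqref{eqn:structureequations} and using $\psi^{*}\Omega=\Omega$ then yields $(\psi^{*}\Pi)\wedge\Omega=\Pi\wedge\Omega$, so $\psi^{*}\Pi$ satisfies \eqref{eqn:structureequations}. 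For the coframe condition \eqref{eqn:vbisomorphism}, a pointwise computation shows $(\Omega,\psi^{*}\Pi)_p=(\Omega,\Pi)_{\psi(p)}\circ d\psi_p$, a composite of isomorphisms; hence $(\Omega,\psi^{*}\Pi)$ is again a vector bundle isomorphism, and $\psi^{*}\Pi$ is a legitimate choice of auxiliary form in the sense of Definition \ref{def:realization}.

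Next I would identify the pullback of the fundamental vector fields with fundamental vector fields for $\psi^{*}\Pi$. Write $X_S^{\Pi'}$ for the field attached to the auxiliary form $\Pi'$, and let $\psi^{*}X_S$ denote the pullback field determined by $d\psi_p((\psi^{*}X_S)_p)=(X_S)_{\psi(p)}$. A direct check using $\psi^{*}\Omega=\Omega$, $\psi^{*}I=I$ and the defining conditions of $X_S$ gives $\Omega(\psi^{*}X_S)=0$ and $(\psi^{*}\Pi)(\psi^{*}X_S)=I^{*}S$, so that $\psi^{*}X_S=X_S^{\psi^{*}\Pi}$; the identical computation yields $\psi^{*}X_u=X_u^{\psi^{*}\Pi}$ for $u\in\Gamma(\mathcal{S}^0)$, now using $\Pi(X_u)=0$. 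Invoking canonicity---Lemma \ref{lemma:realizationsecondcomponentcanonical} for $X_S$ and Proposition \ref{prop:systaticspace1action} for $X_u$---we obtain $X_S^{\psi^{*}\Pi}=X_S$ and $X_u^{\psi^{*}\Pi}=X_u$, whence $\psi^{*}X_S=X_S$ and $\psi^{*}X_u=X_u$. Unwinding the definition of the pullback field, these are exactly the invariance statements $d\psi(X_S)=X_S$ and $d\psi(X_u)=X_u$.

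The main obstacle, and the conceptual crux, is precisely the non-invariance of $\Pi$; everything else is bookkeeping with pullbacks. Once one recognizes that $\psi$ maps the affine space of admissible auxiliary forms to itself (i.e.\ that $\psi^{*}\Pi$ is again admissible), the canonicity of $X_S$ and $X_u$ on that affine space forces their invariance. A minor point to handle with care is the naturality of $\mathrm{MC}_\Omega$ under $\psi$, which rests on the connection-independence of Proposition \ref{prop:maurercartan2form}; one should also note that the argument for $X_u$ requires $u\in\mathcal{S}^0$ rather than an arbitrary section of $\CAlg$, since it is exactly the $\Pi$-independence of $X_u$ on $\mathcal{S}^0$ that makes the canonicity step applicable.
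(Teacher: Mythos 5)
Your proof is correct and rests on the same mechanism as the paper's: both derive $(\psi^{*}\Pi-\Pi)\wedge\Omega=0$ from the structure equation together with $\psi^{*}\Omega=\Omega$, and both then exploit that elements of $\Tab$ kill $\mathcal{S}^0$ (respectively that $\Omega(X_S)=0$) when evaluating against the fundamental vector fields. The only difference is packaging: the paper verifies the four conditions on $\Omega(d\psi(X_\bullet))$ and $\Pi(d\psi(X_\bullet))$ directly, whereas you route the identical computation through the admissibility of $\psi^{*}\Pi$ and the canonicity statements of Lemma \ref{lemma:realizationsecondcomponentcanonical} and Proposition \ref{prop:systaticspace1action}, whose proofs consist of exactly those evaluations.
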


\begin{proof}
	We need to show that
	\begin{equation*}
	\begin{split}
	& \Omega(d\psi(X_u)) = I^*u,\hspace{2cm}  \Pi(d\psi(X_u)) = 0, \\
	& \Omega(d\psi(X_S)) = 0, \hspace{2.35cm}  \Pi(d\psi(X_S)) = I^*S.
	\end{split}
	\end{equation*}
	First,
	\begin{equation*}
	\begin{split}
	& \Omega(d\psi(X_u)) = \Omega(X_u) = I^*u, \\
	& \Omega(d\psi(X_S)) = \Omega(X_S) = 0,
	\end{split}
	\end{equation*}
	because $\psi^*\Omega=\Omega$. Next, we note that the structure equation $d\Omega+\frac{1}{2}[\Omega,\Omega] = \Pi\wedge \Omega$ combined with the invariance condition $\psi^*\Omega=\Omega$ imply that $(\psi^*\Pi - \Pi)\wedge\Omega=0$. Applying this equation on pairs $(X_u,X_\alpha)$ and $(X_S,X_\alpha)$ and using the fact that $T(u)=0$ for all $T\in\Gamma(\Tab)$, we see that
	\begin{equation*}
	\begin{split}
	& (\psi^*\Pi-\Pi)(X_u)(I^*\alpha) =0,\\
	& (\psi^*\Pi-\Pi)(X_S)(I^*\alpha) =0.
	\end{split}
	\end{equation*}
	Since this is true for all $\alpha\in\Gamma(\CAlg)$, then 
	\begin{equation*}
	\begin{split}
	& \Pi(d\psi(X_u)) = \Pi(X_u) = 0, \\
	& \Pi(d\psi(X_S)) = \Pi(X_S) = I^*S. \qedhere
	\end{split}
	\end{equation*}
\end{proof}

\subsection{Reduction of a Pseudogroup in Normal Form}
\label{section:reductionofpseudogroupinnormalform}

In the previous section, we have seen that any Cartan algebroid $(\CAlg,\Tab)$ comes with an intrinsic Lie algebroid $\mathcal{S}$, which we call the systatic space (Definition \ref{def:systaticspace}). We have seen that $\mathcal{S}$ acts on all realizations $(P,\Omega)$ of the Cartan algebroid via the systatic action (Definition \ref{def:systaticaction}) and that the induced pseudogroup in normal form $\Gamma(P,\Omega)$ is invariant under this action. In this section, we study \textit{reduction}, the procedure of taking the quotient by the systatic action. For simplicity, we will do this under the assumption that the Lie algebroid $\mathcal{S}\to N$ integrates to a Lie groupoid $\Sigma\rightrightarrows N$, and that the action of $\mathcal{S}$ on $I:P\to N$ integrates to a sufficiently nice action of $\Sigma\rightrightarrows N$ on $I:P\to N$ (see Theorem \ref{theorem:reduction} for the precise assumptions). 

The key to reduction is to pass from the realization $(P,\Omega)$ to a \textit{Pfaffian groupoid} (Definition \ref{def:pfaffiangroupoid}) consisting of the gauge groupoid $\mathcal{G}(P)\rightrightarrows P$ of $I:P\to N$ together with a multiplicative $1$-form $\theta(\Omega)$ that is obtained as the natural lift of $\Omega$ to $\mathcal{G}(P)$. The action of $\Sigma\rightrightarrows N$ extends to an action on this Pfaffian groupoid. In this situation, the quotient can be taken, and we prove that the resulting object is a Lie-Pfaffian groupoid $(\mathcal{G}(P)_{\mathrm{red}},\theta(\Omega)_{\mathrm{red}})$ over the quotient manifold $P_{\mathrm{red}}= P/\Sigma$ and that the pseudogroup in normal form $\Gamma(P,\Omega)$ we started with is an isomorphic prolongation of the generalized pseudogroup $\Gamma_{\mathrm{red}} = \LBis(\mathcal{G}(P)_{\mathrm{red}},\theta(\Omega)_{\mathrm{red}})$ along the quotient map $\mathrm{pr}:P\to P_\mathrm{red}$ (Theorem \ref{theorem:reduction}). Thus, at the cost of passing to the more abstract setting of Lie-Pfaffian groupoids and generalized pseudogroups, the systatic action allows us to reduce a pseudogroup to a generalized pseudogroup that acts on a ``smaller'' space. In Section \ref{examples:reduction}, we will work out two examples of the reduction procedure.

\subsubsection*{The Pfaffian Groupoid of a Realization}
\label{section:realizationpfaffiangroupoid}

Let $(\mathcal{C},\Tab)$ be a Cartan algebroid over $N$ and $(P,\Omega)$ a realization. We denote the gauge groupoid (sometimes called the submersion groupoid) of $I:P\to N$ by
\begin{equation*}
\mathcal{G}=\mathcal{G}(P):=P\times_N P = \{ \; (p,q)\in P\times P\;|\; I(p)=I(q) \; \}\rightrightarrows P.
\end{equation*}
Let $s$ and $t$ denote the source and target maps of $\mathcal{G}$ and consider the $1$-form
\begin{equation}
\label{eqn:omegaongaugegroupoid}
\theta=\theta(\Omega):= s^*\Omega - t^*\Omega
\end{equation} 
on $\mathcal{G}$, which makes sense because $I(p)=I(q)$ for any $(p,q)\in\mathcal{G}$. A-priori $\theta$ takes values in $I^*\mathcal{C}$. However, given a vector $(X,Y)$ tangent to $\mathcal{G}$ at a point $(p,q)$, the fact that $\Omega$ is anchored ($dI=\rho\circ \Omega$) implies that $\rho(\theta(X,Y)) = \rho(\Omega(X)) - \rho(\Omega(Y)) = dI(X)-dI(Y)=0$, and hence
\begin{equation}
\label{eqn:reductionpfaffianform}
\theta\in\Omega^1(\mathcal{G};I^*E)
\end{equation}	
with
\begin{equation}
\label{eqn:reductionE}
E:=\Ker \rho\subset \CAlg.
\end{equation}
Finally, equipping $I^*E$ with the trivial action of $\mathcal{G}$, it is easy to check that $\theta$ is multiplicative. 

\begin{myprop}
	\label{prop:reductionpfaffiangroupoid}
	Let $(\CAlg,\Tab)$ be a Cartan algebroid over $N$ and $(P,\Omega)$ a realization. The pair $(\mathcal{G},\theta)$ is a Pfaffian groupoid (Definition \ref{def:pfaffiangroupoid}), and 
	\begin{equation*}
	\LBis(\mathcal{G},\theta) = \Gamma(P,\Omega),
	\end{equation*}
	where we view $\Gamma(P,\Omega)$ as a generalized pseudogroup (see Example \ref{example:pseudogroupasgeneralizedpseudogroup}).
\end{myprop}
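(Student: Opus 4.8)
The plan is to establish two things: first, that $(\mathcal{G},\theta)$ satisfies the axioms of a Pfaffian groupoid (Definition \ref{def:pfaffiangroupoid}), and second, the identification of the local holonomic bisections $\LBis(\mathcal{G},\theta)$ with the pseudogroup $\Gamma(P,\Omega)$. The multiplicativity of $\theta$ has already been noted in the paragraph preceding the statement, so the main work for the first assertion is to verify the remaining axioms of a Pfaffian groupoid, most notably the condition relating $\Ker\theta$ to the source and target fibers (the analogue of $\Ker\omega\cap\Ker ds = \Ker\omega\cap\Ker dt$ that appears for Lie-Pfaffian groupoids) and the pointwise surjectivity of $\theta$ onto its coefficient bundle $I^*E$.

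First I would analyze the kernel of $\theta$ at a point $(p,q)\in\mathcal{G}$. A tangent vector to $\mathcal{G}$ at $(p,q)$ is a pair $(X,Y)\in T_pP\times T_qP$ satisfying $dI(X)=dI(Y)$, and $\theta(X,Y)=0$ means $\Omega(X)=\Omega(Y)$ as elements of $\CAlg_{I(p)}$ (here using $s^*\Omega$ and $t^*\Omega$ and the fact that $I\circ s=I\circ t$). Since $(\Omega,\Pi):TP\to I^*(\CAlg\oplus\Tab)$ is an isomorphism (condition \eqref{eqn:vbisomorphism}), the condition $\Omega(X)=\Omega(Y)$ does not by itself pin down $X$ and $Y$; the residual freedom lives in $\Ker\Omega$, which by Proposition \ref{prop:gactsonrealization} is exactly the image of $\Psi_\Tab:I^*\Tab\to TP$. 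This is precisely where the symbol space of the Pfaffian groupoid will enter. I would then check pointwise surjectivity of $\theta$: given any $e\in E_{I(p)}=(\Ker\rho)_{I(p)}$, using surjectivity of $\Omega$ one finds $X\in T_pP$ with $\Omega(X)=e$; since $\rho(e)=0=dI(X)$, the pair $(X,0)\in T_{(p,q)}\mathcal{G}$ satisfies $dI(X)=dI(0)$ and $\theta(X,0)=\Omega(X)=e$, giving surjectivity.

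For the bijection $\LBis(\mathcal{G},\theta)=\Gamma(P,\Omega)$, I would use the standard correspondence between local bisections of the gauge groupoid $\mathcal{G}=P\times_N P$ and fiber-preserving local diffeomorphisms of $P$: a local bisection $b$ of $\mathcal{G}$ over an open set $U\subset P$ is the graph of a locally defined diffeomorphism $\phi\in\LDiff(P)$ satisfying $\phi^*I=I$, via $b(p)=(p,\phi(p))$ (or $(\phi(p),p)$ depending on the source/target conventions, which I would fix to match Definition \ref{def:pfaffiangroupoid}). The holonomicity condition $b^*\theta=0$ then reads $b^*(s^*\Omega-t^*\Omega)=0$, i.e. $\Omega=\phi^*\Omega$ on $U$, which is exactly the defining condition of $\Gamma(P,\Omega)$ in \eqref{eqn:realizationinducedpseudogroup}. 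Conversely every $\phi\in\Gamma(P,\Omega)$ yields such a bisection. The main obstacle I anticipate is bookkeeping the source/target conventions consistently so that the pullback $b^*\theta=0$ produces $\phi^*\Omega=\Omega$ (rather than a sign-flipped or transposed condition), and ensuring that the bisection so constructed is genuinely holonomic in the sense of Definition \ref{def:pfaffiangroupoid} — that is, that $b^*\theta=0$ is equivalent to, and not merely implied by, membership in $\Gamma(P,\Omega)$. Once the conventions are aligned, the verification is a direct computation, so the conceptual content lies entirely in the kernel analysis of the first part.
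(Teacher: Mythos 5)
There is a genuine gap, and it comes from misreading which axioms the proposition actually requires. The statement only claims that $(\mathcal{G},\theta)$ is a \emph{Pfaffian} groupoid, not a \emph{Lie-Pfaffian} groupoid; by Definition \ref{def:pfaffiangroupoid} this means axiom 2 (the equality $\Ker\theta\cap\Ker dt=\Ker\theta\cap\Ker ds$) is dropped, while axiom 1 (involutivity of $\Ker\theta\cap\Ker ds$) must be verified explicitly --- see item (d) of Remark \ref{remark:consequencesliepfaffiangroupoid}, which stresses that for a mere Pfaffian groupoid this involutivity does not come for free. Your proposal does the opposite: you direct the ``main work'' at the source/target kernel equality and never address involutivity. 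Worse, the equality you aim for is actually \emph{false} here in general: using the generators of Remark \ref{remark:reductionnotation}, $\Ker\theta\cap\Ker ds$ is spanned by vectors of the form $(X_S,0)$ with $S\in\Gamma(\Tab)$, while $\Ker\theta\cap\Ker dt$ is spanned by $(0,X_T)$, and these subbundles differ whenever $\Tab\neq 0$. This is precisely why $(\mathcal{G},\theta)$ is only Pfaffian before reduction and becomes Lie-Pfaffian only after quotienting by the systatic action (Theorem \ref{theorem:reduction}). The missing step is short but essential: $\Gamma(\Ker\theta\cap\Ker ds)$ is generated by the $(X_S,0)$, and these close under the Lie bracket because $\Ker\Omega$ is an involutive distribution on $P$, which is a consequence of the structure equation (Lemma \ref{lemma:structureequationsaction}).

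The rest of your proposal is fine and matches the paper. Your surjectivity argument (lifting $e\in E=\Ker\rho$ to $X$ with $\Omega(X)=e$ and noting $dI(X)=\rho(e)=0$, so $(X,0)$ is tangent to $\mathcal{G}$) is essentially the paper's computation $\theta(X_\alpha,0)=(I\circ t)^*\alpha$. Your treatment of the identification $\LBis(\mathcal{G},\theta)=\Gamma(P,\Omega)$ via graphs of $I$-preserving local diffeomorphisms is exactly what the paper means when it says this ``follows from the definition of $\Gamma(P,\Omega)$''; the convention-chasing you worry about is harmless since $\theta=s^*\Omega-t^*\Omega$ vanishes on the graph of $\phi$ if and only if $\phi^*\Omega=\Omega$ regardless of which factor is declared the source.
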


\begin{myremark}
\label{remark:reductionnotation}
For this and later proofs, it will be useful to describe the module of vector fields on $\mathcal{G}$. Fix $\Pi$ for $(P,\Omega)$ as in the definition of a realization (Definition \ref{def:realization}), which, in turn, induces the map \eqref{eqn:realizationinversemap}. As a $C^\infty(P\times P)$-module, $\mathfrak{X}(P\times P)$ is generated by
\begin{equation*}
(X_\alpha+X_S,X_\beta+X_T),\hspace{1cm} \alpha,\beta\in\Gamma(\CAlg),\;S,T\in\Gamma(\Tab),
\end{equation*}
where $(X,Y)\in\mathfrak{X}(P\times P)$ with $X,Y\in\mathfrak{X}(P)$ denotes the vector field for which $(X,Y)_{(p,q)} = (X_p,Y_q)\in T_pP\oplus T_qP = T_{(p,q)}(P\times P)$. Since $dI=\rho\circ\Omega$, then  $\mathfrak{X}(\mathcal{G})$ is generated as a $C^\infty(\mathcal{G})$-module by 
\begin{equation*}
(X_\alpha+X_S,X_\beta+X_T),\hspace{1cm} \alpha,\beta\in\Gamma(\CAlg)\;\text{ with }\;\rho(\alpha)=\rho(\beta),\;S,T\in\Gamma(\Tab),
\end{equation*}
where by $(X_\alpha+X_S,X_\beta+X_T)$ we mean the restriction to $\mathcal{G}$. Applying $\theta$, we see that 
\begin{equation}
\label{eqn:reductiontheta}
\theta(X_\alpha+X_S,X_\beta+X_T) = (I \circ t)^*(\beta - \alpha),
\end{equation}
and hence that $\Gamma(\Ker\theta)\subset \mathfrak{X}(\mathcal{G})$ is generated by 
\begin{equation}
\label{eqn:reductionthetakernel}
(X_\alpha+X_S,X_\alpha+X_T),\hspace{1cm} \alpha\in\Gamma(\CAlg),\;S,T\in\Gamma(\Tab).
\end{equation}
\end{myremark}

\begin{proof}
	We only need to check that $\theta$ is pointwise surjective and that $\Ker \theta \cap \Ker ds$ is involutive. For the former, note that if $\alpha\in \Gamma(E)$, then $\theta(X_\alpha,0) = (I \circ t)^*\alpha$ by \eqref{eqn:reductiontheta}. For the latter, we see that $\Gamma(\Ker \theta\cap \Ker ds)$ is generated by vector fields $(X_S,0)$ with $S\in\Gamma(\Tab)$, and hence involutive because $\Ker \Omega$ is involutive (Lemma \ref{lemma:structureequationsaction}). The last assertion of the proposition follows from the definition of $\Gamma(P,\Omega)$. 
\end{proof}

\subsubsection*{The Systatic Action on the Pfaffian Groupoid}

The systatic action (Definition \ref{def:systaticaction}) extends to an action on the Pfaffian groupoid $(\mathcal{G},\theta)$ in the following sense. The Lie algebroid $\mathcal{S}$ acts on $\mathcal{G}$ via the diagonal action, which, using the notation of Remark \ref{remark:reductionnotation}, is given by
\begin{equation}
\label{eqn:reductiondiagonalaction}
\Gamma(\mathcal{S})\to \mathfrak{X}(\mathcal{G}),\hspace{1cm} (u,S)\mapsto (X_u+X_S,X_u+X_S),
\end{equation} 
for all $u\in\Gamma(\mathcal{S}^0),\, S\in\Gamma(\Tab)$. It is an action along the composition of the source (or target) map of $\mathcal{G}$ with $I:P\to N$, which is a surjective submersion that we also denote by $I:\mathcal{G} \to N$. It is well defined, i.e. takes values in $\mathfrak{X}(\mathcal{G})$, since $dI(X_\alpha+X_S) = \rho\circ\Omega(X_\alpha+X_S) = \rho(\alpha)$. 

\begin{myremark}
	The action of $\mathcal{S}$ on $P$ and $\mathcal{G}$ defines a Lie algebroid action on the Lie groupoid $\mathcal{G}\rightrightarrows P$, the infinitesimal counterpart of a Lie groupoid action on a Lie groupoid as defined in \cite{Higgins1990}, Definition 3.1.
\end{myremark}

 Furthermore:

\begin{myprop}
	\label{prop:reductionbasic}
	Let $(\CAlg,\Tab)$ be a Cartan algebroid and $(P,\Omega)$ a realization. The vector bundle $E$ (defined in \eqref{eqn:reductionE}), equipped with 
	\begin{equation}
	\label{eqn:reductions1connection}
	\nabla:\Gamma(\mathcal{S})\times \Gamma(E)\to \Gamma(E),\hspace{1cm} \nabla_{(u,S)}(\alpha) =  [u,\alpha] - S(\alpha),
	\end{equation}
	is a Lie algebroid representation of the systatic space $\mathcal{S}$. The 1-form $\theta$ defined in \eqref{eqn:reductionpfaffianform} is basic with respect to the action of $\mathcal{S}$ (Definition \ref{def:basicformliealgebroid}). 
\end{myprop}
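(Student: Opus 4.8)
The statement contains two independent claims, which I would treat in turn.

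\textbf{The representation.} First I would check that $\nabla_{(u,S)}(\alpha) = [u,\alpha] - S(\alpha)$ genuinely takes values in $E = \Ker\rho$: indeed $\rho([u,\alpha]) = [\rho(u),\rho(\alpha)] = 0$ since $\alpha\in\Ker\rho$, and $S(\alpha)\in\Ker\rho$ because $\Tab\subset\Hom(\CAlg,\Ker\rho)$. The two Leibniz-type identities defining an $\mathcal{S}$-connection follow from the Leibniz rule of $\CAlg$ together with $\rho(u,S) = \rho(u)$; crucially, $C^\infty(N)$-linearity in the $(u,S)$ slot holds \emph{precisely because} $\alpha\in\Ker\rho$ kills the anchor term $L_{\rho(\alpha)}(f)u$ appearing in $[fu,\alpha]$. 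It then remains to prove flatness, for which I would expand the curvature $R_\nabla\big((u,S),(v,T)\big)(\alpha) = \nabla_{(u,S)}\nabla_{(v,T)}\alpha - \nabla_{(v,T)}\nabla_{(u,S)}\alpha - \nabla_{[(u,S),(v,T)]}\alpha$ directly. The double-bracket terms combine, via the identity $[u,[v,\alpha]] - [v,[u,\alpha]] = [[u,v],\alpha] - \mathrm{J}_{u,v}(\alpha)$, into the Jacobiator; the mixed terms $-[u,T(\alpha)] + T([u,\alpha])$ and $[v,S(\alpha)] - S([v,\alpha])$ reassemble into $-\mathrm{Ad}_u(T)(\alpha) + \mathrm{Ad}_v(S)(\alpha)$; and the purely $\Tab$-valued terms $S(T(\alpha)) - T(S(\alpha))$ give $[S,T](\alpha)$. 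Since the $\Tab$-component of the bracket of $\mathcal{S}$ is exactly $\mathrm{J}_{u,v} + \mathrm{Ad}_u(T) - \mathrm{Ad}_v(S) - [S,T]$ (and $\mathcal{S}$ is a Lie algebroid by Proposition \ref{prop:1stsystaticspaceliealgebroid}), the first two contributions match $\nabla_{[(u,S),(v,T)]}\alpha$ term by term, so $R_\nabla = 0$.

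\textbf{Basicness.} Recalling that a form is basic (Definition \ref{def:basicformliealgebroid}) exactly when it is both horizontal and invariant, I would verify the two conditions separately, using the description of $\mathfrak{X}(\mathcal{G})$ and the formula \eqref{eqn:reductiontheta} collected in Remark \ref{remark:reductionnotation}. Horizontality is immediate: the fundamental vector field of $(u,S)$ is the diagonal $a(u,S) = (X_u + X_S,\, X_u + X_S)$, and \eqref{eqn:reductiontheta} applied with both components equal gives $\theta(a(u,S)) = (I\circ t)^*(u - u) = 0$. For invariance I would evaluate $\mathcal{L}^\nabla_{a(u,S)}\theta$ on a generator $Y = (X_\alpha + X_{S'},\, X_\beta + X_{T'})$ of $\mathfrak{X}(\mathcal{G})$, where $\rho(\alpha) = \rho(\beta)$, via $(\mathcal{L}^\nabla_{a(u,S)}\theta)(Y) = (I^*\nabla)_{a(u,S)}(\theta(Y)) - \theta([a(u,S),Y])$. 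On one side, $\theta(Y) = I^*(\beta - \alpha)$ is a pullback section of $I^*E$, so differentiating it along the action through the representation yields $I^*\nabla_{(u,S)}(\beta - \alpha) = I^*\big([u,\beta-\alpha] - S(\beta-\alpha)\big)$. On the other side, $\theta([a(u,S),Y])$ is computed from the componentwise bracket of vector fields on $P\times P$ together with Lemma \ref{lemma:structureequationsaction}; the decisive simplification is that $u\in\mathcal{S}^0$ forces $S'(u) = T'(u) = 0$, so the mixed terms collapse and one obtains $I^*\big([u,\beta] - S(\beta)\big) - I^*\big([u,\alpha] - S(\alpha)\big)$. The two sides agree, so $\mathcal{L}^\nabla_{a(u,S)}\theta = 0$ and $\theta$ is basic.

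\textbf{Main obstacle.} The individual computations are routine once organized, so the real difficulty is organizational rather than conceptual: keeping the signs straight and matching the $\Tab$-component of the bracket of $\mathcal{S}$ against the curvature expression, and correctly interpreting the covariant derivative of the $I^*E$-valued form through the representation $\nabla$ (an $\mathcal{S}$-connection) rather than an ordinary $TN$-connection. The one fact that drives every vanishing step — and must be invoked throughout — is that elements of $\mathcal{S}^0$ are annihilated by all of $\Tab$: this is what makes $\nabla$ land in $E$, what makes the cross-terms in the invariance computation disappear, and, through conditions 2 and 3 of Definition \ref{def:cartanalgebroid}, what guarantees that $\mathrm{J}$ and $\mathrm{Ad}$ take values in $\Tab$ so that the bracket of $\mathcal{S}$ is well defined.
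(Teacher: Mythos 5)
Your proof is correct and takes essentially the same route as the paper: flatness of $\nabla$ is established by the same direct curvature expansion matched against the $\Tab$-component of the bracket of $\mathcal{S}$, and basicness is verified exactly as in the paper, using \eqref{eqn:reductionthetakernel} for horizontality and Lemma \ref{lemma:actionproperties} together with the vanishing $T(u)=0$ for $u\in\Gamma(\mathcal{S}^0)$ for the equivariance condition \eqref{eqn:basicequivariancealgebroid}. The only difference is that you spell out the routine checks (that $\nabla$ lands in $E=\Ker\rho$ and is $C^\infty(N)$-linear in the $\mathcal{S}$-slot) which the paper covers with ``clearly''.
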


\begin{proof}
	Clearly, $\nabla$ is an $\mathcal{S}$-connection, and it is flat since
	\begin{equation*}
	\begin{split}
		&(\nabla_{(u,S)}\nabla_{(v,T)} - \nabla_{(v,T)}\nabla_{(u,S)} - \nabla_{[(u,S),(v,T)]})(\alpha) \\
		&\hspace{1.5cm} = S(T(\alpha)) - T(S(\alpha)) - [S,T](\alpha) \\  
		&\hspace{1.7cm}  + J_{u,v}(\alpha) + [u,[v,\alpha]] - [[u,v],\alpha] - [v,[u,\alpha]]   \\
		&\hspace{1.7cm}  + T([u,\alpha]) - T([u,\alpha]) + S([v,\alpha])  - S([v,\alpha]) \\ 
		&\hspace{1.7cm} + [v,S(\alpha)] - [v,S(\alpha)] + [u,T(\alpha)]    - [u,T(\alpha)] \\
		& \hspace{1.5cm} = 0.
		\end{split}
	\end{equation*}
	Next, we show that $\theta$ is basic. First, from \eqref{eqn:reductionthetakernel} we see that all the vector fields at the image of \eqref{eqn:reductiondiagonalaction} lie in the kernel of $\theta$, which is therefore horizontal. Furthermore, $\theta$ satisfies the equivariance condition \eqref{eqn:basicequivariancealgebroid}:
	\begin{equation*}
	\begin{split}
	& (I^*\nabla)_{I^*(u,W)}\theta(X_\alpha+X_S,X_\beta+X_T)  \\ & \hspace{1.5cm}- \theta([(X_u+X_W,X_u+X_W),(X_\alpha+X_S,X_\beta+X_T)]) \\
	&\explain{=}{\theta=s^*\Omega-t^*\Omega} I^*\big(\nabla_{(u,W)}(\beta-\alpha)\big) - s^*\big(\Omega([X_u+X_W,X_\beta+X_T])\big) \\ & \hspace{1.5cm} + t^*\big(\Omega([X_u+X_W,X_\alpha+X_S])\big) \\
	&\explain{=}{\text{Lemma \ref{lemma:actionproperties}}} I^*\big([u,\beta-\alpha] - W(\beta-\alpha) - [u,\beta] - W(\beta) - [u,\alpha] + W(\alpha)\big) = 0. \qedhere
	\end{split}
	\end{equation*}
\end{proof}

\subsubsection*{Passing to the Quotient}
\label{section:reductionsystaticaction}

The final step in the reduction procedure is to pass to the quotient of the Pfaffian groupoid $(\mathcal{G},\theta)$ by the action of the systatic space $\mathcal{S}$. To this end, we make the following three regularity assumptions (in Remark \ref{remark:regularityconditionsreduction}, we explain how these assumptions can be relaxed). 

The first assumption is that the Lie algebroid $\mathcal{S}\to N$ integrates to an $s$-connected Lie groupoid $\Sigma\rightrightarrows N$. The second assumption is that the action of $\mathcal{S}$ on $I: P\to N$ integrates to a free and proper action of $\Sigma$. This implies that $\Sigma$ also acts freely and properly on $I:\mathcal{G}\to N$ via the diagonal action, and hence the orbit spaces
\begin{equation*}
P_\mathrm{red}:=P/\Sigma\hspace{0.5cm}\mathrm{and}\hspace{0.5cm} \mathcal{G}_\mathrm{red}=\mathcal{G}(P)_\mathrm{red}:=\mathcal{G}/\Sigma
\end{equation*}
have the unique smooth structure with which the projections
\begin{equation*}
\mathrm{pr}:P\to P_\mathrm{red} \hspace{0.5cm}\mathrm{and}\hspace{0.5cm} \mathrm{pr}:\mathcal{G}\to \mathcal{G}_\mathrm{red}
\end{equation*}
are surjective submersions. As a consequence,
\begin{equation*}
\mathcal{G}_\mathrm{red}\rightrightarrows P_\mathrm{red}
\end{equation*}
has the unique structure of a Lie groupoid with which the projections form a Lie groupoid morphism (depicted in Diagram \ref{eqn:reductiondiagram}), a fact which is easy to check directly, or, alternatively, noting that this is a special case of an action of a Lie groupoid on another Lie groupoid, one may apply Lemma 2.1 of \cite{Moerdijk2002}. 

The third assumption is that $E\to N$, as a representation of $\mathcal{S}$, integrates to a representation of $\Sigma$. This allows us to construct the associated vector bundle 
\begin{equation*}
E_\mathrm{red}\to P_\mathrm{red}
\end{equation*}
(see Appendix \ref{appendix:basicforms} for more details), which becomes a representation of $\mathcal{G}_\mathrm{red}\rightrightarrows P_\mathrm{red}$, since the trivial action of $\mathcal{G}$ on $I^*E$ descends to an action of $\mathcal{G}_\mathrm{red}$ on $E_\mathrm{red}$. In turn, the $1$-form $\theta\in\Omega^1(\mathcal{G},E)$ descends to a $1$-form
\begin{equation}
\label{eqn:reduced1form}
\theta_\mathrm{red}\in\Omega^1(\mathcal{G}_\mathrm{red},E_\mathrm{red}).
\end{equation}
Indeed, since $\theta$ is basic with respect to the action of $\mathcal{S}$ (Proposition \ref{prop:reductionbasic}), it is basic with resepct to the action of $\Sigma$ (Proposition \ref{prop:basicforms} together with our assumption of $s$-connectivity), and so it is the unique $1$-form satisfying $\mathrm{pr}^*\theta_\mathrm{red} = \theta$ (Proposition \ref{prop:basicformscorrespondence}). 

\begin{mytheorem}
	\label{theorem:reduction}
	Let $(\CAlg,\Tab)$ be a Cartan algebroid over $N$, $(P,\Omega)$ a realization, and consider the induced pseudogroup $\Gamma(P,\Omega)$. Let us assume that
	\begin{enumerate}
		\item there exists an $s$-connected Lie groupoid $\Sigma\rightrightarrows N$ integrating the Lie algebroid $\mathcal{S}\to N$,
		\item the action of $\mathcal{S}$ on $P$ integrates to a free and proper action of $\Sigma$,
		\item the representation $E$ of $\mathcal{S}$ integrates to a representation of $\Sigma$.
	\end{enumerate}
	Then, $(\mathcal{G}_\text{red},\theta_\text{red})$ is a Lie-Pfaffian groupoid over $P_\mathrm{red}$, and $\Gamma(P,\Omega)$ (viewed as $\LBis(\mathcal{G},\theta)$ by Proposition \ref{prop:reductionpfaffiangroupoid}) is an isomorphic prolongation of the generalized pseudogroup $\LBis(\mathcal{G}_\text{red},\theta_\text{red})$ (Definition \ref{def:pfaffiangroupoid}).
\end{mytheorem}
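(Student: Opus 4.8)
The plan is to establish the two assertions in sequence: first that $(\mathcal{G}_\mathrm{red},\theta_\mathrm{red})$ is a Lie-Pfaffian groupoid, and then that $\Gamma(P,\Omega)$ is an isomorphic prolongation of $\LBis(\mathcal{G}_\mathrm{red},\theta_\mathrm{red})$. For the first assertion, I would leverage the work already done: by Proposition \ref{prop:reductionpfaffiangroupoid}, $(\mathcal{G},\theta)$ is a Pfaffian groupoid, and by Proposition \ref{prop:reductionbasic}, $\theta$ is basic with respect to the systatic action. The three regularity assumptions (integration of $\mathcal{S}$ to $\Sigma$, freeness and properness of the $\Sigma$-action, and integration of the representation $E$) are precisely what is needed to form the quotients $P_\mathrm{red}=P/\Sigma$ and $\mathcal{G}_\mathrm{red}=\mathcal{G}/\Sigma$ as smooth manifolds and to descend $\theta$ to $\theta_\mathrm{red}$ via Proposition \ref{prop:basicformscorrespondence}. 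The core task is then to verify that the descended data $(\mathcal{G}_\mathrm{red},\theta_\mathrm{red})$ satisfies the axioms of a Lie-Pfaffian groupoid (Definition \ref{def:pfaffiangroupoid}): multiplicativity, pointwise surjectivity, the kernel condition $\Ker\theta_\mathrm{red}\cap\Ker ds_\mathrm{red} = \Ker\theta_\mathrm{red}\cap\Ker dt_\mathrm{red}$, and constant rank of the symbol space. Each of these should follow from the corresponding property of $(\mathcal{G},\theta)$ by a descent argument, using that $\mathrm{pr}:\mathcal{G}\to\mathcal{G}_\mathrm{red}$ is a surjective submersion intertwining the structure maps and that $\mathrm{pr}^*\theta_\mathrm{red}=\theta$. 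Multiplicativity of $\theta_\mathrm{red}$ descends because the $\Sigma$-action respects the groupoid structure; pointwise surjectivity and the kernel conditions descend because $d\mathrm{pr}$ is fiberwise surjective and the systatic distribution is exactly $\Ker\theta\cap\Ker ds$ intersected with the action image.

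The most delicate point of the first assertion is the \emph{kernel condition}. I would argue as follows: the vertical directions of the $\Sigma$-action on $\mathcal{G}$ are spanned by the diagonal vector fields $(X_u+X_S,X_u+X_S)$ from \eqref{eqn:reductiondiagonalaction}, and by \eqref{eqn:reductionthetakernel} these lie in $\Ker\theta$. Using the explicit generators of $\mathfrak{X}(\mathcal{G})$ from Remark \ref{remark:reductionnotation} and the formula \eqref{eqn:reductiontheta}, one computes $\Ker\theta$, $\Ker ds$ and $\Ker dt$ explicitly, checks that the action vectors are common to all of them, and then shows that the quotients $\Ker\theta_\mathrm{red}$, $\Ker ds_\mathrm{red}$, $\Ker dt_\mathrm{red}$ are obtained by descending these along $d\mathrm{pr}$. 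The condition $\Ker\theta\cap\Ker ds = \Ker\theta\cap\Ker dt$ for the Pfaffian groupoid $(\mathcal{G},\theta)$ (which holds since it is a Pfaffian groupoid) then descends to the analogous condition for $(\mathcal{G}_\mathrm{red},\theta_\mathrm{red})$, because the action vectors are already contained in these kernels and quotienting commutes with the intersections. Constant rank of the symbol space follows from the assumed constant rank of $\Tab^{(1)}$-type data and the fact that $\mathrm{pr}$ is a submersion.

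For the second assertion, I would invoke the general mechanism of Definition \ref{def:isomorphicprolongationgeneralized}: to show $\Gamma(P,\Omega)$ is an isomorphic prolongation of $\LBis(\mathcal{G}_\mathrm{red},\theta_\mathrm{red})$ along $\mathrm{pr}:P\to P_\mathrm{red}$, I must produce an action of $\mathcal{G}\mathrm{erm}(\LBis(\mathcal{G}_\mathrm{red},\theta_\mathrm{red}))\rightrightarrows P_\mathrm{red}$ on $\mathrm{pr}:P\to P_\mathrm{red}$ together with an isomorphism $\mathcal{G}\mathrm{erm}(\Gamma(P,\Omega))\cong \mathcal{G}\mathrm{erm}(\LBis(\mathcal{G}_\mathrm{red},\theta_\mathrm{red}))\ltimes P$. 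The key structural fact enabling this is that, by Proposition \ref{prop:reductionpfaffiangroupoid}, $\Gamma(P,\Omega)=\LBis(\mathcal{G},\theta)$, and by Proposition \ref{prop:reductioninvariancegamma}, every element of $\Gamma(P,\Omega)$ is invariant under the systatic action and hence descends along $\mathrm{pr}$. Thus each holonomic bisection of $(\mathcal{G},\theta)$ that is $\Sigma$-invariant projects to a holonomic bisection of $(\mathcal{G}_\mathrm{red},\theta_\mathrm{red})$; conversely, a local holonomic bisection of the quotient lifts (using a local section of $\mathrm{pr}$ and the $\Sigma$-equivariance) to a $\mathrm{pr}$-family of bisections upstairs. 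I would make this correspondence precise by sending $\phi\in\Gamma(P,\Omega)$ to the pair consisting of its induced reduced bisection together with its value, mirroring the construction in the proof of Proposition \ref{prop:prolongationiscartanequivalent}.

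The hardest part will be the lifting direction of the second assertion, namely verifying that the isomorphism of germ groupoids $\mathcal{G}\mathrm{erm}(\Gamma(P,\Omega))\cong \mathcal{G}\mathrm{erm}(\LBis(\mathcal{G}_\mathrm{red},\theta_\mathrm{red}))\ltimes P$ is well-defined, smooth, and bijective. The subtlety is that a holonomic bisection downstairs determines the germ of an element of $\Gamma(P,\Omega)$ only up to the systatic fibers, so one must show that the germ-level data, combined with the base point $p\in P$, recovers a unique germ of a symmetry of $(P,\Omega)$; this is exactly where the invariance from Proposition \ref{prop:reductioninvariancegamma} and the freeness/properness of the $\Sigma$-action combine to guarantee uniqueness and smoothness. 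I expect the bookkeeping of germs, actions, and the action-groupoid identification to be the main technical obstacle, whereas the descent of the Pfaffian structure in the first assertion, though requiring care with the kernel condition, is relatively routine given the explicit generators in Remark \ref{remark:reductionnotation}.
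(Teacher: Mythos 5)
Your plan follows the paper's proof essentially step for step: descend the Pfaffian structure via Propositions \ref{prop:reductionbasic} and \ref{prop:basicformscorrespondence}, check the axioms of Definition \ref{def:pfaffiangroupoid} by pulling back along $\mathrm{pr}$, and obtain the isomorphic prolongation from the canonical action of $\mathcal{G}_\mathrm{red}$ on $\mathrm{pr}:P\to P_\mathrm{red}$ together with Proposition \ref{prop:reductioninvariancegamma}. (One small slip: ``constant rank of the symbol space'' is not an axiom of a Lie-Pfaffian groupoid, so there is nothing to check there.)

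Two of the steps you label as routine descent are where the proof actually does its work. First, to deduce multiplicativity of $\theta_\mathrm{red}$ from that of $\theta$ you need the projection $\mathrm{pr}:\mathcal{G}_2\to(\mathcal{G}_\mathrm{red})_2$ between the spaces of \emph{composable} arrows to be a surjective submersion, so that pullback of forms is injective. This is not automatic from $\mathrm{pr}:\mathcal{G}\to\mathcal{G}_\mathrm{red}$ being a submersive groupoid morphism: lifting a composable pair of tangent vectors componentwise yields lifts $\widetilde{X},\widetilde{Y}$ whose matching condition fails by some $Z=ds(\widetilde{X})-dt(\widetilde{Y})\in T_qP$, and one must observe that $Z$ projects to zero in $P_\mathrm{red}$, hence is tangent to a $\Sigma$-orbit, and can be absorbed by an action vector. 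Second, in the lifting direction of the isomorphic prolongation, Proposition \ref{prop:reductioninvariancegamma} only gives \emph{infinitesimal} invariance $d\phi(X_u)=X_u$; to conclude that every $\widetilde{\sigma}\in\LBis(\mathcal{G},\theta)$ is locally constant along the $\Sigma$-orbits (and hence descends) one must integrate this along paths $g_\epsilon$ in the $s$-fibers of $\Sigma$, showing that $\phi(g_\epsilon\cdot p)$ and $g_\epsilon\cdot\phi(p)$ satisfy the same time-dependent ODE. The operative hypothesis here is the $s$-connectedness of $\Sigma$ (freeness and properness only guarantee smoothness of the quotients), which your write-up does not invoke. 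With these two arguments supplied, your outline coincides with the paper's proof.
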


The theorem is depicted in the following diagram:
\begin{equation}
\label{eqn:reductiondiagram}
\begin{tikzpicture}[description/.style={fill=white,inner sep=2pt},bij/.style={above,sloped,inner sep=.5pt}]	
\matrix (m) [matrix of math nodes, row sep=0.3em, column sep=0.8em, 
text height=1.5ex, text depth=0.25ex]
{ 
	& & & \mathcal{G} \\
	& \mathcal{G}_\text{red} &  &  & \reflectbox{$\acts$} \; \LBis(\mathcal{G},\theta) = \Gamma(P,\Omega)  &  \\
	\LBis(\mathcal{G}_\text{red},\theta_\text{red}) \acts & &  & P \\
	& P_\text{red} \\
};
\path[->,font=\scriptsize]
(m-2-2) edge [transform canvas={xshift=-0.5ex}] node[auto] {} (m-4-2)
(m-2-2) edge [transform canvas={xshift=0.5ex}] node[auto] {} (m-4-2)
(m-1-4) edge [transform canvas={xshift=-0.5ex}] node[auto] {} (m-3-4)
(m-1-4) edge [transform canvas={xshift=0.5ex}] node[auto] {} (m-3-4)
(m-1-4) edge node[above] {$pr$} (m-2-2)
(m-3-4) edge node[above] {$pr$} (m-4-2);
\end{tikzpicture}
\end{equation}

\begin{myremark}
	\label{remark:regularityconditionsreduction}
	The purpose of the assumptions of the theorem is to ensure that the reduced objects are smooth, and these assumptions can be varied according to need. For example, if one assumes that the integration $\Sigma$ is the source-simply connected one, it automatically follows that the representation $E$ integrates to a representation of $\Sigma$. If one further assumes that the map $I:P\to N$ is proper, then the action of $\mathcal{S}$ on $P$ automatically integrates to an action of $\Sigma$, but one would still need to assume that the resulting action is free and proper. Note, however, that the freeness assumption is a rather weak one, since the action of $\mathcal{S}$ on $P$ is already infinitesimally free in the sense that the action map \eqref{eqn:systaticspaceaction} is injective. One can also relax this assumption at the possible cost of working with local Lie groupoids and local actions.
\end{myremark}

\begin{myremark}
	Note that, as an immediate corollary, $E_\mathrm{red}\to P_\mathrm{red}$ is a Lie algebroid (see item (c) of Remark \ref{remark:consequencesliepfaffiangroupoid}).
\end{myremark}

\begin{proof}
	\noindent \underline{Item 1:} We must verify the following three properties:
	
	1) $\theta_\text{red}$ is multiplicative: note that the multiplicativity expression 
	\begin{equation*}
	(m^*\theta_\text{red})_{(g,h)} - (\mathrm{pr}_1^*\theta_\text{red})_{(g,h)} - g\cdot (\mathrm{pr}_2^*\theta_\text{red})_{(g,h)},\hspace{1cm} (g,h)\in(\mathcal{G}_\text{red})_2,
	\end{equation*}
	defines an $E_\text{red}$-valued $1$-form on $(\mathcal{G}_\text{red})_2$, and that the pull-back of this form by 
	\begin{equation}
	\label{eqn:projectioncomposablearrows}
	\text{pr}: \mathcal{G}_2\to (\mathcal{G}_\text{red})_2
	\end{equation}
	is precisely the corresponding expression for $\theta$ (using the fact that $\theta = \mathrm{pr}^*\theta_\text{red}$ and that $\text{pr}$ is a Lie groupoid morphism). Now, if \eqref{eqn:projectioncomposablearrows} is a surjective submersion, then the pull-back is injective, and the multiplicativity of $\theta$ implies the multiplicativity of $\theta_\text{red}$. Let us verify that this is indeed the case. Consider a vector $(X,Y)$ in $(\mathcal{G}_\text{red})_2$ at the composable pair $([p,q],[q,r])$, with $X\in T_{[p,q]} \mathcal{G}_\text{red}$ and $Y\in T_{[q,r]} \mathcal{G}_\text{red}$ that satisfy $dt(Y)=ds(X) \in T_{[q]}P_\text{red}$. Because $\text{pr}:\mathcal{G}\to \mathcal{G}_\text{red}$ is a submersion, then there exist $X\in T_{(p,q)} \mathcal{G}$ and $Y\in T_{(q,r)} \mathcal{G}$ that project to $X$ and $Y$, respectively. The problem is that $Z:=ds(\widetilde{X})- dt(\widetilde{Y})\in T_qP $ may be non-zero. However, the projection $P\to P_\text{red}$ maps $Z$ to zero, and, hence, we may construct a $\widetilde{Z}\in T_{(q,r)}(\mathcal{G})$ that projects to zero under $\text{pr}:\mathcal{G}\to \mathcal{G}_\text{red}$ by means of the action on $P$. Thus, the pair $(\widetilde{X},\widetilde{Y}+\widetilde{Z})$ is a vector in $\mathcal{G}_2$ at $(p,q)$ that projects to $(X,Y)$. 
	
	2) $\theta$ is pointwise surjective: the fact that $\theta$ is surjective together with $\theta = \text{pr}^*\theta_\text{red}$ implies that $\theta_\text{red}$ is surjective, since $I^*E$ (the space in which $\theta$ takes values) is just the pullback of $E_\text{red}$ (the space in which $\theta_\text{red}$ takes values).
	
	3) $\Ker \theta_\mathrm{red}\cap \Ker dt=\Ker \theta_\mathrm{red}\cap \Ker ds$: we use the notation of Remark \ref{remark:reductionnotation}. Both sides consist of all equivalence classes of the type $[X_u+X_S,X_u+X_T]$, with $u\in\mathcal{S}^0$ and $S,T\in\Tab$, and hence coincide. 
	
	\vspace{0.3cm}
	
	\noindent \underline{Item 2:} There is a canonical action of $\mathcal{G}_\text{red}$ on $\text{pr}:P\to P_\text{red}$ given by $[p,q]\cdot q=p$, where $p$ is the first component of the unique representative of $[p,q]$ that has $q$ as the second component. Using this action, any element $\sigma\in\Gamma(\mathcal{G}_\text{red},\theta_\text{red})$ with domain $U=\Dom(\sigma)$ induces a bisection $\widetilde{\sigma}$ of $\mathcal{G}$ with domain $\text{pr}^{-1}(U)\subset P$ given by 
	\begin{equation}
	\label{eqn:reductionsigmatilde}
	\widetilde{\sigma}(p) = (\sigma(\text{pr}(p))\cdot p,p).
	\end{equation}
	We show that the induced generalized pseudogroup on $\mathcal{G}\rightrightarrows P$ is precisely $\LBis(\mathcal{G},\theta)$. Unraveling the definition of an isomorphic prolongation, this shows that $\LBis(\mathcal{G},\theta)$ is an isomorphic prolongation of $\LBis(\mathcal{G}_\text{red},\theta_\text{red})$. There are two things to verify: 
	\begin{enumerate}[label=\alph*)]
		\item Given $\widetilde{\sigma}$ induced by an element $\sigma\in\LBis(\mathcal{G}_\text{red},\theta_\text{red})$ by \eqref{eqn:reductionsigmatilde}, then $\widetilde{\sigma}^*\theta=0$ and hence $\widetilde{\sigma}\in \LBis(\mathcal{G},\theta)$.
		\item Locally, any $\widetilde{\sigma}\in \LBis(\mathcal{G},\theta)$ arises from some $\sigma\in\LBis(\mathcal{G}_\text{red},\theta_\text{red})$, i.e. for every $p\in\Dom(\widetilde{\sigma})$ there exists a neighborhood $U\subset \Dom(\widetilde{\sigma})$ of $p$ such that $\widetilde{\sigma}|_U$ is induced by some $\sigma$ via \eqref{eqn:reductionsigmatilde}. 
	\end{enumerate}
	
	Proof of a). Let $p\in P$ and let $q\in P$ be the element satisfying $\widetilde{\sigma}(p)=(q,p)$. Choose a local section $\eta$ of $\text{pr}:P\to P_\text{red}$ such that $p\in\Image\;\eta$. By \eqref{eqn:reductionsigmatilde}, $\sigma = \text{pr}\circ \widetilde{\sigma}\circ \eta$, and so
	\begin{equation}
	\label{eqn:reductionkillingtheta}
	0 = \sigma^*\theta_\text{red} = (\text{pr}\circ\widetilde{\sigma}\circ \eta)^*\theta_\text{red} = \eta^*(\widetilde{\sigma}^*\theta). 
	\end{equation}
	Hence, $\widetilde{\sigma}^*\theta$ vanishes on a horizontal subspace of $T_pP$ with respect to the projection $\text{pr}:P\to P_\text{red}$. Now, the vertical subbundle of $TP$ is spanned by vector fields of the type $X_u+X_s\in \mathfrak{X}(P)$, with $u\in\Gamma(\mathcal{S}^0)$ and $S\in\Gamma(\Tab)$ at the image of the action map of $\mathcal{S}$ on $P$. Choose a curve $g_\epsilon$ in $\Sigma$ such that $g_0=1_{I(p)}$ and $\frac{d}{d\epsilon}\big|_{\epsilon=0} \; g_\epsilon\cdot p = (X_u+X_S)_p$, and hence $\frac{d}{d\epsilon}\big|_{\epsilon=0} \; g_\epsilon\cdot q = (X_u+X_S)_q$. Then,
	\begin{equation}
	\begin{split}
	(d\widetilde{\sigma})_p(X_u+X_S) &= \frac{d}{d\epsilon}\Big|_{\epsilon=0} \widetilde{\sigma}(g_\epsilon\cdot p) \\
	&= \frac{d}{d\epsilon}\Big|_{\epsilon=0} (\sigma(\text{pr}(g_\epsilon\cdot p)))\cdot (g_\epsilon\cdot p),g_\epsilon\cdot p) \\
	&= \frac{d}{d\epsilon}\Big|_{\epsilon=0} (\sigma(\text{pr}(p)))\cdot (g_\epsilon\cdot p),g_\epsilon\cdot p) \\
	&= \frac{d}{d\epsilon}\Big|_{\epsilon=0} (g_\epsilon\cdot q,g_\epsilon\cdot p) \\
	& = (X_u+X_S,X_u+X_S)_{(q,p)}.
	\end{split}
	\end{equation}
	And so, by the definition of $\theta$,
	\begin{equation*}
	\begin{split}
	(\widetilde{\sigma}^*\theta)_p(X_u+X_S) &= (\theta)_{(q,p)}(X_u+X_S,X_u+X_S) \\ &= \Omega_p(X_u+X_S) - \Omega_q(X_u+X_S) \\ &= 0.
	\end{split}
	\end{equation*}
	
	Proof of b). Let $\widetilde{\sigma}\in \LBis(\mathcal{G},\theta)$, i.e. $\widetilde{\sigma}$ is a local bisection of $\mathcal{G}$ that satisfies $\widetilde{\sigma}^*\theta=0$. We first show that, locally, $\widetilde{\sigma}$ descends to a local bisection $\sigma$ of $P_\text{red}$. Since $\text{pr}:P\to P_\text{red}$ is a submersion, any point in $\Dom(\widetilde{\sigma})$ has a neighborhood $U\subset \Dom(\widetilde{\sigma})$ such that the fibers of $\text{pr}|_U:U\to P_\text{red}$ are connected, and we may, thus, assume that the domain of $\widetilde{\sigma}$ has this property. Let $p\in\Dom(\widetilde{\sigma})$, set $x:=I(p)$ and set $U_x:= \Dom(\widetilde{\sigma})\cap \text{pr}^{-1}([p])$, where $[p]=\text{pr}(p)\in P_\text{red}$. Since $\Sigma$ acts freely on $P$, the action map $\Sigma {}_s\!\times_I P\to P$ provides a diffeomorphism $U_x\cong V_x$ between $U_x$ and an open subset $V_x\subset s^{-1}(x)\subset \Sigma$ which maps a point $q\in U_x$ to the unique arrow $g\in\Sigma$ satisfying $g\cdot p=q$. We need to show that for any $g\in V_x$, $\widetilde{\sigma}(g\cdot p) = g\cdot \widetilde{\sigma}(p)$. Equivalently, passing to the pseudogroup point of view, writing $\widetilde{\sigma}(q)=(\phi(q),q)$ for all $q\in\Dom(\widetilde{\sigma})$ for some uniquely determined $\phi\in\LDiff(P)$, we must show that $\phi(g\cdot p)=g\cdot \phi(p)$. Since $V_x$ is connected, we can choose a path $g_\epsilon$ in $V_x$ such that $g_0=1_x$ and $g_1=g$. We show that
	\begin{equation*}
	\phi(g_\epsilon\cdot p) = g_\epsilon\cdot \phi(p),\hspace{1cm} \forall\;\epsilon,
	\end{equation*}
	by showing that both sides are integral curves of the same time dependent vector field, and since $\phi(g_\epsilon\cdot p) = g_\epsilon\cdot \phi(p)$ at $\epsilon=0$, this will imply that they are equal for all $\epsilon$. For each $\epsilon$, $\frac{d}{d\epsilon}g_\epsilon\in T_{g_\epsilon} s^{-1}(x)\subset T_{g_\epsilon}\Sigma$, and applying $dR_{g_\epsilon^{-1}}$ gives an element of $(\mathcal{S})_{t(g_\epsilon)}$, where $\mathcal{S}$ is the Lie algebroid of $\Sigma$. Thus, we may find a time dependent section $u_\epsilon\in\Gamma(\mathcal{S})$ such that if $\widetilde{u}_\epsilon\in\mathfrak{X}(\Sigma)$ is the induced right invariant vector field, then the value of $\widetilde{u}_\epsilon(g_\epsilon)=\frac{d}{d\epsilon}g_\epsilon$. Now, if $X_{u_\epsilon}\in\mathfrak{X}(P)$ is the image of $u_\epsilon$ under the infinitesimal action map $\Gamma(\mathcal{S})\to \mathfrak{X}(P)$, then $(X_{u_\epsilon})_{g_\epsilon\cdot p} = \frac{d}{d\epsilon}g_\epsilon\cdot p$. But by Proposition \ref{prop:reductioninvariancegamma}, $d\phi(X_{u_\epsilon})=X_{u_\epsilon}$, and so
	\begin{equation*}
	\frac{d}{d\epsilon}\phi(g_\epsilon\cdot p) = d\phi((X_{u_\epsilon})_{g_\epsilon\cdot p}) = (X_{u_\epsilon})_{\phi(g_\epsilon\cdot p)},
	\end{equation*} 
	and on the other hand,
	\begin{equation*}
	\frac{d}{d\epsilon}g_\epsilon\cdot \phi(p) = (X_{u_\epsilon})_{g_\epsilon\cdot \phi(p)}.
	\end{equation*}
	Thus, we may define a local section $\sigma$ of $\mathcal{G}_\text{red}$ with domain $\text{pr}(\Dom(\widetilde{\sigma}))$ by defining $\sigma([p]):=[\widetilde{\sigma}(p)]$ for some representative $p$ of $[p]$ (and one easily checks that it is smooth). The two are related by \ref{eqn:reductionsigmatilde}, and choosing a section $\eta$ of $\text{pr}:P\to P_\text{red}$, we see as before that $\sigma = \text{pr}\circ\widetilde{\sigma}\circ \eta$. Finally, reading \eqref{eqn:reductionkillingtheta} from right to left, we see that $\sigma^*\theta_\text{red} = 0$.
\end{proof}

\subsection{Examples of Reduction}
\label{examples:reduction}

Let us look at two examples of reduction. In the first example, we illustrate the full reduction procedure in the simple but instructive case of a Lie group. Starting from the Lie pseudogroup of right translations on a Lie group, the generalized pseudogroup obtained by reduction is the Lie group itself viewed as a generalized pseudogroup (see Example \ref{example:liegroupasgeneralizedpseudogroup}). The reduction procedure, thus, reveals in this case the ``true'' abstract object underlying the Lie pseudogroup. In the second example we sketch the reduction procedure in the case of a Lie pseudogroup of infinite type. 

\begin{myexample}[Lie Groups]
\label{example:liegroups}

Let us continue with Example \ref{example:liegroupssystatic}. Let $G$ be a Lie group with Lie algebra $\mathfrak{g}$. We saw that $(P=G,\Omega=\Omega_\text{MC})$ is a realization of the Cartan algebroid $(\CAlg=\mathfrak{g},\Tab=0)$ over a point. The Pfaffian groupoid induced by the realization is the pair $(\mathcal{G},\theta)$, with 
\begin{equation*}
\mathcal{G}=G\times G\hspace{0.7cm}\text{and}\hspace{0.7cm} \theta= s^*\Omega_\text{MC} - t^*\Omega_\text{MC},
\end{equation*}
and
\begin{equation*}
E=\Ker\rho=\mathfrak{g}
\end{equation*}
From \eqref{eqn:reductions1connection}, we see that $E=\mathfrak{g}$ becomes a representation of $\mathcal{S}=\mathfrak{g}$ by the map
\begin{equation}
\label{eqn:examplereductionliegroup1}
\mathfrak{g}\mapsto \text{End}(\mathfrak{g}),\hspace{1cm} X\mapsto \mathrm{ad}_X= [X,\cdot],
\end{equation}
i.e. the adjoint representation. 

To perform the reduction, we use the integration of $\mathcal{S}=\mathfrak{g}$ to the Lie group $\Sigma=G$. The action of $\mathcal{S}=\mathfrak{g}$ on $P=G$ integrates to the action of $\Sigma=G$ on $P=G$ given by
\begin{equation*}
(\Sigma=G)\times (P=G)\to (P=G),\hspace{1cm} (g,g')\mapsto g\cdot g',
\end{equation*}
and integrating \eqref{eqn:examplereductionliegroup1} results in the adjoint action of $\Sigma=G$ on $E=\mathfrak{g}$. With this integration, we can divide out by the action of $\Sigma$. The resulting quotient is the Lie-Pfaffian groupoid
\begin{center}
	\begin{tikzpicture}[description/.style={fill=white,inner sep=2pt},bij/.style={above,sloped,inner sep=.5pt}]	
	\matrix (m) [matrix of math nodes, row sep=1.5em, column sep=0em, 
	text height=1.5ex, text depth=0.25ex]
	{ 
		\mathcal{G}_\text{red} & \node[anchor=west]{\cong\; \;G}; \\
		P_\text{red}  &  \node[anchor=west]{\cong\;\{*\},}; \\
	};
	\path[->,font=\scriptsize]
	(m-1-1) edge [transform canvas={xshift=-0.5ex}] node[auto] {} (m-2-1)
	(m-1-1) edge [transform canvas={xshift=0.5ex}] node[auto] {} (m-2-1);
	\end{tikzpicture}
\end{center}
where the isomorphism $\mathcal{G}_\text{red}\cong G$ is canonical, and, under this isomorphism, $\theta_\text{red}$ coincides with the Maurer-Cartan form on $G$. Let us describe the isomorphism explicitly. Elements of $\mathcal{G}$ are pairs $(g,g')\in G\times G$, and $\mathcal{G}_\text{red}$ consists of equivalence classes of the form $[g,g']$, with $(g,g')\sim (hg,hg')$ for all $h\in G$. The isomorphism is then given by $\mathcal{G}_\text{red}\xrightarrow{\simeq} G,\; [g,g']\mapsto g^{-1}g'$. The result is that we recover the Lie group $G$ together with its Maurer-Cartan form. 

Finally, since a local solution of $(\mathcal{G}_\text{red},\theta_\text{red})$ is simply an element of $G$, then
\begin{equation*}
\LBis(\mathcal{G}_\text{red},\theta_\text{red}) = G,
\end{equation*}
and an element $R_{g^{-1}}\in\Gamma(G,\Omega_\text{MC})$ of $\Gamma(G,\Omega_\mathrm{MC})$ with $g\in G$ descends to the element $g\in \LBis(\mathcal{G}_\text{red},\theta_\text{red})$. 

Thus, the reduction procedure recovers the Lie group $G$ from the realization $(G,\Omega_\text{MC})$ and detects that $\Gamma(G,\Omega_\text{MC})$ comes from an action of the Lie group $G$, but viewed as a generalized pseudogroup, on the manifold $G$. 

\end{myexample}

\begin{myexample}[A Lie Pseudogroup of Finite Type] Consider Cartan's example (which was mentioned already in the introduction) of a Lie pseudogroup $\Gamma$ on the upper half plane $M=\{ (x,y)\in\mathbb{R}^2\;|\; y>0 \}$ generated by the diffeomorphisms
\begin{equation*}
\phi_a: M\to M,\; (x,y)\mapsto (x+ay,y),\hspace{1cm} \forall\;a\in\mathbb{R}.
\end{equation*}
Clearly, this pseudogroup comes from an action of the additive Lie group $\mathbb{R}$ on $M$. To apply reduction, we first apply the Second Fundamental Theorem and place this pseudogroup in normal form. We state here the result and leave it to the reader to fill in the details (see Section \ref{section:cartanexamples} for two fully worked out examples). The Cartan algebroid is the pair $(\mathcal{C},\Tab=0)$, with $\mathcal{C}$ a trivial vector bundle of rank 3 over $M$ spanned by global sections $e^1,e^2,e^3$ whose bracket and anchor are defined by
\begin{equation*}
[e^1,e^2]=0,\;\;\; [e^1,e^3]=0,\;\;\; [e^2,e^3] = -\frac{1}{y} e^3,\;\;\; \rho(e^1)=\frac{\partial}{\partial x},\;\;\; \rho(e^2)=\frac{\partial}{\partial y},\;\;\; \rho(e^3)=0. 
\end{equation*}
The realization is given by the manifold $P=\{(x,y,p)\in\mathbb{R}^3\;|\; y>0 \}$, the map
\begin{equation*}
I:P\to M,\; (x,y,p)\mapsto (x+py,y),
\end{equation*}
and the $1$-form $\Omega\in\Omega^1(P;I^*\mathcal{C})$ defined by
\begin{equation*}
\Omega: \frac{\partial}{\partial x} \mapsto I^*e^1,\;\; \frac{\partial}{\partial y} \mapsto p\,I^*e^1 + I^* e^2,\;\; \frac{\partial}{\partial p} \mapsto y\, I^*(e^1 + e^3).
\end{equation*}
The induced pseudogroup in normal form is the pseudogroup $\widetilde{\Gamma}$ on $P$ generated by the diffeomorphisms
\begin{equation*}
\widetilde{\phi}_a:P\to P,\; (x,y,p)\mapsto (x+ay,y,p-a),\hspace{1cm} \forall\; a\in\mathbb{R}.
\end{equation*}
Since $\Tab=0$, we immediately see that the systatic space $\mathcal{S}$ coincides with $\mathcal{C}$ and its action on $P$ is given by the inverse of $\Omega$, i.e.
\begin{equation*}
\Omega^{-1}: I^*\mathcal{S}\to TP,\;\; I^*e^1\mapsto \frac{\partial}{\partial x},\;\; I^*e^2\mapsto -p \frac{\partial}{\partial x} + \frac{\partial}{\partial y},\;\ I^*e^3\mapsto -\frac{\partial}{\partial x} +  \frac{1}{y} \frac{\partial}{\partial p}.
\end{equation*}
One can now perform the reduction by explicitly integrating the systatic algebroid and its action on $P$ (a nice exercise). The result, as one may expect, is precisely as in the previous example. The reduced Lie-Pfaffian groupoid is the Lie group $\mathbb{R}$ equipped with the Maurer-Cartan form, and, hence, reduction ``reveals'' that the pseudgroup $\widetilde{\Gamma}$ on $P$ is an isomorphic prolongation of this Lie group viewed as a generalized pseudogroup.

\end{myexample}

\begin{myexample}[A Lie Pseudogroup of Infinite Type]

Consider the pseudogroup in normal form \eqref{eqn:example2normalform} coming from Cartan's example of a Lie pseudogroup of infinite type. In this example, it is evident that the pseudogroup comes from an action of $\LDiff(\mathbb{R})$ on $\mathbb{R}^2$. Using our computation of the underlying Cartan algebroid and realization in Example \ref{example:infiniterevisited}, we see that the partial systatic space $\mathcal{S}^0$ is spanned by $e^2$, the systatic space $\mathcal{S}$ by $\{e^2,t\}$, and the action of $\mathcal{S}$ on $P$ is given by
\begin{equation*}
\mathcal{S}\to \mathfrak{X}(P),\hspace{1cm} e^2\mapsto y\frac{\partial}{\partial y} + uy\frac{\partial}{\partial u},\; t\mapsto y \frac{\partial}{\partial u}.
\end{equation*}
The orbits are the submanifolds of $P$ given by $x=\text{cnst}$, and hence $P_\text{red}=P/\mathcal{S}\cong \mathbb{R}$. Reduction in this case produces a Lie-Pfaffian groupoid over $\mathbb{R}$, whose local holonomic section are $\LDiff(\mathbb{R})$, viewed as a generalized pseudogroup. 
\end{myexample}

\appendix

\section*{Appendix}
\addtocounter{section}{1}
\addcontentsline{toc}{section}{Appendix}

\subsection{Jet Groupoids and Algebroids}
\label{section:jetgroupoids}

In this appendix, we review the framework of jet groupoids and algebroids. This framework allows for a geometric formulation of the notion of a system of PDEs in the setting of pseudogroups and, accordingly, of a Lie pseudogroup (see Section \ref{section:liepseudogroups}). We place a special emphasis on the role and properties of the Cartan form in encoding the essential structure. This will lead us in Appendix \ref{section:Liepfaffaingroupoidandalgebroid} to the notion of a Lie-Pfaffian groupoid, an axiomatization of the notion of a jet groupoid, which, in the spirit of Cartan, isolates the key role of the Cartan form. 

\subsubsection{Jet Groupoids}

Let $M$ be a manifold. For each integer $k\geq 0$, the \textbf{$k$-th jet groupoid} of $\LDiff(M)$ (or, for brevity, of $M$), denoted by $J^kM\rightrightarrows M$, is a Lie groupoid whose space of arrows consists of all $k$-jets of all locally defined diffeomorphisms of $M$,
\begin{equation}
J^kM := \{\; j^k_x\phi\;|\; \phi\in \LDiff(M),\; x\in\Dom(\phi) \;\}.
\label{eqn:jetgroupoidmanifold}
\end{equation}
Its structure maps are
\begin{gather*}
s(j^k_x\phi) = x,\hspace{1.3cm} t(j^k_x\phi) = \phi(x), \hspace{1.3cm} 1_x = j^k_x(\Id_M), \\
j^k_{\phi(x)}\phi'\cdot j^k_x\phi = j^k_x(\phi'\circ\phi),\hspace{1cm} (j^k_x\phi)^{-1} = j^k_{\phi(x)}\phi^{-1}.
\end{gather*}
Thus, the $k$-th jet groupoid encodes the $k$-th order Taylor polynomials of locally defined diffeomorphisms of $M$. For example, the $0$-th jet groupoid $J^0M\rightrightarrows M$ encodes the source and target points. It is canonically isomorphic to the pair groupoid $M\times M\rightrightarrows M$ by the map $j^0_x\phi \mapsto (\phi(x),x)$. The smooth structure of $J^kM$ is defined as usual for jet spaces (in fact, $J^kM$ is an open subset of the space of $k$-jets of all smooth maps from $M$ to $M$). Any $\phi\in \LDiff(M)$ induces a local bisection $j^k\phi$ of $J^kM\rightrightarrows M$, called a \textbf{local holonomic bisection}, whose domain is the domain of $\phi$ and which maps $x\mapsto j^k_x\phi$. The subset of local holonomic bisections (inside the set of all local bisections) is, one may say, the most important piece of structure of a jet groupoid. When one studies Lie subgroupoids of a jet groupoid, which play the role of PDEs (e.g. the defining PDEs of Lie pseudogroups), local holonomic bisections play the role of local solutions. 

The jet groupoids of $M$ form a sequence of Lie groupoids,
\begin{equation}
... \xrightarrow{\pi} J^3M \xrightarrow{\pi} J^2M  \xrightarrow{\pi} J^1M \xrightarrow{\pi} J^0M,
\label{eqn:jetgroupoidMsequence}
\end{equation}
where the projections, the Lie groupoid morphisms 
\begin{equation}
\pi:J^{k+1}M \to J^kM,\hspace{1cm} j^{k+1}_x\phi\mapsto j^k_x\phi,
\label{eqn:jetgroupoidprojection}
\end{equation}
are surjective submersions. 

\begin{myremark}
	\label{remark:jetgroupoidsofgroupoids}
	More generally, we may construct the $k$-th jet groupoid $J^k\mathcal{G}\rightrightarrows M$ of a Lie groupoids $\mathcal{G}$, consisting of $k$-jets $j^k_xb$ of local bisections $b\in\LBis(\mathcal{G})$. The jet groupoid $J^kM$ is the special case corresponding to the pair groupoid $M\times M\rightrightarrows M$. All the notions explained here and in Section \ref{section:jetgroupoids} generalize without extra effort to this setting. 
\end{myremark}

\subsubsection{Jet Algebroids and the Adjoint Representation}

At the infinitesimal level, each jet groupoid $J^kM\rightrightarrows M$ has an associated Lie algebroid $A^kM=A(J^kM)$. These fit in a sequence of Lie algebroids,
\begin{equation}
\label{eqn:sequenceofjetalgebroids}
... \xrightarrow{l} A^3M \xrightarrow{l} A^2M  \xrightarrow{l} A^1M \xrightarrow{l} A^0M,
\end{equation}
where the projections, induced by the projections \eqref{eqn:jetgroupoidprojection}, are surjective Lie algebroid morphisms. A well-known fact is that $A^kM$ is canonically isomorphic to the \textbf{$k$-th jet algebroid of $\mathfrak{X}(M)$} (or of $M$), the Lie algberoid $J^kTM$ consisting of $k$-jets of vector fields on $M$. The isomoprhism
\begin{equation}
J^kTM \cong A^kM,
\label{eqn:jetalgebroididentification}
\end{equation}
is given by mapping a $k$-jet $j^k_xX$ of a vector field $X\in\mathfrak{X}(M)$ at $x\in M$ to the vector $\frac{d}{d\epsilon}\big|_{\epsilon=0} \,j^k_x\varphi_X^\epsilon$, where $\varphi_X^\epsilon$ is the flow of $X$. In particular, $A^0M$ is canonically isomorphic to $TM$. We refer the reader to \cite{Guillemin1966}, \cite{Kumpera1975} or \cite{Yudilevich2016-2} for further details. We, thus, refer to $A^kM$ also as the $k$-th jet algebroid of $M$. 

For every $k>0$, there is a canonical action of the $k$-th jet groupoid $J^kM$ on the $(k-1)$-th jet algebroid $A^{k-1}M$, given by conjugation. Namely, an element $j^k_x\phi\in J^kM$ acts on the fiber $(A^{k-1}M)_x$ by:
\begin{equation}
j^k_x\phi\cdot \frac{d}{d\epsilon}\Big|_{\epsilon=0}\,j^{k-1}_x\psi_\epsilon := \frac{d}{d\epsilon}\Big|_{\epsilon=0} \, j^{k-1}_{\phi(x)} ( \phi\circ \psi_\epsilon\circ\phi^{-1}),
\label{eqn:jetgroupoidadjointrepresentation}
\end{equation} 
where $j^{k-1}_x\psi_\epsilon$ is a curve representing an element of $(A^{k-1}M)_x$. With this action, $A^{k-1}M$ becomes a representation of $J^kM$ -- the \textbf{adjoint representation}. At the infinitesimal level, any jet algebroid $A^kM$ (with $k>0$) has an adjoint representation $A^{k-1}M$  (see e.g. \cite{Crainic2005}).

\subsubsection{The Cartan Form}
\label{section:thecartanform}

Every jet groupoid $J^kM\rightrightarrows M$ (with $k>0$) comes equipped with a tautological form
\begin{equation}
\label{eqn:jetgroupoidcartanform}
\omega=\omega^k\in\Omega^1(J^kM;t^*A^{k-1}M),
\end{equation}
called the \textbf{Cartan form} of $J^kM$. It is a $1$-form on the total space of the jet groupoid with values in the adjoint representation. It is defined at a point $j^k_x\phi\in J^k M$ by the formula
\begin{equation}
\label{eqn:cartanformmultaiplicativepde}
\omega_{j^k_x\phi} :=  dR_{(j^{k-1}_x\phi)^{-1}} \circ(d\pi- (d(j^{k-1}\phi))_x\circ ds)_{j^k_x\phi},
\end{equation} 
which is depicted in the following diagram:
\begin{center}
	\begin{tikzpicture}[description/.style={fill=white,inner sep=2pt},bij/.style={above,sloped,inner sep=.5pt}]	
	\matrix (m) [matrix of math nodes, row sep=1.5em, column sep=2.5em, 
	text height=1.5ex, text depth=0.25ex]
	{ 
		J^kM &  \\
		J^{k-1}M & \\
		& M. \\
	};
	\path[->,font=\scriptsize]
	(m-1-1) edge node[left] {$\pi$} (m-2-1)
	(m-1-1) edge node[auto] {$s$} (m-3-2)
	(m-2-1) edge node[below] {$s$} (m-3-2)
	(m-3-2) edge[bend left=50] node[auto] {$j^{k-1}\phi$} (m-2-1);
	\end{tikzpicture}
\end{center}
Indeed, the image of the map $d\pi- (d(j^{k-1}\phi))_x\circ ds$ at a point $j^k_x\phi$ is tangent to the $s$-fiber of $J^{k-1}M$ at the point $j^{k-1}_x\phi$, and then right translation maps it to the fiber of the Lie algebroid $A^{k-1}M$ at the point $t(j^k_x\phi)=\phi(x)$. By restricting $\omega$ to $\Ker ds$, we see that $\omega$ is pointwise surjective. The kernel of the Cartan form, a non-involutive distribution, is called the \textbf{Cartan distribution} and is denoted by $C_\omega:=\Ker\omega\subset T\mathcal{G}$. The main property of the Cartan form is that it detects holonomic sections (for the proof see e.g. Proposition 1.3.3 in \cite{Yudilevich2016-2}):

\begin{myprop}
	Let $M$ be a manifold. A local bisection $b$ of $J^kM\rightrightarrows M$ is holonomic (i.e. of the form $b=j^k\phi$ for some $\phi\in\LDiff(M)$) if and only if $b^*\omega=0$. 
	\label{prop:detectingholonomicsections}
\end{myprop}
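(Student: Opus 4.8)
The statement has two directions; the forward one is a direct computation, and the converse I would handle by induction on $k$. For the forward direction, suppose $b=j^k\phi$. The plan is to evaluate $(b^*\omega)_x(X)=\omega_{b(x)}(db_x(X))$ for $x\in\Dom(\phi)$ and $X\in T_xM$ using the two defining features of the jet of a bisection. Since $b$ is a section of the source map, $s\circ b=\Id_M$, so $ds(db_x(X))=X$; and since $\pi\circ j^k\phi=j^{k-1}\phi$, we have $d\pi(db_x(X))=d(j^{k-1}\phi)_x(X)$. Plugging these into the defining formula \eqref{eqn:cartanformmultaiplicativepde}, the two contributions $d\pi$ and $d(j^{k-1}\phi)_x\circ ds$ cancel, and since $dR_{(j^{k-1}_x\phi)^{-1}}$ is an isomorphism, $\omega_{b(x)}(db_x(X))=0$. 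This argument works verbatim at every order, a fact I will reuse.

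For the converse, the first step is to record, for a \emph{general} bisection $b$ with $b(x)=j^k_x\psi_x$ (choosing local representatives $\psi_x$), the reformulation coming from the same substitution. Writing $b^{k-1}:=\pi\circ b$, the identities $ds(db_x(X))=X$ and $d\pi(db_x(X))=d(b^{k-1})_x(X)$ together with invertibility of $dR$ show that $b^*\omega=0$ is equivalent to
\begin{equation*}
d(b^{k-1})_x(X)=d(j^{k-1}\psi_x)_x(X),\qquad \forall\,x,\ \forall\,X\in T_xM.\tag{$\star$}
\end{equation*}
Geometrically, $(\star)$ says that the bisection $b^{k-1}$ of $J^{k-1}M$ is tangent, at each $x$, to the holonomic lift $j^{k-1}\psi_x$ passing through $b^{k-1}(x)=j^{k-1}_x\psi_x$.

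The induction then runs as follows. The base case is $k=0$, where $J^0M\cong M\times M$ is the pair groupoid and every bisection is automatically holonomic ($b=j^0\phi$ with $\phi=t\circ b$). For the inductive step $k\geq 1$, assume $b^*\omega=0$, equivalently $(\star)$. First I would deduce that $(b^{k-1})^*\omega^{k-1}=0$ for the Cartan form $\omega^{k-1}$ of $J^{k-1}M$: the pullback of a $1$-form at a point depends only on the $1$-jet of the map there, and by $(\star)$ (plus the matching of values) $b^{k-1}$ and $j^{k-1}\psi_x$ have the same $1$-jet at $x$; hence $(b^{k-1})^*\omega^{k-1}$ agrees at $x$ with $(j^{k-1}\psi_x)^*\omega^{k-1}$, which vanishes by the forward direction applied to the holonomic bisection $j^{k-1}\psi_x$. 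By the induction hypothesis $b^{k-1}=j^{k-1}\phi$ for a local diffeomorphism, necessarily $\phi=t\circ b$. It remains to combine this with $(\star)$: from $b^{k-1}(x)=j^{k-1}_x\phi=j^{k-1}_x\psi_x$ the maps $\phi$ and $\psi_x$ agree to order $k-1$ at $x$, and from $(\star)$ together with $b^{k-1}=j^{k-1}\phi$ one gets $d(j^{k-1}\phi)_x=d(j^{k-1}\psi_x)_x$; the standard jet fact that $j^k_x\phi$ is determined by the pair $\bigl(j^{k-1}_x\phi,\,d(j^{k-1}\phi)_x\bigr)$ (i.e.\ injectivity of the holonomic embedding $J^kM\hookrightarrow J^1(J^{k-1}M)$, $j^k_x\phi\mapsto j^1_x(j^{k-1}\phi)$) then forces $j^k_x\phi=j^k_x\psi_x=b(x)$, so $b=j^k\phi$.

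The hard part is this final combining step: one must handle the $x$-dependence of the chosen representatives $\psi_x$ cleanly and correctly identify the first derivative $d(j^{k-1}\psi_x)_x$ of a holonomic lift with the one extra order of jet data, i.e.\ justify the injectivity of the holonomic embedding. Everything else — both the forward computation and the derivation of $(b^{k-1})^*\omega^{k-1}=0$ — is a mechanical unwinding of \eqref{eqn:cartanformmultaiplicativepde} and the bookkeeping of the induction.
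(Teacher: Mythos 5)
Your proof is correct, and it is essentially the standard argument (the paper itself defers to \cite{Yudilevich2016-2} rather than proving the proposition): unwinding \eqref{eqn:cartanformmultaiplicativepde} to get the tangency condition $(\star)$, then inducting on the jet order via the injectivity of the holonomic embedding $j^k_x\phi\mapsto j^1_x(j^{k-1}\phi)$. The one point you flag as delicate — the $x$-dependence of the representatives $\psi_x$ — is in fact harmless, since at each fixed $x$ the comparison of $j^k_x\phi$ with $j^k_x\psi_x=b(x)$ uses only pointwise data (equality of $(k-1)$-jets at $x$ and of the differentials $d(j^{k-1}\phi)_x=d(j^{k-1}\psi_x)_x$), and $d(j^{k-1}\psi_x)_x$ is independent of the chosen representative because it is determined by $j^k_x\psi_x$.
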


In fact, all jet spaces carry a tautological form, defined similarly to the Cartan form of a jet groupoid, and these satisfy a property similar to the one in the above proposition. However, the Cartan form of a jet groupoid satisfies the additional property of \textit{multiplicativity} that reflects its compatibility with the Lie groupoid structure of the jet groupoid. Recall that a differential form $\omega\in\Omega^*(\mathcal{G},t^*E)$ on a Lie groupoid $\mathcal{G}\rightrightarrows M$ with values in a representation $E\to M$ is said to be \textbf{multiplicative} if
\begin{equation}
(m^*\omega)_{(g,h)} = (\text{pr}_1 ^*\omega)_{(g,h)} + g\cdot(\text{pr}_2 ^*\omega)_{(g,h)},\hspace{1cm} \forall\; (g,h)\in\mathcal{G}_2,
\label{eqn:multiplicativeform}
\end{equation}
where $\mathcal{G}_2\subset \mathcal{G}\times\mathcal{G}$ is the space of composable arrows and $m,\text{pr}_i:\mathcal{G}_2\to \mathcal{G}$ are the multiplication and projection maps. For the proof that the Cartan form is multiplicative, see \cite{Crainic2012} (Proposition 3.4) or \cite{Yudilevich2016-2} (Proposition 2.4.3). To summarize: 

\begin{myprop}
	\label{prop:cartanformjetgroupoid}
	Let $M$ be a manifold. For any $k>0$, the Cartan form of $J^kM\rightrightarrows M$,
	\begin{equation*}
	\omega\in\Omega^1(J^kM;t^*A^{k-1}M),
	\end{equation*}
	is a pointwise-surjective multiplicative 1-form on $J^kM$ with values in the adjoint representation. 
\end{myprop}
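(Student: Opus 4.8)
The plan is to treat the three assertions separately---well-definedness as a $t^*A^{k-1}M$-valued form, pointwise surjectivity, and multiplicativity---with the last being the substantive one.

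First I would check that the defining formula \eqref{eqn:cartanformmultaiplicativepde} depends only on the jet $j^k_x\phi$ and not on the chosen representative $\phi$. The only potentially ambiguous ingredient is the term $(d(j^{k-1}\phi))_x$; but the differential at $x$ of the holonomic section $j^{k-1}\phi$ is a first-order datum of $j^{k-1}\phi$, hence determined by $j^k_x\phi$. Since the image of $d\pi - (d(j^{k-1}\phi))_x\circ ds$ lands in $T^s_{j^{k-1}_x\phi}J^{k-1}M$, right translation by $(j^{k-1}_x\phi)^{-1}$ carries it into $(A^{k-1}M)_{\phi(x)} = (t^*A^{k-1}M)_{j^k_x\phi}$, so $\omega$ is a well-defined $1$-form with values in the adjoint representation.

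For pointwise surjectivity I would restrict $\omega_{j^k_x\phi}$ to $\Ker ds \subset T_{j^k_x\phi}J^kM$. On this subspace the correction term $(d(j^{k-1}\phi))_x\circ ds$ vanishes, so $\omega|_{\Ker ds} = dR_{(j^{k-1}_x\phi)^{-1}}\circ d\pi$. Because $\pi: J^kM \to J^{k-1}M$ is a surjective submersion and a groupoid morphism, it restricts to a submersion between the corresponding source-fibers, so $d\pi$ maps $\Ker ds$ onto $T^s_{j^{k-1}_x\phi}J^{k-1}M$; composing with the isomorphism $dR_{(j^{k-1}_x\phi)^{-1}}: T^s_{j^{k-1}_x\phi}J^{k-1}M \to (A^{k-1}M)_{\phi(x)}$ yields surjectivity.

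The main obstacle is multiplicativity, i.e. verifying \eqref{eqn:multiplicativeform}. I would prove it by a direct computation at a composable pair $g = j^k_{\phi(x)}\phi'$, $h = j^k_x\phi$ (so that $gh = j^k_x(\phi'\circ\phi)$), applied to a tangent vector $(X,Y)$ to $(J^kM)_2$. The three ingredients that make the computation close up are: (i) $\pi$ is a groupoid morphism, so $d\pi$ intertwines the two multiplications, giving $d\pi(dm(X,Y)) = dm(d\pi X, d\pi Y)$ and $\pi(gh) = \pi(g)\pi(h)$; (ii) the differentiated chain rule for holonomic sections, $d(j^{k-1}(\phi'\circ\phi))_x = dm\circ\big(d(j^{k-1}\phi')_{\phi(x)}\circ d\phi_x,\, d(j^{k-1}\phi)_x\big)$, which disposes of the correction terms; and (iii) the compatibility of right translation with the multiplication and with the action \eqref{eqn:jetgroupoidadjointrepresentation} on the adjoint representation, which accounts for the twist $g\cdot(-)$ in \eqref{eqn:multiplicativeform}. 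Substituting (i)--(iii) into the expressions for $\omega_{gh}(dm(X,Y))$, $\omega_g(X)$ and $g\cdot\omega_h(Y)$, the terms reorganize to give \eqref{eqn:multiplicativeform}; this is precisely the calculation carried out in Proposition 3.4 of \cite{Crainic2012} and Proposition 2.4.3 of \cite{Yudilevich2016-2}. I expect the delicate part to be the bookkeeping of the correction term via (ii) together with keeping track of base points under right translation in (iii).
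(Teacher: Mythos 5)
Your proposal is correct and follows essentially the same route as the paper: surjectivity by restricting to $\Ker ds$ (where the correction term $(d(j^{k-1}\phi))_x\circ ds$ drops out and $d\pi$ surjects onto the source-fibers of $J^{k-1}M$), and multiplicativity by the direct computation that the paper itself delegates to Proposition 3.4 of \cite{Crainic2012} and Proposition 2.4.3 of \cite{Yudilevich2016-2}. Your identification of the three ingredients that make that computation close up (morphism property of $\pi$, the chain rule $d(j^{k-1}(\phi'\circ\phi))_x = dm\circ\bigl(d(j^{k-1}\phi')_{\phi(x)}\circ d\phi_x,\, d(j^{k-1}\phi)_x\bigr)$, and compatibility of right translation with multiplication and the adjoint action) is accurate, and the well-definedness check is a sensible addition.
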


\subsubsection{The Spencer Operator}

At the infinitesimal level, the Cartan form of $J^kM$ induces the \textbf{Spencer operator} of $A^kM$, a bilinear map
\begin{equation}
D=D^k:\mathfrak{X}(M)\times \Gamma(A^kM)\to \Gamma(A^{k-1}M), \hspace{1cm} (X,\alpha)\mapsto D_X(\alpha),
\label{eqn:spenceroperator}
\end{equation}
satisfying the connection-like properties
\begin{equation}
\label{eqn:connectionlikeproperties}
D_{fX}(\alpha) = fD_X(\alpha),\hspace{1cm} D_X(f\alpha) = fD_X(\alpha) + X(f)l(\alpha),
\end{equation}
for all $f\in C^\infty(M)$, $X\in\mathfrak{X}(M)$ and $\alpha\in\Gamma(A^kM)$, where $l:A^kM\to A^{k-1}M$, as we recall, is the projection. It is obtained from the Cartan form $\omega$ by differentiation: 
\begin{equation}
\label{eqn:linearizingcartanform}
D_X(\alpha)_x := \frac{d}{d\epsilon}\Big|_{\epsilon=0}\varphi^\epsilon_\alpha(x)^{-1}\cdot \omega(d\varphi^\epsilon_\alpha(X_x)),
\end{equation}
for all $x\in M$, $X\in\mathfrak{X}(M)$ and $\alpha\in\Gamma(A^kM)$. Here, $\varphi^\epsilon_\alpha$ is the flow of bisections of $J^kM$ associated with $\alpha$. Alternatively, under the identification \eqref{eqn:jetalgebroididentification}, the Spencer operator on $A^kM$ corresponds to the classical Spencer operator on the Lie algebroid $J^kTM$ of $k$-jets of vector fields on $M$, which is defined purely out of infinitesimal data (e.g. see Section 1.1.4 in \cite{Salazar2013}). The multiplicativity property \eqref{eqn:multiplicativeform} translates into a property known as \textit{infinitesimal  multiplicativity} (\textit{IM} in short). We will explain this property in the more general context of Lie-Pfaffian algebroids in Section \ref{section:Liepfaffaingroupoidandalgebroid} (in particular, see \eqref{eqn:liepfaffianalgebroid}). With this property, the Spencer operator becomes an \textit{IM-form}, the infinitesimal counterpart of a multiplicative form.

\subsubsection{The Symbol Space}

The kernel of the projection $l: A^kM\to A^{k-1}M$ (where $k>0$), 
\begin{equation}
\Tab^kM:= \Ker (l: A^kM\to A^{k-1}M),
\label{eqn:jetgroupoidsymbolspace}
\end{equation}
is called the \textbf{symbol space of $A^kM$}. It plays a key role in the theory. Being the kernel of a Lie algebroid morphism, the symbol space has the structure of a bundle of Lie algebras. For $k>1$ it is abelian. For notational purposes, we set $\Tab^0M:= A^0M = TM$. 

The restriction of the Spencer operator to the symbol space gives a canonical inclusion
\begin{equation}
\Tab^kM \hookrightarrow \Hom(TM,\Tab^{k-1}M),\hspace{1cm} T\mapsto \big(\hat{T}:X\mapsto D_{X}(T)\big).
\label{eqn:symbolspacetableau}
\end{equation}
For $k>1$, this identifies $\Tab^kM$ with the symmetric part of $\Hom(TM,\Tab^{k-1}M)$, in the sense that
\begin{equation}
\label{eqn:symbolspaceprolongation}
\Tab^kM \cong \{\; \hat{T}\in \Hom(TM,\Tab^{k-1}M)\;|\; \hat{T}(X)(Y) = \hat{T}(Y)(X)\;\;\; \forall \, X,Y\in TM  \;\},
\end{equation}
where the equation on the right-hand side uses the inclusion $\Tab^{k-1}M\hookrightarrow\Hom(TM,\Tab^{k-2}M)$. For $k=1$,
\begin{equation}
\label{eqn:firstsymbolspace}
\Tab^1M \cong \Hom(TM,TM).
\end{equation}
We note that, if we equip $\Hom(TM,\Tab^{k-1})$ with the zero bracket for $k>1$ and the commutator bracket for $k=1$, then these become isomorphisms of Lie algebroids. Applying \eqref{eqn:symbolspaceprolongation} and \eqref{eqn:firstsymbolspace} inductively, we obtain the well-known isomorphism
\begin{equation}
\Tab^kM \cong S^kT^*M\otimes TM,
\label{eqn:symbolspacepolynomials}
\end{equation}
which identifies the symbol space of $A^kM$ with the space of vector-valued monomials of degree $k$ on $M$ (this identification is often expressed in local coordinates, see e.g. p. 21 in \cite{Yudilevich2016-2}).  

\subsubsection{Aside: Tableau Bundles and the Spencer Cohomology}
\label{section:tableaubundles}

The inclusion \eqref{eqn:symbolspacetableau} of the symbol space as a vector subbundle of a $\Hom$-bundle is an instance of the abstract notion of a \textit{tableau bundle}. The important notions of prolongation, the Spencer cohomology and involutivity, that one typically associates with the symbol space, can be defined purely in terms of its tableau bundle structure. To construct and define these notions, it is sufficient to work with the following discrete version of the notion of a vector bundle, which allows us to include non-smooth vector bundles into the picture, such as kernels, images and cokernels of vector bundle maps. 

Given a manifold $M$, a \textbf{discrete vector bundle} over $M$ is a disjoint union of vector spaces indexed by $M$, i.e. a space $E = \sqcup_{x\in M} E_x$ where $\{E_x\}_{x\in M}$ is a collection of vector spaces. A \textbf{discrete vector subbundle} of a discrete vector bundle $F$ over $M$ is a subset $E\subset F$, such that $E_x\subset F_x$, the fiber of $E$ over $x$, is a vector subspace for each $x\in M$. Given two discrete vector bundles $E$ and $F$ over $M$, a map $\partial:E\to F$ covering the identity is a \textbf{discrete vector bundle map} if it restricts to a linear map on each fiber. A discrete vector subbundle can be viewed as a discrete vector bundle together with an injective discrete vector bundle map.  

\begin{mydef}
	\label{def:tableaubundle}
	\index{tableau bundle}
	Let $E,F$ be discrete vector bundles over a manifold $M$. A \textbf{tableau bundle} relative to $(E,F)$ is a discrete vector bundle $\Tab$ over $M$ together with a discrete vector bundle map
	\begin{equation}
	\label{eqn:tableaumap}
	\partial:\Tab\to \Hom(E,F).
	\end{equation}
\end{mydef}

\begin{myexample}
	The symbol space $\Tab^kM$, together with \eqref{eqn:symbolspacetableau}, is a tableau bundle relative to $(TM,\Tab^{k-1}M)$. Note that, in this case, the map \eqref{eqn:tableaumap} is injective. 
\end{myexample}  

We denote a tableau bundle by $(\Tab,\partial)$, or simply by $\Tab$ when it is clear what the map $\partial$ is. While the map $\partial$ is injective in most applications, such as in the previous example, we will see in Section \ref{section:Liepfaffaingroupoidandalgebroid} and in Section \ref{section:thesecondfundamentaltheorem} that a great deal of the theory does not rely on this property and that, in the setting of Lie-Pfaffian groupoids, it is natural to consider non-injective maps.

\begin{mydef}
	\label{def:tableaubundleprolongation}
	Let $(\Tab,\partial)$ be a tableau bundle relative to $(E,F)$. The \textbf{1st prolongation} \index{prolongation!tableau bundle} of $\Tab$ is the tableau bundle given by the discrete vector subbundle
	\begin{equation*}
	\Tab^{(1)}\subset \Hom(E,\Tab),
	\end{equation*}
	whose fiber at $x\in M$ is
	\begin{equation*}
	\Tab^{(1)}_x := \{ \; \xi\in \Hom_x(E,\Tab)\;\;|\;\; \partial(\xi(u))(v)=\partial(\xi(v))(u)\;\;\;\forall \;u,v\in E_x \; \}.
	\end{equation*}
\end{mydef}

\begin{myremark}
	Even when the initial data of a tableau bundle is smooth, i.e. $E$, $F$ and $\Tab$ are vector bundles and $\partial$ a vector bundle map, its 1st prolongation may fail to be smooth. This is the main reason we resort to the language of discrete vector bundles. However, we note that $\Tab^{(1)}$ is the kernel of the map \eqref{eqn:spencercoboundaryoperator} defined below (with $m=1$ and $l=0$), and, hence, if the data of the tableau bundle is smooth, then $\Tab^{(1)}$ is smooth (i.e. a vector subbundle of $\Hom(E,\Tab)$) if and only if it is of constant rank.
\end{myremark}

Since the 1st prolongation is again a tableau bundle, we can continue and define the higher prolongations in the following inductive manner:

\begin{mydef}
	Let $(\Tab,\partial)$ be a tableau bundle relative to $(E,F)$. The \textbf{$l$-th prolongation} of $\Tab$, with $l>0$ an integer, is the discrete vector subbundle $\Tab^{(l)}\subset \Hom(E,\Tab^{(l-1)})$ defined by
	\begin{equation*}
	\Tab^{(l)} := (\Tab^{(l-1)})^{(1)},
	\end{equation*}
	where we set $\Tab^{(-1)}:= F$ and $\Tab^{(0)}:= \Tab$.
\end{mydef}

\begin{myexample}
	By \eqref{eqn:symbolspaceprolongation} and \eqref{eqn:firstsymbolspace}, we see that $$\Tab^kM = (\Tab^{k-1}M)^{(1)}=(\Tab^{k-2}M)^{(2)}= ... =\Hom(TM,TM)^{(k-1)}.$$
\end{myexample}

A tableau bundle $(\Tab,\partial)$ and its prolongations fit into the following sequence of cochain complexes called the \textbf{Spencer complex} of $(\Tab,\partial)$: 

\begin{equation}
\label{eqn:spencercomplex}
\begin{split}
& \hspace{0.9cm} \text{0} \hspace{1.5cm} \text{1} \hspace{2.6cm} \text{2} \hspace{2.9cm}\text{3} \hspace{2.3cm} \text{4} \\
& \text{0} \hspace{0.7cm} \Tab \hspace{0.40cm} \hookrightarrow \Hom(E,F) \\
& \text{1} \hspace{0.7cm}  \Tab^{(1)} \hookrightarrow \Hom(E,\Tab) \hspace{0.4cm} \xrightarrow{\delta} \Hom(\Lambda^2 E,F) \\
& \text{2} \hspace{0.7cm}  \Tab^{(2)} \hookrightarrow \Hom(E,\Tab^{(1)}) \xrightarrow{\delta} \Hom(\Lambda^2 E,\Tab) \hspace{0.35cm} \xrightarrow{\delta} \Hom(\Lambda^3 E,F)  \\
& \text{3} \hspace{0.7cm}  \Tab^{(3)} \hookrightarrow \Hom(E,\Tab^{(2)}) \xrightarrow{\delta} \Hom(\Lambda^2 E,\Tab^{(1)}) \xrightarrow{\delta} \Hom(\Lambda^3 E,\Tab) \hspace{0.1cm} \xrightarrow{\delta} \Hom(\Lambda^4 E,F) \\
& \hspace{7cm} \LargerCdot \\
& \hspace{7cm} \LargerCdot \\
& \hspace{7cm} \LargerCdot 
\end{split}
\end{equation}
The coboundary operator $\delta:\Hom(\Lambda^m E,\Tab^{(l)})\to \Hom(\Lambda^{m+1} E,\Tab^{(l-1)})$ is defined by
\begin{equation}
\delta(\xi)(u_0,...,u_m) := \sum_{i=0}^m (-1)^i \partial(\xi(u_0,...,\hat{u}_i,...,u_m))(u_i),
\label{eqn:spencercoboundaryoperator}
\end{equation}
where $\hat{u}_i$ denotes the removal of the $i$-th term. It is straightforward to verify that $\delta\circ\delta=0$. We denote the cocycles at $\Hom(\Lambda^m E,\Tab^{(l)})$ by $Z^{l,m}(\Tab,\partial)$, the coboundaries by $B^{l,m}(\Tab,\partial)$, and the resulting cohomology group by
\begin{equation*}
H^{l,m}(\Tab,\partial) = \frac{Z^{l,m}(\Tab)}{B^{l,m}(\Tab)}.
\end{equation*} 
The resulting cohomology theory is called the \textbf{Spencer cohomology} of $(\Tab,\partial)$. Note that, by definition, $H^{l,1}(\Tab,\partial)=0$ for all $l\geq 0$.

\begin{mydef}
	\label{def:acyclicinvolutivity}
	Let $r\geq 1$ be an integer. A tableau bundle $\Tab$ is said to be \textbf{$r$-acyclic} if 
	\begin{equation*}
	H^{l,m}(\Tab,\partial)=0 \hspace{1cm} \forall \; 1 \leq m \leq r,\; 0 \leq l,
	\end{equation*}
	and $\textbf{involutive}$ if it is $r$-acyclic for all $r\geq 1$. 
\end{mydef} 

\begin{myremark}
	The notion of a tableau  was introduced by Cartan in the context of PDEs and his theory of Exterior Differential Systems. In that work, Cartan gave a different characterization of the notion of involutivity (see e.g. \cite{Bryant1991}). Many years later, in \cite{Spencer1962-1}, Spencer introduced the Spencer complex and cohomology in the context of deformations of geometric structures, and Guillemin, Singer and Sternberg conjectured and Serre proved (\cite{Guillemin1964,Singer1965}) the equivalence between Cartan's notion of involutivity and involutivity in terms of the Spencer cohomology as defined above. 
\end{myremark}

\subsection{Lie-Pfaffian Groupoids and Algebroids}
\label{section:Liepfaffaingroupoidandalgebroid}

The essential structure of a jet groupoid is encoded in its Cartan form. For example, Proposition \ref{prop:detectingholonomicsections} shows us that the notion of a holonomic section can be formulated purely in terms of the Cartan form. With this in mind, we can think of a jet groupoid more abstractly as a Lie groupoid equipped with a vector bundle-valued 1-form that satisfies the essential properties of the Cartan form, and similarly of a jet algebroid as a Lie algebroid equipped with an operator that satisfies the essential properties of the Spencer operator. 

This is the idea behind the theory of Lie-Pfaffian groupoids and algebroids that was introduced in \cite{Salazar2013}. These structures isolate the minimal set of essential ingredients that are present in jet groupoids and algebroids. Working with them has the advantage that constructions and proofs become substantially simpler and more transparent as compared to working directly with jets. For example, it is shown in \cite{Salazar2013} that the construction of prolongations of systems of PDEs and the proof of a formal integrability theorem can be formulated in terms of this structure alone, allowing one to avoid the messy computations and book-keeping that one typically encounters when working directly with jets. This appendix comes as a preparation for Section \ref{section:thesecondfundamentaltheorem}, where we use Lie-pfaffian groupoids and algebroids in our proof of Cartan's Second Fundamental Theorem, and for Section \ref{section:systaticspacesmodern}, where they will prove to be an essential ingredient in the reduction procedure. 

\subsubsection{Lie-Pfaffian Groupoids}

Let $\mathcal{G}\rightrightarrows M$ be a Lie groupoid with source and target maps $s$ and $t$, respectively. Recall that a differential form $\omega\in\Omega^*(\mathcal{G};t^*E)$ on $\mathcal{G}$ with values in a representation $E\to M$ of $\mathcal{G}$ is said to be multiplicative if it satisfies \eqref{eqn:multiplicativeform}.

\begin{mydef}
	\label{def:pfaffiangroupoid}
	\index{Pfaffian groupoid}
	\index{Lie-Pfaffian groupoid}
	A \textbf{Lie-Pfaffian groupoid} over a manifold $M$ is a Lie groupoid $\mathcal{G}\rightrightarrows M$ equipped with a pointwise-surjective multiplicative 1-form 
	\begin{equation*}
	\omega\in\Omega^1(\mathcal{G};t^*E)
	\end{equation*}
	(called the \textbf{Cartan form}) with values in a representation $E\to M$ of $\mathcal{G}$, such that:
	\begin{enumerate}
		\item $C_\omega \cap \Ker ds$ is an involutive distribution,
		\item $C_\omega\cap \Ker dt = C_\omega\cap \Ker ds$,
	\end{enumerate}
	where $C_\omega:=\Ker \omega$ (called the \textbf{Cartan distribution}). We call $(\mathcal{G},\omega)$ a \textbf{Pfaffian groupoid} if axiom 2 is omitted. 
	
	A \textbf{(local) holonomic bisection} (or a \textbf{(local) solution}) of $(\mathcal{G},\omega)$ is a (local) bisection $\eta$ of $\mathcal{G}$ satisfying $\eta^*\omega=0$. The set of local holonomic bisections is denoted by $\LBis(\mathcal{G},\omega)$.
\end{mydef}

\begin{myremark}
	\label{remark:consequencesliepfaffiangroupoid}
	\begin{enumerate}[label=(\alph*), wide, labelwidth=!, labelindent=0pt]
		\item In the majority of the paper, we will only encounter Lie-Pfaffian groupoids. The weaker notion of a Pfaffian groupoid only appears as a ``step'' in the reduction procedure of Section \ref{section:reductionofpseudogroupinnormalform}. For more applications of Pfaffian groupoids, see \cite{Salazar2013}. 
		\item Both Pfaffian and Lie-Pfaffian groupoids satisfy the property that 
	\begin{equation}
	C_\omega + \Ker ds = T\mathcal{G},
	\label{eqn:pfaffiangroupoidstransversal}
	\end{equation}
	or, equivalently, that
	\begin{equation}
	ds\big|_{C_\omega}:C_\omega\to s^*TM
	\label{eqn:pfaffiangroupoidstransversalequivalent}
	\end{equation}
	is pointwise-surjective. To verify this, one first shows that it holds at the units of $\mathcal{G}$ using the fact that the unit map is a holonomic section (which follows from multiplicativity of $\omega$). Then, one ``transfers'' the property to other points $g\in\mathcal{G}$ using the fact that $\mathcal{C}_\omega$ is of constant rank and that $(dR)_g(\mathcal{C}_\omega\cap \Ker ds)_{t(g)} = (\mathcal{C}_\omega\cap \Ker ds)_{g}$ (again, due to multiplicativity). Note that this condition is imposed in the definition given in \cite{Salazar2013}, but this is not necessary. 
	\item A consequence of axiom 2 is that $E$ inherits a Lie algebroid structure, namely the unique one with which 
	\begin{equation}
	\label{eqn:liepfaffiangroupoidliealgebroidmap}
	\omega|_{A}:A\to E,
	\end{equation}
	is a Lie algebroid map (see Proposition 6.1.8 in \cite{Salazar2013}). 
	\item In the case of a Lie-Pfaffian groupoid, axiom 1 is actually not needed, it follows from the other axioms. One way to see this is to note that, since \eqref{eqn:liepfaffiangroupoidliealgebroidmap} is a Lie algebroid map, then its kernel is closed under the bracket, and its bracket is precisely induced by the bracket of right invariant vector fields in $\mathcal{C}_\omega\cap \Ker ds$. Another argument is given in Remark 6.1.9 in \cite{Salazar2013}. For a Pfaffian groupoid, this axiom must be imposed separately. 
	\end{enumerate}
\end{myremark}

\begin{myexample}[Jet groupoids]
	\label{example:liepfaffiangroupoidjetgroupoids}
	Given a manifold $M$ and an integer $k>0$, the pair $(J^k M, \omega)$, consisting of the jet groupoid $J^k M\rightrightarrows M$ and the Cartan form $\omega\in\Omega^1(J^kM;t^*A^{k-1}M)$ with values in the adjoint representation $A^{k-1}M$, is a Lie-Pfaffian groupoid. Due to Proposition \ref{prop:cartanformjetgroupoid}, we are only left to verify axiom 2. For this, we note that there is a (unique) isomorphism $s^*TM\cong t^*TM$ of vector bundles over $J^kM$ with which the diagram
	\begin{equation}
	\label{eqn:commutativediagramliepseudogroup}
	\begin{tikzpicture}[description/.style={fill=white,inner sep=2pt},bij/.style={above,sloped,inner sep=.5pt}]	
	\matrix (m) [matrix of math nodes, row sep=0.75em, column sep=0.75em, 
	text height=1.5ex, text depth=0.25ex]
	{ 
		& C_\omega & \\
		&    \\
		s^*TM & & t^*TM \\
	};
	\path[->,font=\scriptsize]
	(m-1-2) edge node[above,xshift=-0.5em] {$ds$} (m-3-1)
	(m-1-2) edge node[above,xshift=0.5em] {$dt$} (m-3-3)
	(m-3-1) edge node[above] {$\simeq$} (m-3-3);
	\end{tikzpicture}
	\end{equation}
	commutes given at a point $j^k_x\phi$ by $(d\phi)_x$. This implies that $C_\omega\cap\Ker ds = C_\omega\cap\Ker dt$. Note that the isomorphism \eqref{eqn:commutativediagramliepseudogroup} defines an action with which $TM$ becomes a representation of $J^kM$. 
\end{myexample}

Our main examples of interest of a Lie-Pfaffian groupoid are the defining equations of Lie pseudogroups (which are Lie subgroupoids of jet groupoids). These will be discussed in Section \ref{section:liepseudogroupsasliepfaffiangroupoids}.

In the above example, we saw that $TM$ is canonically a representation of any jet groupoid. This is a general feature of Lie-Pfaffian groupoids: 

\begin{mylemma}
	\label{lemma:pfaffiangroupoidtriangle}
	Let $(\mathcal{G},\omega)$ be a Lie-Pfaffian groupoid. There exists a unique isomorphism $s^*TM\cong t^*TM$ with which the following diagram commutes:
	\begin{equation*}
	\begin{tikzpicture}[description/.style={fill=white,inner sep=2pt},bij/.style={above,sloped,inner sep=.5pt}]	
	\matrix (m) [matrix of math nodes, row sep=0.75em, column sep=0.75em, 
	text height=1.5ex, text depth=0.25ex]
	{ 
		& C_\omega & \\
		&    \\
		s^*TM & & t^*TM. \\
	};
	\path[->,font=\scriptsize]
	(m-1-2) edge node[above,xshift=-0.5em] {$ds$} (m-3-1)
	(m-1-2) edge node[above,xshift=0.5em] {$dt$} (m-3-3)
	(m-3-1) edge node[above] {$\simeq$} (m-3-3);
	\end{tikzpicture}
	\end{equation*}	
	This defines an action of $\mathcal{G}$ on $TM$ with which $TM$ becomes a representation of $\mathcal{G}$. 
\end{mylemma}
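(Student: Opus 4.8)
The plan is to establish the isomorphism fiberwise and then argue smoothness and the representation property. The crucial input is the pointwise-surjectivity of the map $ds|_{C_\omega}:C_\omega\to s^*TM$ recorded in \eqref{eqn:pfaffiangroupoidstransversalequivalent} of Remark \ref{remark:consequencesliepfaffiangroupoid}(b), together with axiom 2 of Definition \ref{def:pfaffiangroupoid}, which asserts $C_\omega\cap\Ker dt = C_\omega\cap\Ker ds$.

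First I would fix a point $g\in\mathcal{G}$ and show that the restriction $ds|_{(C_\omega)_g}:(C_\omega)_g\to (s^*TM)_g = T_{s(g)}M$ is a linear isomorphism. Surjectivity is exactly \eqref{eqn:pfaffiangroupoidstransversalequivalent}. For injectivity, observe that its kernel is $(C_\omega)_g\cap \Ker ds_g = (C_\omega\cap\Ker ds)_g$; since the Cartan distribution $C_\omega$ has constant rank and \eqref{eqn:pfaffiangroupoidstransversalequivalent} is a fiberwise surjection of vector bundles, a dimension count forces $C_\omega\cap\Ker ds$ to have rank $\dim\mathcal{G}-\dim\Ker ds$, i.e. the restriction is a fiberwise isomorphism, so its kernel has rank $\mathrm{rank}\,C_\omega - \dim M$, matching $\dim\Ker ds - (\dim\mathcal{G}-\mathrm{rank}\,C_\omega)$; more directly, since $ds|_{C_\omega}$ is a surjective bundle map between bundles whose rank difference is constant, it has constant-rank kernel, and I may simply declare $\psi := dt|_{C_\omega}\circ (ds|_{C_\omega})^{-1}:s^*TM\to t^*TM$. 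By the identical argument applied to $t$ (using axiom 2 to identify $C_\omega\cap\Ker dt$ with $C_\omega\cap\Ker ds$), $dt|_{C_\omega}$ is also a fiberwise isomorphism, so $\psi$ is an isomorphism of vector bundles. Smoothness is automatic because $C_\omega=\Ker\omega$ is a smooth subbundle (by pointwise-surjectivity of $\omega$ and constant rank) and both $ds$ and $dt$ restrict to smooth bundle maps on it. Uniqueness is clear: any commuting isomorphism must equal $dt|_{C_\omega}\circ(ds|_{C_\omega})^{-1}$ on each fiber.

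Next I would verify that $\psi$ defines a groupoid action, i.e. that $g\mapsto \psi_g$ is compatible with multiplication, units and inverses, so that $TM$ becomes a representation. The cleanest route is to exploit the multiplicativity of $\omega$ rather than to compute directly with $ds,dt$. Concretely, I would show that $C_\omega$ is a multiplicative distribution: multiplicativity \eqref{eqn:multiplicativeform} implies that for a composable pair $(g,h)$, the multiplication map sends $(C_\omega)_g\times_{\ldots}(C_\omega)_h$ into $(C_\omega)_{gh}$, and that $dm$ intertwines the relevant projections by $ds$ and $dt$. Tracing this through, the cocycle identity $\psi_{gh}=\psi_g\circ\psi_h$ (interpreted via the pullback identifications $s^*TM$, $t^*TM$) falls out, as does $\psi_{1_x}=\mathrm{id}$ from the fact that units are holonomic. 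This is precisely the content already exhibited in Example \ref{example:liepfaffiangroupoidjetgroupoids} for jet groupoids, where the isomorphism is $(d\phi)_x$; the abstract proof is the coordinate-free version of that computation.

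The main obstacle I anticipate is the action/representation step rather than the fiberwise isomorphism. Pinning down the cocycle condition requires carefully untangling how the canonical identifications $s^*TM$ and $t^*TM$ interact across a composable pair under $dm$, and how the constancy of $\mathrm{rank}\,C_\omega$ together with the multiplicativity-transport property $(dR)_g(C_\omega\cap\Ker ds)_{t(g)}=(C_\omega\cap\Ker ds)_g$ (invoked in Remark \ref{remark:consequencesliepfaffiangroupoid}(b)) ensures $\psi$ is genuinely multiplicative and not merely fiberwise invertible. I expect the bookkeeping here to be the delicate part; everything else is a constant-rank argument plus the defining axioms.
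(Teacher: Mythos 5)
There is a genuine error in the first step: the map $ds|_{C_\omega}:C_\omega\to s^*TM$ is surjective but \emph{not} injective in general, so $(ds|_{C_\omega})^{-1}$ does not exist as a map and your $\psi$ is not yet defined. The kernel of $ds|_{C_\omega}$ is $C_\omega\cap\Ker ds$, which by \eqref{eqn:symbolspaceliepfaffiangroupoid} is isomorphic to $t^*\Tab$, the pullback of the symbol space; this is nonzero in essentially every example of interest (for $J^kM$ with $k\geq 1$ it has rank $\dim S^kT^*M\otimes TM$). Your dimension count is internally inconsistent: you correctly identify the kernel's rank as $\mathrm{rank}\,C_\omega-\dim M$, but this quantity equals $\mathrm{rank}\,\Tab$ and vanishes only when the symbol space is trivial. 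Concluding ``the restriction is a fiberwise isomorphism'' from constancy of rank is a non sequitur — constant rank of the kernel does not make the kernel zero.

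The repair is exactly where axiom 2 of Definition \ref{def:pfaffiangroupoid} enters, and it is the route the paper takes: choose a splitting $H:s^*TM\to C_\omega$ of the surjection \eqref{eqn:pfaffiangroupoidstransversalequivalent} and set $\psi:=dt\circ H$. The difference of any two such splittings takes values in $C_\omega\cap\Ker ds$, which by axiom 2 equals $C_\omega\cap\Ker dt$ and is therefore killed by $dt$; hence $\psi$ is independent of the choice of $H$ (equivalently, $dt(w)$ depends only on $ds(w)$ for $w\in C_\omega$, which is what makes your intended formula meaningful). Invertibility of $\psi$ then follows by running the same argument with $s$ and $t$ interchanged, again using axiom 2. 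Your discussion of the representation property (multiplicativity of $C_\omega$ via \eqref{eqn:multiplicativeform}, units being holonomic) is a reasonable sketch and goes beyond what the paper writes out, but it cannot stand until the existence and well-definedness of $\psi$ is established by the splitting argument rather than by a claimed inverse that does not exist.
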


\begin{proof}
	The isomorphism is given by choosing a splitting $H:s^*TM\to C_\omega$ of \eqref{eqn:pfaffiangroupoidstransversalequivalent} and composing it with $dt$. It is independent of the choice, since the difference of any two connections takes values in $C_\omega\cap\Ker ds$, which is killed by $dt$ by axiom 2.
\end{proof}

\subsubsection{Lie-Pfaffian Algebroids}
\label{section:liepfaffianalgebroids}

The infinitesimal counterpart of a Lie-Pfaffian groupoid is \textit{a Lie-Pfaffian algebroid}.  Let $A\to M$ and $E\to M$ be vector bundles and let $l:A\to E$ be a surjective vector bundle map. An \textbf{$l$-connection} $D$ on $A$ is a bilinear map
\begin{equation*}
D:\mathfrak{X}(M)\times \Gamma(A)\to \Gamma(E)
\end{equation*}
satisfying the connection-like properties 
\begin{equation*}
D_{f X}(\alpha) = fD_X(\alpha),\hspace{1cm} D_X(f\alpha) = fD_X(\alpha) + X(f)l(\alpha),
\end{equation*}
for all $X\in\mathfrak{X}(M),\alpha\in\Gamma(A)$ and $f\in C^\infty(M)$. Equivalently, we can view $D$ as a linear map $D:\Gamma(A)\to \Omega^1(M;E)$ satisfying the Leibniz condition $D(f\alpha) = fD(\alpha) + df\otimes l(\alpha)$. If $A$ and $E$ are Lie algebroids and $l:A\to E$ is a surjective Lie algebroid map, then an $l$-connection induces an $A$-connection on $E$,
\begin{equation*}
\nabla^D:\Gamma(A)\times \Gamma(E)\to\Gamma(E),
\end{equation*}
defined by 
\begin{equation*}
\nabla^D_\alpha(\beta) = [l(\alpha),\beta] + D_{\rho(\beta)}(\alpha),\hspace{1cm} \forall \; \alpha\in \Gamma(A),\beta\in\Gamma(E).
\end{equation*}
Fixing a section $\alpha\in \Gamma(A)$, the connection induces a Lie derivative operation on $E$-valued forms,
\begin{equation*}
\mathcal{L}^D_\alpha:\Omega^*(M;E)\to \Omega^*(M;E).
\end{equation*}
We will only need the formula for 1-forms, which is
\begin{equation*}
\mathcal{L}^D_\alpha \omega (X) = \nabla^D_\alpha(\omega(X)) - \omega([\rho(\alpha),X]), \hspace{1cm} \forall \; \omega\in\Omega^1(M;E).
\end{equation*}
For the formula for arbitrary degrees, see \cite{Crainic2012}.

\begin{mydef}
	\label{def:liepfaffianalgebroid}
	\index{Lie-Pfaffian algebroid}
	A \textbf{Lie-Pfaffian algebroid} over a manifold $M$ consists of a pair of Lie algebroids $A$ and $E$ over $M$, a surjective Lie algebroid map $l:A\to E$ and an $l$-connection 
	\begin{equation*}
	D:\mathfrak{X}(M)\times \Gamma(A)\to \Gamma(E),
	\end{equation*}
	(called the \textbf{Spencer operator}) such that
	\begin{equation}
	\label{eqn:liepfaffianalgebroid}
	D([\alpha,\beta]) = \mathcal{L}^D_\alpha (D(\beta)) - \mathcal{L}^D_\beta (D(\alpha)),\hspace{1cm} \forall \; \alpha,\beta\in\Gamma(A).
	\end{equation}
	A \textbf{(local) holonomic section} of $(A,D)$ is a (local) section $\alpha\in\Gamma(A)$ such that $D(\alpha)=0$. 
\end{mydef}

\begin{myremark}
	Due to \eqref{eqn:liepfaffianalgebroid}, the $A$-connection $\nabla^D$ is flat and $E$ is a representation of $A$. 
\end{myremark}

A Lie-Pfaffian groupoid $(\mathcal{G},\omega)$ induces a Lie-Pfaffian algebroid $(A(\mathcal{G}),D_\omega)$, where $A=A(\mathcal{G})$ is the Lie algebroid of $\mathcal{G}$, $l:A\to E$ is the restriction of $\omega\in\Omega^1(\mathcal{G};t^*E)$ to $A$, and 
\begin{equation*}
D=D_\omega:\mathfrak{X}(M)\times\Gamma(A)\to \Gamma(E)
\end{equation*}
is obtained from $\omega$ by the differentiation formula \eqref{eqn:linearizingcartanform}. The following theorem is due to \cite{Salazar2013} (Theorem 6.1.23 and Proposition 6.1.25):

\begin{mytheorem}
	\label{theorem:liepfaffiangroupoidalgebroid}
	If $(\mathcal{G},\omega)$ is a Lie-Pfaffian groupoid, then $(A(\mathcal{G}),D_\omega)$ is a Lie-Pfaffian algebroid. Conversely, if $(A,D)$ is a Lie-Pfaffian algebroid such that $A$ is integrable and $\mathcal{G}$ is the $s$-simply connected integration of $A$, then there is a unique Lie-Pfaffian groupoid $(\mathcal{G},\omega)$ integrating $(A,D)$. 
\end{mytheorem}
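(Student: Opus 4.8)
The plan is to treat this statement as an instance of the general differentiation--integration correspondence between multiplicative forms on Lie groupoids and their infinitesimal counterparts, the Spencer operators (equivalently, IM-forms), developed in \cite{Crainic2012}. The whole statement decouples into two essentially independent tasks: differentiating a Lie-Pfaffian groupoid, which is mostly a van Est-type computation, and integrating a Lie-Pfaffian algebroid, where the real work lies because one must first manufacture the coefficient representation and only then appeal to Lie's second theorem for multiplicative forms.

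For the differentiation direction, I would first record that $l:=\omega|_A:A\to E$ is exactly the restriction appearing in \eqref{eqn:liepfaffiangroupoidliealgebroidmap}, which by item (c) of Remark \ref{remark:consequencesliepfaffiangroupoid} is a surjective Lie algebroid map, so $E$ carries the induced Lie algebroid structure. The connection-like properties of $D_\omega$ (the definition of an $l$-connection) are immediate from the defining formula \eqref{eqn:linearizingcartanform} via the Leibniz rule for the flow $\varphi^\epsilon_\alpha$ and the $C^\infty(M)$-linearity in $X$. The heart of this direction is the IM-equation \eqref{eqn:liepfaffianalgebroid}: I would obtain it by differentiating the multiplicativity identity \eqref{eqn:multiplicativeform} along the flows of the right-invariant vector fields $\widetilde{\alpha},\widetilde{\beta}$ attached to sections $\alpha,\beta\in\Gamma(A)$ and restricting to the units. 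Pairing the first derivative along $\widetilde{\alpha}$ with $\beta$ produces the Lie-derivative term $\mathcal{L}^D_\alpha(D(\beta))$, while the antisymmetrization in $\alpha,\beta$ generates the bracket term $D([\alpha,\beta])$ out of the commutator of the two flows. This is the standard van Est differentiation and is bookkeeping once the flows are set up.

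For the integration direction, assume $(A,D)$ is a Lie-Pfaffian algebroid with $A$ integrable and $\mathcal{G}$ its $s$-simply connected integration. The first step is to produce the coefficients: by the remark following Definition \ref{def:liepfaffianalgebroid}, equation \eqref{eqn:liepfaffianalgebroid} makes $\nabla^D$ a flat $A$-connection, so $E$ is a representation of $A$; since $\mathcal{G}$ is $s$-simply connected, this flat connection integrates to a unique representation of $\mathcal{G}$ on $E$, and one may then speak of $t^*E$-valued forms on $\mathcal{G}$. The second step is to integrate the Spencer operator itself: the pair $(l,D)$ is precisely the infinitesimal datum of a multiplicative $1$-form, so by the integration theorem of \cite{Crainic2012} there is a unique multiplicative $\omega\in\Omega^1(\mathcal{G};t^*E)$ with $D_\omega=D$. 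It remains to verify the axioms of Definition \ref{def:pfaffiangroupoid}. Pointwise surjectivity of $\omega$ follows from surjectivity of $l=\omega|_A$ at the units, propagated off the units by multiplicativity together with the constancy of $\mathrm{rank}\,C_\omega$ (item (b) of Remark \ref{remark:consequencesliepfaffiangroupoid}); axiom 1 is then automatic by item (d) of that remark. The remaining condition, axiom 2, requires checking $C_\omega\cap\Ker dt = C_\omega\cap\Ker ds$ first at the units, where $C_\omega\cap\Ker ds$ reduces to the symbol $\Ker l\subset A$, and this is exactly where the hypotheses that $E$ is a genuine Lie algebroid and $l$ a Lie algebroid map are used; both distributions are then of constant rank and agree away from the units by multiplicativity.

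The main obstacle I anticipate is this integration direction, and within it the last point: showing that the abstractly integrated multiplicative form defines a \emph{Lie}-Pfaffian groupoid rather than merely a Pfaffian one, i.e.\ establishing axiom 2 from the infinitesimal side. This demands a careful analysis of how the symbol $\Ker l$ and the Lie algebroid structure on $E$ constrain the Cartan distribution $C_\omega$ at, and then away from, the units. The clean formulation of this is exactly Proposition 6.1.25 of \cite{Salazar2013}, and my proof would follow that route, with the van Est integration of the coefficients and of $(l,D)$ supplied by \cite{Crainic2012}.
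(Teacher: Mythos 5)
The paper gives no proof of this theorem: it is stated as a citation to Theorem 6.1.23 and Proposition 6.1.25 of \cite{Salazar2013} (with the underlying differentiation/integration machinery for multiplicative forms coming from \cite{Crainic2012}). Your outline reconstructs exactly that route — differentiating multiplicativity to get the IM-equation, integrating the flat connection and the Spencer operator via \cite{Crainic2012}, and deferring axiom 2 to Proposition 6.1.25 of \cite{Salazar2013} — so it is correct and follows essentially the same approach the paper relies on.
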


\begin{myexample}[Jet algebroids]
	\label{example:liepfaffianalgebroidjetalgebroid}
	Continuing from Example \ref{example:liepfaffiangroupoidjetgroupoids}, the jet algebroid $A^kM$ of a manifold $M$, equipped with the Spencer operator $D:\mathfrak{X}(M)\times\Gamma(A^kM)\to \Gamma(A^{k-1}M)$, is a Lie-Pfaffian algebroid. 
\end{myexample}

The following lemma provides a useful formula for computing the Spencer operator induced by a Lie-Pfaffian groupoid: 

\begin{mylemma}
	\label{lemma:cartanformspenceroperatorformula}
	Let $(\mathcal{G},\omega)$ be a Lie-Pfaffian groupoid, and let $(A,D)$ be its Lie-Pfaffian algebroid. Then
	\begin{equation}
	\label{eqn:cartanformspenceroperatorformula}
	\omega([\widehat{X},\widetilde{\alpha}]) = t^*\big(D_X(\alpha)\big),
	\end{equation}
	for all $\alpha\in \Gamma(A)$ and $X\in\mathfrak{X}(M)$, where $\widetilde{\alpha}\in\mathfrak{X}(\mathcal{G})$ is the right invariant vector field induced by $\alpha$ and $\widehat{X}\in\mathfrak{X}(\mathcal{G})$ is a lift of $X$ that satisfies $dt(\widehat{X})=X$ and $\omega(\widehat{X})=0$. 
\end{mylemma}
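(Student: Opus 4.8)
The plan is to compute the bracket $[\widehat{X},\widetilde{\alpha}]$ by flowing along the right-invariant field $\widetilde{\alpha}$, whose flow is explicit, and then to use the multiplicativity of $\omega$ to recognize the answer as the defining expression \eqref{eqn:linearizingcartanform} for the Spencer operator. First I would record the flow of $\widetilde{\alpha}$: if $\varphi^\epsilon_\alpha$ is the flow of bisections generated by $\alpha$ (so $\frac{d}{d\epsilon}\big|_0\varphi^\epsilon_\alpha=\alpha$), then the flow of $\widetilde{\alpha}(g)=dR_g(\alpha(t(g)))$ is the left translation $\Phi^\epsilon(g)=\varphi^\epsilon_\alpha(t(g))\cdot g$, which is a one-line check obtained by differentiating at $\epsilon=0$. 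Since $\Phi^\epsilon(g)=m(\varphi^\epsilon_\alpha(t(g)),g)$ is a groupoid product, the multiplicativity identity \eqref{eqn:multiplicativeform} yields the pullback formula
\[
((\Phi^\epsilon)^*\omega)_g(Y)=\varphi^\epsilon_\alpha(t(g))\cdot\omega_g(Y)+((\varphi^\epsilon_\alpha)^*\omega)_{t(g)}(dt(Y)),
\]
valid for all $Y\in T_g\mathcal{G}$, where the dot denotes the action making $E$ a representation of $\mathcal{G}$.

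Next I would differentiate this identity at $\epsilon=0$, but only after correcting the moving fibre by acting with $\varphi^\epsilon_\alpha(t(g))^{-1}$ so that every term lands in the fixed fibre $E_{t(g)}$. The first term becomes constant in $\epsilon$ and drops out, while the second term, by the very definition \eqref{eqn:linearizingcartanform} of the Spencer operator, differentiates to $D_{dt(Y)}(\alpha)_{t(g)}$. This produces the identity $(\mathcal{L}^{\mathrm{eq}}_{\widetilde{\alpha}}\omega)_g(Y)=D_{dt(Y)}(\alpha)_{t(g)}$ for the equivariant Lie derivative $\mathcal{L}^{\mathrm{eq}}_{\widetilde{\alpha}}\omega:=\frac{d}{d\epsilon}\big|_0\,\varphi^\epsilon_\alpha(t(\cdot))^{-1}\cdot(\Phi^\epsilon)^*\omega$. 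Finally, the Cartan-type formula for this Lie derivative reads $(\mathcal{L}^{\mathrm{eq}}_{\widetilde{\alpha}}\omega)(Y)=\nabla_{\widetilde{\alpha}}(\omega(Y))-\omega([\widetilde{\alpha},Y])$ for any connection chosen to differentiate the $E$-valued function $\omega(Y)$; taking $Y=\widehat{X}$ and using $\omega(\widehat{X})\equiv 0$ kills the first term (so the choice of connection is immaterial), leaving $(\mathcal{L}^{\mathrm{eq}}_{\widetilde{\alpha}}\omega)(\widehat{X})=-\omega([\widetilde{\alpha},\widehat{X}])=\omega([\widehat{X},\widetilde{\alpha}])$. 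Combined with $dt(\widehat{X})=X$ this gives $\omega([\widehat{X},\widetilde{\alpha}])_g=D_X(\alpha)_{t(g)}=(t^*D_X(\alpha))_g$, which is the claim.

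I would also include one consistency remark: the right-hand side of the pullback formula depends on $g$ only through $t(g)$, which already explains why $\omega([\widehat{X},\widetilde{\alpha}])$ is basic along $t$; moreover the value is independent of the chosen lift $\widehat{X}$. Indeed, changing $\widehat{X}$ by a section $Z$ of $C_\omega\cap\Ker ds$ changes the bracket by $[Z,\widetilde{\alpha}]$, and the flow $\Phi^\epsilon$ preserves $C_\omega\cap\Ker ds$ (a short check from the same pullback formula using axiom 2 of Definition \ref{def:pfaffiangroupoid}, namely $C_\omega\cap\Ker dt=C_\omega\cap\Ker ds$), whence $\omega([Z,\widetilde{\alpha}])=0$.

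The main obstacle is purely bookkeeping rather than conceptual: the $E$-valued nature of $\omega$ forces one to track the groupoid action on the coefficient bundle across the moving target fibres $E_{\varphi^\epsilon_\alpha(t(g))\cdot t(g)}$, and to justify the connection-free Cartan formula for $\mathcal{L}^{\mathrm{eq}}_{\widetilde{\alpha}}$. Both of these become clean precisely because $\omega(\widehat{X})$ vanishes identically and the unit section is holonomic, so that a naive flow computation and the representation-twisted one agree and the $\nabla$-term disappears.
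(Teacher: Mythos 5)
Your proposal is correct and follows essentially the same route as the paper's proof: both identify the flow of $\widetilde{\alpha}$ as left translation by $\varphi^\epsilon_\alpha$, split the pullback of $\omega$ along this flow via multiplicativity so that one term is constant and the other is the defining expression \eqref{eqn:linearizingcartanform} for $D$, and then use $\omega(\widehat{X})\equiv 0$ to reduce the remaining $\epsilon$-derivative to $\omega([\widehat{X},\widetilde{\alpha}])$. The only difference is organizational — you first establish the identity for an arbitrary $Y$ and then specialize to $Y=\widehat{X}$, whereas the paper works with $\widehat{X}$ from the start — and your closing remark on independence of the lift is a correct (if not strictly necessary) sanity check.
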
 

\begin{proof}
	First note that a lift $\widehat{X}$ always exist by Lemma \ref{lemma:pfaffiangroupoidtriangle} together with the fact that \eqref{eqn:pfaffiangroupoidstransversalequivalent} admits splittings. Let us write $x=t(g)$. One easily shows that $\varphi_{\widetilde{\alpha}}^\epsilon(g) = \varphi^\epsilon_\alpha(x)\cdot g$, and hence, by replacing $g$ with a curve representing $\widehat{X}_g$, we see that $d\varphi^\epsilon_{\widetilde{\alpha}}(\widehat{X}_g) = dm(d\varphi^\epsilon_\alpha(X_x),\widehat{X}_g)$. Applying $\omega$ on both sides and using the multiplicativity of $\omega$, 
	\begin{equation*}
	\omega(d\varphi^\epsilon_{\widetilde{\alpha}}(\widehat{X}_g)) = \omega(d\varphi^\epsilon_\alpha(X_x)) + \varphi^\epsilon_\alpha(x)\cdot \cancel{\omega(\widehat{X}_g))}.
	\end{equation*}
	With these identities, the right hand side of \eqref{eqn:linearizingcartanform} can be re-expressed as
	\begin{equation*}
	\varphi_\alpha^{\epsilon}(x)^{-1} \cdot \omega(d\varphi_\alpha^\epsilon (X_x)) = g\cdot \varphi_{\widetilde{\alpha}}^{\epsilon}(g)^{-1} \cdot \omega(d\varphi^\epsilon_{\widetilde{\alpha}}(\widehat{X}_g)).
	\end{equation*}
	Using this,
	\begin{equation*}
	\begin{split}
	D_X(\alpha)_x &= \frac{d}{d\epsilon}\Big|_{\epsilon=0}\; \varphi_\alpha^{\epsilon}(x)^{-1} \cdot \omega(d\varphi_\alpha^\epsilon (X_x)) 
	=\frac{d}{d\epsilon}\Big|_{\epsilon=0}\; g\cdot \varphi_{\widetilde{\alpha}}^{\epsilon}(g)^{-1} \cdot \omega(d\varphi^\epsilon_{\widetilde{\alpha}}(\widehat{X}_g)) \\
	&= \lim_{\epsilon\to 0} \frac{g\cdot \varphi_{\widetilde{\alpha}}^{\epsilon}(g)^{-1} \cdot \omega (d\varphi^\epsilon_{\widetilde{\alpha}}(\widehat{X}_g)) - \omega(\widehat{X}_g)}{\epsilon} \\
	&= \lim_{\epsilon\to 0} \frac{g\cdot\varphi_{\widetilde{\alpha}}^{\epsilon}(g)^{-1} \cdot \omega (d\varphi^\epsilon_{\widetilde{\alpha}} (d\varphi^{-\epsilon}_{\widetilde{\alpha}} \;\widehat{X}_{\varphi^\epsilon_{\widetilde{\alpha}}(x)})) - \omega(d\varphi^{-\epsilon}_{\widetilde{\alpha}} \;\widehat{X}_{\varphi^\epsilon_{\widetilde{\alpha}}(g)})}{\epsilon} \\
	&= \lim_{\epsilon\to 0} \frac{g\cdot \varphi_{\widetilde{\alpha}}^{\epsilon}(g)^{-1} \cdot \cancel{\omega (\widehat{X}_{\varphi^\epsilon_{\widetilde{\alpha}}(g)})} - \omega(d\varphi^{-\epsilon}_{\widetilde{\alpha}} \;\widehat{X}_{\varphi^\epsilon_{\widetilde{\alpha}}(g)})}{\epsilon} \\
	&= - \omega\Big( \frac{d}{d\epsilon}\Big|_{\epsilon=0}\; d\varphi^{-\epsilon}_{\widetilde{\alpha}} \;\widehat{X}_{\varphi^\epsilon_{\widetilde{\alpha}}(g)} \Big) 
	= \omega ([\widehat{X},\widetilde{\alpha}]_g)
	\end{split}
	\end{equation*}
	In the fourth equality, $\hat{X}_g$ can be replaced by $d\varphi^{-\epsilon}_{\widetilde{\alpha}} \;\widehat{X}_{\varphi^\epsilon_{\widetilde{\alpha}}(g)}$ since they coincide in the limit. 
\end{proof}

\subsubsection{The Symbol Space and the Symbol Map}
\label{section:symbolspaceandmap}

Let $(A,D)$ be a Lie-Pfaffian algebroid over $M$. The kernel of $l$,
\begin{equation}
\label{eqn:symbolspaceliepfaffainalgebroid}
\Tab=\Tab(A,D):=\Ker(l:A\to E)\subset A
\end{equation}
is called the \textbf{symbol space} of $(A,D)$. Being the kernel of a Lie algebroid morphism, it has the structure of a bundle of Lie algebras. Given a Lie-Pfaffian groupoid $(\mathcal{G},\omega)$ with associated Lie-Pfaffian algebroid $(A,D)$, we define  its symbol space to be the symbol space of $(A,D)$, thus $\Tab=\Tab(\mathcal{G},\omega):=\Tab(A,D)$. Note that, by right translation,
\begin{equation}
\label{eqn:symbolspaceliepfaffiangroupoid}
t^*\Tab(\mathcal{G},\omega)\cong C_\omega\cap \Ker ds.
\end{equation}
The restriction of the Spencer operator to the symbol space induces a map
\begin{equation}
\label{eqn:symbolmapliepfaffianalgebroid}
\partial=\partial_D:\Tab(A)\to \Hom(TM,E), \hspace{1cm} T\mapsto (\hat{T}:X\mapsto D_X(T)),
\end{equation}
called the \textbf{symbol map}. The pair $(\Tab,\partial)$ is a tableau bundle, in the sense of Definition \ref{def:tableaubundle}, and hence we can construct its prolongations and Spencer cohomology (see Section \ref{section:tableaubundles}).

\begin{mydef}
	\label{def:standard}
	A Lie-Pfaffian algebroid $(A,D)$ is \textbf{standard} if its symbol map is injective. A Lie-Pfaffian groupoid $(\mathcal{G},\omega)$ is \textbf{standard} if its associated Lie-Pfaffian algebroid is standard. 
\end{mydef}

\begin{myexample}
	The Lie-Pfaffian groupoids coming from jet groupoids (Example \ref{example:liepfaffiangroupoidjetgroupoids}) and from Lie pseudogroups (Section \ref{section:liepseudogroupsasliepfaffiangroupoids}) are standard, as well as their associated Lie-Pfaffian algebroids.
\end{myexample}

\subsubsection{The Differential of the Cartan Form}

The main problem in the study of PDEs is that of integrability, i.e. the existence of solutions. Obstructions to integrability are obtained by looking at what are known as prolongations of PDEs, which are first and higher order differential consequences of the equations. The construction of prolongations can also be thought of as the construction of formal solutions (see Remark \ref{remark:formalsolutions}). In the framework of Lie-Pfaffian groupoids, prolongations are encoded in the differential of the Cartan form, as we now explain.

While the differential of a vector bundle-valued form is not canonically defined, its restriction to the kernel of the form is, and it is precisely this part that contains the relevant information. Let $(\mathcal{G},\omega)$ be a Lie-Pfaffian groupoid, with $\omega\in\Omega^1(\mathcal{G};t^*E)$, where $E$ is a representation of $\mathcal{G}$. A choice of a connection $\nabla$ on $E$ induces a de-Rham type operator $d_\nabla$ on the space of $E$-valued forms (by the usual Koszul-type formula), and we denote the restriction of $d_\nabla\omega\in\Omega^2(\mathcal{G};t^*E)$ to the Cartan distribution $C_\omega$, the kernel of $\omega$, by 
\begin{equation}
\label{eqn:differentialcartanform}
\delta\omega:=d_\nabla\omega\big|_{C_\omega}:\Lambda^2C_\omega\to t^*E.
\end{equation}
At the level of sections, 
\begin{equation*}
\delta\omega(X,Y)=-\omega([X,Y]),\hspace{1cm} \forall\, X,Y\in\Gamma(C_\omega).
\end{equation*} 
As an immediate consequence of Lemma \ref{lemma:cartanformspenceroperatorformula}, we obtain the following formula that relates the symbol map with the differential of the Cartan form:

\begin{mylemma}
	\label{lemma:symbolmapdifferentialcartanform}
	Let $(\mathcal{G},\omega)$ be a Lie-Pfaffian groupoid over $M$. Then
	\begin{equation}
	\label{eqn:symbolmapdifferentialcartanform}
	\delta\omega(T^r,\hat{X}) = t^*(\delta(\widetilde{T})(X)),
	\end{equation}
	for all $T\in \Gamma(\Tab)$ and $X\in\mathfrak{X}(M)$, where $\widetilde{T}\in\mathfrak{X}(\mathcal{G})$ is the right invariant vector field induced by $T$ and $\widehat{X}\in\mathfrak{X}(\mathcal{G})$ is a lift of $X$ that satisfies $dt(\widehat{X})=X$ and $\omega(\widehat{X})=0$. Because $\delta\omega$ is a tensor, \eqref{eqn:symbolmapdifferentialcartanform} holds pointwise.  
\end{mylemma}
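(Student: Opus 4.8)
The plan is to deduce the identity directly from Lemma~\ref{lemma:cartanformspenceroperatorformula}, as the surrounding text already anticipates. The only preliminary point to settle is that the expression $\delta\omega(\widetilde{T},\hat{X})$ is even meaningful: by \eqref{eqn:differentialcartanform}, $\delta\omega$ is the restriction of $d_\nabla\omega$ to the Cartan distribution $C_\omega=\Ker\omega$, so I first verify that both arguments are sections of $C_\omega$. For $\hat{X}$ this is immediate, since by hypothesis $\omega(\hat{X})=0$. For $\widetilde{T}$, I would invoke \eqref{eqn:symbolspaceliepfaffiangroupoid}, which identifies $t^*\Tab$ with $C_\omega\cap\Ker ds$ under right translation; since $T\in\Gamma(\Tab)$, its induced right-invariant vector field $\widetilde{T}$ is a section of $C_\omega\cap\Ker ds\subset C_\omega$. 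Hence the pairing is defined.

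Next I would use the sectional formula for $\delta\omega$ recorded just below \eqref{eqn:differentialcartanform}, namely $\delta\omega(Y,Z)=-\omega([Y,Z])$ for $Y,Z\in\Gamma(C_\omega)$, to write
\begin{equation*}
\delta\omega(\widetilde{T},\hat{X}) = -\omega([\widetilde{T},\hat{X}]) = \omega([\hat{X},\widetilde{T}]).
\end{equation*}
At this stage the hypotheses on $\hat{X}$ — that $dt(\hat{X})=X$ and $\omega(\hat{X})=0$ — are precisely the conditions required of the lift in Lemma~\ref{lemma:cartanformspenceroperatorformula}. Applying that lemma with $\alpha=T\in\Gamma(\Tab)\subset\Gamma(A)$ gives $\omega([\hat{X},\widetilde{T}]) = t^*(D_X(T))$.

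Finally, I would rewrite the right-hand side via the definition of the symbol map \eqref{eqn:symbolmapliepfaffianalgebroid}, which sets $\partial_D(T)(X)=D_X(T)$; this yields $\delta\omega(\widetilde{T},\hat{X}) = t^*(\partial_D(T)(X))$, the asserted right-hand side of \eqref{eqn:symbolmapdifferentialcartanform}. The pointwise validity claimed in the statement then follows because $\delta\omega$ is $C^\infty(\mathcal{G})$-bilinear (being the restriction of a $2$-form to $C_\omega$) and the symbol map is a vector bundle map, so both sides are tensorial in $T$ and $X$; it therefore suffices to have established the equality at the level of sections.

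I do not expect a genuine obstacle here, since the content is entirely carried by Lemma~\ref{lemma:cartanformspenceroperatorformula}. The only care needed is the bookkeeping that places $\widetilde{T}$ inside the Cartan distribution (so that $\delta\omega$ can be applied) and the observation that the conditions imposed on $\hat{X}$ coincide exactly with the lift conditions of Lemma~\ref{lemma:cartanformspenceroperatorformula}; everything else is a one-line substitution.
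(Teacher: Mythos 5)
Your proof is correct and follows exactly the route the paper intends: the paper gives no separate argument, simply declaring the lemma ``an immediate consequence'' of Lemma~\ref{lemma:cartanformspenceroperatorformula}, and your write-up is precisely that deduction spelled out (checking that $\widetilde{T}$ lies in $C_\omega$ via \eqref{eqn:symbolspaceliepfaffiangroupoid}, using $\delta\omega(Y,Z)=-\omega([Y,Z])$ on sections of $C_\omega$, and substituting the formula for the Spencer operator). No gaps.
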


The prolongation of a Lie-Pfaffian groupoid, which we call the \textit{classical prolongation}, is the space of all first order solutions. Recall that a local solution (or a local holonomic bisection) $b\in\LBis(\mathcal{G},\omega)$ of a Lie-Pfaffian groupoid $(\mathcal{G},\omega)$ over $M$ is a local bisection $b$ that satisfies 
\begin{equation}
\label{eqn:holonomicequation}
b^*\omega=0.
\end{equation}
As a consequence of \eqref{eqn:holonomicequation}, the first order approximation of a local solution $b$ at a point $x\in\Dom(b)\subset M$, i.e. its differential $\xi:=(db)_x:T_xM\to T_{b(x)}\mathcal{G}$, satisfies the equations
\begin{equation}
\label{eqn:firstordersolution}
\xi^*\omega=0 \hspace{1cm}\text{and}\hspace{1cm} \xi^*\delta\omega=0.
\end{equation}
This leads us to the following definition. Let $g\in\mathcal{G}$ and let $\xi:T_{s(g)}M\to T_g\mathcal{G}$ be a linear map such that $(ds)_g\circ \xi=\Id$ and $(dt)_g\circ\xi:T_{s(g)}M\to T_{t(g)}M$ is a linear isomorphism (note that any such linear map can be written as $\xi=(db)_{s(g)}$ for some local bisection $b$ of $\mathcal{G}$). Such a linear map $\xi$ is called a \textbf{1st order solution} of $(\mathcal{G},\omega)$ if it satisfies \eqref{eqn:firstordersolution}. Of course, first order solutions do not necessarily arise as the differentials of local solutions, but the existence of a first order solutions is a necessary condition for the existence of local solutions. 

\begin{mydef}
	Let $(\mathcal{G},\omega)$ be a Lie-Pfaffian groupoid. The space of all 1st order solutions,
	\begin{equation*}
	P_\omega(\mathcal{G}):= \{ \; \xi=(db)_x\;|\; b\in\LBis(\mathcal{G}),\; x\in\Dom(b),\; \xi^*\omega=0, \; \xi^*\delta\omega=0  \; \},
	\end{equation*}
	is called the \textbf{classical prolongation} of $(\mathcal{G},\omega)$.
\end{mydef}

Relaxing the two conditions in \eqref{eqn:firstordersolution} one by one, we get two inclusions,
\begin{equation*}
P_\omega(\mathcal{G})\subset J^1_\omega\mathcal{G} \subset J^1\mathcal{G},
\end{equation*}
where
\begin{equation}
\label{eqn:partialprolongation}
J^1_\omega(\mathcal{G}):= \{ \; \xi=(db)_x\;|\; b\in\LBis(\mathcal{G}),\; x\in\Dom(b),\; \xi^*\omega=0  \; \}
\end{equation}
is called the \textbf{partial prolongation} of $(\mathcal{G},\omega)$, and
\begin{equation}
\label{eqn:firstjetgroupoidofbisections}
J^1\mathcal{G} = \{\; (db)_x\;|\; b\in\LBis(\mathcal{G}),\; x\in\Dom(b) \;\}
\end{equation}
is precisely the 1st jet groupoid of local bisections of $\mathcal{G}$ (where first jets $j^1_xb$ of local bisections are canonically identified with the differentials $(db)_x$ of local bisections). The classical prolongation inherits its structure from these ambient spaces, as we explain in the next two sections. 

\subsubsection{The Partial Prolongation and its Affine Structure}

Let us examine more closely the partial prolongation $J^1_\omega\mathcal{G}$ of a Lie-Pfaffian groupoid $(\mathcal{G},\omega)$. The 1st jet groupoid $J^1\mathcal{G}\rightrightarrows M$ of a Lie groupoid $\mathcal{G}\rightrightarrows M$, as defined in  \eqref{eqn:firstjetgroupoidofbisections} (and see also Remark \ref{remark:jetgroupoidsofgroupoids}), is a Lie groupoid over $M$. The source and target maps send $(db)_x$ to $x$ and $\phi_b(x)$, where $\phi_b=t\circ b$, multiplication is induced by the composition of local bisections, i.e. $(db')_y\cdot(db)_x := (d(b'\cdot b))_x$, and the inverse and unit maps are induced by the inverse operation on bisections and the identity bisection. The smooth structure is the usual one for jet spaces, and the projection,
\begin{equation}
\label{eqn:firstjetgroupoidprojection}
\pi:J^1\mathcal{G}\to \mathcal{G}, \hspace{1cm} (db)_x\mapsto b(x),
\end{equation}
is a surjective Lie groupoid morphism and a submersion. The Cartan form, 
\begin{equation}
\label{eqn:firstjetgroupoidcartanform}
\omega\in\Omega^1(J^1\mathcal{G};t^*A),
\end{equation}
which takes with values in the Lie algebroid $A$ of $\mathcal{G}$, is defined by the formula (c.f. \eqref{eqn:cartanformmultaiplicativepde})
\begin{equation*}
\omega_{(db)_x}:= dR_{b(x)^{-1}}\cdot (d\pi-(db)_x\circ ds)_{(db)_x}.
\end{equation*}
Also here, $A$ is a representation of $J^1\mathcal{G}$, the adjoint representation (the action is given by \eqref{eqn:jetgroupoidadjointrepresentation}), and $\omega$ is multiplicative. With this structure, $(J^1\mathcal{G},\omega)$ is a Lie-Pfaffian groupoid. 

The partial prolongation $J^1_\omega\mathcal{G}\subset J^1\mathcal{G}$ inherits this Lie-Pfaffian groupoid structure. The main step in showing this is to show that this inclusion is smooth. This follows from the following important observation: the restriction of the projection \eqref{eqn:firstjetgroupoidprojection}, which we also denote by
\begin{equation}
\label{eqn:partialprolongationprojection}
\pi:J^1_\omega\mathcal{G}\to \mathcal{G},
\end{equation}
has the structure of an affine bundle modeled on $t^*\Hom(TM,\Tab)$, where $\Tab$ is the symbol space of $(\mathcal{G},\omega)$. To describe this structure, recall that, by Lemma \ref{lemma:pfaffiangroupoidtriangle} and \eqref{eqn:symbolspaceliepfaffiangroupoid}, we have canonical isomorphisms $s^*TM\cong t^*TM$ and $C_\omega\cap \Ker ds\cong t^*\Tab$ of vector bundles over $\mathcal{G}$. Together, these give the identification
\begin{equation}
\label{eqn:modelingvectorbundle}
t^*\Hom(TM,\Tab)\cong \Hom(s^*TM,C_\omega\cap\Ker ds).
\end{equation}
Let us first describe the affine space structure of a single fiber of \eqref{eqn:partialprolongationprojection} over a point $g\in\mathcal{G}$. The difference $(db')_{s(g)}-(db)_{s(g)}$ of two points in the fiber is a linear map $T_{s(g)}M\to T_g\mathcal{G}$ which takes values in $C_\omega\cap\Ker ds$, and hence, modulo \eqref{eqn:modelingvectorbundle}, belongs to $\Hom_{t(g)}(TM,\Tab)$. Conversely, if $(db)_{s(g)}$ is in this fiber and $\zeta\in\Hom_{t(g)}(TM,\Tab)$, then the sum $(db)_{s(g)}+\zeta$ is again in this fiber, where axiom 2 of Definition \ref{def:pfaffiangroupoid} ensures that the composition of $(db)_{s(g)}+\zeta$ with $dt$ is a linear isomorphism, and hence $(db)_{s(g)}+\zeta$ comes from a local bisection. We thus have a collection of affine spaces parametrized by $\mathcal{G}$. These, in turn, glue smoothly to an affine bundle, since \eqref{eqn:partialprolongationprojection} has a smooth global section (smooth as a section of \eqref{eqn:firstjetgroupoidprojection}). Indeed, a section of \eqref{eqn:partialprolongationprojection} is the same thing as a splitting of \eqref{eqn:pfaffiangroupoidstransversalequivalent}, which always exists.

Finally, equipped with the restriction of \eqref{eqn:firstjetgroupoidcartanform}, which we also denote by
\begin{equation}
\label{eqn:partialprolongationcartanform}
\omega\in\Omega^1(J^1_\omega\mathcal{G};t^*A),
\end{equation}
it is straightforward to show that the partial prolongation is a Lie-Pfaffian groupoid over $M$. For example, the fact that the restrictions of the multiplication and inverse maps are well-defined and that the unit map is surjective is a consequence of the multiplicativity of $\omega$. We refer to Section 6.2.3 in \cite{Salazar2013} for more details. To summarize:

\begin{myprop}
	Let $(\mathcal{G},\omega)$ be a Lie-Pfaffian groupoid. The partial prolongation $(J^1_\omega\mathcal{G},\omega)$ is a Lie-Pfaffian groupoid and the projection \eqref{eqn:partialprolongationprojection} is an affine bundle modeled on $t^*\Hom(TM,\Tab)$ and a morphism of Lie groupoids.
\end{myprop}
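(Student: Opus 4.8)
The plan is to realize $J^1_\omega\mathcal{G}$ as a Lie-Pfaffian subgroupoid of the ambient Lie-Pfaffian groupoid $(J^1\mathcal{G},\omega)$, cut out by the first-order holonomy condition $\xi^*\omega=0$, and to extract all of the claimed structure from this embedding. Concretely I would proceed in four steps: first establish the fiberwise affine structure of the projection \eqref{eqn:partialprolongationprojection} and deduce that $J^1_\omega\mathcal{G}$ is a smooth affine subbundle; then show the subset is closed under the groupoid operations using multiplicativity of $\omega$; and finally verify that the restricted Cartan form satisfies the axioms of Definition \ref{def:pfaffiangroupoid}.

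For the affine structure, I would fix $g\in\mathcal{G}$ and consider two jets $\xi,\xi'\in\pi^{-1}(g)$, i.e. linear maps $T_{s(g)}M\to T_g\mathcal{G}$ with $ds\circ\xi=\Id$, with $dt\circ\xi$ an isomorphism, and with $\Image\xi\subset C_\omega$. Their difference satisfies $ds\circ(\xi'-\xi)=0$ and $\omega\circ(\xi'-\xi)=0$, hence takes values in $(C_\omega\cap\Ker ds)_g$; using the canonical isomorphisms $s^*TM\cong t^*TM$ of Lemma \ref{lemma:pfaffiangroupoidtriangle} and $C_\omega\cap\Ker ds\cong t^*\Tab$ of \eqref{eqn:symbolspaceliepfaffiangroupoid}, together with \eqref{eqn:modelingvectorbundle}, this difference is identified with an element of $\Hom_{t(g)}(TM,\Tab)$. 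Conversely, for $\xi\in\pi^{-1}(g)$ and $\zeta$ in this space, the sum $\xi+\zeta$ again has $ds\circ(\xi+\zeta)=\Id$ and $\omega\circ(\xi+\zeta)=0$; the crucial point is that since $\zeta$ takes values in $C_\omega\cap\Ker ds=C_\omega\cap\Ker dt$ by axiom 2, one has $dt\circ(\xi+\zeta)=dt\circ\xi$, still an isomorphism, so $\xi+\zeta\in\pi^{-1}(g)$. Thus each fiber is an affine space modeled on $\Hom_{t(g)}(TM,\Tab_{t(g)})$. These data globalize: the modeling vector bundle $t^*\Hom(TM,\Tab)$ has constant rank, and $\pi$ admits a smooth global section, since such a section is the same as a splitting of the pointwise-surjective bundle map \eqref{eqn:pfaffiangroupoidstransversalequivalent}, which always exists; combining these exhibits $J^1_\omega\mathcal{G}$ as a smooth affine subbundle of $\pi:J^1\mathcal{G}\to\mathcal{G}$, in particular an embedded submanifold on which $\pi$ restricts to an affine bundle modeled on $t^*\Hom(TM,\Tab)$. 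I expect this converse, and the essential use of axiom 2 within it, to be the main obstacle, since it is exactly the point where the Pfaffian structure (rather than bare jet theory) enters.

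Next I would show $J^1_\omega\mathcal{G}$ is a Lie subgroupoid of $J^1\mathcal{G}$. Closure under multiplication follows from the multiplicativity identity \eqref{eqn:multiplicativeform}: writing composable jets $j^1_xb$ and $j^1_{\phi_b(x)}b'$ with $b^*\omega$ and $b'^*\omega$ vanishing to first order at the relevant points, multiplicativity forces $(b'\cdot b)^*\omega$ to vanish to first order at $x$, so the product jet again lies in $J^1_\omega\mathcal{G}$; closure under inversion follows similarly from \eqref{eqn:multiplicativeformsinverse}; and the unit jets lie in $J^1_\omega\mathcal{G}$ because the unit bisection is holonomic, as recorded in Remark \ref{remark:consequencesliepfaffiangroupoid}(b). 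Together with the smoothness established above, this makes $J^1_\omega\mathcal{G}$ a Lie subgroupoid, and $\pi$, being the restriction of the Lie groupoid morphism $\pi:J^1\mathcal{G}\to\mathcal{G}$, is itself a morphism.

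Finally I would restrict the multiplicative form $\omega\in\Omega^1(J^1\mathcal{G};t^*A)$ to $J^1_\omega\mathcal{G}$, which remains multiplicative with values in the adjoint representation $A$. Pointwise surjectivity holds at the unit jets and propagates to all points by multiplicativity, exactly as in the argument of Remark \ref{remark:consequencesliepfaffiangroupoid}(b). Axiom 2 of Definition \ref{def:pfaffiangroupoid} is inherited directly: intersecting the ambient equality $C_\omega\cap\Ker ds=C_\omega\cap\Ker dt$ on $J^1\mathcal{G}$ with the subbundle $TJ^1_\omega\mathcal{G}$ yields the corresponding equality for the partial prolongation. With a multiplicative, pointwise-surjective form satisfying axiom 2 in hand, axiom 1 (involutivity of $C_\omega\cap\Ker ds$) is automatic by Remark \ref{remark:consequencesliepfaffiangroupoid}(d), which derives it from multiplicativity and axiom 2. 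This completes the verification that $(J^1_\omega\mathcal{G},\omega)$ is a Lie-Pfaffian groupoid and that $\pi$ is an affine bundle and a Lie groupoid morphism.
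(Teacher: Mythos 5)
Your proposal is correct and follows essentially the same route as the paper: the fiberwise affine structure via the identification of differences with $\Hom_{t(g)}(TM,\Tab)$, the crucial use of axiom 2 to guarantee that $dt\circ(\xi+\zeta)$ remains an isomorphism in the converse direction, the global section obtained as a splitting of \eqref{eqn:pfaffiangroupoidstransversalequivalent}, and closure under the groupoid operations via multiplicativity of $\omega$. The paper is in fact terser on the last verification (deferring details to \cite{Salazar2013}), so your spelled-out checks of the Lie-Pfaffian axioms for the restricted form are a welcome addition rather than a deviation.
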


\subsubsection{The Classical Prolongation and its Affine Structure}

While the partial prolongation $J^1_\omega\mathcal{G}$ of a Lie-Pfaffian groupoid $(\mathcal{G},\omega)$ is always smooth, the classical prolongation $P_\omega(\mathcal{G})$ may fail to be so by failing to be an affine subbundle of $J^1_\omega(\mathcal{G})$. Understanding when it is smooth is a first step in the problem of integrability. In the case it is smooth, it is note hard to show that it is a Lie subgroupoid of $J^1_\omega\mathcal{G}$ and a Lie-Pfaffian groupoid (when equipped with the restriction of $\omega$ on $J^1_\omega\mathcal{G}$)).

Consider the projection 
\begin{equation*}
\pi:P_\omega(\mathcal{G})\to \mathcal{G},
\end{equation*}
the restriction of \eqref{eqn:partialprolongationprojection}, and recall that $\Tab^{(1)}$ denotes the 1st prolongation of the symbol space $\Tab$ (see Section \ref{section:tableaubundles}).

\begin{mylemma}
	Let $(\mathcal{G},\omega)$ be a Lie-Pfaffian groupoid. If the fiber of $\pi:P_\omega(\mathcal{G})\to \mathcal{G}$ at $g\in\mathcal{G}$ is non-empty, then it is an affine subspace of the respective fiber of \eqref{eqn:partialprolongationprojection}, and it is modeled on the vector subspace $\Tab^{(1)}_{t(g)}\subset \Hom_{t(g)}(TM,\Tab)$. 
\end{mylemma}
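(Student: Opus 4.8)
The plan is to analyze the fiber of $\pi:P_\omega(\mathcal{G})\to\mathcal{G}$ at a point $g\in\mathcal{G}$ by comparing two elements that both satisfy the pair of conditions \eqref{eqn:firstordersolution}, and to show that the discrepancy between them is exactly a section of $\Tab^{(1)}_{t(g)}$. We already know from the previous proposition that the fiber of the \emph{partial} prolongation $J^1_\omega\mathcal{G}$ at $g$ is an affine space modeled on $\Hom_{t(g)}(TM,\Tab)$, with the affine structure given by $(db)_{s(g)}+\zeta$ for $\zeta\in\Hom_{t(g)}(TM,\Tab)$ (modulo the identification \eqref{eqn:modelingvectorbundle}). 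Since $P_\omega(\mathcal{G})$ is cut out inside $J^1_\omega\mathcal{G}$ by the single additional equation $\xi^*\delta\omega=0$, the task reduces to determining which increments $\zeta$ preserve this equation.

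First I would fix $\xi=(db)_{s(g)}$ lying in the fiber of $P_\omega(\mathcal{G})$ over $g$ (so $\xi^*\omega=0$ and $\xi^*\delta\omega=0$), and take a second point $\xi'=\xi+\zeta$ of the same fiber of $J^1_\omega\mathcal{G}$, with $\zeta\in\Hom_{t(g)}(TM,\Tab)$ viewed inside $\Hom(s^*TM,C_\omega\cap\Ker ds)$. The key computation is to expand $(\xi')^*\delta\omega$ using bilinearity of the tensor $\delta\omega:\Lambda^2 C_\omega\to t^*E$. Writing, for $X,Y\in T_{s(g)}M$,
\begin{equation*}
(\xi')^*\delta\omega(X,Y) = \delta\omega\big(\xi(X)+\zeta(X),\,\xi(Y)+\zeta(Y)\big),
\end{equation*}
I would expand into four terms. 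The term $\delta\omega(\xi(X),\xi(Y))$ vanishes because $\xi\in P_\omega(\mathcal{G})$, and the term $\delta\omega(\zeta(X),\zeta(Y))$ vanishes because both arguments lie in $C_\omega\cap\Ker ds$, which is involutive (axiom 1 of Definition \ref{def:pfaffiangroupoid}) so that $\delta\omega$ restricted to it computes $-\omega$ of a bracket that again lies in the kernel of $\omega$; equivalently, since $\zeta$ takes values in the symbol directions, the symbol-symbol pairing of $\delta\omega$ is zero. Hence $(\xi')^*\delta\omega=0$ if and only if the cross terms satisfy
\begin{equation*}
\delta\omega(\zeta(X),\xi(Y)) - \delta\omega(\zeta(Y),\xi(X)) = 0.
\end{equation*}

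The crux is then to identify these cross terms with the Spencer coboundary operator $\delta$ from \eqref{eqn:spencercoboundaryoperator}. Here I would invoke Lemma \ref{lemma:symbolmapdifferentialcartanform}, which gives $\delta\omega(T^r,\hat{X}) = t^*(\delta(\widetilde{T})(X))$, i.e. the value of $\delta\omega$ on a right-invariant symbol field and a horizontal lift of a base vector field is precisely the symbol map's coboundary $\partial(T)(X)=D_X(T)$. Since $\zeta(X)$ lies in $C_\omega\cap\Ker ds\cong t^*\Tab$ and $\xi(Y)$ plays the role of a horizontal lift of $Y$ (because $\xi$ splits $ds$ and lands in $C_\omega$), the cross-term condition becomes exactly $\partial(\zeta(X))(Y)=\partial(\zeta(Y))(X)$ for all $X,Y$, which is by definition the condition that $\zeta\in\Tab^{(1)}_{t(g)}$ (Definition \ref{def:tableaubundleprolongation}). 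This shows the fiber of $P_\omega(\mathcal{G})$ is the affine subspace of the $J^1_\omega\mathcal{G}$-fiber consisting of those $\xi'$ whose difference from $\xi$ lies in $\Tab^{(1)}_{t(g)}$, proving it is modeled on that vector subspace.

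The main obstacle I anticipate is the careful bookkeeping in matching $\delta\omega$'s cross terms to the symmetrized symbol map: Lemma \ref{lemma:symbolmapdifferentialcartanform} is phrased for a right-invariant field $T^r$ paired with a lift $\hat X$, whereas here both $\zeta(X)$ and $\xi(Y)$ are prescribed tangent vectors at the single point $g$, not globally defined fields. I would handle this by using that $\delta\omega$ is a genuine tensor (so the identity holds pointwise, as the lemma explicitly states) and by choosing local extensions of $\zeta(X)$ to a right-invariant symbol field and of $\xi(Y)$ to a horizontal lift, checking that the antisymmetrization in $X,Y$ kills any ambiguity coming from the choice of extension. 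The involutivity argument for the symbol-symbol term and the precise sign conventions in \eqref{eqn:spencercoboundaryoperator} will also need verification, but these are routine once the pointwise tensoriality is in hand.
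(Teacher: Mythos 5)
Your proposal is correct and follows essentially the same route as the paper's proof: expand $\delta\omega$ on $\xi+\zeta$ into four terms, kill the $\xi$-$\xi$ term by $\xi^*\delta\omega=0$ and the $\zeta$-$\zeta$ term by involutivity of $C_\omega\cap\Ker ds$, and identify the antisymmetrized cross terms with the Spencer coboundary via Lemma \ref{lemma:symbolmapdifferentialcartanform} and its pointwise (tensorial) validity. The only cosmetic difference is that you package the two directions of the paper's argument (differences of fiber elements lie in $\Tab^{(1)}$, and adding $\Tab^{(1)}$ preserves the fiber) into a single ``if and only if'' computation.
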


\begin{proof}
	Fix $g\in\mathcal{G}$. To simplify notation, we treat the identifications $s^*TM \cong t^*TM$, $C_\omega\cap \Ker ds \cong t^*\Tab$ and \eqref{eqn:modelingvectorbundle} as equalities. First, let $\xi,\xi'\in P_\omega(\mathcal{G})$ in the fiber over $g$. We prove that, for every $X,Y\in T_{t(g)}M$ the difference $\xi'-\xi$, which is a priori an element of $\Hom_{t(g)}(TM,\Tab)$, is an element of $\Tab^{(1)}$:
	\begin{equation*}
	\begin{split}
	\partial((\xi'-\xi)(X))(Y) &= \delta\omega((\xi'-\xi)(X),\xi(Y))
	= \delta\omega(\xi'(X),\xi(Y)) \\
	&= \delta\omega(\xi'(X),(\xi-\xi')(Y)) 
	= \delta\omega((\xi'-\xi)(Y),\xi'(X)) 
	= \partial((\xi'-\xi)(Y))(X).
	\end{split}
	\end{equation*}
	Lemma \ref{lemma:symbolmapdifferentialcartanform} was used in the first and last equality, the fact that $\xi^*\delta\omega=\xi'^*\delta\omega=0$ (since they are elements of $P_\omega(\mathcal{G})$) in the second and third, and anti-symmetry of $\delta\omega$ in the fourth. 
	
	Next, let $\xi\in P_\omega(\mathcal{G})$ in the fiber over $g$ and let $\zeta\in\Tab^{(1)}_{t(g)}$. We know already that $\xi+\zeta\in J^1_\omega\mathcal{G}$, in the fiber over $g$, and we prove that it is in $P_\omega(\mathcal{G})$. Clearly, $(\xi+\zeta)^*\omega=0$. Furthermore, $(\xi+\zeta)^*\delta\omega=0$, since for every $X,Y\in T_{t(g)}M$,
	\begin{equation*}
	\begin{split}
	&\delta\omega((\xi+\zeta)(X),(\xi+\zeta)(Y)) \\
	& \hspace{0.5cm} = \delta\omega(\xi(X),\xi(Y)) + \delta\omega(\zeta(X),\xi(Y)) - \delta\omega(\zeta(Y),\xi(X))  + \delta\omega(\zeta(X),\zeta(Y)) \\
	& \hspace{0.5cm} = 0,
	\end{split}
	\end{equation*}
	where the first term vanishes because $\xi^*\delta\omega=0$, the sum of the second and third are equal to $\partial(\zeta(X))(Y) - \partial(\zeta(Y))(X)$, which vanish because $\xi\in\Tab^{(1)}$, and the third vanishes because $C_\omega\cap\Ker ds$ is involutive. 
\end{proof}

So we can conclude that:

\begin{myprop}
	\label{prop:smoothnessclassicalprolongation}
	Let $(\mathcal{G},\omega)$ be a Lie-Pfaffian groupoid over $M$. The projection
	\begin{equation}
	\label{eqn:prolongationprojection}
	\pi:P_\omega(\mathcal{G})\to \mathcal{G},
	\end{equation}
	is an affine bundle (modeled on $t^*\Tab^{(1)}$) if and only if $\Tab^{(1)}$ is of constant rank and $\pi$ has a smooth global section (smooth as a section of \eqref{eqn:partialprolongationprojection}). In this case, the pair $(P_\omega(\mathcal{G}),\omega)$ is a Lie-Pfaffian groupoid and $\pi$ is a morphism of Lie groupoids.
\end{myprop}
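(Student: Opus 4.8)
The statement to prove is Proposition~\ref{prop:smoothnessclassicalprolongation}, which characterizes when the projection $\pi:P_\omega(\mathcal{G})\to \mathcal{G}$ is an affine bundle and asserts that in this case $(P_\omega(\mathcal{G}),\omega)$ is a Lie-Pfaffian groupoid.

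\textbf{Overall strategy.} The plan is to assemble the proposition from the fiberwise affine structure established in the preceding lemma together with standard facts about when a fiberwise-affine subset of an affine bundle is itself a smooth affine subbundle. The preceding lemma has already done the essential geometric work: it shows that each \emph{non-empty} fiber of $\pi:P_\omega(\mathcal{G})\to\mathcal{G}$ over $g$ is an affine subspace of the corresponding fiber of the partial prolongation $J^1_\omega\mathcal{G}$, modeled on $\Tab^{(1)}_{t(g)}\subset \Hom_{t(g)}(TM,\Tab)$. So the remaining content is (i) converting this pointwise statement into a smoothness criterion, and (ii) checking that, once smooth, $P_\omega(\mathcal{G})$ inherits the Lie-Pfaffian groupoid structure and that $\pi$ is a groupoid morphism.

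\textbf{Step 1: the affine bundle criterion.} First I would recall the general principle (used earlier in the paper, e.g. in the proof of Lemma~\ref{lemma:liepseudogrouphasintegralcartanehresmannconnection} via Proposition 1.1.6 of \cite{Yudilevich2016-2}) that a fiberwise-affine subset $B\subset \widetilde{B}$ of an affine bundle $\widetilde{B}\to X$, whose non-empty fibers are modeled on a fixed family of vector subspaces, is a smooth affine subbundle precisely when the modeling vector subbundle is of constant rank and the projection admits a smooth global section. Applying this to $B=P_\omega(\mathcal{G})$ sitting inside the affine bundle $\widetilde{B}=J^1_\omega\mathcal{G}$ (modeled on $t^*\Hom(TM,\Tab)$, by the previous proposition), with modeling family $t^*\Tab^{(1)}$, immediately gives the stated equivalence: $\pi$ is an affine bundle modeled on $t^*\Tab^{(1)}$ if and only if $\Tab^{(1)}$ has constant rank and $\pi$ has a smooth global section. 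The constant-rank hypothesis guarantees $t^*\Tab^{(1)}$ is a genuine vector bundle, and the section ensures non-emptiness of all fibers and provides the local trivializations.

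\textbf{Step 2: the Lie-Pfaffian groupoid structure.} Assuming $\pi$ is an affine bundle, I would then verify that $P_\omega(\mathcal{G})$ is a Lie subgroupoid of $J^1_\omega\mathcal{G}$ and inherits the Lie-Pfaffian structure. Smoothness being settled, it remains to check closure under the groupoid operations: multiplication, inverse, and units. The key observation is that the defining conditions $\xi^*\omega=0$ and $\xi^*\delta\omega=0$ are preserved by these operations. Closure under multiplication and inverse follows from multiplicativity of $\omega$ (hence of $\delta\omega=d_\nabla\omega|_{C_\omega}$), exactly as for the partial prolongation; the unit map lands in $P_\omega(\mathcal{G})$ because unit bisections are holonomic and their differentials annihilate both $\omega$ and $\delta\omega$. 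Then $\pi:P_\omega(\mathcal{G})\to\mathcal{G}$ is a groupoid morphism since it is the restriction of the morphism \eqref{eqn:partialprolongationprojection}, and $(P_\omega(\mathcal{G}),\omega|_{P_\omega(\mathcal{G})})$ satisfies the axioms of Definition~\ref{def:pfaffiangroupoid} by restricting the corresponding properties from $(J^1_\omega\mathcal{G},\omega)$ --- pointwise surjectivity of $\omega$ onto $t^*A$ holds on $P_\omega(\mathcal{G})$ because $\omega$ restricts to the Maurer-Cartan form on the source-fibers, and axioms 1 and 2 are inherited since the Cartan distribution of the prolongation sits inside that of $J^1_\omega\mathcal{G}$.

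\textbf{Main obstacle.} The genuinely delicate point is Step~1: the preceding lemma only controls the \emph{non-empty} fibers, so the equivalence hinges on correctly packaging ``constant rank of $\Tab^{(1)}$ plus existence of a smooth section'' as exactly the data needed to upgrade a pointwise-affine subset to a smooth affine subbundle. I would make sure to invoke the precise general lemma (constant rank of the modeling bundle together with a global section being necessary and sufficient) rather than re-deriving it, and to note that smoothness of the section must be understood as smoothness inside the ambient smooth affine bundle $J^1_\omega\mathcal{G}$. The groupoid-closure verifications in Step~2, by contrast, are routine and parallel the arguments already given for the partial prolongation, so I would state them concisely and refer to \cite{Salazar2013} for the bookkeeping.
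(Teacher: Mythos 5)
Your proposal is correct and follows essentially the same route as the paper, which in fact states this proposition without a separate proof (``So we can conclude that:''), treating it as an immediate consequence of the preceding fiberwise lemma together with the standard criterion for a fiberwise-affine subset of an affine bundle to be a smooth affine subbundle (constant rank of the modeling bundle plus a smooth global section), and deferring the routine groupoid-closure checks to \cite{Salazar2013}. Your Step 1 and Step 2 fill in exactly the reasoning the paper intends, including the correct reading of ``smooth'' as smoothness inside the ambient bundle $J^1_\omega\mathcal{G}$.
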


\begin{myremark}
	In \cite{Salazar2013}, it is shown that \eqref{eqn:prolongationprojection} is, in fact, a \textit{morphism} of Lie-Pfaffian groupoids, in a sense that they make precise. Furthermore, they define an abstract notion of a \textit{Lie prolongation}, which, roughly speaking, is a morphism $p:(\widetilde{\mathcal{G}},\widetilde{\omega})\to (\mathcal{G},\omega)$ of Lie-Pfaffian groupoids such that $\widetilde{\omega}$ ``extends'' $\omega$. It is then proven that \eqref{eqn:prolongationprojection} is a Lie prolongation and that it is ``universal'' in some sense (see Proposition 6.2.42 in \cite{Salazar2013} for the precise statement).
\end{myremark}

\begin{myremark}
	\label{remark:formalsolutions}
	In the study of formal integrability, the classical prolongation of a Lie-Pfaffian groupoid $(\mathcal{G},\omega)$ is also called the \textit{1st prolongation}. If it is smooth, i.e. it is a Lie-Pfaffian groupoid and the projection is an affine bundle, then we can proceed and construct its classical prolongation, which is called the \textit{2nd prolongation} of $(\mathcal{G},\omega)$. Its elements correspond to \textit{2nd order solutions} of $(\mathcal{G},\omega)$. Proceeding inductively (where at each step there may be obstructions to smoothness), we obtain, at the $k$-th step, the \textit{$k$-th prolongation} of $(\mathcal{G},\omega)$ consisting of \textit{$k$-th order solutions}. If there are no obstructions and we can continue indefinitely, then the inverse limit of the resulting infinite tower of prolongations is called the \textit{$\infty$-prolongation} of $(\mathcal{G},\omega)$ and its elements correspond to \textit{formal solutions}. In this case, $(\mathcal{G},\omega)$ is said to be formally integrable. Theorem 6.3.13 in \cite{Salazar2013} gives criteria for formal integrability.
\end{myremark}

\subsubsection{Cartan-Ehresmann Connections}
\label{section:cartanehresmannconnection}

The existence of sections of the projection $\pi:P_\omega(\mathcal{G})\to\mathcal{G}$ is an obstruction to prolongation and to formal integrability. Geometrically, such sections can be interpreted as special type of connections. We first note that sections of $\pi:J^1_\omega\mathcal{G}\to \mathcal{G}$ are the same thing as splittings of the vector bundle map \eqref{eqn:pfaffiangroupoidstransversalequivalent}, i.e. splittings $H$ of $ds:T\mathcal{G}\to s^*TM$ that satisfy the condition $H^*\omega=0$. In other words, they are are Ehresmann connections that take value in the Cartan distribution. These, in turn, correspond to sections of the classical prolongation if and only if they satisfy the extra condition $H^*\delta\omega=0$. This motivates the following terminology:

\begin{mydef}
	\label{def:cartanehresmannconnection}
	Let $(\mathcal{G},\omega)$ be a Lie-Pfaffian groupoid over $M$. A \textbf{Cartan-Ehresmann connection} is a splitting
	\begin{equation*}
	H:s^*TM\to C_\omega
	\end{equation*}
	of \eqref{eqn:pfaffiangroupoidstransversalequivalent}. It is said to be \textbf{integral} if $H^*\delta\omega=0$. 
\end{mydef}

Cartan-Ehresmann connections play an important role in our proof of Cartan's Second Fundamental Theorem in Section \ref{section:thesecondfundamentaltheorem}. 

\subsection{Generalized Pseudogroups}
\label{section:generalizedpseudogroups}

A locally defined diffeomorphism of a manifold $M$ can be interpreted as a local bisection of the pair groupoid $M\times M \rightrightarrows M$. This point of view leads us to the notion of a \textit{generalized pseudogroup} by replacing the pair groupoid with any Lie groupoid $\mathcal{G}$. Generalized pseudogroups arise naturally when one studies the space of local holonomic bisections (or ``local solutions'') of a Lie-Pfaffian groupoid (Appendix \ref{section:Liepfaffaingroupoidandalgebroid}) and they are central to the reduction procedure of Section \ref{section:systaticspacesmodern}.

We denote the set of local bisections of a Lie groupoid $\mathcal{G}$ by $\LBis(\mathcal{G})$. Recall that any two local bisections of a Lie groupoid $\mathcal{G}$ can be composed if their domains and codomains are compatible, that a local bisection has an inverse and that there exists a unit bisection, which we denote by $1\in \LBis(\mathcal{G})$. 

\begin{mydef}
	\label{def:generalized_pseudogroup}
	\index{generalized pseudogroup}
	A \textbf{generalized pseudogroup} on a Lie groupoid $\mathcal{G}\rightrightarrows M$ is a subset $\Gamma\subset \LBis(\mathcal{G})$ that satisfies the following axioms:
	\begin{enumerate}[label=\Alph*)]
		\item Group-like axioms:
		\begin{enumerate}[label=\arabic*)]
			\item if $\sigma,\sigma'\in\Gamma$ and $\Image(t \circ \sigma')\subset \Dom(\sigma)$, then $\sigma\cdot \sigma'\in\Gamma$,
			\item if $\sigma \in\Gamma$, then $\sigma^{-1}\in\Gamma$,
			\item $1 \in \Gamma$.
		\end{enumerate}
		\item Sheaf-like axioms:
		\begin{enumerate}[label=\arabic*)]
			\item if $\sigma \in\Gamma$ and $U\subset\Dom(\sigma)$ is an open subset, then $\sigma|_U\in\Gamma$,
			\item if $\sigma \in\LBis(\mathcal{G})$ and $\{U_i\}_{i\in I}$ is an open cover of $\Dom(\sigma)$ such that $\sigma|_{U_i}\in\Gamma$ for all $i\in I$, then $\sigma\in\Gamma$.
		\end{enumerate}
	\end{enumerate}
\end{mydef}

\begin{myremark}
	$\LBis(\mathcal{G})$ has the structure of a groupoid over $\mathrm{Open}(M)$, the set of open subsets of $M$. Using this observation, the first three axioms can be rephrased as saying that $\Gamma\subset \LBis(\mathcal{G})$ is a wide subgroupoid. 
\end{myremark}

\begin{myexample}[pseudogroups as generalized pseudogroups]
	\label{example:pseudogroupasgeneralizedpseudogroup}
	Pseudogroups on $M$ are the same thing as generalized pseudogroups on the pair groupoid $M\times M\rightrightarrows M$, where a locally defined diffeomorphism $\phi$ of $M$ is viewed as the local bisection $x\mapsto (\phi(x),x)$ of $M\times M$. 
\end{myexample}

\begin{myexample}(Lie groups as generalized pseudogroups)
	\label{example:liegroupasgeneralizedpseudogroup}
	While realizing a Lie group $G$ as a Lie pseudogroup depends on the choice of a space $M$ and an action of $G$ on $M$, the generalized point of view allows us to make sense of a Lie group as a (generalized) pseudogroup in a canonical way: taking the Lie groupoid $G\rightrightarrows \{*\}$, its group of local bisections coincides with $G$ as a group. This is a trivial yet conceptually important interpretation. For instance, using the reduction procedure of Section \ref{section:systaticspacesmodern}, certain Lie pseudogroups can be reduced to Lie groups when interpreting them as generalized pseudogroups in this sense (see e.g. Example \ref{example:liegroups}).
\end{myexample}

\begin{myexample}[Lie-Pfaffian groupoids]
	\label{example:generalizedpseudogroupliepfaffiangroupoid}
	The set $\LBis(\mathcal{G},\omega)$ of local holonomic bisections of a Lie-Pfaffian groupoid $(\mathcal{G},\omega)$ is a generalized pseudogroup, which is a consequence of the multiplicativity property of $\omega$. In particular, this holds for the jet groupoids $(J^kM,\omega)$ (Example \ref{example:liepfaffiangroupoidjetgroupoids}), where, by Proposition \ref{prop:detectingholonomicsections}, we have a bijection
	\begin{equation*}
	\LDiff(M) \xrightarrow{\simeq} \LBis(J^kM,\omega),\hspace{1cm} \phi\mapsto j^k\phi.
	\end{equation*}
\end{myexample}

\begin{myexample} (the classical shadow)
	Any generalized pseudogroup $\Gamma$ on a Lie groupoid $\mathcal{G}\rightrightarrows M$ induces a pseudogroup $\Gamma_{\text{cl}}$ on the base $M$ by ``projecting'' the elements, i.e.
	\begin{equation*}
	\Gamma_{\text{cl}}:= \{\; t\circ \sigma\;|\;\sigma\in\Gamma \;\} \subset \LDiff(M).
	\end{equation*}
	We call $\Gamma_{\text{cl}}$ the \textbf{classical shadow} of $\Gamma$. ``Classical'' pseudogroups often arise as the classical shadows of generalized pseudogroups that are more natural than the actual pseudogroups one is interested in studying. Consider, for instance, the pseudogroup on $\mathbb{R}$ generated by the set of diffeomorphisms
	\begin{equation*}
	\phi:\mathbb{R}\to \mathbb{R},\hspace{1cm} \phi(x) = (ax+b),
	\end{equation*}
	parametrized by $a\in\mathbb{R}\backslash\{0\}$ and $b\in\mathbb{R}$. It clearly comes from the action of a Lie group. Indeed, we take the Lie groups $(\mathbb{R},+)$ and $(\mathbb{R}\backslash\{0\},\times)$, and the map $\varphi:\mathbb{R}\backslash\{0\}\to \text{Aut}(\mathbb{R}),\; a\mapsto (b\mapsto ab)$. From this data we construct the semi direct product 
	\begin{equation*}
	\mathbb{R}\backslash\{0\}{}_\varphi\!\!\ltimes \mathbb{R},
	\end{equation*}
	where the product of two elements $(a,b)$ and $(a',b')$ is given by
	\begin{equation*}
	(a',b')\cdot(a,b)=(a'a,b' + a'b).
	\end{equation*}
	We then consider the action groupoid induced the action of $\mathbb{R}\backslash\{0\}{}_\varphi\!\ltimes \mathbb{R}$ on $\mathbb{R}$, where the action is given by $(a,b)\cdot x = ax+b$. The above pseudogroup is the classical shadow of the generalized pseudogroup of ``constant'' bisections of this action groupoid. 
\end{myexample}

\begin{myexample} (classical pseudogroups made nicer)
	Cartan's approach to the study of Lie pseudogroups works well under suitable regularity conditions (such as in Definition \ref{def:liepseudogroup}) and is best understood in the transitive case. Generalized pseudogroups often allow one replace ill-behaved pseudogroups by well-behaved generalized pseudogroups. Here is an illustration of this phenomenon. Consider the pseudogroup $\Gamma_{\text{cl}}$ of rotations on $\mathbb{R}^2$, i.e. the one generated by the set of diffeomorphisms
	\begin{equation*}
	\phi:\mathbb{R}^2\to \mathbb{R}^2,\hspace{1cm} \phi(x,y) = (x\cos \,2\pi\theta + y\sin\,2\pi\theta, -x\sin\,2\pi\theta+y\cos\,2\pi\theta),	
	\end{equation*}
	parametrized by $\theta\in \mathbb{R}/\mathbb{Z}$. Since it is has a singular orbit, it is not a Lie pseudogroup, but it is the classical shadow of the generalized pseudogroup $\Gamma_{\text{gen}}$ of ``constant'' bisections of the action groupoid $\mathbb{R}/\mathbb{Z} \ltimes \mathbb{R}^2$ associated with the action of the Lie group $\mathbb{R}/\mathbb{Z}$ on $\mathbb{R}^2$ by rotations, i.e.,
	\begin{equation*}
	\mathbb{R}/\mathbb{Z} \times \mathbb{R}^2\to \mathbb{R}^2,\hspace{0.5cm} \theta\cdot(x,y) = (x\cos \,2\pi\theta + y\sin\,2\pi\theta, -x\sin\,2\pi\theta+y\cos\,2\pi\theta).
	\end{equation*}
\end{myexample}

\subsection{Basic Forms on Principal $\mathcal{G}$-Bundles}
\label{appendix:basicforms}

In this appendix, we recall the notion of a basic form in the setting of Lie groupoid and Lie algebroid actions, which will be used in the reduction procedure in Section \ref{section:reductionofpseudogroupinnormalform}. Let us begin with the more familiar case of Lie groups. Let $\pi:P\to B$ be a principal $G$-bundle, with $G$ a Lie group, and let $V$ be a representation of $G$. A $V$-valued differential form $\theta\in \Omega^*(P;V)$ is said to be basic if it is horizontal and $G$-equivariant. Horizontal means that $\theta$ vanishes if applied to at least one vertical tangent vector, while $G$-equivariance means that
\begin{equation*}
L_g^* \theta = g \cdot\theta,\hspace{1cm} \forall\;g\in G,
\end{equation*}
where $L_g:P\to P,\; p\mapsto g\cdot p$. Basic $V$-valued forms on $P$, which we denote by $\Omega^*_{\text{bas}}(P;V)$, are precisely those $V$-valued forms on $P$ that come from forms on the base $B$. More precisely, recalling that the associated vector bundle $E=E(P,V)$ on $B$ is the vector bundle obtained as the quotient of the trivial vector bundle $P\times V\to P$ by the induced action of $G$ on $P\times V$ given by $g\cdot(p,v)=(g\cdot p,g\cdot v)$, the pull-back by $\pi$ gives a linear isomorphism 
\begin{equation*}
\pi^*:\Omega^*(B;E)\xrightarrow{\simeq} \Omega^*_{\text{bas}}(P;V).
\end{equation*}
Note that, on the right hand side, we are implicitly using the canonical isomorphism $P\times V\xrightarrow{\simeq} \pi^* E$ which, at a point $p$, maps $v \mapsto [p,v]$. See e.g. \cite{Kobayashi1963} (Section II.5) for more details. 

The notion of a basic form generalizes naturally to the setting of Lie groupoids. For simplicity, we restrict to the case of $1$-forms, which is of relevance to us. To keep the notation clean, given a surjective submersion $\pi:P\to M$ and a vector bundle $E\to M$, we write $\Omega^1(P;E)$ for the space of $\pi^*E$-valued 1-forms on $P$. 

\begin{mydef}
	\index{Lie groupoid!basic form}
	Let $\pi:P\to M$ be a surjective submersion equipped with an action of a Lie groupoid $\mathcal{G}\rightrightarrows M$ and let $E\to M$ be a representation of $\mathcal{G}$. A $1$-form $\theta\in\Omega^1(P;E)$ is \textbf{horizontal} if it vanishes on all vectors that are tangent to the orbits of the action of $\mathcal{G}$. An $E$-valued $1$-form $\theta\in\Omega^1(P;E)$ is \textbf{basic} if it is both horizontal and satisfies
	\begin{equation}
	\label{eqn:basicequivariance}
	\theta(g\cdot X) = g\cdot\theta(X), 
	\end{equation}
	for all $g\in\mathcal{G}$ and $X\in T_pP$ for which $s(g)=\pi(p)$. We denote the space of basic $1$-forms by $\Omega^1_\text{bas}(P;E)$.
\end{mydef}

Let us clarify the left hand side of \eqref{eqn:basicequivariance}. This is best understood in terms of the action groupoid $\mathcal{G}\ltimes P\rightrightarrows P$ associated with the action of $\mathcal{G}$ on $P$. We regard $\theta$ as a form on the base of the action groupoid. The fact that $\theta$ is horizontal is equivalent to the condition that the restriction of $\theta$ to any orbit $\mathcal{O}\subset P$ of $\mathcal{G}\ltimes P$ must vanish. If $\theta$ is horizontal, then it descends to a map $\theta:N\mathcal{O}\to E|_\mathcal{O}$ on the normal bundle to an orbit. With this in mind, \eqref{eqn:basicequivariance} should be read as $\theta((g,p)\cdot[X]) = g\cdot\theta([X])$, for all $(g,p)\in \mathcal{G}\ltimes P$ and $X\in T_pP$, where $(g,p)$ acts on $[X]$ via the normal representation (see e.g. Section 2.1 in \cite{Yudilevich2016-2} for a description of the normal representation). 

The notion of a basic form can also be defined at the infinitesimal level. 

\begin{mydef}
	\label{def:basicformliealgebroid}
	\index{Lie algebroid!basic form}
	Let $\pi:P\to M$ be a surjective submersion equipped with an action $a:\pi^*A \to TP$  of a Lie algebroid $A\to M$ and let $\nabla:\Gamma(A)\times \Gamma(E)\to \Gamma(E)$ be a representation of $A$. A $1$-form $\theta\in\Omega^1(P;E)$ is \textbf{horizontal} if $\theta(a(\alpha))=0$ for all $\alpha\in\Gamma(A)$. A $1$-form $\theta\in\Omega^1(P;E)$ is \textbf{basic} if it is both horizontal and satisfies
	\begin{equation}
	\label{eqn:basicequivariancealgebroid}
	\theta([a(\alpha),X]) = (\pi^*\nabla)_{\pi^*\alpha}\theta(X),
	\end{equation}
	for all $\alpha\in\Gamma(A)$ and $X\in\mathfrak{X}(P)$. 
\end{mydef}

Let clarify the right-hand side of \eqref{eqn:basicequivariancealgebroid}. The action of $A$ on $P$ induces the action algebroid $\pi^*A\to P$, and the representation $\nabla:\Gamma(A)\times \Gamma(E)\to \Gamma(E)$ of $A$ induces a representation $\pi^*\nabla:\Gamma(\pi^*A)\times \Gamma(\pi^*E)\to \Gamma(\pi^*E)$ of $\pi^*A$. The construction is analogous to the construction of the pull-back of a usual connection. Namely, on pull-back sections one defines $(\pi^*\nabla)_{\pi^*\alpha}(\pi^*\sigma)|_p:=\nabla_\alpha(\sigma)|_{\pi(p)}$, and then one extends the definition by the Leibniz identity. The connection is easily seen to be flat. Note that if the action of $A$ on $P$ and the representation $\nabla$ come from an action of $\mathcal{G}$ on $P$ and a representation $E$, then the representation $\pi^*\nabla$ is induced by a representation $\pi^*E$ of $\mathcal{G}\ltimes P$. Namely, the one in which an arrow $(g,p)\in\mathcal{G}\ltimes P$ acts on a vector $v\in (\pi^*E)_p$ by $(g,p)\cdot v = g\cdot v \in (\pi^*E)_{g\cdot p}$. 

\begin{myprop}
	\label{prop:basicforms}
	Let $\mathcal{G}\rightrightarrows M$ be a Lie groupoid acting on a surjective submersion $\pi:P\to M$ and let $E\to M$ be a representation of $\mathcal{G}$. Given any $\theta\in\Omega^1(P;E)$, the following are equivalent:
	\begin{enumerate}
		\item $s^*\theta - t^*\theta = 0$, where $s, t$ is the source and target maps of $\mathcal{G}\ltimes P$. 
		\item $\theta - (t\circ\sigma)^*\theta =0$ for all $\sigma\in\LBis(\mathcal{G}\ltimes P)$.
		\item $\theta$ is basic with respect to $\mathcal{G}$.
	\end{enumerate}
	If $A$ is the Lie algebroid of $\mathcal{G}$ (together with the induced representation of $\mathcal{G}$ and action on $P$), then conditions 1-3 imply that:
	\begin{enumerate}
		\setcounter{enumi}{3}
		\item $\theta$ is basic with respect to $A$.
	\end{enumerate}
	If $\mathcal{G}$ is $s$-connected, then conditions 1-4 are equivalent.
\end{myprop}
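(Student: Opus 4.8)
The plan is to phrase everything in terms of the action groupoid $\mathcal{H} := \mathcal{G}\ltimes P\rightrightarrows P$, whose arrows are pairs $(g,p)$ with $s(g)=\pi(p)$, with source $s(g,p)=p$ and target $t(g,p)=g\cdot p$, and to regard $\pi^*E$ as the representation of $\mathcal{H}$ on which $(g,p)$ acts through $g\colon E_{s(g)}\to E_{t(g)}$. This action provides a canonical isomorphism $s^*(\pi^*E)\cong t^*(\pi^*E)$ of vector bundles over $\mathcal{H}$, and it is with respect to this identification that the expression $s^*\theta-t^*\theta$ in condition 1 and the pullbacks $(t\circ\sigma)^*\theta$ in condition 2 are to be read. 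Note that $t\circ\sigma$ does not cover the identity on $M$, so the representation is genuinely needed in order to view $(t\circ\sigma)^*\theta$ again as a $\pi^*E$-valued form on $P$. With this dictionary fixed, conditions 1--3 become three standard reformulations of the single property that $\theta$ is basic on $\mathcal{H}$, and the content of the proposition is to make the passage between them precise.

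First I would prove $1\Leftrightarrow 3$ by decomposing tangent vectors to $\mathcal{H}$. Any $Y\in T_{(g,p)}\mathcal{H}$ splits into a part tangent to the $s$-fibre of $\mathcal{H}$ (arrows with fixed source $p$, i.e. $g$ varying in $s^{-1}(\pi(p))$) and a complementary part on which $ds$ is injective. On the first part $ds(Y)=0$ while $dt(Y)$ sweeps out exactly the directions tangent to the $\mathcal{G}$-orbit through $g\cdot p$; hence the vanishing of $(s^*\theta-t^*\theta)(Y)$ on these vectors is equivalent to horizontality of $\theta$ (as $p$ and $g$ range, every orbit direction at every point is attained). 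On the complementary part, writing $Y=d\sigma(ds\,Y)$ for a local bisection $\sigma$ of $\mathcal{H}$ through $(g,p)$ and unwinding the identification $s^*(\pi^*E)\cong t^*(\pi^*E)$, the condition becomes precisely the equivariance \eqref{eqn:basicequivariance}, read through the normal representation once horizontality is known. The implication $1\Rightarrow 2$ is then immediate: pulling $s^*\theta=t^*\theta$ back along a bisection $\sigma$ (so that $s\circ\sigma=\mathrm{id}$) yields $\theta=(t\circ\sigma)^*\theta$; for the converse one uses that every arrow of $\mathcal{H}$ lies in the image of a local bisection and that the $s$-vertical directions, responsible for horizontality, are recovered as derivatives of families of bisections through a fixed arrow.

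Next I would derive $1,2,3\Rightarrow 4$ by differentiation. A section $\alpha\in\Gamma(A)$ integrates to a flow of local bisections $\varphi_\alpha^\epsilon$ of $\mathcal{G}$, which induces a flow of bisections of $\mathcal{H}$ and, on $P$, the flow of the fundamental vector field $a(\alpha)$. Applying condition 2 to these bisections and differentiating at $\epsilon=0$ turns the invariance $\theta=(t\circ\varphi_\alpha^\epsilon)^*\theta$ into the infinitesimal equivariance \eqref{eqn:basicequivariancealgebroid}: the $\epsilon$-derivative of the pullback produces the Lie-derivative expression $\theta([a(\alpha),X])-(\pi^*\nabla)_{\pi^*\alpha}\theta(X)$, the connection term arising exactly from differentiating the representation twist built into the pullback, while differentiating horizontality gives infinitesimal horizontality $\theta(a(\alpha))=0$. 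This is the routine, if slightly tedious, step, entirely analogous to the differentiation formula \eqref{eqn:linearizingcartanform}.

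The hard part is the converse $4\Rightarrow 1$ under the hypothesis that $\mathcal{G}$ is $s$-connected, the one genuine integration step. Since $\mathcal{G}$ is $s$-connected, every arrow $g$ with $s(g)=\pi(p)$ can be joined to the unit $1_{\pi(p)}$ by a path $g_\epsilon$ inside the $s$-fibre, and each such path is locally generated by a time-dependent section of $A$ through its right-invariant vector field, exactly as in the proof of Theorem \ref{theorem:reduction}. I would then show that the equivariance defect $\theta_{g_\epsilon\cdot p}(g_\epsilon\cdot X)-g_\epsilon\cdot\theta_p(X)$, transported to a fixed fibre of $E$ by right translation, satisfies a linear ordinary differential equation in $\epsilon$ whose coefficient is supplied by \eqref{eqn:basicequivariancealgebroid}, rewritten through right-invariant and fundamental vector fields as in Lemma \ref{lemma:cartanformspenceroperatorformula}; since the defect vanishes at $\epsilon=0$, uniqueness of solutions forces it to vanish for all $\epsilon$, and horizontality propagates along the fibre in the same way. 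The main obstacle throughout, and the place where sign and identification errors are most likely, is the consistent handling of the representation twist relating $s^*(\pi^*E)$ and $t^*(\pi^*E)$, both in the static equivalences and in matching the derivative of this twist against the connection term $(\pi^*\nabla)_{\pi^*\alpha}$.
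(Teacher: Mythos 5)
Your proposal is correct, and the equivalence of conditions 1--3 proceeds along essentially the same lines as the paper (splitting tangent vectors to $\mathcal{G}\ltimes P$ into $s$-vertical and bisection-tangent parts, with the paper's small trick of producing a vector $\widetilde{X}$ at a unit with $ds(\widetilde{X})=-dt(\widetilde{X})$ to extract horizontality from condition~2). Where you genuinely diverge is in the second half. For $1\text{--}3\Rightarrow 4$ you differentiate the invariance $\theta=(t\circ\varphi_\alpha^\epsilon)^*\theta$ along the flow of bisections generated by $\alpha\in\Gamma(A)$; the paper explicitly acknowledges this as ``the most direct approach'' but instead takes a more conceptual route, recognizing $\omega=s^*\theta-t^*\theta$ as a multiplicative form on $\mathcal{G}\ltimes P$ and invoking the correspondence of \cite{Crainic2012} between multiplicative forms and Spencer operators, under which $D_\omega$ is exactly the defect $\theta([a(\alpha),X])-(\pi^*\nabla)_{\pi^*\alpha}\theta(X)$. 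For the converse $4\Rightarrow 1$ under $s$-connectedness, the paper simply quotes the injectivity of the map $\omega\mapsto D_\omega$ for source-connected groupoids, whereas you integrate by hand: joining an arbitrary arrow to a unit inside its $s$-fibre, generating the path by a time-dependent section of $A$, and showing the equivariance defect solves a linear ODE with zero initial condition. Your argument is in effect the proof of the cited injectivity theorem specialized to this situation, and it closely mirrors the integration argument the paper itself uses in part~(b) of the proof of Theorem~\ref{theorem:reduction}; it buys self-containedness at the price of the sign and representation-twist bookkeeping you correctly identify as the delicate point, while the paper's route buys brevity at the price of an external reference. Both are sound; just make sure, in your $4\Rightarrow 1$ step, to note that full horizontality (vanishing on all orbit directions, not just on the image of $a$) also requires $s$-connectedness, since only then do the images of the infinitesimal action span the tangent spaces of the orbits.
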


\begin{myremark}
	\label{remark:basicforms}
	A few words of explanation are in order. In condition 2, we view both $s^*\theta$ and $t^*\theta$ as elements of $\Omega^1(\mathcal{G}\ltimes P;t^*E)$. The latter is clear, while for the former one makes use of the representation, namely $(s^*\theta)_g(X) = g\cdot\theta(ds(X))\in E_{t(g)}$. In condition 3, we view both $\theta$ and $(t\circ\sigma)^*\theta$ as locally defined elements of $\Omega^1(P;E)$ whose domains of definitions are the domain of $\sigma$. For the former one simply restricts $\theta$ to $\Dom(\phi)$, while in the latter one uses again the representation. Namely, $((t\circ\sigma)^*\theta)_p = \sigma(p)^{-1}\cdot((t\circ\sigma)^*\theta)_p$ for all $p\in\Dom(\sigma)$. 
\end{myremark}

\begin{proof}
	We first prove the equivalence of 1-3. To go from 1 to 2, simply pull back by $\sigma$. Next, assume 2. First, let $X\in T_pP$ such that $X$ is tangent to an orbit. One can always find an $\widetilde{X}\in T_{1_p}(\mathcal{G}\ltimes P)$ such that $ds(\widetilde{X}) = -dt(\widetilde{X}) = X$ (since $X$ is tangent to an orbit, there exist vectors in $T_{1_p}(s^{-1}(p))$ and $T_{1_p}(t^{-1}(p))$ projecting to $X$, so simply take their difference) and a local bisection $\sigma$ of $\mathcal{G}\ltimes P$ such that $\sigma(p) = 1_p$ and $d\sigma(X) = \widetilde{X}$. Applying $\theta-(t\circ \sigma)^*\theta$ on $X$, we see that $2\theta(X)=0$, and hence $\theta$ is horizontal. To prove \eqref{eqn:basicequivariance}, for every $g\in \mathcal{G}$ and $X\in T_pP$ such that $s(g)=\pi(p)$, simply choose a bisection $\sigma$ such that $\sigma(p)=g$ and note that $\theta(g\cdot X) = \theta (d(t\circ \sigma)(X))$. Finally, assume 3. Let $(g,p)\in\mathcal{G}\ltimes P$ and $X\in T_{(g,p)}(\mathcal{G}\ltimes P)$, then $(s^*\theta - t^*\theta)(X) = g\cdot \theta(ds(X)) - \theta(dt(X))$, which vanishes by \eqref{eqn:basicequivariance}, since, by the definition of the normal representation, $[dt(X)] = [g\cdot ds(X)]$. 
	
	The most direct approach for proving that 1-3 imply 4 is to go from 3 to 4 by differentiating \eqref{eqn:basicequivariance} and discovering \eqref{eqn:basicequivariancealgebroid}. More conceptually, we will go from 1 to 4 by appealing to the notion of multiplicative forms and their infinitesimal counterparts, the Spencer operators. In \cite{Crainic2012}, the notion of a representation-valued multiplicative form on a Lie groupoid is studied, and in Theorem 1 of that paper it is shown that such a form linearizes to a so called Spencer operator on the associated Lie algebroid  and that the map sending the multiplicative form to the Spencer operator is injective if the Lie groupoid is source-connected (note that while the authors actually assume source-simply connectedness in the statement of the theorem, source-connectedness is sufficient for the injectivity assertion). To apply the theorem to our problem, one first notes that the form $\omega:=s^*\theta-t^*\theta\in\Omega^1(\mathcal{G}\ltimes P;t^*E)$ is multiplicative (e.g. see Proposition \ref{prop:reductionpfaffiangroupoid}). Next, one computes the induced Spencer operator $D_\omega:\mathfrak{X}(P)\times \Gamma(\pi^*A)\to \Gamma(\pi^*E)$ on the Lie algebroid $\pi^*A$ of $\mathcal{G}\ltimes P$, obtaining the formula
	\begin{equation*}
	(D_\omega)_X(\pi^*\alpha):= \theta([a(\alpha),X]) - (\pi^*\nabla)_{\pi^*\alpha}\theta(X),\hspace{1cm} \forall\;\alpha\in\Gamma(A),\;X\in\mathfrak{X}(P).
	\end{equation*}
	This formula is given in Example 2.10 in \cite{Crainic2012} (and it can be computed by the same method used to prove Lemma \ref{lemma:cartanformspenceroperatorformula}). Now, since the vanishing of $\omega$ implies that $D_\omega$ vanishes, and hence that \eqref{eqn:basicequivariancealgebroid} is satisfied (and we already saw that 1 implies that $\theta$ is horizontal), then 1 implies 4. Conversely, if $\mathcal{G}$ is $s$-connected, and hence $\mathcal{G}\ltimes P$ is $s$-connected, then, by the injectivity of the map described above, the vanishing of $D_\omega$ implies the vanishing of $\omega$ and hence 4 implies 1. 
\end{proof}

Now, generalizing the case of Lie groups, if the action of $\mathcal{G}\rightrightarrows M$ on $P\to M$ is free and proper, then the basic forms on $P$ with values in a representation $E\to M$ are precisely those that descend to forms on the orbit space $P_{\text{red}}:= P/\mathcal{G}$ with values in the associated vector bundle $E_{\text{red}}\to P_{\text{red}}$. Here, $P_\mathrm{red}$ has the unique smooth structure with which the projection $pr:P\to P_{\text{red}}$ is a surjective submersion, and the \textbf{associated vector bundle} $E_\mathrm{red}\to P_\mathrm{red}$ is the quotient of $\pi^*E\to P$ by the action of $\mathcal{G}$ given by $g\cdot (p,v) = (g\cdot p,g\cdot v)$, for all $p\in P$, $v\in E_{\pi(p)}$ and $g\in s^{-1}(\pi(p))$ (thus, $[g\cdot p,v] = [p,g^{-1}\cdot v]\in (E_\text{red})_{[p]}$). The proof of the following proposition is in complete analogy to the case of Lie groups:

\begin{myprop}
	\label{prop:basicformscorrespondence}
	Let  $\pi:P\to M$ be a surjective submersion equipped with a free and proper action of a Lie groupoid $\mathcal{G}\rightrightarrows M$, and let $E\to M$ be a representation of $\mathcal{G}$. The pull-back by the projection $pr:P\to P_\text{red}$ gives a 1-1 correspondence
	\begin{equation*}
	pr^*: \Omega^1(P_\text{red};E_\text{red}) \xrightarrow{\simeq} \Omega^1_\text{bas}(P;E).
	\end{equation*}
\end{myprop}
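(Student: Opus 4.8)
The final statement to prove is Proposition \ref{prop:basicformscorrespondence}, which asserts that for a free and proper action of a Lie groupoid $\mathcal{G}\rightrightarrows M$ on a surjective submersion $\pi:P\to M$, with $E\to M$ a representation of $\mathcal{G}$, pullback along the quotient map $pr:P\to P_\text{red}$ gives an isomorphism $pr^*: \Omega^1(P_\text{red};E_\text{red}) \xrightarrow{\simeq} \Omega^1_\text{bas}(P;E)$.

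\medskip

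The plan is to imitate the classical principal-bundle argument, which the proposition itself flags as the template. First I would check that $pr^*$ actually lands in $\Omega^1_\text{bas}(P;E)$: given $\bar\theta\in\Omega^1(P_\text{red};E_\text{red})$, the form $pr^*\bar\theta$ is horizontal because $pr$ collapses the orbit directions (vectors tangent to orbits lie in $\Ker d(pr)$, as the orbits of $\mathcal{G}$ are exactly the fibers of $pr$ by freeness and properness), and it satisfies the equivariance \eqref{eqn:basicequivariance} because $pr$ is $\mathcal{G}$-invariant and the identification $pr^*E_\text{red}\cong \pi^*E$ is precisely $\mathcal{G}$-equivariant by the construction of the associated bundle $E_\text{red}$. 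Next I would construct the inverse: given a basic $\theta\in\Omega^1_\text{bas}(P;E)$, I must produce $\bar\theta\in\Omega^1(P_\text{red};E_\text{red})$ with $pr^*\bar\theta=\theta$. Since $pr$ is a surjective submersion, at each $[p]\in P_\text{red}$ I can choose a local section $\eta$ of $pr$ with $\eta([p])=p$ and set $\bar\theta_{[p]}:=(\eta^*\theta)_{[p]}$, interpreted via $E_\text{red}\cong$ the $\mathcal{G}$-quotient of $\pi^*E$.

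\medskip

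The heart of the argument — and the step I expect to be the main obstacle — is showing this $\bar\theta$ is well-defined, i.e. independent of the choice of local section $\eta$, and smooth. Two local sections $\eta,\eta'$ through the same point differ by the groupoid action: for $[q]$ near $[p]$, the points $\eta([q])$ and $\eta'([q])$ lie in the same $\mathcal{G}$-orbit, so there is a (smoothly varying) arrow $g$ with $\eta'([q])=g\cdot\eta([q])$. I would differentiate this relation and feed the result into both the horizontality and the equivariance clause of basicness: the horizontal part ensures that the ``vertical discrepancy'' between $d\eta$ and $d\eta'$ is annihilated by $\theta$, while \eqref{eqn:basicequivariance} ensures that the genuine tangential part transforms by exactly the factor $g$ that is quotiented out in passing from $\pi^*E$ to $E_\text{red}$. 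Concretely, I would write any tangent vector at $[q]$ as the $pr$-image of $d\eta(\,\cdot\,)$, decompose $d\eta'(\,\cdot\,) - d(L_g\circ\eta)(\,\cdot\,)$ into a piece tangent to the orbit (killed by horizontality) plus the transported piece (handled by equivariance), and conclude $(\eta'^*\theta)_{[q]}=(\eta^*\theta)_{[q]}$ as elements of $(E_\text{red})_{[q]}$. Smoothness of $\bar\theta$ then follows because locally it is just $\eta^*\theta$ for a fixed smooth section $\eta$.

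\medskip

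With well-definedness in hand, the two composites are immediate: $pr^*\bar\theta=\theta$ holds because $\eta$ is a section of $pr$ so $pr\circ\eta=\Id$ forces the pullback to recover $\theta$ on the horizontal complement and basicness fixes it on orbit directions, while $\overline{pr^*\bar\theta'}=\bar\theta'$ is the same computation read backwards. I would also remark that freeness and properness are used only to guarantee that $P_\text{red}$ and $E_\text{red}$ are smooth (so that $pr$ is a surjective submersion with connected orbit-fibers and $E_\text{red}$ is a genuine vector bundle, as recorded in Appendix \ref{appendix:basicforms}); the algebraic content of the correspondence is entirely the interplay of horizontality and \eqref{eqn:basicequivariance}. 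Finally, since the whole construction is manifestly $C^\infty(P_\text{red})$-linear and natural, the map $pr^*$ and its inverse are mutually inverse linear isomorphisms, completing the proof.
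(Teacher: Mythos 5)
Your argument is correct and is exactly the route the paper intends: the paper gives no details beyond declaring the proof ``in complete analogy to the case of Lie groups,'' and your proposal is precisely that classical principal-bundle argument transported to the groupoid setting (pullback lands in basic forms; inverse via local sections of $pr$; well-definedness from horizontality plus the equivariance condition \eqref{eqn:basicequivariance}, with freeness and properness used only to make $P_\text{red}$ and $E_\text{red}$ smooth). The one step worth tightening is the comparison of two local sections: rather than differentiating $\eta'([q])=g\cdot\eta([q])$ directly (where the action of a single arrow on a full tangent vector needs interpretation), it is cleaner to package the varying arrow as a local bisection $\sigma$ of $\mathcal{G}\ltimes P$ and invoke condition 2 of Proposition \ref{prop:basicforms}, which gives $\eta'^*\theta=\eta^*\theta$ in $E_\text{red}$ at once.
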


\bibliographystyle{plain}

\def\cprime{$'$} \def\cprime{$'$} \def\cprime{$'$}

\end{document}